\theoremstyle{definition}
\newtheorem{dfn}{Definition}[section]
\newtheorem{exm}[dfn]{Example}
\theoremstyle{remark}
\newtheorem{rmk}[dfn]{Remark}
\numberwithin{equation}{subsection}
\theoremstyle{plain}
\newtheorem{thm}[dfn]{Theorem}
\newtheorem{prop}[dfn]{Proposition}
\newtheorem{lem}[dfn]{Lemma}
\newtheorem{cor}[dfn]{Corollary}
\newcommand*{\Cdot}{\raisebox{-1ex}{\scalebox{3}{$\cdot$}}}
\renewcommand{\leq}{\leqslant}
\renewcommand{\setminus}{\smallsetminus}
\newcommand{\Z}{\mathbb{Z}}
\newcommand{\Q}{\mathbb{Q}}
\newcommand{\R}{\mathbb{R}}
\newcommand{\C}{\mathbb{C}}
\renewcommand{\P}{\mathbb{P}}
\title{Localization of equivariant cohomology rings of real Grassmannians}
\subjclass[2010]{Primary 57R91; Secondary 57S25, 05C90}
\keywords{Characteristic classes, Equivariant cohomology, Grassmannian, GKM theory}
\author[He]{\bfseries Chen He}
\address{
	Yau Mathematical Sciences Center \\ 
	Tsinghua University \\ 
	Beijing\\
	P.R.~China}
\email{che@math.tsinghua.edu.cn}
\begin{document}


\vspace{18mm} \setcounter{page}{1} \thispagestyle{empty}

\begin{abstract}
We use localization method to understand the rational equivariant cohomology rings of real Grassmannians and oriented Grassmannians, then relate this to the Leray-Borel description which says the ring generators are equivariant Pontryagin classes, Euler classes in even dimension, and one more new type of classes in odd dimension, as stated by Casian and Kodama. We give additive basis in terms of equivariant characteristic polynomials and equivariant Schubert/canonical classes. We also calculate Poincar\'e series, equivariant Littlewood-Richardson coefficients and equivariant characteristic numbers. Since all these Grassmannians with torus actions are equivariantly formal, many results for equivariant cohomology have similar statements for ordinary cohomology. 
\end{abstract}

\maketitle

\section{Introduction}
\vskip 15pt

The study of homology and cohomology of real and complex Grassmannian was initially based on Ehresmann's work \cite{Eh37} in terms of Schubert cells. The relations between Schubert classes and characteristic classes were given by Chern \cite{Ch48} for real Grassmannians in $\Z/2$ coefficients. Using the techniques of spectral sequences of fibre bundles, Leray \cite{Le46,Le49} and Borel \cite{Bo53A,Bo53B} gave a unified way to describe the cohomology rings of certain homogeneous spaces in terms of characteristic classes. 

These pioneering work applies not only to ordinary cohomology but also to equivariant cohomology. For compact connected Lie group $G$ and its connected closed subgroup $H$ of the same rank, the maximal torus $T$ acts from the left of the homogeneous space $G/H$ with the Leray-Borel description of its equivariant cohomology in $\Q$ coefficients:
\[
H^*_T(G/H)=\mathbb{S}\mathfrak{t}^*\otimes_{(\mathbb{S}\mathfrak{t}^*)^{W_G}} (\mathbb{S}\mathfrak{t}^*)^{W_H}
\]
where $\mathbb{S}\mathfrak{t}^*$ is the symmetric algebra of the dual Lie algebra $\mathfrak{t}^*$, and $W_G,W_H$ are the Weyl groups of $G$ and $H$. 

For example, if we consider complex flag varieties as quotients of unitary groups $U(n)$, then the Weyl groups are products of symmetric groups and the Weyl group invariants $(\mathbb{S}\mathfrak{t}^*)^{W_G}, (\mathbb{S}\mathfrak{t}^*)^{W_H}$ consist of symmetric polynomials in appropriate variables. In topological terminology, these invariants are polynomials of equivariant Chern classes of canonical bundles, with relations from Whitney product formula.  

Similarly, for even dimensional oriented Grassmannians viewed as quotients of $SO(n)$, whose Weyl groups act by permutations and sign changes on polynomials in appropriate variables, the Leray-Borel description means theirs equivariant cohomology rings are generated by equivariant Pontryagin classes and equivariant Euler classes of canonical bundles and complementary bundles, with relations from Whitney product formula and square of Euler classes as top Pontryagin classes. Notice that oriented Grassmannians are natural $2$-fold covers of real Grassmannians, then we can identify the equivariant cohomology of real Grassmannians as $\Z/2$-invariant of the equivariant cohomology of oriented Grassmannians. Due to the lack of preferred orientations for subspaces in $\R^n$, there is no Euler class of canonical bundle or complementary bundle over real Grassmannians. These facts give even dimensional real Grassmannians their Leray-Borel description of equivariant cohomology rings generated by equivariant Pontryagin classes, with relations from Whitney product formula. This special case of Leray-Borel description for real Grassmannians are stated in Casian\&Kodama \cite{CK}. For odd dimensional oriented Grassmannians $SO(2n+2)/SO(2k+1)\times SO(2n-2k+1)$ in which $SO(2n+2)$ and $SO(2k+1)\times SO(2n-2k+1)$ have different ranks of maximal tori, similar Leray-Borel description was obtained by Takeuchi \cite{Ta62}. 

In this paper, we will use localization methods to understand the rational equivariant cohomology rings of real and oriented Grassmannians and re-derive the Leray-Borel description. We will give additive basis both in characteristic polynomials and in canonical classes, compute Poincar\'e series and characteristic numbers, and try to relate these results with Schubert calculus on real Grassmannians.

Alternatively, Sadykov \cite{Sa17}, Carlson \cite{Carl} and He \cite{HeB} have given short derivations of the Leray-Borel-Takeuchi descriptions of the rational cohomology rings of real and oriented Grassmannians. 

\textbf{Acknowledgement} This work was part of the author's PhD thesis. The author would like to thank Victor Guillemin and Jonathan Weitsman for guidance. The author would also like to thank Jeffrey Carlson for many useful discussions and for the references of Leray and Takeuchi.

\vskip 20pt
\section{Torus actions, equivariant cohomology}
\vskip 15pt
In this section, we recall some basics of equivariant cohomology for torus actions.

\subsection{Torus actions and isotropy weights at fixed points}\label{subsec:T-action}
Throughout the paper, a manifold $M$ is always assumed to be smooth, compact, but not necessarily oriented nor connected. Let torus $T$ act on a manifold $M$, we will denote $M^T$ as the fixed-point set. For any point $p$ in a connected component $C$ of $M^T$, there is the \textbf{isotropy representation} of $T$ on the tangent space $T_p M$, which splits into weighted spaces $T_p M = V_0 \oplus V_{[\alpha_1]} \oplus \cdots \oplus V_{[\alpha_r]}$ where the non-zero distinct weights $[\alpha_1],\ldots,[\alpha_r] \in \mathfrak{t}^*_\Z/{\pm 1}$ are determined only up to signs (If $M$ has a $T$-invariant stable almost complex structure, then those weights are determined without ambiguity of signs). Comparing with the tangent-normal splitting $T_p M = T_p C \oplus N_p C$, we get that $T_p C = V_0$ and $N_p C = V_{[\alpha_1]} \oplus \cdots \oplus V_{[\alpha_r]}$. Since $N_p C = V_{[\alpha_1]} \oplus \cdots \oplus V_{[\alpha_r]}$ is of even dimension, the dimensions of $M$ and of the components of $M^T$ will be of the same parity. If $\mathrm{dim}\, M$ is even, the smallest possible components of $M^T$ could be isolated points.  If $\mathrm{dim}\, M$ is odd, the smallest possible components of $M^T$ could be isolated circles. Moreover, since $T$ acts on the normal space $N_p C$ by rotation, this gives the normal space $N_p C$ an orientation. 

For any subtorus $K$ of $T$, we get two more actions automatically: the \textbf{sub-action} of $K$ on $M$ and the \textbf{residual action} of $T/K$ on $M^K$.

\subsection{Equivariant cohomology}
Let torus $T=(S^1)^n$ act on a manifold $M$. The $T$-equivariant cohomology of $M$ is defined using the Borel construction $H^*_{T} (M) = H^*((ET\times M) / T)$, where $ET=(S^\infty)^{n}$ with $BT=ET/T=(\C P^\infty)^{n}$ and the coefficient ring will usually be $\Q$ throughout the paper, unless otherwise mentioned to be $\Z$. By this definition, if we denote $\mathfrak{t}^*$ as the dual Lie algebra of $T$, then $H^*_T (pt)=H^*(ET / T)=H^*((\C P^\infty)^{n})=\mathbb{S}\mathfrak{t}^*$ is a polynomial ring $\Q[\alpha_1,\dots,\alpha_n]$ or $\Z[\alpha_1,\ldots,\alpha_n]$ under the identification that $c_1(\gamma_i\rightarrow BT)=\alpha_i$, where $\gamma_i\rightarrow BT$ is the canonical complex line bundle on the $i$-th $\C P^\infty$-component  of $BT$ and $\alpha_i \in \mathfrak{t}^*_\Z$ is the integral weight dual to the $i$-th $S^1$-component of $T$. The trivial map $\iota: M \rightarrow pt$ induces a homomorphism $\iota^*: H^*_T(pt)\rightarrow H^*_T(M)$ and hence makes $H^*_T(M)$ a $H^*_T (pt)$-module.

For a $T$-equivariant complex, oriented or real vector bundle $V$ over $M$, we can define the equivariant Chern, Euler or Pontryagin classes respectively as $c^T(V)=c((ET\times V) / T)\,,e^T(V)=e((ET\times V) / T)\,,p^T(V)=p((ET\times V) / T)\in H^*_T(M,\Z)$.

The famous Atiyah-Bott-Berline-Vergne(ABBV) localization formula says:

\begin{thm}[ABBV Localization Formula, \cite{BV83,AB84}]\label{ABBV}
	On an oriented $T$-manifold $M$, an equivariant cohomology class $\omega \in H^*_T (M)$ can be integrated as
	\[
	\int_M \omega = \sum_{C \subseteq M^T} \int_C \frac{\omega|_C}{e^T(NC)}
	\]
	where the summation is taken for every component $C \subseteq M^T$ with normal bundle $NC$ and equivariant Euler class $e^T(NC)$.
\end{thm}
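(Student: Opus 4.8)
The plan is to deduce this from the \emph{algebraic} localization theorem together with the self-intersection (excess-intersection) formula and functoriality of the equivariant Gysin pushforward. Write $R=H^*_T(pt)=\mathbb{S}\mathfrak{t}^*$, let $S\subset R$ be the multiplicative set of nonzero homogeneous elements, and pass to the localized coefficients $S^{-1}R=\mathrm{Frac}(R)$. The first step is the localization theorem itself: the inclusion $i\colon M^T\hookrightarrow M$ induces an isomorphism $S^{-1}i^*\colon S^{-1}H^*_T(M)\xrightarrow{\ \sim\ }S^{-1}H^*_T(M^T)$. I would prove this by covering the compact $M$ with finitely many $T$-invariant tubular neighborhoods of orbits, running a Mayer--Vietoris induction on the number of pieces, and reducing to the key fact that $S^{-1}H^*_T(T/K)=0$ for every proper closed subgroup $K\subsetneq T$, which holds because $H^*_T(T/K)\cong H^*_K(pt)$ is $R$ modulo a nonzero linear form and is therefore killed once that form is inverted.

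The second step is to identify the preimage of $\omega|_{M^T}$ under this isomorphism. Since $M$ is oriented and each normal bundle $NC$ is canonically oriented by the rotation action of $T$ (Section~2.1), each component $C\subseteq M^T$ inherits an orientation, so there are equivariant Gysin maps $(i_C)_*\colon H^*_T(C)\to H^*_T(M)$. The excess-intersection formula gives $i_C^*(i_C)_*\eta=e^T(NC)\cdot\eta$, while $i_{C'}^*(i_C)_*=0$ for $C\ne C'$ since distinct fixed components have disjoint $T$-invariant tubular neighborhoods. Moreover $e^T(NC)$ becomes invertible in $S^{-1}H^*_T(C)$, its leading term being the product of the nonzero isotropy weights along $C$. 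Hence the class
\[
\omega-\sum_{C\subseteq M^T}(i_C)_*\!\left(\frac{\omega|_C}{e^T(NC)}\right)\in S^{-1}H^*_T(M)
\]
restricts to zero on every component of $M^T$, so by Step~1 it vanishes.

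The final step applies the equivariant integration $\int_M\colon S^{-1}H^*_T(M)\to S^{-1}R$ to the resulting identity and invokes the projection formula $\int_M(i_C)_*\eta=\int_C\eta$, valid because $\pi_M\circ i_C=\pi_C$ for the maps to a point; this produces exactly $\int_M\omega=\sum_{C}\int_C\bigl(\omega|_C/e^T(NC)\bigr)$, an equality in $S^{-1}R$.

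I expect the main obstacle to be a clean treatment of the localization isomorphism of Step~1 in the topological category, in particular organizing the Mayer--Vietoris induction over a $T$-invariant cover and keeping track of the fact that $M$ need not be connected. An alternative that avoids it is the original Berline--Vergne argument in the Cartan model: on $M\setminus M^T$ the closed equivariant form $\omega$ is $d_T$-exact via a primitive constructed from an invariant metric and the nonvanishing of the fundamental vector field, so Stokes' theorem on the complement of $\varepsilon$-tubular neighborhoods of the $C$'s reduces the integral to local contributions near each $C$, each evaluating to $\int_C\omega|_C/e^T(NC)$ by a Thom/Gaussian computation in $NC$; the delicate point there is the uniform control of the primitive as $\varepsilon\to0$. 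In either approach, the orientability hypotheses are precisely what make all the pushforwards and integrals well defined.
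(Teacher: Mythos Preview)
The paper does not prove this theorem at all: it is stated with attribution to \cite{BV83,AB84} and then used as a black box (in Theorem~\ref{thm:EquivChernNum} and in the characteristic-number computations of Section~6). So there is no proof in the paper to compare your proposal against.

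That said, your outline is a correct sketch of the standard Atiyah--Bott argument: the localization isomorphism $S^{-1}H^*_T(M)\cong S^{-1}H^*_T(M^T)$ via Mayer--Vietoris over an invariant cover, the self-intersection formula $i_C^*(i_C)_*=e^T(NC)\cdot(-)$, invertibility of $e^T(NC)$ after localization, and functoriality of the Gysin pushforward to a point. The alternative you mention at the end---exactness of $\omega$ on $M\setminus M^T$ in the Cartan model and a Stokes/tube-shrinking argument---is precisely the Berline--Vergne approach in the other cited reference. Either route is acceptable; neither appears in this paper.
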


Inspired by this localization theorem, one can hope for more connections between the manifold $M$ and its fixed-point set $M^T$, if $H^*_T(M)$ is actually a free $H^*_T (pt)$-module. 

\begin{dfn}
	An action of $T$ on $M$ is \textbf{equivariantly formal} if $H^*_T (M)$ is a free $H^*_T (pt)$-module. 
\end{dfn}

Using the techniques of spectral sequences, equivariant formality has various equivalent expressions. 

\begin{thm}[Equivalences of equivariant formality, \cite{AP93} pp.\,210 Thm\,3.10.4, \cite{GGK02} pp.\,206-207] \label{thm:formal}
Let torus $T$ act on a manifold $M$, the following conditions about equivariant cohomology are equivalent:
\begin{enumerate}
	\item The $T$-action is equivariantly formal, i.e. $H^*_T (M)$ is a free $H^*_T (pt)$-module
	\item The Leray-Serre sequence of the fibration $M \hookrightarrow (M \times ET) /T \rightarrow BT$ collapses with $E_\infty = E_2 = H^*(BT)\otimes H^*(M)$
	\item $H^*_T(M)\cong H^*_T (pt)\otimes H^*(M)$ as $H^*_T (pt)$-module
	\item $H^*_T(M)\rightarrow H^*(M)$, defined as the restriction to the fibre $M$, is surjective
    \item Any additive basis of $H^*(M)$ can be lifted to $H^*_T(M)$, hence give an additive $H^*_T (pt)$-basis for $H^*_T(M)$
	\item $\sum \mathrm{dim}\,H^*(M^T) = \sum \mathrm{dim}\,H^*(M)$.
\end{enumerate}
\end{thm}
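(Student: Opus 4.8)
The plan is to derive everything from one tool: the cohomology Leray--Serre spectral sequence of the Borel fibration $M \hookrightarrow (M\times ET)/T \to BT$. Since $BT=(\C P^\infty)^n$ is simply connected and we work over the field $\Q$, the local coefficients are trivial and $E_2^{p,q}=H^p(BT)\otimes H^q(M)$, i.e. $E_2=H^*_T(pt)\otimes H^*(M)$ as a bigraded $H^*_T(pt)$-algebra, converging to the filtered ring $H^*_T(M)$ with $\mathrm{gr}\,H^*_T(M)=E_\infty$. Granting this, the equivalences among (1)--(5) are algebra of filtered modules and multiplicative spectral sequences, whereas (6) is where the real input, the localization theorem, enters.

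For the part not involving (6) I would establish the implications $(4)\Rightarrow(2)\Rightarrow(3)\Rightarrow(1)$ together with $(2)\Rightarrow(5)\Rightarrow(4)$, which with $(1)\Rightarrow(6)\Rightarrow(2)$ below close up into the full equivalence. For $(2)\Rightarrow(3)$ and $(2)\Rightarrow(5)$: if $E_\infty=E_2=H^*_T(pt)\otimes H^*(M)$, then $\mathrm{gr}\,H^*_T(M)$ is a free $H^*_T(pt)$-module, so lifting any homogeneous basis of $H^*(M)$ viewed in the bottom column $E_2^{0,*}=H^*(M)$ produces an $H^*_T(pt)$-basis of $H^*_T(M)$ (bounded-below filtration plus graded Nakayama), which is exactly (3) and (5). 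Then $(3)\Rightarrow(1)$ is trivial and $(5)\Rightarrow(4)$ is immediate, since a lift of a basis of $H^*(M)$ is carried onto that basis by the fibre-restriction. The substantive soft step is $(4)\Rightarrow(2)$: one identifies the fibre-restriction $H^*_T(M)\to H^*(M)$ with the edge homomorphism $H^*_T(M)\twoheadrightarrow E_\infty^{0,*}\hookrightarrow E_2^{0,*}=H^*(M)$, so surjectivity means $E_\infty^{0,q}=E_2^{0,q}$ for all $q$, i.e. the column $p=0$ consists of permanent cycles; since the row $q=0$, being $H^*(BT)=E_2^{*,0}$, consists of permanent cycles for degree reasons, and $E_2^{p,q}$ is spanned by products of classes from this row and this column, multiplicativity of the spectral sequence forces every class of $E_2$ to be a permanent cycle, whence $E_\infty=E_2$.

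It remains to bring in (6). The localization theorem gives $S^{-1}H^*_T(M)\cong S^{-1}H^*_T(M^T)$ for $S=H^*_T(pt)\setminus\{0\}$, and as $T$ acts trivially on $M^T$ one has $H^*_T(M^T)=H^*_T(pt)\otimes H^*(M^T)$, so $\mathrm{rank}_{H^*_T(pt)}H^*_T(M)=\dim_\Q H^*(M^T)$ unconditionally ($M$ compact keeps all modules finitely generated). Next, since $E_\infty=\mathrm{gr}\,H^*_T(M)$ and for each $q$ the module $\bigoplus_p E_\infty^{p,q}$ is a subquotient of the free module $H^*(BT)\otimes H^q(M)$ of rank $\dim_\Q H^q(M)$, additivity of rank over the finite filtration gives the unconditional bound $\dim_\Q H^*(M^T)=\mathrm{rank}\,H^*_T(M)\le\dim_\Q H^*(M)$. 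For $(1)\Rightarrow(6)$: if $H^*_T(M)$ is $H^*_T(pt)$-free, then the Eilenberg--Moore spectral sequence of the fibre square $\,pt\to BT\leftarrow (M\times ET)/T\,$ collapses, yielding $H^*(M)\cong H^*_T(M)\otimes_{H^*_T(pt)}\Q$, so the free rank equals $\dim_\Q H^*(M)$ and the bound becomes an equality. For $(6)\Rightarrow(2)$, argue by contraposition: if the spectral sequence does not collapse, let $d_r$ be the first nonvanishing differential, so $E_r=E_2$ is $H^*_T(pt)$-free; then $\mathrm{im}\,d_r$ is a nonzero submodule of a free module, hence torsion-free of positive rank, so it is not killed by the exact functor $S^{-1}(-)$ and $\dim_K S^{-1}E_{r+1}<\dim_K S^{-1}E_r=\dim_\Q H^*(M)$ over $K=\mathrm{Frac}\,H^*_T(pt)$; as later pages only shrink, $\dim_\Q H^*(M^T)=\mathrm{rank}\,H^*_T(M)<\dim_\Q H^*(M)$, contradicting (6).

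The step I expect to be the main obstacle is precisely this last converse: one must show that equality of total Betti numbers forces an honest collapse of the spectral sequence, not merely the vanishing of differentials after inverting $S$. The device that makes it work is that at the first potentially nonzero page $E_r$ is still free over the polynomial ring $H^*_T(pt)$, so that for $d_r$ ``nonzero'' and ``nonzero after localizing'' coincide; the other genuinely nontrivial inputs are the localization theorem and the multiplicativity argument for $(4)\Rightarrow(2)$, while the filtered-module bookkeeping and graded Nakayama are routine.
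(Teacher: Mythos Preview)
Your proposal is correct and, in fact, more detailed than what the paper offers. The paper does not prove this theorem at all: it is stated with references to Allday--Puppe and Ginzburg--Guillemin--Karshon, and the only argument given is the remark that the equivalences among (2), (3), (4), (5) are ``direct applications of the Leray--Hirsch theorem,'' while for (1) and (6) the reader is sent to the cited references. Your spectral-sequence treatment of (2)--(5) is exactly the content of Leray--Hirsch unpacked, so there is no divergence in spirit there.

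Where you genuinely add something is in the handling of (1) and (6). Your use of the Borel localization theorem to establish $\mathrm{rank}_{H^*_T(pt)}H^*_T(M)=\dim_\Q H^*(M^T)$ unconditionally, combined with the rank-drop argument at the first nontrivial differential (using that $E_r=E_2$ is still free over the polynomial ring, so a nonzero image has positive rank), is the standard and correct route to $(6)\Rightarrow(2)$. For $(1)\Rightarrow(6)$ your Eilenberg--Moore argument is fine; a slightly lighter alternative is to note that freeness of $H^*_T(M)$ forces $\mathrm{rank}\,H^*_T(M)=\mathrm{rank}\,E_\infty=\mathrm{rank}\,\mathrm{gr}\,H^*_T(M)$, and since each $E_\infty^{p,q}$ is a subquotient of the free module $E_2^{p,q}$, the inequality $\mathrm{rank}\,E_\infty\le\dim_\Q H^*(M)$ together with the reverse inequality coming from freeness (the associated graded of a free module has the same rank) already pins down equality without invoking a second spectral sequence. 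Either way, your argument is sound.
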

\begin{rmk}
	The equivalences among (2)(3)(4)(5) are direct applications of the Leray-Hirsch theorem (which works not only for $\Q$ coefficients, but also for $\Z$ coefficients if $H^*(M,\Z)$ is a free $\Z$-module). For the equivalence to the remaining conditions (1)(6) in $\Q$ coefficients, see the cited references.
\end{rmk}

\begin{rmk}
    When the Betti numbers of $M$ and $M^T$ are known, the equality $\sum \mathrm{dim}\,H^*(M^T) = \sum \mathrm{dim}\,H^*(M)$ is a handy way to verify the equivariant formality.
\end{rmk}

\begin{rmk}
	The fibre inclusion $M \hookrightarrow (M \times ET) /T$ induces a homomorphism $H^*_T(M) \rightarrow H^*(M)$ factoring through $\Q\otimes_{H^*_T (pt)} H^*_T(M)$, where $\Q$ has a $H^*_T (pt)=\Q[\alpha_1,\ldots,\alpha_n]$-algebra structure from the constant-term morphism $\Q[\alpha_1,\ldots,\alpha_n]\rightarrow \Q: f(\alpha_1,\ldots,\alpha_n)\mapsto f(0)$. When the $T$-action is equivariantly formal, i.e. $H^*_T(M)\cong H^*_T (pt)\otimes_\Q H^*(M)$, then we can recover $H^*(M)$ as $\Q\otimes_{H^*_T (pt)} H^*_T(M)$.
\end{rmk}

\vskip 20pt
\section{GKM-type theorems}
\vskip 15pt
In this section, we recall the Chang-Skjelbred lemma, the GKM theorem and the author's recent work on GKM-type theorems for possibly non-orientable manifolds in odd dimensions.

\subsection{Chang-Skjelbred lemma and GKM condition}

Given an action $T\curvearrowright M$, for every point $p\in M$, its stabilizer is defined as $T_p=\{t \in T \mid t\cdot p = p\}$, and its orbit is $\mathcal{O}_p\cong T/T_p$. Let's set the $1$-skeleton $M_1=\{p \mid \textup{dim}\,\mathcal{O}_p\leq 1\}$, the union of $1$-dimensional orbits and fixed points. If $H^*_T (M)$ is a free $H^*_T (pt)$-module, i.e. the action is equivariantly formal, Chang and Skjelbred \cite{CS74} proved that $H^*_T (M)$ only depends on the fixed-point set $M^T$ and the $1$-skeleton $M_1$.

\begin{thm}[Chang-Skjelbred Lemma, \cite{CS74}]\label{Chang}
	If an action $T\curvearrowright M$ is equivariantly formal, then
	\[
	H^*_T(M) \cong H^*_T(M_1)\hookrightarrow H^*_T(M^T).
	\]
\end{thm}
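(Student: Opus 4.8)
The plan is to prove the Chang--Skjelbred lemma by exploiting the long exact sequences of equivariant cohomology for the pairs $(M, M_1)$ and $(M_1, M^T)$, together with a localization argument that isolates the contribution of orbits of dimension exactly one. The key input is that equivariant formality forces the relevant relative groups to be torsion modules over $H^*_T(pt)$, while the groups we care about are free, so the connecting maps must vanish.

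\medskip

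First I would recall the general localization theorem (a consequence of the work of Borel, Hsiang, and Quillen): if $T$ acts on a compact manifold $M$ and $S \subseteq H^*_T(pt)$ is the multiplicative set generated by the nonzero weights occurring as isotropy representations, then $S^{-1}H^*_T(M) \xrightarrow{\ \sim\ } S^{-1}H^*_T(M^T)$, and more generally for any $T$-invariant subspace $X \supseteq M^T$ the relative group $H^*_T(M, X)$ is $S$-torsion. In particular $H^*_T(M, M_1)$ and $H^*_T(M_1, M^T)$ are torsion $H^*_T(pt)$-modules. Next I would use equivariant formality: by Theorem~\ref{thm:formal}, $H^*_T(M)$ is free over $H^*_T(pt)$, and one checks (again via the Leray--Serre spectral sequence, or by the fact that $M_1$ is itself equivariantly formal when $M$ is) that $H^*_T(M_1)$ and $H^*_T(M^T)$ are free as well; a free module over the domain $H^*_T(pt)$ has no nonzero torsion.

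\medskip

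With these two facts in hand, the proof proceeds in two stages. Consider the long exact sequence of the pair $(M, M_1)$:
\[
\cdots \to H^k_T(M, M_1) \to H^k_T(M) \to H^k_T(M_1) \to H^{k+1}_T(M, M_1) \to \cdots.
\]
Since $H^*_T(M, M_1)$ is torsion while $H^*_T(M)$ is free, the map $H^*_T(M, M_1) \to H^*_T(M)$ is zero, so $H^*_T(M) \hookrightarrow H^*_T(M_1)$ is injective; a symmetric torsion-vs-free argument on the other end, combined with the fact that the composite $H^*_T(M) \to H^*_T(M_1) \to H^*_T(M^T)$ localizes to an isomorphism, shows the image of $H^*_T(M)$ in $H^*_T(M_1)$ is exactly $H^*_T(M_1) \cap \mathrm{im}\bigl(H^*_T(M^T)\bigr)$-type data; more cleanly, one repeats the argument with the pair $(M_1, M^T)$ to get $H^*_T(M_1) \hookrightarrow H^*_T(M^T)$, and then identifies $\mathrm{im}(H^*_T(M) \to H^*_T(M^T))$ with $\mathrm{im}(H^*_T(M_1) \to H^*_T(M^T))$ by comparing the two localizations inside $S^{-1}H^*_T(M^T)$. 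This yields $H^*_T(M) \cong H^*_T(M_1) \hookrightarrow H^*_T(M^T)$.

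\medskip

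The main obstacle, and the point that deserves the most care, is establishing that $H^*_T(M_1, M^T)$ is genuinely $S$-torsion even when $M_1$ is not a manifold and its orbit structure can be complicated (circles of $T$-fixed points, non-orientable components, $1$-dimensional orbits whose closures meet $M^T$ in intricate ways). One handles this by decomposing $M_1 \setminus M^T$ into finitely many $T$-invariant pieces on each of which the orbit type is constant with a $1$-dimensional orbit $T/T_p$, applying the localization theorem componentwise (the relevant stabilizer $T_p$ is a codimension-one subtorus, and $H^*_T$ of a free $T/T_p$-space is killed by the weight cutting out $T_p$), and patching via Mayer--Vietoris. In the odd-dimensional or non-orientable setting the individual pieces may fail to be orientable, but since we only need the \emph{torsion} conclusion and never integrate, orientability is not required here — that refinement is precisely what the later GKM-type results in this section will exploit.
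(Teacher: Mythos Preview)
The paper does not prove this theorem; it is quoted as a cited result from Chang--Skjelbred and used as a black box throughout, so there is no proof in the paper to compare against.

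Assessing your argument on its own merits: the overall strategy (localization plus torsion-versus-free) is the right one, but there is a genuine gap at the image-identification step. Two issues. First, you assert that $H^*_T(M_1)$ is free over $H^*_T(pt)$ ``via the Leray--Serre spectral sequence, or by the fact that $M_1$ is itself equivariantly formal when $M$ is''; this is true but highly nontrivial --- it is essentially the exactness of the Atiyah--Bredon sequence at the first stage, which is roughly equivalent in strength to the Chang--Skjelbred lemma itself, so invoking it here is circular. Second, and more seriously, your final move ``identify the two images by comparing localizations inside $S^{-1}H^*_T(M^T)$'' cannot work as written: once you invert all of $S$, both $H^*_T(M)$ and $H^*_T(M_1)$ map \emph{isomorphically} onto $S^{-1}H^*_T(M^T)$, so the comparison is vacuous and says nothing about the un-localized images.

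What the actual Chang--Skjelbred argument uses is the \emph{finer} support statement: $H^*_T(M, M_1)$ is not merely $S$-torsion but is supported in codimension $\geq 2$ in $\mathrm{Spec}\,H^*_T(pt)$ (equivalently, annihilated by products of pairs of linearly independent weights), because every orbit in $M \setminus M_1$ has stabilizer of codimension $\geq 2$ in $T$. This graded support information, combined with freeness of $H^*_T(M)$ (a depth condition), is what forces the cokernel of $H^*_T(M) \to H^*_T(M_1)$ to vanish after restricting to $H^*_T(M^T)$. Your last paragraph correctly flags a difficulty, but the real obstacle is this codimension-$2$ support refinement, not the coarser torsion claim you focus on.
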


This Lemma enables one to describe the equivariant cohomology $H^*_T(M)$ as a sub-ring of $H^*_T(M^T)$, subject to certain algebraic relations determined by the $1$-skeleton $M_1$. To apply the Chang-Skjelbred Lemma, we will follow Goresky, Kottwitz and MacPherson's idea to start with the smallest fixed-point set $M^T$ and $1$-skeleton $M_1$.

\begin{dfn}[GKM condition in either even or odd dimensions]\label{GKMCond}
	An action $T\curvearrowright M$ is \textbf{GKM} if
	\begin{itemize}
		\item[(1)] The fixed-point set $M^T$ consists of isolated points or isolated circles. 
		\item[(2)] At each fixed point $p\in M^T$, the non-zero weights $[\alpha_1],\ldots,[\alpha_n] \in \mathfrak{t}^*_\Z/{\pm 1}$ of the isotropy $T$-representation $T\curvearrowright T_pM$ are pair-wise independent.
	\end{itemize}
\end{dfn}

\begin{rmk}
	As mentioned in the Subsection\,\ref{subsec:T-action}, the dimensions of $M$ and of $M^T$ have the same parity. Condition\,(1) in Definition\,\ref{GKMCond} means that $M^T$ consists of isolated points when $M$ is even dimensional or $M^T$ consists of isolated circles when $M$ is odd dimensional.
\end{rmk}

\begin{rmk}
	The GKM condition is equivalent to requiring the $1$-skeleton $M_1$ to be 2-dimensional when $M$ is even dimensional or 3-dimensional when $M$ is odd dimensional.
\end{rmk}

\subsection{GKM theorem: the even dimensional case}

Goresky, Kottwitz and MacPherson \cite{GKM98} considered torus actions on algebraic varieties where the fixed-point set $M^T$ is finite and the $1$-skeleton $M_1$ is a union of spheres $S^2$. They proved that the cohomology $H^*_T(M)$ can be described in terms of congruence relations on a graph determined by the $1$-skeleton $M_1$. Goertsches and Mare \cite{GM14} observed that GKM theory also works in non-orientable case by adding $\R P_{[\alpha]}^2$ components to the $1$-skeleton $M_1$.

\begin{dfn}[GKM graph in even dimension]
	If an action $T\curvearrowright M^{2m}$ is GKM, then its \textbf{GKM graph} consists of 
	\begin{description}
		\item[Vertices] A $\bullet$ for each fixed point in $M^T$
		\item[Edges$\,\&\,$Weights] A solid edge with weight $[\alpha]$ for each $S^1_{[\alpha]}$ joining two $\bullet$'s representing its two fixed points, and a dotted edge with weight $[\beta]$ for each $\R P^2_{[\beta]}$ joining a $\bullet$ to an empty vertex.
	\end{description}
\end{dfn}

\begin{thm}[GKM theorem in even dimension, \cite{GKM98} pp.\,26 Thm\,1.2.2, \cite{GM14} pp.\,7 Thm\,3.6]\label{thm:EvenGKM}
	If the action of a torus $T$ on a (possibly non-orientable) manifold $M^{2m}$ is equivariantly formal and GKM, then we can construct its GKM graph $\Gamma$, with vertex set $V=M^T$ and weighted edge set $E$, moreover the equivariant cohomology has a graphic description
	\[
	H^*_T(M) = \big\{ f: V\rightarrow \mathbb{S}\mathfrak{t}^* \mid f_p \equiv f_q \mod{\alpha} \quad \mbox{for each solid edge $\overline{pq}$ with weight $\alpha$ 
		in $E$}\big\}.
	\]
\end{thm}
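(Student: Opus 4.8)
The plan is to deduce the statement from the Chang--Skjelbred Lemma (Theorem~\ref{Chang}) together with an analysis of the $1$-skeleton $M_1$. Since the action is equivariantly formal, Theorem~\ref{Chang} identifies $H^*_T(M)$ with the image of $H^*_T(M_1)$ in $H^*_T(M^T)$. As $\dim M$ is even and the action is GKM, the fixed-point set $M^T=V$ is finite, so $H^*_T(M^T)=\bigoplus_{p\in V}\mathbb{S}\mathfrak{t}^*$ is precisely the ring of functions $f\colon V\to\mathbb{S}\mathfrak{t}^*$, and what must be shown is that this image is the subring cut out by the congruences $f_p\equiv f_q\bmod\alpha$ over the solid edges.

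First I would pin down the structure of $M_1$. Writing $K$ for a corank-one subtorus of $T$, one has $M^K\subseteq M_1$ and indeed $M_1=\bigcup_K M^K$, while two distinct such fixed sets $M^K,M^{K'}$ meet only in $M^T$ (because $K$ and $K'$ together generate all of $T$). At a fixed point $p\in M^K$, condition~(2) of Definition~\ref{GKMCond} forces $T_p M^K$ to be a single isotropy weight space, hence $2$-dimensional; so each component of $M^K$ meeting $M^T$ is a closed surface on which $T/K\cong S^1$ acts nontrivially---otherwise that surface would lie in $M^T$---with isolated fixed points, and by the Lefschetz formula it is $S^2$ (two fixed points) or $\R P^2$ (one fixed point). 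A component of some $M^K$ disjoint from $M^T$ would be a nonempty clopen subset of $M_1$ whose equivariant cohomology is nonzero yet restricts to zero on $M^T$, contradicting the injectivity in Theorem~\ref{Chang}; hence none occur. Thus $M_1$ is a union of invariant spheres $S^2_{[\alpha]}$ and projective planes $\R P^2_{[\beta]}$ attached to one another at points of $M^T$, and in particular the GKM graph $\Gamma$ is well defined.

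Next I would carry out the two local computations. For $\Sigma=S^2_{[\alpha]}$ with fixed points $p,q$, write $S^2=D_+\cup D_-$ with $D_\pm$ equivariantly contractible onto $p$ and $q$ and with $D_+\cap D_-$ an equator on which $T/K\cong S^1$ acts freely; the equivariant Mayer--Vietoris sequence reduces to $H^*_T(S^2)\to\mathbb{S}\mathfrak{t}^*\oplus\mathbb{S}\mathfrak{t}^*\to\mathbb{S}\mathfrak{t}^*/(\alpha)$, whose second map $(g,h)\mapsto\bar g-\bar h$ is surjective, so $H^*_T(S^2_{[\alpha]})$ is the subring $\{(f_p,f_q):f_p\equiv f_q\bmod\alpha\}$ of $H^*_T(\{p,q\})$. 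For $\Sigma=\R P^2_{[\beta]}$ with unique fixed point $p$, rationally $\R P^2$ is acyclic, so $\sum\dim H^*(\R P^2;\Q)=1=\sum\dim H^*((\R P^2)^{T/K};\Q)$ and Theorem~\ref{thm:formal}(6) makes the action equivariantly formal; then $H^*_T(\R P^2_{[\beta]};\Q)\to H^*_T(\{p\})$ is a unital map of free rank-one $\mathbb{S}\mathfrak{t}^*$-modules, hence an isomorphism---so a dotted edge contributes no congruence, and here the use of $\Q$-coefficients is essential.

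It remains to assemble the pieces. One inclusion is functoriality: for $\tilde f\in H^*_T(M_1)$, restricting to $M^T$ and then to the two fixed points of each $S^2_{[\alpha]}$ shows $\tilde f|_{M^T}$ satisfies the solid-edge congruences. Conversely, given $f\colon V\to\mathbb{S}\mathfrak{t}^*$ with those congruences, I would build a preimage by attaching the pieces of $M_1$ one at a time: $f$ extends over each new piece by the local computations, and along the finite set $F\subseteq M^T$ where the new piece meets the part already built the two classes agree (both equal $f|_F$), so exactness at the middle of the Mayer--Vietoris sequence of that union produces an extension over the enlarged skeleton; since every fixed point lies on at least two pieces (one for each isotropy weight, and there are $\tfrac12\dim M\ge2$ of them when $\dim M\ge4$, the cases $\dim M\le2$ being trivial), the resulting class still restricts to $f$ on all of $M^T$. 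The step I expect to be the main obstacle is the second paragraph: verifying that equivariant formality genuinely rules out exotic---higher-dimensional or fixed-point-free---components of the sets $M^K$ so that $M_1$ really is a union of $2$-spheres and $\R P^2$'s, together with getting the rational $\R P^2$ computation right, since it is exactly this that renders the dotted edges invisible in the congruence relations.
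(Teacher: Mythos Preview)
The paper does not give its own proof of this theorem; it is quoted from \cite{GKM98} and \cite{GM14}, and the only commentary is the subsequent remark that $H^*_T(\R P^2_{[\beta]})=H^*_T(pt)$, so dotted edges impose no congruences. There is therefore nothing to compare your argument against except the standard proof in those references, and your outline is essentially that proof and is correct.

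Two small points on the details. First, your appeal to the injectivity $H^*_T(M_1)\hookrightarrow H^*_T(M^T)$ to exclude fixed-point-free components of the $M^K$ is legitimate as the paper states the Chang--Skjelbred Lemma, but a more self-contained route is to note that equivariant formality of $M$ forces equivariant formality of each $M^K$ (the Borel inequalities $\sum\dim H^*(M^T)\le\sum\dim H^*(M^K)\le\sum\dim H^*(M)$ collapse by criterion~(6) of Theorem~\ref{thm:formal}); since the $S^2$ and $\R P^2$ components already account for all of $\sum\dim H^*((M^K)^T)$, no further components can occur. Second, the clause in your final paragraph about ``every fixed point lies on at least two pieces'' is unnecessary and slightly confused: once the inductive Mayer--Vietoris gluing has produced a class on all of $M_1$, its restriction to $M^T\subset M_1$ is automatically $f$, because at each step the glued class restricts to the prescribed classes on both pieces and hence to $f$ on every fixed point contained in them; the number of pieces through a given fixed point plays no role.
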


\begin{rmk}
	Since $H^*_T(\R P_{[\beta]}^2)=H^*_T(pt)$ does not contribute to congruence relations, we can erase all the dotted edges of $\R P_{[\beta]}^2$ and only keep the solid edges of $S^2_{[\alpha]}$ to construct an \textbf{effective} GKM graph, which does not necessarily have the same number of edges for every vertex.
\end{rmk}

\subsection{GKM-type theorem: the odd dimensional case}

In odd dimensions, we can also consider smallest-dimensional $1$-skeleton, i.e. $M_1$ is $3$-dimensional. Then according to Chang-Skjelbred lemma, the localization boils down to understanding $S^1$-actions on $3$-manifolds, studied by the author \cite{He17}.

Similar to the even dimensional case, we can construct a graph for each odd dimensional $T$-manifold under the GKM condition\,\ref{GKMCond}.

\begin{dfn}[GKM graph in odd dimension]
	If an action $T\curvearrowright M^{2m+1}$ (possibly non-orientable) is GKM, then its \textbf{GKM graph} consists of
	\begin{description}
		\item[Vertices] There will be two types of vertices.
		\begin{itemize}
			\item [$\circ$] for each fixed circle $C \subset M^T$.
			\item [$\Box$] for each 3d connected component $N^3_{[\alpha]}$ in $M^{T_{[\alpha]}}$ of some codimension-$1$ subtorus $T_{[\alpha]}$ which has Lie algebra $\mathfrak{t_\alpha}\subset \mathfrak{t}$ annihilated by $\alpha$. The $\Box$ is then weighted with $[\alpha]$.
		\end{itemize}
		
		\item[Edges] An edge joins a $(\Box,N)$ to a $(\circ,C)$, if the 3d manifold $N$ contains the fixed circle $C$ and hence is a connected component of $M^{T_{[\alpha]}}$ for an isotropy weight $[\alpha]$ of $C$. There are no edges directly joining $\circ$ to $\circ$, nor $\Box$ to $\Box$.
		
	\end{description}
\end{dfn}

In order to derive a GKM-type graphic description of $H_{T}^*(M^{2m+1})$, we need to fix in advance an orientation $\theta_i$ for each fixed circle $C_i \subseteq M^T$, and also fix an orientation for each orientable $M^{T_{[\alpha]}} \subseteq M_1$.

\begin{thm}[A GKM-type theorem in odd dimension, \cite{HeA}]\label{thm:OddGKM}
	If the action of a torus $T$ on (possibly non-orientable) manifold $M^{2n+1}$ is equivariantly formal and GKM, then we can construct its GKM graph $\Gamma$, with two types of vertex sets $V_\circ$ and $V_\Box$ and edge set $E$. An element of the equivariant cohomology $H^*_T(M)$ can be written as:
	\[
	(P,Q\theta): V_\circ \longrightarrow \mathbb{S}\mathfrak{t}^* \oplus \mathbb{S}\mathfrak{t}^* \theta
	\]
	where $\theta$ is the generator of $H^1(S^1)$,
	under the relations that for each $\Box$ representing a 3d component $N$ of some $M^{T_{[\alpha]}}$ and the neighbour $\circ$'s representing the fixed circles $C_1,\ldots,C_k$ on this component,
	\begin{itemize}
		\item if $N$ is non-orientable,
		\begin{equation*}
		P_{C_1}\equiv P_{C_2}\equiv \cdots\equiv P_{C_k} \mod{\alpha} 
		\end{equation*}
		\item if $N$ is orientable,
		\begin{equation*}
		P_{C_1}\equiv P_{C_2}\equiv \cdots\equiv P_{C_k} \mbox{ and } \sum_{i=1}^k \pm Q_{C_i}\equiv 0 \mod{\alpha} 
		\end{equation*}
		where the sign for each $Q_{C_i}$ is specified by comparing the prechosen orientation $\theta_i$ with the induced orientation of $N$ on $C_i$.
	\end{itemize}
\end{thm}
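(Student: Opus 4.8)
The plan is to run the localization argument behind GKM theory one dimension higher, the building blocks now being $S^1$-manifolds of dimension three rather than two-spheres. Since the $T$-action is equivariantly formal, the Chang--Skjelbred Lemma (Theorem~\ref{Chang}) identifies $H^*_T(M)$ with the image of the restriction $H^*_T(M_1)\to H^*_T(M^T)$, so the whole computation reduces to understanding $H^*_T$ of the $1$-skeleton and its restriction to the fixed set. By the GKM condition $M^T$ is a disjoint union of circles on which $T$ acts trivially, so after fixing the orientation generator $\theta_C\in H^1(C)$ of each one gets $H^*_T(C)=H^*(BT)\otimes H^*(S^1)=\mathbb{S}\mathfrak{t}^*\oplus\mathbb{S}\mathfrak{t}^*\theta_C$; this is exactly where the shape $(P,Q\theta)$ of an element of $H^*_T(M)$ comes from.

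The second step is to dissect $M_1$ along the fixed circles. Again by the GKM condition, every component of every $M^{T_{[\alpha]}}$ that is not $1$-dimensional is one of the closed $3$-manifolds $N$ labelling the $\Box$-vertices; by pairwise-independence of the weights, distinct such pieces meet only inside $M^T$, and $N\cap M^T=N^T$ is a union of fixed circles. Excision then gives $H^{*}_T(M_1,M^T)\cong\bigoplus_{\Box}H^{*}_T(N,N^T)$, and combining this with the long exact sequences of the pairs $(M_1,M^T)$ and $(N,N^T)$ yields
\[
H^*_T(M)\;\cong\;\bigl\{\,f\in H^*_T(M^T)\ \bigm|\ f|_{N^T}\in\operatorname{im}\!\bigl(H^*_T(N)\to H^*_T(N^T)\bigr)\ \text{for every }\Box\,\bigr\}.
\]
So everything localizes to a single $\Box$: one must compute the image of $H^*_T(N)\to H^*_T(N^T)$ for one closed, possibly non-orientable, $3$-manifold $N$ carrying the residual action of the circle $T/T_{[\alpha]}$.

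For this local model I would invoke the author's analysis of $S^1$-actions on $3$-manifolds \cite{He17}. Since $N$ is pointwise fixed by $T_{[\alpha]}$, the normal bundle of a fixed circle $C\subset N$ is an oriented $2$-plane bundle rotated only by the residual circle, so $e^T(\nu C)=\pm\alpha$; the Thom isomorphism $H^{*-2}_T(C)\cong H^*_T(N,N\setminus C)$ followed by extension by zero then shows that $\alpha\cdot H^*_T(N^T)$ already lies in the image, so the image is detected modulo $\alpha$. Connectedness of $N$ forces the $\theta$-free components $P_C$ to agree modulo $\alpha$, exactly as in the even-dimensional GKM theorem. For the $\theta$-parts, if $N$ is orientable then ABBV (Theorem~\ref{ABBV}) applied to any $\omega\in H^*_T(N)$ gives $\int_N\omega=\sum_C\int_C\omega|_C/e^T(\nu C)$; the $\theta$-free part of each $\omega|_C$ integrates to $0$ over $C$ and the $\theta$-part to $Q_C$, so this reads $\alpha\int_N\omega=\sum_C\varepsilon_C Q_C$, forcing $\sum_C\varepsilon_C Q_C\equiv 0\bmod\alpha$ with $\varepsilon_C=\pm1$ comparing $\theta_C$ to the orientation $N$ induces on $C$. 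That these are the only constraints, and that when $N$ is non-orientable the $Q_C$ are completely unconstrained (no integration over $N$ exists), is what the structure theory of $S^1$-actions on $3$-manifolds in \cite{He17} supplies. Substituting the two local descriptions back into the displayed identity produces exactly the congruences in the statement, one package per $\Box$-vertex.

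The main obstacle is this last, genuinely three-dimensional ingredient: the even-dimensional GKM theorem rests on the one-line fact $H^*_{S^1}(S^2)=\{(f,g):f\equiv g\bmod\alpha\}$, whereas here one has to pin down $\operatorname{im}(H^*_T(N)\to H^*_T(N^T))$ for an arbitrary $S^1$-action on a closed $3$-manifold, orientable or not, and in particular to prove both the sign-weighted balancing relation in the orientable case and the \emph{absence} of any relation among the $Q_C$ in the non-orientable case. Secondary bookkeeping that still needs checking is that the excision and Mayer--Vietoris step introduces no spurious relations along the shared fixed circles, and that the exceptional (finite-stabilizer) orbits in $N\setminus N^T$, as well as any free circles in $M_1$, are invisible to rational equivariant cohomology.
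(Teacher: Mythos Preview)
The paper does not prove this theorem: it is stated with the citation \cite{HeA} and no proof is given here; it is imported as background in Section~3 alongside the even-dimensional GKM theorem (Theorem~\ref{thm:EvenGKM}). So there is no ``paper's own proof'' to compare against.

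That said, your sketch is a faithful outline of how the argument in \cite{HeA} must go, and you have correctly isolated the nontrivial ingredient. The reduction via Chang--Skjelbred to the $1$-skeleton, the identification $H^*_T(C)\cong\mathbb{S}\mathfrak{t}^*\oplus\mathbb{S}\mathfrak{t}^*\theta$ for each fixed circle, and the excision/Mayer--Vietoris step localizing the constraints to individual $\Box$-components are all standard and parallel the even-dimensional story. Your ABBV computation in the orientable case, yielding $\alpha\int_N\omega=\sum_C\varepsilon_C Q_C$ and hence the necessity of the sign-weighted balancing relation, is clean and correct. You are also right that the hard content---sufficiency of these relations, and the absence of any constraint on the $Q_C$ in the non-orientable case---is precisely what requires the structure theory of $S^1$-actions on closed $3$-manifolds developed in \cite{He17}, and that exceptional orbits and free circles are rationally invisible. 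The secondary bookkeeping you flag (no spurious relations from gluing along shared fixed circles) is genuine but routine once one has the local models. In short: your plan is the right one, and the paper simply defers the execution to \cite{HeA} and \cite{He17}.
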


\begin{rmk}
	As discussed in \cite{HeA}, different choices of orientations from $C_i \subseteq M^T$ and from orientable $M^{T_{[\alpha]}} \subseteq M_1$ give the isomorphic equivariant cohomology. When $M$ has a $T$-invariant stable almost complex structure, then the isotropy weights $\alpha$ can be determined without ambiguity of signs. Moreover, $M^T$ and $M^{T_{\alpha}} \subseteq M_1$ are equipped with induced stable almost complex structures, hence are oriented canonically.
\end{rmk}

\begin{rmk}
	To describe the $\mathbb{S}\mathfrak{t}^*$-algebra structure of $H^*_T(M^{2m+1})$, it is convenient to write an element $(P,Q\theta)$ as $(P_C+Q_C\theta)_{C\subset M^T}$. Note $\theta^2=0$, then $(P_C+Q_C\theta)_{C\subset M^T} + (\bar{P}_C+\bar{Q}_C\theta)_{C\subset M^T}=([P_C+\bar{P}_C]+[Q_C+\bar{Q}_C]\theta)_{C\subset M^T}$, and $(P_C+Q_C\theta)_{C\subset M^T} \Cdot (\bar{P}_C+\bar{Q}_C\theta)_{C\subset M^T}=([P_C \bar{P}_C]+[P_C\bar{Q}_C+\bar{P}_C Q_C]\theta)_{C\subset M^T}$. For any polynomial $R \in \mathbb{S}\mathfrak{t}^*$, we have $R \Cdot (P_C+Q_C\theta)_{C\subset M^T} = (R P_C+R Q_C\theta)_{C\subset M^T}$.
\end{rmk}

\vskip 20pt
\section{Equivariant cohomology rings of complex Grassmannians}
\vskip 15pt
In this section, we recall the GKM description and Leray-Borel description of equivariant cohomology rings of complex Grassmannians, together with the characteristic basis and canonical basis of the additive structure. We use the notation $G_k(\C^n)$ for the Grassmannian of $k$-dimensional complex subspaces in $\C^n$.

\subsection{GKM description of complex Grassmannians}
As shown by Guillemin, Holm and Zara \cite{GHZ06}, for compact connected group $G$ and its closed connected subgroup $H$ of the same rank as $G$, the homogeneous space $G/H$ is GKM and equivariantly formal under the left action of maximal torus $T$, hence has a graphic description for its equivariant cohomology. For example, the GKM graph of $T^n \curvearrowright U(n)/(U(k)\times U(n-k))$ is the Johnson graph $J(n,k)$, of which each vertex is a $k$-element subset $S \subseteq \{1,\ldots,n\}$ and two vertices $S,S'$ are joined by an edge if they differ by one element. For later use in the case of real Grassmannians, we will give an explicit description for the $1$-skeleta of complex Grassmannians.

\begin{prop}[$1$-skeleta of complex Grassmannians]
	The $T^n$-action on $G_k(\C^n)$ has $\binom{n}{k}$ fixed points of the form $\oplus_{i\in S} \C_i$, where $S$ is a $k$-element subset of $\{1,2,\ldots,n\}$ and $\C_i$ is the $i$-th component of $\C^n$. The isotropy weights at $\oplus_{i\in S} \C_i$ are $\{\alpha_j-\alpha_i \mid i\in S,j\not \in S\}$, and join $\oplus_{i'\in S} \C_{i'}$ to $\oplus_{i'\in (S\setminus\{i\})\cup \{j\}} \C_{i'}$ via $\{(\oplus_{i'\in S\setminus\{i\}} \C_{i'})\oplus L \mid L \in \P(\C_i\oplus\C_j)\}\cong \C P^1$ in the $1$-skeleton.
\end{prop}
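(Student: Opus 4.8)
The plan is to describe the fixed points, isotropy weights, and invariant $2$-spheres of the $T^n$-action on $G_k(\mathbb{C}^n)$ directly from the linear-algebra description of the Grassmannian, using only standard facts about torus representations.

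First I would identify the fixed points. Since $T^n = (S^1)^n$ acts on $\mathbb{C}^n = \bigoplus_{i=1}^n \mathbb{C}_i$ diagonally, a $k$-plane $V$ is $T$-invariant if and only if it is a sum of coordinate lines, i.e. $V = \bigoplus_{i\in S}\mathbb{C}_i$ for some $S\subseteq\{1,\dots,n\}$ with $|S|=k$; this is because a $T$-invariant subspace must be spanned by weight vectors, and the weight spaces of $\mathbb{C}^n$ are exactly the coordinate lines $\mathbb{C}_i$. This gives the $\binom{n}{k}$ fixed points $\bigoplus_{i\in S}\mathbb{C}_i$. Second, I would compute the isotropy representation on the tangent space. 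The tangent space to $G_k(\mathbb{C}^n)$ at a point $V$ is canonically $\mathrm{Hom}(V, \mathbb{C}^n/V)$. At the fixed point $V=\bigoplus_{i\in S}\mathbb{C}_i$, both $V$ and $\mathbb{C}^n/V \cong \bigoplus_{j\notin S}\mathbb{C}_j$ decompose into weight lines, so $\mathrm{Hom}(V,\mathbb{C}^n/V) = \bigoplus_{i\in S,\, j\notin S}\mathrm{Hom}(\mathbb{C}_i,\mathbb{C}_j)$, and $\mathrm{Hom}(\mathbb{C}_i,\mathbb{C}_j)$ carries the weight $\alpha_j - \alpha_i$. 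This yields the stated list of isotropy weights $\{\alpha_j-\alpha_i \mid i\in S,\ j\notin S\}$. Since these weights are pairwise distinct (the weight $\alpha_j-\alpha_i$ determines the ordered pair $(i,j)$ as long as $i\ne j$), the fixed point is isolated, consistent with the GKM condition.

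Third, I would exhibit the invariant $2$-spheres. Fix $i\in S$ and $j\notin S$. Consider the family $\{(\bigoplus_{i'\in S\setminus\{i\}}\mathbb{C}_{i'})\oplus L \mid L\in\mathbb{P}(\mathbb{C}_i\oplus\mathbb{C}_j)\}$; as $L$ ranges over the projective line $\mathbb{P}(\mathbb{C}_i\oplus\mathbb{C}_j)\cong\mathbb{C}P^1$, this is a $T$-invariant embedded $\mathbb{C}P^1$ inside $G_k(\mathbb{C}^n)$, because $T$ preserves $\mathbb{C}_i\oplus\mathbb{C}_j$ and acts on it with the two distinct weights $\alpha_i,\alpha_j$, hence acts on $\mathbb{P}(\mathbb{C}_i\oplus\mathbb{C}_j)$ with exactly two fixed points $[\mathbb{C}_i]$ and $[\mathbb{C}_j]$. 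These correspond to the two fixed points $\bigoplus_{i'\in S}\mathbb{C}_{i'}$ and $\bigoplus_{i'\in(S\setminus\{i\})\cup\{j\}}\mathbb{C}_{i'}$ of $G_k(\mathbb{C}^n)$, and the induced isotropy weight on the tangent line to this $\mathbb{C}P^1$ at $[\mathbb{C}_i]$ is $\alpha_j-\alpha_i$. One then checks that these are all the components of the $1$-skeleton: a component of $M_1$ through a fixed point $p$ is the closure of a $1$-dimensional orbit, which lies in the weight-$[\alpha_j-\alpha_i]$ summand of $T_pM$, and tracing this back in the Grassmannian gives precisely one of the $\mathbb{C}P^1$'s just described; equivalently, the count of edges at each vertex equals $k(n-k) = \dim_{\mathbb{C}} G_k(\mathbb{C}^n)$, matching the number of weight lines, so nothing is missed.

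The main obstacle, such as it is, is purely bookkeeping: identifying $T_V G_k(\mathbb{C}^n)$ with $\mathrm{Hom}(V,\mathbb{C}^n/V)$ equivariantly and verifying that the natural chart around $V$ (graphs of homomorphisms $V\to\mathbb{C}^n/V$) is $T$-equivariant, so that the weights read off from $\mathrm{Hom}(V,\mathbb{C}^n/V)$ genuinely are the isotropy weights. Since the paper only needs this as a setup for the real case, I would keep the verification brief and appeal to the well-known equivariant structure of the Grassmannian.
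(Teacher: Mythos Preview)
Your proposal is correct and follows essentially the same approach as the paper: identify fixed points as coordinate subspaces via the sub-representation argument, compute isotropy weights via the equivariant splitting of $T_V G_k(\mathbb{C}^n)\cong \mathrm{Hom}(V,\mathbb{C}^n/V)$, and exhibit the invariant $\mathbb{C}P^1$'s as $\{(\oplus_{i'\in S\setminus\{i\}}\mathbb{C}_{i'})\oplus L \mid L\in \mathbb{P}(\mathbb{C}_i\oplus\mathbb{C}_j)\}$. The paper additionally remarks that each such sphere is fixed by the corank-$1$ subtorus $T_{\alpha_j-\alpha_i}$, while you add the edge-count verification that nothing is missed; both are minor variations on the same argument.
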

\begin{proof}
	$T^n$ acts on $\C^n$ linearly by $(t_1,\ldots,t_n)\cdot (z_1,\ldots,z_n)=(t_1 z_1,\ldots,t_1 z_n)$ and hence induces an action on $G_k(\C^n)$ by mapping every $k$-dimensional subspace $V$ to $t\cdot V$ for each $t\in T^n$. A fixed point $V$ of the $T$-action on $G_k(\C^n)$ is exactly a $k$-dimensional sub-representation of $\C^n= \oplus_{i=1}^n \C_i$. Since $\oplus_{i=1}^n \C_i$ has distinct weights $\alpha_1,\ldots,\alpha_n$, a $k$-dimensional sub-representation is of the form $\oplus_{i\in S} \C_i$ for $S$, a $k$-element subset of $\{1,2,\ldots,n\}$.
	
	To understand the isotropy weights at $\oplus_{i \in S} \C_i \in G_k(\C^n)$, notice that the tangent space of $G_k(\C^n)$ at $\oplus_{i \in S} \C_i$ is $Hom_\C(\oplus_{i \in S} \C_i, \oplus_{j \not \in S} \C_j)\cong (\oplus_{i \in S} \C_i)^* \otimes_\C (\oplus_{j \not \in S} \C_j)\cong\oplus_{i \in S} \oplus_{j \not \in S} (\C_i^* \otimes_\C \C_j)$ with pair-wise independent weights $\{\alpha_j-\alpha_i \mid i \in S \text{ and } j \not \in S\}$.
	
	The finiteness of fixed points and pair-wise independence of isotropy weights at every fixed point verifies that the $T^n$ action on $G_k(\C^n)$ is GKM. Moreover, for a $k$-element subset $S\subseteq \{1,2,\ldots,n\}$ and a weight $\alpha_j-\alpha_i$ with $j\not \in S,\, i \in S$, the $2$-sphere 
	\[
	\big\{(\underset{i'\in S\setminus \{i\}}{\oplus} \C_{i'}) \oplus L \mid L \in \P(\C_i\oplus \C_j) \big\} \cong \C P^1
	\]
	connects the $T$-fixed point $\oplus_{i'\in S} \C_{i'}$ with the $T$-fixed point $\oplus_{i'\in (S\setminus\{i\})\cup \{j\}} \C_{i'}$, and is fixed by the corank-$1$ subtorus torus $T_{\alpha_j-\alpha_i}$ whose Lie algebra is $\textup{Ker}(\alpha_j-\alpha_i)$.
\end{proof}

Since $G_k(\C^n)$ doesn't have odd-degree cells, the canonical $T^n$ action on $G_k(\C^n)$ is equivariantly formal in $\Z$ coefficients and of course in $\Q$ coefficients. Then we can apply the even dimensional GKM theorem to $G_k(\C^n)$ using congruence relations on the Johnson graph $J(n,k)$. 

\begin{thm}[GKM description of complex Grassmannians, \cite{GZ01}]\label{thm:GKMcplxGrass}
	Let $\mathcal{S}$ be the collection of $k$-element subsets of $\{1,2,\ldots,n\}$, then the equivariant cohomology of the $T^n$ action on $G_k(\C^n)$ is
	\[
	H^*_{T}(G_k(\C^n),\Q)=\big\{f:\mathcal{S}\rightarrow \Q[\alpha_1,\ldots,\alpha_n] \mid f_{S} \equiv f_{S'} \mod \alpha_j-\alpha_i \quad \text{for $S,S' \in \mathcal{S}$ with $S\cup\{j\}=S'\cup\{i\}$}\big\}.
	\]
\end{thm}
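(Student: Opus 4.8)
The plan is to combine the three ingredients already assembled in the excerpt: equivariant formality of $G_k(\C^n)$ (which holds over $\Z$, hence over $\Q$, since the Grassmannian has only even-dimensional cells), the explicit description of the $1$-skeleton given in the preceding Proposition, and the even-dimensional GKM Theorem~\ref{thm:EvenGKM}. First I would invoke Theorem~\ref{thm:EvenGKM}, which requires the action to be equivariantly formal and GKM. Equivariant formality is immediate from Theorem~\ref{thm:formal}, since $H^*(G_k(\C^n))$ is concentrated in even degrees, so the Leray--Serre spectral sequence of $G_k(\C^n)\hookrightarrow (G_k(\C^n)\times ET)/T\to BT$ collapses. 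The GKM condition was verified inside the proof of the Proposition: the fixed points $\oplus_{i\in S}\C_i$ are isolated (they are indexed by the finite set $\mathcal{S}$ of $k$-element subsets), and the isotropy weights $\{\alpha_j-\alpha_i\mid i\in S,\ j\notin S\}$ at each such point are pairwise independent.

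Next I would identify the effective GKM graph. By the Proposition, the $2$-spheres in the $1$-skeleton are exactly $\{(\oplus_{i'\in S\setminus\{i\}}\C_{i'})\oplus L\mid L\in\P(\C_i\oplus\C_j)\}\cong\C P^1$, each joining the fixed point indexed by $S$ to the one indexed by $(S\setminus\{i\})\cup\{j\}$ and carrying weight $\alpha_j-\alpha_i$. Writing $S' = (S\setminus\{i\})\cup\{j\}$, the condition "$S,S'$ differ by one element, via removing $i\in S$ and adding $j\notin S$" is precisely the condition $S\cup\{j\} = S'\cup\{i\}$ appearing in the statement; note that $\alpha_j-\alpha_i$ and $\alpha_i-\alpha_j$ agree up to sign, so the congruence $f_S\equiv f_{S'}\bmod{\alpha_j-\alpha_i}$ is well defined and symmetric in $S,S'$. (There are no $\R P^2$ components here, since $G_k(\C^n)$ carries a $T$-invariant complex structure, so the weights have unambiguous signs and the $1$-skeleton is a union of $2$-spheres only; in any case, dotted edges would not contribute congruences.) Thus the graph underlying Theorem~\ref{thm:EvenGKM} is the Johnson graph $J(n,k)$ with the weights just described.

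Feeding this graph into Theorem~\ref{thm:EvenGKM} gives exactly
\[
H^*_T(G_k(\C^n),\Q)=\big\{f:\mathcal{S}\to\Q[\alpha_1,\ldots,\alpha_n]\mid f_S\equiv f_{S'}\bmod{\alpha_j-\alpha_i}\ \text{whenever}\ S\cup\{j\}=S'\cup\{i\}\big\},
\]
which is the asserted description. The argument is essentially a matter of bookkeeping, so I do not expect a serious obstacle; the one point deserving care is the bijective match between the combinatorial edge condition "$S\cup\{j\}=S'\cup\{i\}$" in the statement and the geometric edges of the $1$-skeleton produced in the Proposition, together with checking that each such edge is accounted for exactly once and that the weight attached is the correct $\alpha_j-\alpha_i$ (up to the irrelevant overall sign). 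Everything else — equivariant formality, the GKM property, and the passage from graph to ring — is a direct citation of the theorems recalled above.
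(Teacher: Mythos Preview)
Your proposal is correct and follows exactly the route the paper takes: the paper notes that $G_k(\C^n)$ has no odd-degree cells hence is equivariantly formal, invokes the preceding Proposition on the $1$-skeleton to verify the GKM condition and identify the Johnson graph $J(n,k)$ with its weights, and then applies the even-dimensional GKM Theorem~\ref{thm:EvenGKM}. The paper in fact states the result as a citation to Guillemin--Zara rather than giving an explicit proof, but the surrounding text assembles precisely the ingredients you list.
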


Using Morse theory on graphs, Guillemin and Zara analysed additive basis for equivariant cohomology of GKM manifolds.

\begin{thm}[Canonical basis of complex Grassmannians, \cite{GZ03}]\label{thm:CplxSchub}
	There is a self-indexing Morse function on $\mathcal{S}$
	\[
	\phi: \mathcal{S} \longrightarrow \R : S \longmapsto 2(\sum_{i\in S} i) -k(k+1)
	\]
	and a canonical class $\tau_S \in H^{\phi(S)}_{T}(G_k(\C^n),\Q)$ for each $S\in \mathcal{S}$ such that
	\begin{enumerate}
		\item $\tau_S$ is supported upward, i.e. $\tau_S(S')=0$ if $\phi(S')\leq \phi(S)$
		\item $\tau_S(S)=\prod' (\alpha_j - \alpha_i)$ where the product is taken over the weights at $S$ connecting to $S'$ with $\phi(S')<\phi(S)$
	\end{enumerate}
	Moreover, $\{\tau_S, S\in \mathcal{S}\}$ give a $H^*_{T}(pt,\Q)$-additive basis of $H^*_{T}(G_k(\C^n),\Q)$.
\end{thm}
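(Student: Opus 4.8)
The plan is to read this off the Morse theory of the GKM graph of $G_k(\C^n)$, which by the preceding Proposition is the Johnson graph $J(n,k)$ with edge from $S$ to $(S\setminus\{i\})\cup\{j\}$ weighted by $\alpha_j-\alpha_i$ ($i\in S$, $j\notin S$), proving the three assertions — that $\phi$ is a self-indexing Morse function, that the $\tau_S$ exist and are unique, and that they form a basis — in that order; throughout I work with classes as tuples $(f(S))_{S\in\mathcal{S}}$ in $\bigoplus_S\mathbb{S}\mathfrak{t}^*$ subject to the congruences of Theorem~\ref{thm:GKMcplxGrass}, and I use that $H^*_T(G_k(\C^n),\Q)$ is free over $\mathbb{S}\mathfrak{t}^*$ by equivariant formality. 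First, the properties of $\phi$: if $S'=(S\setminus\{i\})\cup\{j\}$ then $\phi(S')-\phi(S)=2(j-i)\ne 0$, so no edge is horizontal and $\phi$ is a Morse function, the edge being descending at $S$ precisely when $j<i$; writing $S=\{s_1<\dots<s_k\}$, the number of descending edges at $S$ (its Morse index) is $\sum_m\#\{j\notin S:j<s_m\}=\sum_m(s_m-m)=\sum_{i\in S}i-\binom{k+1}{2}=\tfrac12\phi(S)$, which is the self-indexing property, and by the GKM condition established in the Proposition the descending weights $\alpha_j-\alpha_i$ at $S$ are pairwise linearly independent, hence pairwise coprime, so the product of all $\tfrac12\phi(S)$ of them is homogeneous of cohomological degree $\phi(S)$.

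The second ingredient is a divisibility remark that yields uniqueness at once: if $f$ is a homogeneous class of cohomological degree $d$ and $p$ is a vertex of $\operatorname{supp}(f)$ with $\phi(p)$ minimal, then $f$ vanishes at every descending neighbour of $p$, so $f(p)$ is divisible by each descending weight at $p$; since these are pairwise coprime linear forms, $f(p)$ is divisible by their product, whence $d\ge\phi(p)$. Applied to the difference of two classes satisfying (1)--(2), this shows the difference is supported strictly above level $\phi(S)$ while having degree $\phi(S)$, hence is $0$; so $\tau_S$ is unique once it exists.

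For existence I would construct $\tau_S$ by upward induction over the level sets of $\phi$: put $\tau_S(S)=\prod'(\alpha_j-\alpha_i)$ over the descending weights at $S$, put $\tau_S(S'')=0$ for every $S''\ne S$ with $\phi(S'')\le\phi(S)$, and then fill in the remaining vertices one level at a time, at each new vertex $S'$ solving the system of congruences imposed by the (descending) edges from $S'$ to the already-filled vertices. The solvability of such a system modulo pairwise-coprime linear forms is the crux, and the main obstacle: it requires the values already placed at the lower neighbours of $S'$ to be mutually congruent modulo the ideals spanned by pairs of descending weights, and this compatibility is exactly what the ``flip squares'' of the Johnson graph — the GKM connection — supply; one also checks the extension stays within cohomological degree $\phi(S)=2\cdot(\text{index of }S)$. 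Here I would invoke the general Morse-theory-on-GKM-graphs construction of Guillemin--Zara \cite{GZ03}, or, for a self-contained argument, identify $\tau_S$ with the $T$-equivariant class of the Schubert variety whose dense cell lies over $S$ and verify its fixed-point restrictions against (1)--(2).

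Finally, for the basis statement, refine the $\phi$-ordering on $\mathcal{S}$ to a total order; then $(\tau_S(S'))_{S,S'}$ is triangular with diagonal entries $\tau_S(S)=\prod'\ne 0$, so the $\tau_S$ are linearly independent over $\operatorname{Frac}(\mathbb{S}\mathfrak{t}^*)$, hence over $\mathbb{S}\mathfrak{t}^*$. To see they span over $\mathbb{S}\mathfrak{t}^*$, given a class $f$ choose $S_0$ minimal in $\operatorname{supp}(f)$ for the total order; by the divisibility remark $f(S_0)=c\,\tau_{S_0}(S_0)$ with $c\in\mathbb{S}\mathfrak{t}^*$, and $f-c\,\tau_{S_0}$ has strictly larger minimum of support, so induction on the finite total order writes $f$ as an $\mathbb{S}\mathfrak{t}^*$-combination of the $\tau_S$. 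Combining independence and spanning (and cross-checking with $|\mathcal{S}|=\binom nk=\dim_\Q H^*(G_k(\C^n))$), the family $\{\tau_S\}_{S\in\mathcal{S}}$ is an $\mathbb{S}\mathfrak{t}^*$-basis of $H^*_T(G_k(\C^n),\Q)$.
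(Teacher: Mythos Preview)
The paper does not supply its own proof of this theorem: it is stated with attribution to Guillemin--Zara \cite{GZ03}, and the subsequent remark makes explicit that ``in this paper, we only need the existence of canonical classes $\tau_S$'', deferring both the construction and the explicit restriction formulae to \cite{GZ03} and \cite{GT09}. Your proposal therefore goes well beyond what the paper does, and what you have written is essentially a sketch of the Guillemin--Zara Morse-theory-on-graphs argument that the paper is citing.

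Your sketch is sound. The self-indexing computation is correct, the divisibility remark giving uniqueness is correct, and the triangular spanning/independence argument for the basis claim is the standard one (this is the content of \cite[Remark to Thm~2.1]{GZ03} that the paper later quotes for the real case). You are also honest about the one genuine difficulty: the inductive extension in the existence step requires compatibility of the congruences at each new vertex, and you correctly flag that this is where the GKM connection on the Johnson graph (or, alternatively, the identification with equivariant Schubert classes, which the paper notes in the remark after the theorem) does the work. Since you explicitly defer this point to \cite{GZ03} or to the Schubert identification, your argument is not self-contained at exactly the same place the paper's citation is not self-contained, so there is no real gap relative to the paper.
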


\begin{rmk}
	The canonical classes $\tau_S$ are exactly the equivariant Schubert classes via the relation that if $S$ consists of the elements $i_1<i_2<\cdots<i_k$, then the corresponding Schubert symbol is $(i_1-1,i_2-2,\ldots,i_k-k)$.
\end{rmk}

\begin{rmk}
	In this paper, we only need the existence of canonical classes $\tau_S$. The general formula of $\tau_S$ restricted at each fixed point was given by Guillemin\&Zara \cite{GZ03} and simplified by Goldin\&Tolman \cite{GT09}.
\end{rmk}

\subsection{Leray-Borel description of complex Grassmannians}
Besides the equivariant Schubert basis, equivariant characteristic classes and characteristic polynomials on complex Grassmannians will give ring generators and additive basis for their equivariant cohomology rings.

\begin{thm}[Leray-Borel description of complex Grassmannians, see \cite{BT82} pp.\,293 Prop\,23.2]
	For the complex Grassmannians $G_k(\C^n)$, let $c_1,c_2,\ldots,c_k$ and $\bar{c}_1,\bar{c}_2,\ldots,\bar{c}_{n-k}$ be the Chern classes of the canonical bundle on $G_k(\C^n)$ and its complementary bundle respectively, then
	\[
	H^*(G_k(\C^n),\Z)=\frac{\Z[c_1,c_2,\ldots,c_k;\bar{c}_1,\bar{c}_2,\ldots,\bar{c}_{n-k}]}{(1+c_1+c_2+\ldots+c_k)(1+\bar{c}_1+\bar{c}_2+\ldots+\bar{c}_{n-k})=1}.
	\]
\end{thm}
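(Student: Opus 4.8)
The plan is to identify $H^*(G_k(\C^n),\Z)$ with a quotient of a symmetric‑function ring by comparing it, via the tautological flag bundle and the Leray--Hirsch theorem, against the known cohomology of the full flag manifold. Write $S$ for the tautological rank‑$k$ subbundle over $G_k(\C^n)$ and $Q=\underline{\C}^n/S$; the Whitney relation $c(S)c(Q)=1$ makes $c_i\mapsto c_i(S)$, $\bar c_j\mapsto c_j(Q)$ into a well‑defined ring homomorphism $\phi\colon R\to H^*(G_k(\C^n),\Z)$, where $R$ denotes the ring in the statement, and the whole task is to prove that $\phi$ is an isomorphism.

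First I would record the standard model $H^*(Fl(\C^n),\Z)=\Z[x_1,\ldots,x_n]/(e_1,\ldots,e_n)$ for the full flag manifold, obtained by iterating the projective‑bundle formula $H^*(\P(E))=H^*(B)[\xi]/\big(\sum_i\xi^{\mathrm{rk}\,E-i}c_i(E)\big)$, with $x_i$ the first Chern class of the $i$‑th tautological line quotient. Then I consider the fibration $\pi\colon Fl(\C^n)\to G_k(\C^n)$ that remembers only the $k$‑plane; its fibre is $Fl(\C^k)\times Fl(\C^{n-k})$, whose cohomology is generated by the restricted classes $x_i$, so the fibre‑restriction map is surjective and Leray--Hirsch applies. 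This gives that $\pi^*$ is injective and that $H^*(Fl(\C^n),\Z)=\bigoplus_a \pi^*\!\big(H^*(G_k(\C^n),\Z)\big)\cdot x^a$, the sum over those ``staircase'' exponents $a$ whose monomials form a $\Z$‑basis of the fibre cohomology. Pulling back, $S$ acquires a filtration with Chern roots $x_1,\ldots,x_k$ and $Q$ one with $x_{k+1},\ldots,x_n$, whence $\pi^*\phi(c_i)=e_i(x_1,\ldots,x_k)$ and $\pi^*\phi(\bar c_j)=e_j(x_{k+1},\ldots,x_n)$.

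The algebraic half is the classical fact that $\Z[x_1,\ldots,x_n]$ is a free module over $\Lambda:=\Z[x_1,\ldots,x_k]^{S_k}\otimes_\Z\Z[x_{k+1},\ldots,x_n]^{S_{n-k}}=\Z[c_i,\bar c_j]$ on the same staircase monomials $x^a$. Since $e_d(x_1,\ldots,x_n)=\sum_{i+j=d}c_i\bar c_j$, the ideal $(e_1,\ldots,e_n)$ of $\Z[x]$ is generated inside $\Lambda$ by precisely the Whitney relations, so $\Z[x]/(e_1,\ldots,e_n)=\bigoplus_a R\cdot x^a$ with the same $x^a$, and $R$ is a free $\Z$‑module — a direct summand of $H^*(Fl(\C^n),\Z)$ — of graded rank the Gaussian binomial $\binom{n}{k}_q$. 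Now I place the two $x^a$‑decompositions of $H^*(Fl(\C^n),\Z)$ side by side: one has first factor $\pi^*(H^*(G_k))$, the other $\pi^*\phi(R)$, with $\pi^*\phi(R)\subseteq\pi^*(H^*(G_k))$. Expanding $\pi^*y=\pi^*y\cdot x^0$ in the $R$‑basis $\{x^a\}$ and matching it against the (likewise direct) Leray--Hirsch expansion forces $\pi^*y\in\pi^*\phi(R)$, so $\pi^*(H^*(G_k))=\pi^*\phi(R)$; hence $\phi$ is surjective, while injectivity of $\phi$ follows from injectivity of $\pi^*\phi$ (the split inclusion $R\hookrightarrow\Z[x]/(e)$). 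Thus $\phi$ is an isomorphism, and as byproducts one reads off the Poincar\'e polynomial $\binom{n}{k}_q$ and the Schur‑polynomial additive basis $\{s_\lambda:\lambda\subseteq k\times(n-k)\}$.

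The step I expect to be most delicate is this final comparison: over $\Z$, knowing that $\pi^*$ is injective and that $\pi^*\phi(R)$ generates the ``correct'' subring does not suffice, since a full‑rank subgroup of a free abelian group can be proper; one must genuinely exploit that the \emph{same} monomials $x^a$ function simultaneously as a Leray--Hirsch basis over $H^*(G_k(\C^n),\Z)$ and as an algebraic basis of $\Z[x]/(e)$ over $R$. A secondary $\Z$‑coefficient subtlety is the freeness of $\Z[x_1,\ldots,x_n]$ over $\Lambda$ on staircase monomials, which I would deduce from the analogous statement for $\Z[x_1,\ldots,x_n]$ over $\Z[x]^{S_n}=\Z[e_1,\ldots,e_n]$ via the block factorization $\Z[e_1,\ldots,e_n]\subseteq\Lambda\subseteq\Z[x_1,\ldots,x_n]$.
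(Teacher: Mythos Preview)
Your argument is correct and is essentially the classical splitting-principle/Leray--Hirsch proof found in Bott--Tu, which is precisely what the paper cites for this statement: the paper does not give its own proof of this theorem but simply refers to \cite{BT82}, Proposition~23.2. The delicate $\Z$-coefficient step you flag---matching the Leray--Hirsch decomposition of $H^*(Fl(\C^n))$ over $H^*(G_k(\C^n))$ against the algebraic decomposition of $\Z[x]/(e_1,\ldots,e_n)$ over $R$ using the \emph{same} staircase monomials---is indeed the heart of the matter, and your resolution (uniqueness in the Leray--Hirsch expansion together with the containment $\pi^*\phi(R)\subseteq\pi^*H^*(G_k)$) is exactly right.
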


The relation $(1+c_1+c_2+\ldots+c_k)(1+\bar{c}_1+\bar{c}_2+\ldots+\bar{c}_{n-k})=1$ makes either $c_1,c_2,\ldots,c_k$ or $\bar{c}_1,\bar{c}_2,\ldots,\bar{c}_{n-k}$ as sets of ring generators of the cohomology $H^*(G_k(\C^n),\Z)$. Certain monomials of $c_1,c_2,\ldots,c_k$ actually give an additive basis of the cohomology $H^*(G_k(\C^n),\Z)$, stated by Carrell \cite{Ca78} for complex Grassmannians in $\C$ coefficients as a result of ``standard combinatorial reasoning''.  Later, details of proof were supplied by Jaworowski \cite{Ja89} for real Grassmannians in $\Z/2$ coefficients which can be adapted to complex Grassmannians in $\Z$ coefficients.

\begin{thm}[Characteristic basis of complex Grassmannians, \cite{Ca78, Ja89}]
	The set of monomials $c_1^{r_1}c_2^{r_2}\cdots c_k^{r_k}$ of cohomological degree $2d=\sum_{i=1}^{k} 2ir_i$ satisfying the condition $\sum_{i=1}^{k} r_i \leq n-k$ forms an additive basis for $H^{2d}(G_k(\C^n),\Z),0\leq d\leq k(n-k)$.
\end{thm}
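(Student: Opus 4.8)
The plan is to realize the proposed monomials inside the Leray--Borel presentation and match them against the Schubert classes by a unitriangular change of basis. Via the splitting principle write $\Lambda_k=\Z[c_1,\dots,c_k]=\Z[y_1,\dots,y_k]^{S_k}$ with $c_i=e_i(y_1,\dots,y_k)$. The Whitney relation $(1+c_1+\cdots+c_k)(1+\bar c_1+\cdots+\bar c_{n-k})=1$ forces $\bar c_j=0$ for $j>n-k$, and rewriting those classes in the $c_i$ yields the familiar presentation $H^*(G_k(\C^n),\Z)=\Lambda_k/(h_{n-k+1},\dots,h_n)$, where $h_m=h_m(y_1,\dots,y_k)=s_{(m)}$ is the complete homogeneous symmetric polynomial (equivalently $(-1)^m\bar c_m$ expressed in the $c_i$). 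It then suffices to produce, in each degree, an invertible integer matrix expressing the monomials $c_1^{r_1}\cdots c_k^{r_k}$ with $\sum_i r_i\le n-k$ in terms of a known $\Z$-basis of this quotient.

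First I would pin down the relevant basis. From $h_m=e_1h_{m-1}-e_2h_{m-2}+\cdots+(-1)^{k-1}e_kh_{m-k}$ for $m>k$ one gets inductively $h_m\in(h_{n-k+1},\dots,h_n)$ for every $m>n-k$; inserting this into the Jacobi--Trudi determinant $s_\lambda=\det(h_{\lambda_i-i+j})$ and expanding along the first row shows $s_\lambda\in(h_{n-k+1},\dots,h_n)$ whenever $\lambda_1>n-k$. Conversely, the Littlewood--Richardson rule gives $f\cdot s_\lambda\in\operatorname{span}\{s_\nu:\nu\supseteq\lambda\}$, so $(h_{n-k+1},\dots,h_n)=(s_{(n-k+1)},\dots,s_{(n)})$ is precisely the $\Z$-span of $\{s_\nu:\nu_1>n-k\}$. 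Since $\{s_\nu:\ell(\nu)\le k\}$ is a $\Z$-basis of $\Lambda_k$, the images of the Schur polynomials $s_\nu(y_1,\dots,y_k)$ with $\nu$ inside the $k\times(n-k)$ rectangle form a $\Z$-basis of $H^*(G_k(\C^n),\Z)$, with $s_\nu$ in cohomological degree $2|\nu|$ (these are, up to sign and transposition, the Schubert classes). This also re-derives $\dim_\Q H^*(G_k(\C^n))=\binom nk$ without invoking the cell structure.

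Then comes the triangularity. Given a monomial $c_1^{r_1}\cdots c_k^{r_k}$, let $\mu$ be the partition having $r_i$ parts equal to $i$; the monomial is $e_\mu(y_1,\dots,y_k)$, of degree $|\mu|=\sum_i ir_i=d$. The standard expansion $e_\mu=\sum_{\nu\trianglelefteq\mu'}K_{\nu'\mu}\,s_\nu$ (from $h_\mu=\sum_{\nu\trianglerighteq\mu}K_{\nu\mu}s_\nu$ via the $\omega$ involution), with $K_{\mu\mu}=1$ and all Kostka numbers $\ge 0$, specializes in $k$ variables to $c_1^{r_1}\cdots c_k^{r_k}=s_{\mu'}+\sum_{\nu\lhd\mu'}K_{\nu'\mu}\,s_\nu$, the terms with $\ell(\nu)>k$ vanishing. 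The hypothesis $\sum_i r_i\le n-k$ says exactly that $\mu'_1=\ell(\mu)=\sum_i r_i\le n-k$, while $\ell(\mu')=\mu_1\le k$; and every surviving $\nu$ satisfies $\nu_1\le\mu'_1\le n-k$ and $\ell(\nu)\le k$, so all $\nu$ appearing fit in the rectangle. Since $r\mapsto\mu\mapsto\mu'$ is a bijection from the admissible exponent vectors onto the partitions in the $k\times(n-k)$ rectangle, and $|\mu'|=|\mu|=d$, the admissible monomials of degree $2d$ are written in the basis $\{s_\nu\}$ of degree $2d$ by a matrix unitriangular for any linear refinement of the dominance order; such a matrix has determinant $\pm1$, so the admissible monomials form a $\Z$-basis of $H^{2d}(G_k(\C^n),\Z)$.

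There is no single hard step here; the content is in correctly invoking the symmetric-function dictionary. The point that most deserves care is identifying the defining ideal $(h_{n-k+1},\dots,h_n)$ with the span of $\{s_\nu:\nu_1>n-k\}$, which is what fixes the Schubert basis integrally; after that, $\sum_i r_i\le n-k$ is exactly the condition keeping the leading partition $\mu'$ inside the rectangle, and Kostka positivity with unit leading coefficient finishes it. Alternatively one can follow Jaworowski's route and show directly that the admissible monomials span, using the relations $h_m=0$ to reduce any monomial with $\sum_i r_i>n-k$; combined with the count $\#\{r:\sum_i r_i\le n-k,\ \sum_i ir_i=d\}=\dim H^{2d}$ (a Gaussian-binomial identity, obtained by introducing the slack variable $r_0=n-k-\sum_i r_i$), spanning forces a basis, since a surjective endomorphism of a free $\Z$-module of finite rank is an isomorphism.
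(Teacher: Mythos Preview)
Your proof is correct. Note, however, that the paper does not actually supply a proof of this statement: it is cited from Carrell and Jaworowski, with only the remark that Jaworowski's $\Z/2$ argument for real Grassmannians ``can be adapted to complex Grassmannians in $\Z$ coefficients.'' So there is no paper proof to compare against line by line.

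Your argument via the symmetric-function dictionary is a clean way to get the result over $\Z$: the identification of the defining ideal with $\operatorname{span}\{s_\nu:\nu_1>n-k\}$ pins down the Schur (Schubert) basis integrally, and the expansion $e_\mu=\sum_{\nu\trianglelefteq\mu'}K_{\nu'\mu}\,s_\nu$ with $K_{\mu\mu}=1$ gives exactly the unitriangular change of basis you need once you observe $\mu'_1=\sum_i r_i\le n-k$ forces every surviving $\nu$ into the rectangle. The alternative you sketch at the end---reducing monomials with $\sum_i r_i>n-k$ using the relations $h_m=0$ and then matching the count against the Gaussian binomial---is essentially Jaworowski's route and is what the paper has in mind by ``standard combinatorial reasoning.'' Your Kostka argument has the advantage of giving the integrality (determinant $\pm1$) directly, whereas the spanning-plus-counting approach needs the extra observation that a surjective endomorphism of a finite-rank free $\Z$-module is an isomorphism, which you correctly include.
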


Notice that the cohomology $H^*(G_k(\C^n),\Z)$ does not have odd-degree elements, the Leray-Serre sequence of $G_k(\C^n)\hookrightarrow ET\times_T G_k(\C^n) \rightarrow BT$ collapses at $E_2=H^*(BT,\Z)\otimes_\Z H^*(G_k(\C^n),\Z)$ with $H^*_T(G_k(\C^n),\Z)\cong H^*(BT,\Z)\otimes_\Z H^*(G_k(\C^n),\Z)$ as $H^*(BT,\Z)$-modules. Therefore, the action $T^n \curvearrowright G_k(\C^n)$ is equivariantly formal in $\Z$ coefficients, and of course in $\Q$ coefficients. However, $H^*_T(G_k(\C^n),\Z)\cong H^*(BT,\Z)\otimes_\Z H^*(G_k(\C^n),\Z)$ is only a $H^*(BT,\Z)$-module isomorphism which 
\begin{enumerate}
	\item neither gives the $H^*(BT,\Z)$-algebra structure of $H^*_T(G_k(\C^n),\Z)$.
	\item nor specifies a map $H^*_T(G_k(\C^n),\Z) \rightarrow H^*(BT,\Z)\otimes_\Z H^*(G_k(\C^n),\Z)$.
\end{enumerate}

The above two problems can be resolved using the equivariant version of the Leray-Borel description which is usually given for a compact connected Lie group $G$ with maximal torus $T$ and a closed connected subgroup $H$ containing $T$ as 
\[
H^*_T(G/H)=\mathbb{S}\mathfrak{t}^*\otimes_{(\mathbb{S}\mathfrak{t}^*)^{W_G}} (\mathbb{S}\mathfrak{t}^*)^{W_H}
\]
where $W_G$ and $W_H$ are the Weyl groups of $G$ and $H$. For the complex Grassmannian $G_k(\C^n)=U(n)/(U(k)\times U(n-k))$, we have the Weyl group $W_G = S_n$, the symmetric group of $n$ elements, and the Weyl group $W_H = S_k \times S_{n-k}$. Under these Weyl group actions, it is well known that the invariant elements in $\mathbb{S}\mathfrak{t}^*$ are symmetric polynomials, or topologically the equivariant Chern classes:

\begin{thm}[Equivariant Leray-Borel description of complex Grassmannians, \cite{Tu10} pp.\,21]
	For the complex Grassmannian $G_k(\C^n)=U(n)/(U(k)\times U(n-k))$, let $T^n$ be the maximal torus of $U(n)$ which acts on the left of $G_k(\C^n)$, and $\alpha_1,\alpha_2,\ldots,\alpha_n$ be the integral basis for its Lie dual algebra $\mathfrak{t}^*$, also let $c^T_1,c^T_2,\ldots,c^T_k$ and $\bar{c}^T_1,\bar{c}^T_2,\ldots,\bar{c}^T_{n-k}$ be the equivariant Chern classes of the canonical bundle on $G_k(\C^n)$ and its complementary bundle respectively, then
	\[
	H^*_T(G_k(\C^n),\Z)=\frac{\Z[\alpha_1,\alpha_2,\ldots,\alpha_n][c^T_1,c^T_2,\ldots,c^T_k;\bar{c}^T_1,\bar{c}^T_2,\ldots,\bar{c}^T_{n-k}]}{c^T\bar{c}^T = \prod_{i=1}^{n}(1+\alpha_i)}.
	\]
\end{thm}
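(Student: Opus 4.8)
The plan is to realize the presented ring as the image of the obvious characteristic-class map out of a polynomial ring, and then to check that this map is an isomorphism by reducing modulo the augmentation ideal of $\mathbb{S}\mathfrak{t}^*$ and invoking equivariant formality. Let $\gamma$ be the canonical bundle over $G_k(\C^n)$ and $\gamma^\perp$ its complementary bundle; both are $T^n$-equivariant and $\gamma\oplus\gamma^\perp$ is the trivial bundle $G_k(\C^n)\times\C^n$ equipped with the fibrewise action $(t_1,\dots,t_n)\cdot(z_1,\dots,z_n)=(t_1z_1,\dots,t_nz_n)$, whose equivariant total Chern class is $\prod_{i=1}^n(1+\alpha_i)$. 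By the equivariant Whitney product formula, $c^T(\gamma)\,c^T(\gamma^\perp)=\prod_{i=1}^n(1+\alpha_i)$ in $H^*_T(G_k(\C^n),\Z)$, so the assignment $\alpha_i\mapsto\alpha_i$, $c^T_j\mapsto c^T_j(\gamma)$, $\bar c^T_j\mapsto c^T_j(\gamma^\perp)$ descends to a graded $\mathbb{S}\mathfrak{t}^*$-algebra homomorphism $\Phi$ from the ring $R$ on the right-hand side to $H^*_T(G_k(\C^n),\Z)$. It then remains to prove that $\Phi$ is bijective.

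The next step is bookkeeping on the algebra side. Reading the defining relation $c^T\bar c^T=\prod_{i=1}^n(1+\alpha_i)$ degree by degree, one solves recursively for $\bar c^T_1,\dots,\bar c^T_{n-k}$ as explicit polynomials in $c^T_1,\dots,c^T_k$ and the $\alpha_i$, after which the relations in degrees $n-k+1,\dots,n$ become $k$ relations purely among $c^T_1,\dots,c^T_k$ over $\mathbb{S}\mathfrak{t}^*$, each having, up to its $\alpha$-part and lower monomials, the same leading term as in the classical case. Carrell's ``standard combinatorial reasoning'', carried out in detail by Jaworowski and recalled in the characteristic-basis theorem above, shows that over $\Z$ (i.e.\ after setting all $\alpha_i=0$) these relations reduce every monomial in $c_1,\dots,c_k$ to a $\Z$-combination of the characteristic monomials $m_\lambda=c_1^{r_1}\cdots c_k^{r_k}$ with $\sum_i r_i\le n-k$. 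The same reduction runs verbatim over $\mathbb{S}\mathfrak{t}^*$, since the extra $\alpha$-terms only strictly decrease the relevant combinatorial statistic; hence the monomials $m_\lambda$, now read in $R$, span $R$ as an $\mathbb{S}\mathfrak{t}^*$-module.

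Finally the argument is closed using equivariant formality, which was established just above for the $T^n$-action on $G_k(\C^n)$ with $\Z$ coefficients. Restriction to a fibre sends $c^T_j(\gamma)$ to $c_j(\gamma)$, so $\Phi(m_\lambda)$ restricts to the ordinary characteristic monomial $m_\lambda$, and these form a $\Z$-basis of $H^*(G_k(\C^n),\Z)$ by the characteristic-basis theorem. Since $H^*(G_k(\C^n),\Z)$ is torsion-free, the integral Leray--Hirsch principle (the $\Z$-coefficient case noted in the remark following Theorem~\ref{thm:formal}) upgrades $\{\Phi(m_\lambda)\}$ to an $\mathbb{S}\mathfrak{t}^*$-basis of $H^*_T(G_k(\C^n),\Z)$. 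As $\Phi$ sends the spanning set $\{m_\lambda\}$ of $R$ onto this basis, the $m_\lambda$ must in fact be an $\mathbb{S}\mathfrak{t}^*$-basis of $R$ and $\Phi$ is an isomorphism of $\mathbb{S}\mathfrak{t}^*$-modules, hence of rings.

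I expect the only real obstacle to be the middle step: verifying that the classical reduction of monomials against the elimination relations genuinely closes up over the polynomial coefficient ring $\mathbb{S}\mathfrak{t}^*$, that is, that introducing the $\alpha_i$-corrections spoils neither the termination of the reduction nor the monic leading terms that make the characteristic monomials a free basis. Granting the ordinary Leray--Borel description and equivariant formality, the rest is formal.
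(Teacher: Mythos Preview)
The paper does not actually supply a proof of this theorem: it is stated with a citation to \cite{Tu10}, immediately after the paragraph that explains how it is read off from the general equal-rank Leray--Borel formula $H^*_T(G/H)=\mathbb{S}\mathfrak{t}^*\otimes_{(\mathbb{S}\mathfrak{t}^*)^{W_G}} (\mathbb{S}\mathfrak{t}^*)^{W_H}$ by identifying the $S_n$- and $S_k\times S_{n-k}$-invariants in $\mathbb{S}\mathfrak{t}^*$ with elementary symmetric polynomials, i.e.\ with the equivariant Chern classes. Your route is genuinely different and more self-contained: rather than invoking the Weyl-invariant description (which over $\Z$ already needs some justification), you build the comparison map $\Phi$ directly from the Whitney formula and close the argument by lifting the ordinary characteristic basis through equivariant formality. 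This yields the integral statement without passing through the general $\Q$-coefficient homogeneous-space machinery, at the cost of needing the ordinary Leray--Borel description and characteristic basis as input.

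Your worry about the middle step is legitimate, but the gap closes cleanly and you need not track a rewriting algorithm. Observe that $R/(\alpha_1,\dots,\alpha_n)R$ is exactly the ordinary Leray--Borel ring, which by the characteristic-basis theorem has $\{m_\lambda\}$ as a $\Z$-basis. Since each graded piece $R_d$ is a finitely generated $\Z$-module, induct on $d$: any $x\in R_d$ maps to a $\Z$-combination of the degree-$d$ monomials $m_\lambda$ in $R_d/\sum_i\alpha_i R_{d-2}$, so $x-\sum a_\lambda m_\lambda\in\sum_i\alpha_i R_{d-2}$, and the inductive hypothesis finishes the spanning claim. This is the graded Nakayama lemma in disguise, and it replaces the need to check that the $\alpha$-corrections preserve termination and monic leading terms. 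With that established, your final paragraph is correct as written.
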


Since the equivariant Chern classes $c^T_1,c^T_2,\ldots,c^T_k$ and $\bar{c}^T_1,\bar{c}^T_2,\ldots,\bar{c}^T_{n-k}$ lift the ordinary Chern classes $c_1,c_2,\ldots,c_k$ and $\bar{c}_1,\bar{c}_2,\ldots,\bar{c}_{n-k}$, we get

\begin{thm}[Equivariant characteristic basis of complex Grassmannians]
	The set of monomials $(c^T_1)^{r_1}(c^T_2)^{r_2}\cdots (c^T_k)^{r_k}$ satisfying the condition $\sum_{i=1}^{k} r_i \leq n-k$ form an additive $H^*_T(pt)$-basis for $H_T^*(G_k(\C^n),\Z)$.
\end{thm}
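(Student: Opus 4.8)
The plan is to deduce this directly from the ordinary characteristic basis theorem together with the equivariant formality of $T^n\curvearrowright G_k(\C^n)$, via the lifting criterion for additive bases. First I would recall that this action is equivariantly formal in $\Z$ coefficients: since $H^*(G_k(\C^n),\Z)$ is concentrated in even degrees and free over $\Z$, the Leray--Serre spectral sequence of $G_k(\C^n)\hookrightarrow ET\times_T G_k(\C^n)\rightarrow BT$ collapses at $E_2$, so we are in the situation of Theorem~\ref{thm:formal}, in particular condition~(5): any $\Z$-basis of $H^*(G_k(\C^n),\Z)$ lifts to an $H^*_T(pt,\Z)$-basis of $H^*_T(G_k(\C^n),\Z)$, where ``lift'' means a preimage under the fibre restriction $H^*_T(G_k(\C^n),\Z)\rightarrow H^*(G_k(\C^n),\Z)$. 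The Leray--Hirsch argument behind condition~(5) is valid over $\Z$ precisely because $H^*(G_k(\C^n),\Z)$ is $\Z$-free.

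Next I would identify the images of the candidate monomials under the fibre restriction. The fibre inclusion $G_k(\C^n)\hookrightarrow ET\times_T G_k(\C^n)$ pulls the bundle $(ET\times\gamma)/T$ back to the canonical bundle $\gamma$ itself, so by naturality of Chern classes the equivariant Chern class $c^T_i=c_i\big((ET\times\gamma)/T\big)$ restricts to the ordinary Chern class $c_i$; hence the monomial $(c^T_1)^{r_1}\cdots(c^T_k)^{r_k}$ restricts to $c_1^{r_1}\cdots c_k^{r_k}$, and moreover has the same cohomological degree $\sum_{i=1}^k 2ir_i$. Thus the equivariant monomials appearing in the statement, those with $\sum_{i=1}^k r_i\leq n-k$, are exactly lifts of the corresponding ordinary monomials.

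Finally I would invoke the ordinary characteristic basis theorem (Carrell--Jaworowski, \cite{Ca78, Ja89}), which says those ordinary monomials form a $\Z$-basis of $H^*(G_k(\C^n),\Z)$. Combining this with the lifting criterion from the first step gives that the equivariant monomials $(c^T_1)^{r_1}\cdots(c^T_k)^{r_k}$ with $\sum_{i=1}^k r_i\leq n-k$ form an $H^*_T(pt,\Z)$-basis of $H^*_T(G_k(\C^n),\Z)$, as claimed.

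The argument is short, and the only places demanding care are at the interface of the $\Z$-coefficient statements: one must check that the Leray--Hirsch/lifting step genuinely holds over $\Z$ (it does, by $\Z$-freeness and even-degree concentration) and that naturality of the equivariant Chern classes is being applied with the normalization $c_1(\gamma_i\to BT)=\alpha_i$ fixed earlier. Neither is a real obstacle. One could alternatively bypass the lifting criterion and argue by the graded Nakayama lemma applied to the finitely generated graded $H^*_T(pt,\Z)$-module $H^*_T(G_k(\C^n),\Z)$, using that the monomials span after setting all $\alpha_i=0$ and that the number of monomials matches $\dim_\Q H^*(G_k(\C^n),\Q)$; this is the same proof phrased differently, and I expect the write-up to be essentially a paragraph.
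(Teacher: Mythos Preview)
Your proposal is correct and follows exactly the paper's approach: combine the ordinary characteristic basis (Carrell--Jaworowski) with the lifting criterion~(5) of Theorem~\ref{thm:formal}. The paper's proof is a single sentence to this effect; your additional remarks on why equivariant formality holds over $\Z$ and why $c^T_i$ restricts to $c_i$ are the natural elaborations of that sentence.
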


\begin{proof}
	Combine the ordinary characteristic basis with the equivalence (5) of Theorem \ref{thm:formal}.
\end{proof}

\subsection{Relations between the Leray-Borel and GKM descriptions}\label{subsec:BGKM}
Since the characteristic monomials $(c^T)^I=(c_1^T)^{i_1}\cdots(c_k^T)^{i_k},\,\sum_{j=1}^{k} i_j \leq n-k$ in Leray-Borel description and the canonical classes $\tau_{S}$ in GKM description are both basis for the free $\Q[\alpha_1,\ldots,\alpha_n]$-module $H^*_{T}(G_k(\C^n),\Q)$, there will be transformations $K,\bar{K}$ between them such that
\begin{align*}
(c^T)^I &= \sum_S K^I_S \tau_S\\
\tau_S &= \sum_I \bar{K}_I^S (c^T)^I
\end{align*}
where $K^I_S,\bar{K}_I^S \in \Q[\alpha_1,\ldots,\alpha_n]$.

\begin{rmk}
	The idea of considering the transformations between Schubert classes and characteristic classes dates back to Bernstein, Gelfand and Gelfand \cite{BGG73}. In this paper, we only need the existence of the transformations $K,\bar{K}$. For the complete flag manifold $Fl(\C^n)$, Kaji \cite{Ka} gave explicit algorithms on how to decide the polynomials $K^I_S,\bar{K}_I^S$. It would also be interesting to know what the $K^I_S,\bar{K}_I^S$ explicitly are for complex Grassmannians.
\end{rmk}

The Littlewood-Richardson rule for equivariant Schubert classes is that there are polynomials $N_{S,S'}^{S''}\in \Q[\alpha_1,\ldots,\alpha_n]$ (see Knutson\&Tao \cite{KT03}) for the multiplication of Schubert classes
\[
\tau_{S}\tau_{S'} =\sum_{S''}N_{S,S'}^{S''}\tau_{S''}.
\]

On the other hand, the multiplication of the equivariant characteristic classes can be localized. We can express the total equivariant Chern classes $c^T,\bar{c}^T$ of the canonical bundle $\gamma$ and its complementary bundle $\bar{\gamma}$ in GKM description at each fixed point $\oplus_{i\in S} \C_i \in G_k(\C^n)$. Note that the canonical bundle $\gamma$, complementary bundle $\bar{\gamma}$ and tangent bundle $TG_k(\C^n)$ restricted at $\oplus_{i\in S} \C_i \in G_k(\C^n)$ for a $k$-element subset $S \subset \{1,\ldots,n\}$ are the vector spaces $\oplus_{i\in S} \C_i$, $\oplus_{j\not\in S} \C_j $ and $\oplus_{i \in S} \oplus_{j \not \in S} (\C_i^* \otimes_\C \C_j)$ respectively, we get
\begin{align*}
c^T|_S &= c^T(\gamma|_S)=\prod_{i\in S} (1+\alpha_i)\\
\bar{c}^T|_S &= c^T(\bar{\gamma}|_S)=\prod_{j\not\in S} (1+\alpha_j)\\
e^T|_S &=e^T(T_SG_k(\C^n))=\prod_{i\in S}\prod_{j\not\in S} (\alpha_j - \alpha_i).
\end{align*}
Since $\gamma\oplus \bar{\gamma} = \sum_{i=1}^{n}\C_i$, this also shows why there is the relation $c^T\bar{c}^T = \prod_{i=1}^{n}(1+\alpha_i)$. 

If we denote $e_l(x_1,\ldots,x_m)$ as the $l$-th elementary symmetric polynomial in variables $x_1,\ldots,x_m$, then $c^T_l|_S = e_l(\alpha_{i\in S}), \bar{c}^T_l|_S = e_l(\alpha_{j\not\in S})$.
\begin{thm}[Equivariant Chern numbers of complex Grassmannians, \cite{Tu10} pp.\,21 Prop\,23]\label{thm:EquivChernNum}
	 Using the ABBV localization formula \ref{ABBV}, equivariant Chern numbers of complex Grassmannians can be given as
	\begin{align*}
	\int_{G_k(\C^n)} (c^T)^I 
	& = \sum_{S} \frac{((c_1^T)^{i_1}\cdots(c_k^T)^{i_k})|_S}{e^T_S} \\
	& = \sum_{S} \frac{e^{i_1}_1(\alpha_{i\in S})\cdots e^{i_k}_k(\alpha_{i\in S})}{\prod_{i\in S}\prod_{j\not\in S} (\alpha_j - \alpha_i)}\in \Q[\alpha_1,\ldots,\alpha_n].
	\end{align*}
	where the sum is taken for all $k$-element subsets $S \subset \{1,\ldots,n\}$. When the cohomological degree of a characteristic polynomial matches with the dimension of a Grassmannian, then its equivariant integration results in a constant, i.e. an ordinary Chern number. 
\end{thm}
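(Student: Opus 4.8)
The plan is to apply the ABBV localization formula of Theorem~\ref{ABBV} to the oriented manifold $M=G_k(\C^n)$ with its left $T^n$-action. First I would record that $G_k(\C^n)$ is a compact complex manifold, hence canonically oriented, and that its $T^n$-invariant complex structure pins down the isotropy weights without sign ambiguity; by the description of the fixed points recalled above, $M^T$ is the finite set of points $\oplus_{i\in S}\C_i$ ranging over the $k$-element subsets $S\subset\{1,\dots,n\}$. Each component $C$ of $M^T$ is thus a single point, so $\int_C$ is simply restriction to that point, and the normal bundle $NC$ coincides with the tangent space $T_{\oplus_{i\in S}\C_i}G_k(\C^n)$.

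The second step is to feed in the restriction data recorded just before the statement. There $c^T_l|_S=e_l(\alpha_{i\in S})$, so $(c^T)^I|_S=e_1^{i_1}(\alpha_{i\in S})\cdots e_k^{i_k}(\alpha_{i\in S})$; and since $T_{\oplus_{i\in S}\C_i}G_k(\C^n)=\oplus_{i\in S}\oplus_{j\notin S}(\C_i^*\otimes_\C\C_j)$ has weights $\{\alpha_j-\alpha_i\mid i\in S,\ j\notin S\}$, its equivariant Euler class is $e^T_S=\prod_{i\in S}\prod_{j\notin S}(\alpha_j-\alpha_i)$, a non-zero-divisor in $\Q[\alpha_1,\dots,\alpha_n]$ and hence invertible in the localization where the ABBV formula naturally lives. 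Substituting these into Theorem~\ref{ABBV} yields precisely the two displayed expressions.

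It then remains to note two formal points. First, although each summand is only a rational function in the $\alpha_i$, the sum is a genuine element of $\Q[\alpha_1,\dots,\alpha_n]$; this is automatic, since the left-hand side $\int_{G_k(\C^n)}(c^T)^I$ is by definition the equivariant pushforward to a point and so takes values in $H^*_T(pt)=\Q[\alpha_1,\dots,\alpha_n]$. Second, $\int_{G_k(\C^n)}$ lowers cohomological degree by $\dim_\R G_k(\C^n)=2k(n-k)$, so when $(c^T)^I$ has degree $2k(n-k)$ the integral lands in $H^0_T(pt)=\Q$ and is a constant, and that constant equals the ordinary Chern number $\int_{G_k(\C^n)}c^I$: the restriction-to-fibre map of Theorem~\ref{thm:formal}(4) carries $c^T$ to $c$ and, in top degree, intertwines equivariant with ordinary integration (equivalently, the constant is unchanged by setting all $\alpha_i=0$). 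I do not expect a genuine obstacle here---this is the standard ABBV bookkeeping, and all of its inputs were assembled in the present subsection---so the only care needed is to ensure that the normal spaces carry the orientations induced by the complex structure, which they do.
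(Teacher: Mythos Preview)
Your proposal is correct and matches the paper's approach exactly: the paper does not supply a separate proof for this theorem, treating it as an immediate application of the ABBV formula (Theorem~\ref{ABBV}) with the localized Chern and Euler data $c^T_l|_S=e_l(\alpha_{i\in S})$ and $e^T|_S=\prod_{i\in S}\prod_{j\notin S}(\alpha_j-\alpha_i)$ assembled in the paragraph immediately preceding the statement. Your write-up simply spells out the standard bookkeeping (isolated fixed points, complex orientation, degree count) that the paper leaves implicit.
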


\begin{rmk}
	By substituting any $\alpha_i=a_i \in \R$ such that $a_i\not =0, a_i \not = a_j$ into the above fraction, we can evaluate the equivariant Chern numbers. For example, a good choice will be $\alpha_i=i,\forall i$. If the cohomological degree of a characteristic polynomial does not match with the dimension of a Grassmannian, then such evaluation will be zero. The interesting case is when the degree matches the dimension.
\end{rmk}

\begin{cor}\label{thm:OrdChernNum}
	When the cohomological degree of a characteristic polynomial matches with the dimension of a Grassmannian, we then get a formula for the ordinary Chern numbers :
	\[
	\int_{G_k(\C^n)} c^I =  \sum_{S} \frac{e^{i_1}_1(S)\cdots e^{i_k}_k(S)}{\prod_{i\in S}\prod_{j\not\in S} (j - i)}\in \Q
	\]
	where the sum is taken for all $k$-element subsets $S \subset \{1,\ldots,n\}$.
\end{cor}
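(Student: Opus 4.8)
The plan is to read off Corollary \ref{thm:OrdChernNum} from Theorem \ref{thm:EquivChernNum} by a degree count followed by a specialization of the polynomial variables $\alpha_1,\ldots,\alpha_n$, so the argument is essentially bookkeeping. First I would recall that Theorem \ref{thm:EquivChernNum} already establishes the identity
\[
\int_{G_k(\C^n)} (c^T)^I = \sum_{S} \frac{e_1^{i_1}(\alpha_{i\in S})\cdots e_k^{i_k}(\alpha_{i\in S})}{\prod_{i\in S}\prod_{j\not\in S}(\alpha_j-\alpha_i)}
\]
and, crucially, that the right-hand side --- a priori only a rational function with poles along the hyperplanes $\alpha_i=\alpha_j$ --- in fact lies in $\Q[\alpha_1,\ldots,\alpha_n]$; this is precisely the output of the ABBV formula (Theorem \ref{ABBV}) applied to $\omega=(c^T)^I$ together with the restrictions $c^T_l|_S=e_l(\alpha_{i\in S})$ and $e^T_S=\prod_{i\in S}\prod_{j\notin S}(\alpha_j-\alpha_i)$ computed in Subsection \ref{subsec:BGKM}.

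Next I would impose the hypothesis of the corollary. The cohomological degree of $(c^T)^I=(c^T_1)^{i_1}\cdots(c^T_k)^{i_k}$ equals $2\sum_{l=1}^{k}l\,i_l$, which coincides with $\dim_\R G_k(\C^n)=2k(n-k)$ exactly when $\sum_l l\,i_l=k(n-k)$. Under this matching-degree assumption, equivariant integration lowers total degree by $2k(n-k)$, so $\int_{G_k(\C^n)}(c^T)^I$ lies in $H^0_T(pt)=\Q$: the polynomial of the first display is a constant. To identify this constant with the ordinary Chern number I would use that $T^n\curvearrowright G_k(\C^n)$ is equivariantly formal, so the fibre-restriction homomorphism $H^*_T(G_k(\C^n))\to H^*(G_k(\C^n))$ is defined and surjective (Theorem \ref{thm:formal}), sends each $c^T_l$ to $c_l$, and is compatible with integration of top-degree classes; since $\int_{G_k(\C^n)}(c^T)^I$ is already a constant it is sent to itself, and this image is $\int_{G_k(\C^n)}c^I$.

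Finally, because a constant polynomial takes the same value at every point of $\mathfrak t^*$, I may evaluate the first display at $\alpha_i=i$ for all $i$. This substitution is legitimate term-by-term: for any $k$-element subset $S$ the elements of $S$ and of its complement are distinct, hence every denominator $\prod_{i\in S}\prod_{j\notin S}(j-i)$ is a nonzero integer. Performing the substitution gives
\[
\int_{G_k(\C^n)} c^I = \sum_{S}\frac{e_1^{i_1}(S)\cdots e_k^{i_k}(S)}{\prod_{i\in S}\prod_{j\notin S}(j-i)},
\]
which is the asserted formula.

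There is no serious obstacle here; the one point demanding care is the order of the two main moves. One must invoke ABBV first, to know that the sum of rational functions collapses to a polynomial (indeed a constant under the degree hypothesis), and only afterwards substitute $\alpha_i=i$ --- termwise evaluation of a sum of functions with poles is meaningless until that global polynomiality is in hand. Everything else is a direct application of results already in the excerpt.
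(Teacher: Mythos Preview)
Your argument is correct and follows essentially the same approach as the paper: the corollary is drawn directly from Theorem~\ref{thm:EquivChernNum} by noting that in matching degree the equivariant integral is a constant equal to the ordinary Chern number, and then specializing $\alpha_i=i$ as suggested in the remark preceding the corollary. Your added care about the order of operations (ABBV polynomiality before termwise substitution) and the use of equivariant formality to justify the passage from $\int(c^T)^I$ to $\int c^I$ make explicit what the paper leaves implicit.
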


\begin{rmk}
	The above characteristic numbers are with respect to the characteristic classes of the canonical bundle and complementary bundle, not the tangent bundle. However,
	\[
	c^T(T(G_k(\C^n)))|_S=c^T(\oplus_{i\in S} \oplus_{j\not\in S} (\C_i^* \otimes \C_j))=\prod_{i\in S}\prod_{j\not\in S} (1+\alpha_j - \alpha_i).
	\]
	We can also use the ABBV formula to calculate the equivariant (and ordinary) characteristic numbers of the tangent bundle. 
\end{rmk}

\begin{rmk}
	The equivariant Chern classes $(c^T)^I$ are integral. Moreover, the canonical classes $\tau_S$ are actually the equivariant Schubert classes, hence also integral. Therefore, the coefficients $N,K,\bar{K}$ and characteristic numbers $\int (c^T)^I,\int c^I$ are all integral.
\end{rmk}

\vskip 20pt
\section{Equivariant cohomology rings of real Grassmannians}
\vskip 15pt
In this section, we give the GKM description and Leray-Borel description of equivariant cohomology rings of real Grassmannians, together with the canonical basis and characteristic basis of the additive structure. We use the notation $G_k(\R^n)$ for the Grassmannian of $k$-dimensional real subspaces in $\R^n$.

The dimension of $G_k(\R^n)$ is $k(n-k)$. Therefore, $G_{2k}(\R^{2n}), G_{2k}(\R^{2n+1}), G_{2k+1}(\R^{2n+1})$ are even dimensional, but $G_{2k+1}(\R^{2n+2})$ is odd dimensional. Moreover, the real Grassmannians $G_k(\R^n)$ differ from each other on Poincar\'e series and orientability according to the parities of $k$ and $n$, as shown by Casian and Kodama:

\begin{thm}[Poincar\'e series of real Grassmannians, \cite{CK} pp.\,11, Thm\,5.1]\label{thm:Poinc}
	The relations between Poincar\'e series of real Grassmannians and complex Grassmannians are given as:
	\begin{align*}
	P_{G_{2k}(\R^{2n})}(t)=P_{G_{2k}(\R^{2n+1})}(t)&=P_{G_{2k+1}(\R^{2n+1})}(t)=P_{G_{k}(\C^{n})}(t^2)\\
	P_{G_{2k+1}(\R^{2n+2})}(t)&=(1+t^{2n+1})P_{G_{k}(\C^{n})}(t^2).
	\end{align*}
\end{thm}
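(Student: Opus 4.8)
The plan is to read the Poincar\'e series off an explicit presentation of the graded ring $H^*(G_k(\R^n);\Q)$ and then match it, degree by degree, against the Leray--Borel presentation of the cohomology of a complex Grassmannian. The most convenient route to $H^*(G_k(\R^n);\Q)$ runs through the canonical double cover $\pi\colon\widetilde G_k(\R^n)=SO(n)/(SO(k)\times SO(n-k))\to G_k(\R^n)$ by the oriented Grassmannian: the deck action of $\Z/2$ is free, so the transfer shows that $\pi^*$ identifies $H^*(G_k(\R^n);\Q)$ with the invariant subring $H^*(\widetilde G_k(\R^n);\Q)^{\Z/2}$. In the four families of the statement the subgroup $SO(k)\times SO(n-k)$ has the same rank as $SO(n)$ in every case but $G_{2k+1}(\R^{2n+2})$, so for the first three families I would quote the Leray--Borel description recalled in the introduction and for the last one Takeuchi's description, to present $H^*(\widetilde G_k(\R^n);\Q)$ as the ring generated by the Pontryagin classes $p_i(\gamma),p_i(\gamma^\perp)$ of the tautological and complementary bundles, together with the Euler class $e(\gamma)$ when $\gamma$ has even rank and $e(\gamma^\perp)$ when $\gamma^\perp$ has even rank, subject to the Whitney relation $p(\gamma)p(\gamma^\perp)=1$ and (when both Euler classes occur, i.e. for $G_{2k}(\R^{2n})$) $e(\gamma)e(\gamma^\perp)=0$ --- and, only for $G_{2k+1}(\R^{2n+2})$, where neither Euler class occurs, one further exterior generator $\eta$ of degree $2n+1$.

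The second step is to take $\Z/2$-invariants. The deck transformation fixes every Pontryagin class (they do not see orientations) and reverses both Euler classes, since reversing the orientation of the tautological plane reverses that of its orthogonal complement once $\R^n$ is kept oriented. Thus a $\Z/2$-invariant monomial involves only even powers of the Euler classes; using $e(\gamma)e(\gamma^\perp)=0$ and the fact that $e(\gamma)^2,\,e(\gamma^\perp)^2$ are the top Pontryagin classes of $\gamma,\gamma^\perp$, one finds that the invariant subring is generated by the $p_i(\gamma)$ and $p_i(\gamma^\perp)$ alone, with the Whitney relation as the only surviving relation:
\[
H^*(G_k(\R^n);\Q)\ \cong\ \Q[\,p_1(\gamma),\dots,p_a(\gamma);\ p_1(\gamma^\perp),\dots,p_b(\gamma^\perp)\,]\big/\big(p(\gamma)\,p(\gamma^\perp)=1\big),
\]
where $a$ and $b$ are the numbers of Pontryagin classes of $\gamma$ and $\gamma^\perp$ respectively. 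For $G_{2k+1}(\R^{2n+2})$ this ring is furthermore tensored with $\Lambda[\eta]$, and here one must check in addition that the deck transformation fixes $\eta$; this holds because the deck transformation preserves the orientation of $\widetilde G_{2k+1}(\R^{2n+2})$ (the ambient dimension $2n+2$ is even) and fixes the Pontryagin subring, so it must fix the class which, multiplied by the top Pontryagin monomial, spans the top cohomology.

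The last step is bookkeeping. After multiplying every cohomological degree by $2$, the displayed presentation becomes verbatim the Leray--Borel presentation $\Q[c_1,\dots,c_a;\bar c_1,\dots,\bar c_b]/(c\,\bar c=1)$ of $H^*(G_a(\C^{a+b});\Q)$, so its Poincar\'e series is $P_{G_a(\C^{a+b})}(t^2)$. A short check shows that for each of the four families written as in the statement one has $a=k$ and $b=n-k$, hence $a+b=n$ and the Pontryagin part contributes $P_{G_k(\C^n)}(t^2)$; for $G_{2k+1}(\R^{2n+2})$ the extra tensor factor $\Lambda[\eta]$ with $\deg\eta=2n+1$ multiplies this by $1+t^{2n+1}$. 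This is exactly the list of identities to be proved.

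The hard part is the middle step: controlling the $\Z/2$-invariant ring --- in particular, showing that no relation beyond Whitney's survives and, for $G_{2k+1}(\R^{2n+2})$, that the new generator is genuinely exterior and deck-invariant. (Alternatively one can take the Leray--Borel/Takeuchi-type presentation of $H^*(G_k(\R^n);\Q)$ itself, stated by Casian--Kodama and re-derived later in this section, as the input, leaving only the bookkeeping.) As a numerical check, a suitable torus in the orthogonal group fixes $\binom{n}{k}$ isolated points on each of the first three Grassmannians and $\binom{n}{k}$ isolated circles on $G_{2k+1}(\R^{2n+2})$, so once equivariant formality is available, item (6) of Theorem~\ref{thm:formal} gives total rational Betti numbers $\binom{n}{k}$ and $2\binom{n}{k}$, consistent with the value at $t=1$ of the claimed Poincar\'e series.
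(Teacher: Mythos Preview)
The paper does not give its own proof of this theorem: it is quoted from Casian--Kodama \cite{CK} and used as an \emph{input}. In the paper's logical flow, Theorem~\ref{thm:Poinc} is invoked to verify the Betti-number identity in item~(6) of Theorem~\ref{thm:formal}, hence to establish equivariant formality of the torus actions on $G_k(\R^n)$; only after that are the GKM descriptions and the Leray--Borel presentations re-derived. So there is nothing to compare against, and two of your suggested shortcuts---taking the Leray--Borel presentation of $H^*(G_k(\R^n);\Q)$ ``re-derived later in this section'' as input, and the ``numerical check'' via equivariant formality---would be circular within this paper.

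Your primary route, however, is an honest independent proof: quote the \emph{classical} Leray--Borel theorem (equal-rank case) and Takeuchi \cite{Ta62} for $H^*(\widetilde G_k(\R^n);\Q)$, then pass to $\Z/2$-invariants. This is correct and is in fact how the paper organizes the relationship between real and oriented Grassmannians in Section~6 (Proposition~\ref{prop:OrientVSReal}, Proposition~\ref{prop:EulerMult}, etc.), only in the reverse direction. One point to tighten: your argument that the deck transformation fixes the odd generator $\eta$ on $\widetilde G_{2k+1}(\R^{2n+2})$ is a bit oblique. The clean reason, given in the paper as Theorem~\ref{thm:OddGrass}, is that when $k$ is odd the deck transformation is induced by the antipodal map on $\R^{2n+2}$, which lies in $SO(2n+2)$ and hence is (equivariantly) homotopic to the identity; thus $\rho^*=\mathrm{id}$ on all of $H^*(\widetilde G_{2k+1}(\R^{2n+2});\Q)$, not just on $\eta$. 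With that in place, and the observation that in every case the invariant subring is exactly $\Q[p_1,\dots,p_k;\bar p_1,\dots,\bar p_{n-k}]/(p\bar p=1)$ (tensored with $\Lambda[\eta]$ in the odd case), your degree-doubling bookkeeping finishes the job.
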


\begin{rmk}
	The Poincar\'e series of complex Grassmannian is (see \cite{BT82} pp.\,292 Prop\,23.1)
	\[
	P_{G_{k}(\C^{n})}(t)=\frac{(1-t^2)\cdots(1-t^{2n})}{(1-t^2)\cdots(1-t^{2k})(1-t^2)\cdots(1-t^{2(n-k)})}.
	\]
	Using the relations between Poincar\'e series, we see that $G_{2k}(\R^{2n})$ and $G_{2k+1}(\R^{2n+2})$ have non-zero top Betti numbers, hence orientable; however, $G_{2k}(\R^{2n+1})$ and $G_{2k+1}(\R^{2n+1})$ have zero top Betti numbers, hence non-orientable.
\end{rmk}

\subsection{GKM description of real Grassmannians}
Similar to the case of complex Grassmannians, we will show the real Grassmannians also have appropriate torus actions that are equivariantly formal and GKM. 

First, we specify the torus actions on real Grassmannians. Write the coordinates on $\R^{2n}$ as $(x_1,y_1,\ldots,x_n,y_n)$. Let $T^n$ act on $\R^{2n},\R^{2n+1},\R^{2n+2}$ so that the $i$-th $S^1$-component of $T^n$ exactly rotates the $i$-th pair of real coordinates $(x_{i},y_{i})$ and leaves the remaining coordinates free, hence we can write $\R^{2n}=\oplus_{i=1}^{n} \R^2_{[\alpha_i]}$, $\R^{2n+1}=(\oplus_{i=1}^{n} \R^2_{[\alpha_i]})\oplus \R_0$ and $\R^{2n+2}=(\oplus_{i=1}^{n} \R^2_{[\alpha_i]})\oplus \R^2_0$ for their decompositions into weighted subspaces, where $[\alpha_i] \in \mathfrak{t}_\Z^*/\pm 1$. These actions induce $T^n$ actions on $G_{2k}(\R^{2n})$, $G_{2k}(\R^{2n+1})$, $G_{2k+1}(\R^{2n+1})$ and $G_{2k+1}(\R^{2n+2})$.

Since there are natural $T^n$-diffeomorphisms $G_{2k}(\R^{2n+1})\cong G_{2n-2k+1}(\R^{2n+1})$ identifying the second and the third types of real Grassmannians, in many discussions we will only consider the three cases of $G_{2k}(\R^{2n})$, $G_{2k}(\R^{2n+1})$ and $G_{2k+1}(\R^{2n+2})$.

\subsubsection{Fixed points}
Similar to the observation in the case of complex Grassmannians, the $T^n$-fixed points of real Grassmannians are exactly some appropriate dimensional sub-representations of the ambient representations. The verification of sub-representations of $\R^{2n}=\oplus_{i=1}^{n} \R^2_{[\alpha_i]}$, $\R^{2n+1}=(\oplus_{i=1}^{n} \R^2_{[\alpha_i]})\oplus \R_0$ for the real Grassmannians $G_{2k}(\R^{2n})$ and $G_{2k}(\R^{2n+1})$ are straightforward. Let's focus on the $2k+1$ dimensional sub-representations of $\R^{2n+2}=(\oplus_{i=1}^{n} \R^2_{[\alpha_i]})\oplus \R^2_0$. Notice that the sub-representation is odd dimensional, hence must have exactly one dimension in the part of trivial representation, therefore has the form $(\oplus_{i\in S} \R^2_{[\alpha_i]})\oplus L_0$ where $S$ is a $k$-element subset of $\{1,2,\ldots,n\}$ and $L_0\in \P(\R^2_0)$. For each $k$-element subset $S$, the connected component $C_S=\{(\oplus_{i\in S} \R^2_{[\alpha_i]})\oplus L_0 \mid L_0\in \P(\R^2_0)\}\cong \R P^1$ gives a fixed circle isolated from the other fixed circles. This gives all the fixed points of the $T^n$ action on $G_{2k+1}(\R^{2n+2})$.

\begin{rmk}
	Because of the one-to-one correspondence between a $k$-element subset $S \subset \{1,\ldots,n\}$ with a fixed point or circle, sometimes we will use $S$ directly to mean a fixed point or circle.
\end{rmk}

\subsubsection{Isotropy weights}\label{subsubsec:IsoWeights}
Fixing a $k$-element subset $S$, let's describe the tangent spaces at the fixed points in the three cases of real Grassmannians.
\begin{enumerate}
	\item The tangent space at $\oplus_{i\in S} \R^2_{[\alpha_i]} \in G_{2k}(\R^{2n})$ is 
	\[
	Hom_\R\Big(\oplus_{i\in S} \R^2_{[\alpha_i]}, \oplus_{j \not\in S} \R^2_{[\alpha_j]}\Big) \cong (\oplus_{i\in S} \R^2_{[\alpha_i]})^* \otimes_\R (\oplus_{j \not\in S} \R^2_{[\alpha_j]})\cong \oplus_{i \in S} \oplus_{j \not \in S} \big((\R^2_{[\alpha_i]})^* \otimes_\R \R^2_{[\alpha_j]}\big).
	\]
	\item The tangent space at $\oplus_{i\in S} \R^2_{[\alpha_i]} \in G_{2k}(\R^{2n+1})$ is 
	\[
	Hom_\R\Big(\oplus_{i\in S} \R^2_{[\alpha_i]}, (\oplus_{j \not\in S} \R^2_{[\alpha_j]})\oplus \R_0\Big) \cong \Big(\oplus_{i \in S} \oplus_{j \not \in S} \big((\R^2_{[\alpha_i]})^* \otimes_\R \R^2_{[\alpha_j]}\big)\Big)\oplus \oplus_{i \in S} (\R^2_{[\alpha_i]})^*.
	\]
	\item The tangent space at $(\oplus_{i\in S} \R^2_{[\alpha_i]})\oplus L_0 \in G_{2k+1}(\R^{2n+2})$, where $L_0 \in \P(\R^2_0)$ has a $L_0^\bot \in \P(\R^2_0)$ such that $L_0 \oplus L_0^\bot \cong \R^2_0$, is 
	\begin{align*}
	&Hom_\R\Big((\oplus_{i\in S} \R^2_{[\alpha_i]})\oplus L_0, (\oplus_{j \not\in S} \R^2_{[\alpha_j]})\oplus L_0^\bot\Big)\\ 
	\cong& \Big(\oplus_{i \in S} \oplus_{j \not \in S} \big((\R^2_{[\alpha_i]})^* \otimes_\R \R^2_{[\alpha_j]}\big)\Big) \oplus \Big( \oplus_{i \in S} (\R^2_{[\alpha_i]})^*\otimes_\R L_0^\bot  \Big) \oplus \Big( \oplus_{j \not\in S} L_0^*\otimes_\R \R^2_{[\alpha_j]} \Big) \\ 
	&\oplus\Big( L_0^* \otimes_\R L_0^\bot \Big) 
	\end{align*}
	among which the first three terms and the fourth term give respectively the normal space and tangent space of the fixed circle $C_S=\{(\oplus_{i\in S} \R^2_{[\alpha_i]})\oplus L_0 \mid L_0\in \P(\R^2_0)\}$.
\end{enumerate}
The isotropy weights are then determined by the following simple lemma:
\begin{lem}
	The weights of the tensor product $(\R^2_{[\alpha_i]})^* \otimes_\R \R^2_{[\alpha_j]}$ are $[\alpha_j-\alpha_i],[\alpha_j+\alpha_i] \in \mathfrak{t}_\Z^*/\pm 1$.
\end{lem}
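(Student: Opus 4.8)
The plan is to compute over $\C$, where representations of the torus $T$ are completely governed by their weights, and then to descend back to $\R$. Write $\C_{\beta}$ for the $1$-dimensional complex $T$-representation of weight $\beta$. I will use the standard complexification dictionary: a $2$-dimensional real representation $\R^2_{[\alpha]}$, on which $T$ acts by rotations realizing the weight $\alpha$ (well-defined up to sign), satisfies $\R^2_{[\alpha]}\otimes_\R\C\cong\C_\alpha\oplus\C_{-\alpha}$; conversely every conjugate pair $\C_\beta\oplus\C_{-\beta}$ is the complexification of a unique real form $\R^2_{[\beta]}$ (with $\R^2_{[0]}$ read as $\R_0\oplus\R_0$), and complexification is injective on isomorphism classes of real $T$-representations. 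In particular $(\R^2_{[\alpha]})^*$ is again of the type $\R^2_{[\alpha]}$, its complexification being $\C_{-\alpha}\oplus\C_{\alpha}$.

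First I would complexify the $4$-dimensional real representation $(\R^2_{[\alpha_i]})^*\otimes_\R\R^2_{[\alpha_j]}$. Using the natural isomorphisms $(V\otimes_\R W)\otimes_\R\C\cong(V\otimes_\R\C)\otimes_\C(W\otimes_\R\C)$ and $(V^*)\otimes_\R\C\cong(V\otimes_\R\C)^*$, I get
\[
\big((\R^2_{[\alpha_i]})^*\otimes_\R\R^2_{[\alpha_j]}\big)\otimes_\R\C\;\cong\;(\C_{-\alpha_i}\oplus\C_{\alpha_i})\otimes_\C(\C_{\alpha_j}\oplus\C_{-\alpha_j}),
\]
and expanding the right-hand side yields the four weight lines $\C_{\alpha_j-\alpha_i}\oplus\C_{-\alpha_j-\alpha_i}\oplus\C_{\alpha_j+\alpha_i}\oplus\C_{-\alpha_j+\alpha_i}$, which assemble into the two conjugate pairs $\C_{\alpha_j-\alpha_i}\oplus\C_{-(\alpha_j-\alpha_i)}$ and $\C_{\alpha_j+\alpha_i}\oplus\C_{-(\alpha_j+\alpha_i)}$.

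Then I would read off the real answer. By uniqueness of the real form of each conjugate pair, $(\R^2_{[\alpha_i]})^*\otimes_\R\R^2_{[\alpha_j]}\cong\R^2_{[\alpha_j-\alpha_i]}\oplus\R^2_{[\alpha_j+\alpha_i]}$, so its isotropy weights are exactly $[\alpha_j-\alpha_i]$ and $[\alpha_j+\alpha_i]$, as claimed. Alternatively one can argue entirely over $\R$: choosing rotation-matrix coordinates, the $T$-action on $(\R^2_{[\alpha_i]})^*\otimes_\R\R^2_{[\alpha_j]}$ is the Kronecker product $R(\theta_i)\otimes R(\theta_j)$ of the two rotation representations, whose complex eigenvalues $e^{\pm i\theta_i}e^{\pm i\theta_j}=e^{\pm i(\theta_i\pm\theta_j)}$ regroup into two planar rotation blocks of angular speeds $\theta_i+\theta_j$ and $\theta_i-\theta_j$.

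There is no genuine obstacle here---the computation is mechanical once the complexification dictionary is set up. The only point deserving a word of care is the degenerate case in which $\alpha_j-\alpha_i$ or $\alpha_j+\alpha_i$ vanishes, i.e.\ $[\alpha_i]=[\alpha_j]$, where the corresponding $\R^2_{[0]}$ summand must be interpreted as $\R_0\oplus\R_0$; but in the applications to $G_k(\R^n)$ one always has distinct indices $i\in S$, $j\notin S$ with the $\alpha_\ell$ pairwise non-proportional, so $\alpha_j\pm\alpha_i$ are honestly nonzero weights and both summands are genuine rotation planes.
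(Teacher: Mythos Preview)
Your argument is correct. The complexification dictionary is sound, the expansion into four weight lines and their regrouping into two conjugate pairs is right, and the descent to the real form is valid; your side remark about the degenerate case $[\alpha_i]=[\alpha_j]$ is a nice bit of care, though as you note it does not arise in the paper's applications.

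The paper takes a different, more hands-on route: it writes elements of $(\R^2_{[\alpha_i]})^*\otimes_\R\R^2_{[\alpha_j]}$ as $2\times 2$ real matrices on which $(e^{\sqrt{-1}\theta_i},e^{\sqrt{-1}\theta_j})$ acts by $M\mapsto R(\theta_j)\,M\,R(-\theta_i)$, and then exhibits an explicit real basis $M_1=\begin{psmallmatrix}1&0\\0&1\end{psmallmatrix}$, $M_2=\begin{psmallmatrix}0&-1\\1&0\end{psmallmatrix}$, $M_3=\begin{psmallmatrix}1&0\\0&-1\end{psmallmatrix}$, $M_4=\begin{psmallmatrix}0&1\\1&0\end{psmallmatrix}$ for which $\R M_1\oplus\R M_2$ and $\R M_3\oplus\R M_4$ are visibly invariant rotation planes of weights $[\alpha_j-\alpha_i]$ and $[\alpha_j+\alpha_i]$. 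Your ``alternative'' paragraph about the Kronecker product of rotation matrices is essentially a compressed version of this computation (with the minor caveat that the dual action contributes $R(-\theta_i)$ rather than $R(\theta_i)$, though this does not change the eigenvalue set $e^{\pm i\theta_i}$). Your main complexification argument is cleaner and generalizes immediately to arbitrary tensor products of real $T$-representations; the paper's approach has the virtue of displaying the real weight planes concretely, which can be useful when one later needs to track orientations or explicit subspaces in the $1$-skeleton.
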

\begin{proof}
	The $T$-action on the dual space $(\R^2_{[\alpha_i]})^*$ is defined in an invariant way so that for $t \in T$, $l \in (\R^2_{[\alpha_i]})^*$, $v \in \R^2_{[\alpha_i]}$, and if we denote $\langle l,v\rangle$ as the natural pairing, we should have
	$\langle t\cdot l,t\cdot v\rangle=\langle l,v\rangle$ or equivalently, $(t\cdot l)(v) = l(t^{-1}\cdot v)$. Notice that only the $i$-th and $j$-th $S^1$-component of $T^n$ have non-trivial actions on $(\R^2_{[\alpha_i]})^*$ or $\R^2_{[\alpha_j]}$, let $e^{\sqrt{-1}\theta_i} \in S^1_i$, $e^{\sqrt{-1}\theta_j} \in S^1_j$, and write elements of $(\R^2_{[\alpha_i]})^* \otimes_\R \R^2_{[\alpha_j]}$ as $2 \times 2$ matrices, then the $S^1_i \times S^1_j$ action on $(\R^2_{[\alpha_i]})^* \otimes_\R \R^2_{[\alpha_j]}$ can be given as:
	\[
	(e^{\sqrt{-1}\theta_i},e^{\sqrt{-1}\theta_j})\cdot 
	\begin{pmatrix}
	a & b\\
	c & d
	\end{pmatrix}=
	\begin{pmatrix}
	\cos \theta_j & -\sin \theta_j\\
	\sin \theta_j & \cos \theta_j
	\end{pmatrix}
	\begin{pmatrix}
	a & b\\
	c & d
	\end{pmatrix}
	\begin{pmatrix}
	\cos \theta_i & \sin \theta_i\\
	-\sin \theta_i & \cos \theta_i
	\end{pmatrix}.	
	\]
	Consider the following new basis of $(\R^2_{[\alpha_i]})^* \otimes_\R \R^2_{[\alpha_j]}$
	\[
	M_1 = 
	\begin{pmatrix}
	1 & 0\\
	0 & 1
	\end{pmatrix} \quad
	M_2 = 
	\begin{pmatrix}
	0 & -1\\
	1 & 0
	\end{pmatrix} \quad
	M_3 = 
	\begin{pmatrix}
	1 & 0\\
	0 & -1
	\end{pmatrix} \quad
	M_4 = 
	\begin{pmatrix}
	0 & 1\\
	1 & 0
	\end{pmatrix}.	
	\]
	We get 
	\begin{align*}
	(e^{\sqrt{-1}\theta_j}\cdot M_1,e^{\sqrt{-1}\theta_j}\cdot M_2) &= (M_1,M_2)
	\begin{pmatrix}
	\cos \theta_j & -\sin \theta_j\\
	\sin \theta_j & \cos \theta_j
	\end{pmatrix}\\
	(e^{\sqrt{-1}\theta_i}\cdot M_1,e^{\sqrt{-1}\theta_i}\cdot M_2) &= (M_1,M_2)
	\begin{pmatrix}
	\cos \theta_i & \sin \theta_i\\
	-\sin \theta_i & \cos \theta_i
	\end{pmatrix}.
	\end{align*}
	In other words, $S^1_i \times S^1_j$ acts on $\R M_1 \oplus \R M_2$ with weight $[\alpha_j-\alpha_i]$.
	Similarly, $S^1_i \times S^1_j$ acts on $\R M_3 \oplus \R M_4$ with weight $[\alpha_j+\alpha_i]$.
\end{proof}

\subsubsection{$1$-skeleta}
To begin with, let's work out the $1$-skeleton of the $T^2$-action on $G_2(\R^4)$. From the previous discussions, we know that there are two fixed points $\R^2_{[\alpha_1]},\R^2_{[\alpha_2]} \in G_2(\R^4)$, both have the same isotropy weights $[\alpha_2-\alpha_1]$ and $[\alpha_2+\alpha_1]$. Let $T_{\alpha_2-\alpha_1}$ be the subtorus of $T^2$ with Lie algebra annihilated by $\alpha_2-\alpha_1$, i.e. $T_{\alpha_2-\alpha_1}$ is the diagonal $\{(t,t) \in T^2\}$. Similarly, $T_{\alpha_2+\alpha_1}$, the subtorus with Lie algebra annihilated by $\alpha_2+\alpha_1$, is the anti-diagonal $\{(t,t^{-1}) \in T^2\}$.

Note that there is a natural diffeomorphism $\mathcal{F}:\C^2 \rightarrow \R^4$ by forgetting the complex structure. This induces an embedding $\C P^1 \hookrightarrow G_2(\R^4):L \mapsto \mathcal{F}(L)$ where $L$ is a complex line in $\C^2$ and $\mathcal{F}(L)$ its two dimensional real image in $\R^4$. Let $J:\C^2 \rightarrow \C^2: (z_1,z_2) \mapsto (z_1,\bar{z}_2)$ be the diffeomorphism with conjugation on the second variable. This also induces an embedding $\C P^1 \hookrightarrow G_2(\R^4):L \mapsto \mathcal{F}(J(L))$. We will denote the images of the two embeddings as $\C P^1$ and $\overline{\C P^1}$.

\begin{lem}
	The fixed-point sets of $T_{\alpha_2-\alpha_1}$ and $T_{\alpha_2+\alpha_1}$ in $G_2(\R^4)$ are $\C P^1$ and $\overline{\C P^1}$ respectively, i.e. the $1$-skeleton of the $T^2$-action on $G_2(\R^4)$ is $\C P^1 \cup \overline{\C P^1}$ glued at the two $T^2$-fixed points $\R^2_{[\alpha_1]},\R^2_{[\alpha_2]} \in G_2(\R^4)$.
\end{lem}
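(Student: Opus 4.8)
The plan is to transport the whole picture to $\C^2$ via the diffeomorphism $\mathcal{F}$ of the ambient space, so that the two sub-actions in question become the standard scalar and ``conjugate-scalar'' circle actions, whose fixed real $2$-planes are easy to enumerate directly.

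\textbf{Step 1 (normal form).} Using $\mathcal{F}^{-1}$ to identify $\R^4$ with $\C^2$, the $T^2$-action reads $(\varepsilon_1,\varepsilon_2)\cdot(z_1,z_2)=(\varepsilon_1 z_1,\varepsilon_2 z_2)$ with $\varepsilon_i\in S^1$. Hence $T_{\alpha_2-\alpha_1}=\{(\varepsilon,\varepsilon)\}$ acts by complex scalars, $\varepsilon\cdot(z_1,z_2)=(\varepsilon z_1,\varepsilon z_2)$, while $T_{\alpha_2+\alpha_1}=\{(\varepsilon,\varepsilon^{-1})\}$ acts by $\varepsilon\cdot(z_1,z_2)=(\varepsilon z_1,\varepsilon^{-1}z_2)$.

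\textbf{Step 2 (the fixed locus of $T_{\alpha_2-\alpha_1}$).} A real $2$-plane $W\subseteq\C^2$ satisfies $e^{\sqrt{-1}\theta}W=W$ for all $\theta$ if and only if multiplication by $\sqrt{-1}$ preserves $W$ (differentiate at $\theta=0$; the converse is immediate), i.e. $W$ is a complex subspace, hence a complex line. Therefore $\big(G_2(\R^4)\big)^{T_{\alpha_2-\alpha_1}}$ is precisely the image of $L\mapsto\mathcal{F}(L)$, namely $\C P^1$.

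\textbf{Step 3 (the fixed locus of $T_{\alpha_2+\alpha_1}$).} Since $\overline{\varepsilon^{-1}}=\varepsilon$ for $|\varepsilon|=1$, a one-line computation gives $J(\varepsilon z_1,\varepsilon^{-1}z_2)=(\varepsilon z_1,\varepsilon\bar z_2)$, that is, $J$ intertwines the $T_{\alpha_2+\alpha_1}$-action with the scalar $T_{\alpha_2-\alpha_1}$-action (using $J^2=\mathrm{id}$). As $J$ is an $\R$-linear diffeomorphism, $W\mapsto J(W)$ is a diffeomorphism of $G_2(\R^4)$ carrying the $T_{\alpha_2+\alpha_1}$-action to the $T_{\alpha_2-\alpha_1}$-action, so $W$ is $T_{\alpha_2+\alpha_1}$-fixed iff $J(W)$ is a complex line $L$ iff $W=J(L)$. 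Thus $\big(G_2(\R^4)\big)^{T_{\alpha_2+\alpha_1}}=\overline{\C P^1}$.

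\textbf{Step 4 (assembling the $1$-skeleton).} The two $T^2$-fixed points are $\C_1=\R^2_{[\alpha_1]}$ and $\C_2=\R^2_{[\alpha_2]}$; these are complex lines with $J(\C_i)=\C_i$, so they lie in both $\C P^1$ and $\overline{\C P^1}$, and a short check (a complex line of the form $J(L')$ forces $L'\in\{\C_1,\C_2\}$) shows they are the only common points. Finally, by the computation of isotropy weights above, the weights at each of $\C_1,\C_2$ are $[\alpha_2-\alpha_1]$ and $[\alpha_2+\alpha_1]$, which are pairwise independent, so the $T^2$-action is GKM and its $1$-skeleton is the union of the positive-dimensional fixed components of codimension-$1$ subtori; any such component through a fixed point forces the Lie algebra of that subtorus to be the kernel of one of the two isotropy weights, hence the subtorus is $T_{\alpha_2-\alpha_1}$ or $T_{\alpha_2+\alpha_1}$. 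Therefore $M_1=\C P^1\cup\overline{\C P^1}$, glued along $\C_1$ and $\C_2$. The step needing most care is Step 3: keeping the conjugation inside $J$ and the inversion in the anti-diagonal torus aligned so that the intertwining identity collapses to the plain scalar action, together with the observation in Step 4 that no positive-dimensional component of $M^{T_{\alpha_2\pm\alpha_1}}$ can avoid the two $T^2$-fixed points — both are settled by the explicit identifications of Steps 2 and 3.
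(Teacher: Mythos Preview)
Your proof is correct, and it takes a somewhat different route from the paper's. The paper only verifies the inclusion $\C P^1\subseteq (G_2(\R^4))^{T_{\alpha_2-\alpha_1}}$ by the one-line observation $(t,t)\cdot[z_1,z_2]=[tz_1,tz_2]=[z_1,z_2]$ (and similarly for $\overline{\C P^1}$), and then concludes by \emph{exhaustion}: since the two spheres already account for both $T^2$-fixed points and for all isotropy weights $[\alpha_2\pm\alpha_1]$ at each of them, they must constitute the entire $1$-skeleton. You instead prove the \emph{equality} $(G_2(\R^4))^{T_{\alpha_2-\alpha_1}}=\C P^1$ directly, via the clean characterization that a real $2$-plane is invariant under all complex scalars iff it is closed under multiplication by $\sqrt{-1}$, and then transport this to the anti-diagonal case using the $J$-intertwining. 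Your approach is more self-contained and actually establishes the first sentence of the lemma as stated (the fixed sets \emph{are} the two copies of $\C P^1$), whereas the paper's argument is shorter but leans on the previously computed isotropy weight data to close the gap between inclusion and equality. Both reach the same $1$-skeleton description; your argument would also go through without appealing to the GKM weight count, at the cost of a few more lines.
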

\begin{proof}
	Let $L_0=\C \oplus 0$ and $L_\infty = 0 \oplus \C$ be the two complex lines in $\C^2$, they are the two poles of both $\C P^1$ and $\overline{\C P^1}$, and are exactly the two $T^2$-fixed points $\R^2_{[\alpha_1]},\R^2_{[\alpha_2]} \in G_2(\R^4)$. The diagonal circle $T_{\alpha_2-\alpha_1}=\{(t,t) \in T^2\}$ fixes $\C P^1$ because $(t,t)\cdot [z_1,z_2]=[tz_1,tz_2]=[z_1,z_2]$ trivially, hence $\C P^1$ joins $\R^2_{[\alpha_1]}$ to $\R^2_{[\alpha_2]}$ with weight $[\alpha_2-\alpha_1]$. Similarly, $\overline{\C P^1}$ joins $\R^2_{[\alpha_1]}$ to $\R^2_{[\alpha_2]}$ with weight $[\alpha_2+\alpha_1]$. The $2$-spheres $\C P^1$ and $\overline{\C P^1}$ exhaust all the $T^2$-fixed points and the isotropy weights, therefore give the $1$-skeleton of the $T^2$-action on $G_2(\R^4)$.
\end{proof}

Generally, let $T_{\alpha_j-\alpha_i}$ and $T_{\alpha_j+\alpha_i}$ be the subtori of $T^n$ with Lie algebras annihilated by $\alpha_j-\alpha_i$ and $\alpha_j+\alpha_i$ respectively. For the $T^n$-action on $\R^{2n}=\oplus_{i=1}^{n} \R^2_{[\alpha_i]}$, the fixed-point sets of $T_{\alpha_j-\alpha_i}$ and $T_{\alpha_j+\alpha_i}$ on $G_2(\R^2_{[\alpha_i]}\oplus \R^2_{[\alpha_j]})$ are two $2$-spheres sharing the poles which are exactly the two $T^n$-fixed points $\R^2_{[\alpha_i]},\R^2_{[\alpha_j]} \in G_2(\R^2_{[\alpha_i]}\oplus \R^2_{[\alpha_j]})$. We will denote the $2$-spheres as $S^2_{[\alpha_j-\alpha_i]}$ and $S^2_{[\alpha_j+\alpha_i]}$ and keep in mind that every element $V$ in $S^2_{[\alpha_j-\alpha_i]}$ or $S^2_{[\alpha_j+\alpha_i]}$ is a $2$-plane in $\R^2_{[\alpha_i]}\oplus \R^2_{[\alpha_j]}$.

Now we are ready to describe the $1$-skeleta of the $T^n$ actions on the three types of real Grassmannians. Let $S$ be a $k$-element subset of $\{1,2,\ldots,n\}$, and $i\in S$, $j\not\in S$.
\begin{enumerate}
	\item For $G_{2k}(\R^{2n})$, the $T^n$-fixed point $V_S=\oplus_{i'\in S} \R^2_{[\alpha_{i'}]}$ is joined to $V_{(S\setminus\{i\})\cup \{j\}}$ via $\{(\oplus_{i'\in S\setminus\{i\}} \R^2_{[\alpha_{i'}]})\oplus V\mid V\in S^2_{[\alpha_j-\alpha_i]}\}\cong S^2$ of weight $[\alpha_j-\alpha_i]$ and also via $\{(\oplus_{i'\in S\setminus\{i\}} \R^2_{[\alpha_{i'}]})\oplus V\mid V\in S^2_{[\alpha_j+\alpha_i]}\}\cong S^2$ of weight $[\alpha_j+\alpha_i]$.
	\item For $G_{2k}(\R^{2n+1})$, the $T^n$-fixed point $V_S=\oplus_{i'\in S} \R^2_{[\alpha_{i'}]}$ is joined to $V_{(S\setminus\{i\})\cup \{j\}}$ via $\{(\oplus_{i'\in S\setminus\{i\}} \R^2_{[\alpha_{i'}]})\oplus V\mid V\in S^2_{[\alpha_j-\alpha_i]}\}\cong S^2$ of weight $[\alpha_j-\alpha_i]$ and also via $\{(\oplus_{i'\in S\setminus\{i\}} \R^2_{[\alpha_{i'}]})\oplus V\mid V\in S^2_{[\alpha_j+\alpha_i]}\}\cong S^2$ of weight $[\alpha_j+\alpha_i]$. Moreover, $V_S$ is contained in $\{(\oplus_{i'\in S\setminus\{i\}} \R^2_{[\alpha_{i'}]})\oplus V\mid V\in G_2(\R^2_{[\alpha_i]}\oplus \R_0)\}\cong \R P^2$ of weight $[\alpha_i]$ without other fixed points.
	\item For $G_{2k+1}(\R^{2n+2})$, the $T^n$-fixed circle $C_S=\{(\oplus_{i'\in S} \R^2_{[\alpha_{i'}]})\oplus L_0 \mid L_0\in \P(\R^2_0)\}\cong \R P^1$ is joined to $C_{(S\setminus\{i\})\cup \{j\}}$ via $\{(\oplus_{i'\in S\setminus\{i\}} \R^2_{[\alpha_{i'}]})\oplus V \oplus L_0 \mid V\in S^2_{[\alpha_j-\alpha_i]}, L_0\in \P(\R^2_0)\}\cong S^2\times \R P^1$ with weight $[\alpha_j-\alpha_i]$ and also via $\{(\oplus_{i'\in S\setminus\{i\}} \R^2_{[\alpha_{i'}]})\oplus V \oplus L_0 \mid V\in S^2_{[\alpha_j+\alpha_i]}, L_0\in \P(\R^2_0)\}\cong S^2\times \R P^1$ with weight $[\alpha_j+\alpha_i]$. Moreover, $C_S$ is contained in $\{(\oplus_{i'\in S\setminus\{i\}} \R^2_{[\alpha_{i'}]})\oplus W \mid W\in G_3(\R^2_{[\alpha_{i}]}\oplus \R^2_0)\}\cong \R P^3$ and $\{(\oplus_{i'\in S} \R^2_{[\alpha_{i'}]})\oplus L  \mid L\in \P(\R^2_{[\alpha_j]}\oplus\R^2_0)\}\cong \R P^3$ of weights $[\alpha_i]$ and $[\alpha_j]$ respectively without other fixed points. 	
\end{enumerate}

\subsubsection{GKM graphs of real Grassmannians}
Since $G_{2k}(\R^{2n}), G_{2k}(\R^{2n+1}), G_{2k+1}(\R^{2n+1})$ are even dimensional, but $G_{2k+1}(\R^{2n+2})$ is odd dimensional, we will construct GKM graphs according to the parity of dimensions.

\begin{exm}
	We give some examples of GKM graphs for $G_k(\R^n)$ when $k$ or $n$ is small.
	\begin{enumerate}
		\item $\R P^{2n}$ as $G_{1}(\R^{2n+1})$ or $G_{2n}(\R^{2n+1})$ 
		\begin{figure}[H]
			\centering
			\begin{subfigure}[b]{.45\textwidth}
				\centering
				\includegraphics{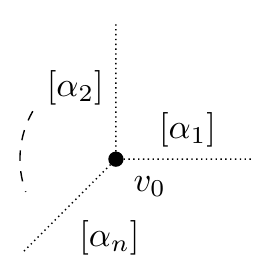}
				\caption{Complete GKM graph for $\R P^{2n}$}
			\end{subfigure}
			\begin{subfigure}[b]{.45\textwidth}
				\centering
				\begin{tikzpicture}[scale=1.5]
				\path (0:0cm) node[draw,fill,circle,inner sep=0pt,minimum size=4pt,label=below right:{$v_0$}] (v0) {};
				\path (0:1cm) node (v1) {}
				(90:1cm) node (v2) {}
				(225:1cm) node (v3) {};
				\end{tikzpicture}
				\caption{Effective GKM graph for $\R P^{2n}$}
			\end{subfigure}
			\caption{GKM graphs for $\R P^{2n}$}
		\end{figure}
		\item $\R P^{2n+1}$ as $G_{1}(\R^{2n+2})$ or $G_{2n+1}(\R^{2n+2})$ 
		\begin{figure}[H]
			\centering
			\begin{subfigure}[b]{.45\textwidth}
				\centering
				\includegraphics{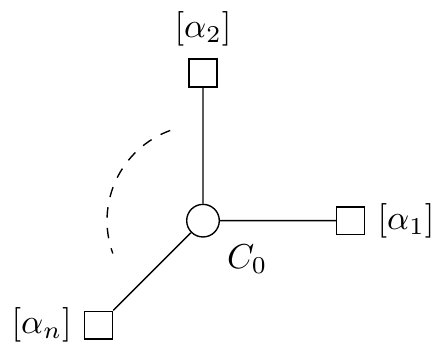}
				\caption{GKM graph for $\R P^{2n+1}$}
			\end{subfigure}
			\begin{subfigure}[b]{.45\textwidth}
				\centering
				\includegraphics{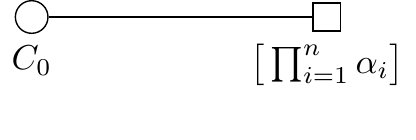}
				\caption{Condensed GKM graph for $\R P^{2n+1}$}
			\end{subfigure}
			\caption{GKM graphs for $\R P^{2n+1}$}
		\end{figure}
		\item $G_{2}(\R^{4}),G_{2}(\R^{5}),G_{3}(\R^{5})$ as $G_{2k}(\R^{2n}), G_{2k}(\R^{2n+1}), G_{2k+1}(\R^{2n+1})$ when $k=1,n=2$.
		\begin{figure}[H]
			\centering
			\begin{subfigure}[b]{0.45\textwidth}
				\centering
				\includegraphics{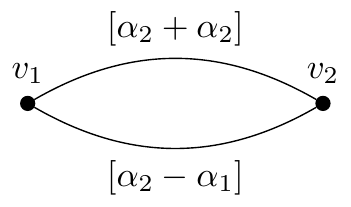}
				\caption{GKM graph for $G_{2}(\R^{4})$}
			\end{subfigure}
			\begin{subfigure}[b]{0.45\textwidth}
				\centering
				\includegraphics{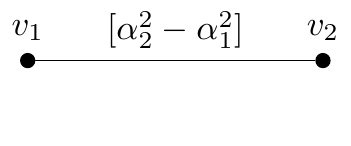}
				\caption{Condensed GKM graph for $G_{2}(\R^{4})$}
			\end{subfigure}
			\caption{GKM graphs for $G_{2}(\R^{4})$}
		\end{figure}
		\begin{figure}[H]
			\centering
			\begin{subfigure}[b]{0.3\textwidth}
				\centering
				\includegraphics{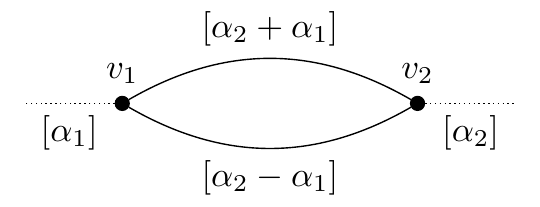}
				\caption{Complete GKM graph}
			\end{subfigure}
			\begin{subfigure}[b]{0.3\textwidth}
				\centering
				\includegraphics{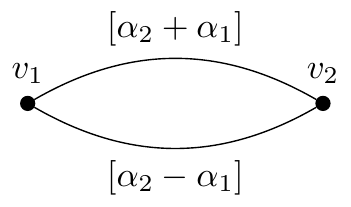}
				\caption{Effective GKM graph}
			\end{subfigure}
			\begin{subfigure}[b]{0.3\textwidth}
				\centering
				\includegraphics{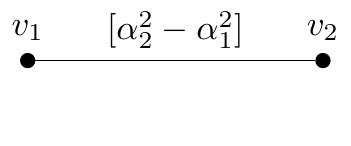}
				\caption{Condensed GKM graph}
			\end{subfigure}
			\caption{GKM graphs for $G_{2}(\R^{5})$}
		\end{figure}
		\begin{figure}[H]
			\centering
			\begin{subfigure}[b]{0.3\textwidth}
				\centering
				\includegraphics{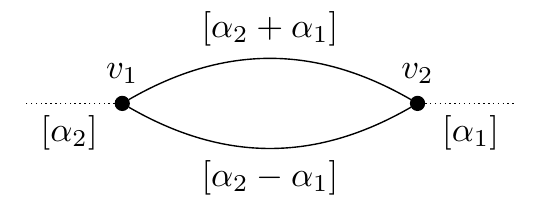}
				\caption{Complete GKM graph}
			\end{subfigure}
			\begin{subfigure}[b]{0.3\textwidth}
				\centering
				\includegraphics{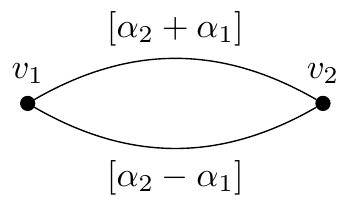}
				\caption{Effective GKM graph}
			\end{subfigure}
			\begin{subfigure}[b]{0.3\textwidth}
				\centering
				\includegraphics{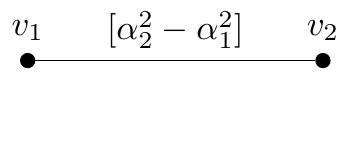}
				\caption{Condensed GKM graph}
			\end{subfigure}
			\caption{GKM graphs for $G_{3}(\R^{5})$}
		\end{figure}		
		\item $G_{3}(\R^{6})$ as $G_{2k+1}(\R^{2n+2})$ when $k=1,n=2$.
		\begin{figure}[H]
			\centering
			\begin{subfigure}[b]{0.45\textwidth}
				\centering
				\includegraphics{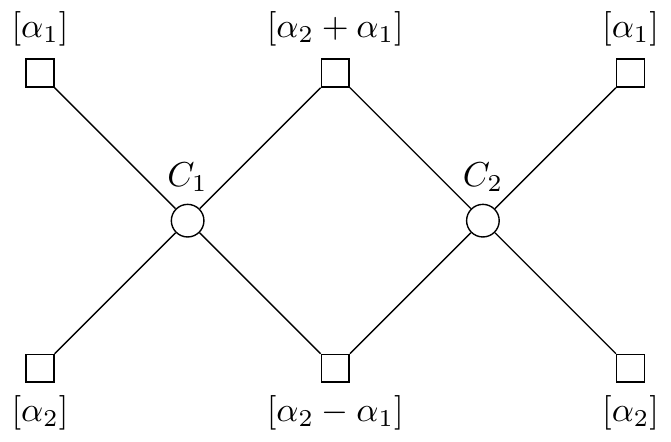}
				\caption{GKM graph for $G_{3}(\R^{6})$}
			\end{subfigure}
			\begin{subfigure}[b]{0.45\textwidth}
				\centering
				\includegraphics{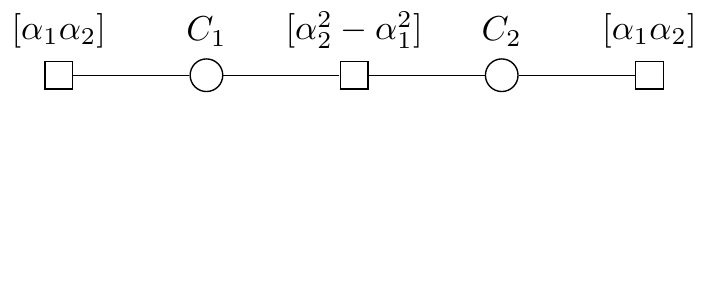}
				\caption{Condensed GKM graph for $G_{3}(\R^{6})$}
			\end{subfigure}
			\caption{GKM graphs for $G_{3}(\R^{6})$}
		\end{figure}
	\end{enumerate}
\end{exm}

\begin{rmk}
	The graphs of $\R P^{2n}$ and $G_2(\R^5)$ have appeared in Goertsches\&Mare \cite{GM14}.
\end{rmk}

\subsubsection{Formality, cohomology and canonical basis of real Grassmannians}
We have given the $1$-skeleta and GKM graphs for real Grassmannians $G_k(\R^n)$ under appropriate torus actions. To apply the GKM-type theorems in even and odd dimensions, we still need to verify that those torus actions on $G_k(\R^n)$ are equivariantly formal. 

\begin{prop}[Equivariant formality of torus actions on real Grassmannians]
	The total Betti numbers of $G_k(\R^n)$ and of its fixed-point set are equal:
	\begin{enumerate}
		\item For the $T^n$-actions on $G_{2k}(\R^{2n}), G_{2k}(\R^{2n+1}), G_{2k+1}(\R^{2n+1})$, the isolated fixed points $V_S$ are all parametrized by $\mathcal{S}=\{S\subseteq\{1,2,\ldots,n\} \mid \#S=k\}$, and we have
		\[
		\sum \mathrm{dim}\,H^*(G_{2k}(\R^{2n})) = \sum \mathrm{dim}\,H^*(G_{2k}(\R^{2n+1})) = \sum \mathrm{dim}\,H^*(G_{2k+1}(\R^{2n+1})) = \# \mathcal{S} = \binom{n}{k}.
		\]
		\item For the $T^n$-action on $G_{2k+1}(\R^{2n+2})$, the isolated fixed circles $C_S$ are also indexed on $\mathcal{S}=\{S\subseteq\{1,2,\ldots,n\} \mid \#S=k\}$, and we have
		\[
		\sum\mathrm{dim}\,H^*(G_{2k+1}(\R^{2n+1})) = \# \mathcal{S} \cdot \sum\mathrm{dim}\,H^*(S^1) = 2\binom{n}{k}.
		\]
	\end{enumerate}
	Therefore, the torus actions on $G_k(\R^n)$ are equivariantly formal.
\end{prop}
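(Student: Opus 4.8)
The plan is to apply the criterion of Theorem~\ref{thm:formal}, namely the equivalence of (1) and (6): a torus action on a compact manifold $M$ is equivariantly formal exactly when $\sum\dim H^*(M^T)=\sum\dim H^*(M)$. So it suffices to compute the two total Betti numbers in each of the four families and check that they coincide.

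First I would tally the fixed-point contributions from the descriptions of $(G_k(\R^n))^T$ obtained in the previous subsections. For $G_{2k}(\R^{2n})$, $G_{2k}(\R^{2n+1})$ and $G_{2k+1}(\R^{2n+1})$ the fixed set is the collection of isolated points $V_S$, one for each $S$ in $\mathcal{S}=\{S\subseteq\{1,\ldots,n\}\mid \#S=k\}$, so $\sum\dim H^*(M^T)=\#\mathcal{S}=\binom{n}{k}$. For $G_{2k+1}(\R^{2n+2})$ the fixed set is the disjoint union of the circles $C_S$, again indexed by $\mathcal{S}$, and each circle contributes $\sum\dim H^*(S^1)=2$, so $\sum\dim H^*(M^T)=2\binom{n}{k}$.

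Next I would evaluate the Poincar\'e series of Theorem~\ref{thm:Poinc} at $t=1$ to get $\sum\dim H^*(M)=P_M(1)$. The one elementary input needed is $P_{G_k(\C^n)}(1)=\binom{n}{k}$, which follows because the displayed product formula for $P_{G_k(\C^n)}(t)$ is the Gaussian binomial $\binom{n}{k}_{t^2}$ specializing to $\binom{n}{k}$ at $t=1$ (equivalently, $G_k(\C^n)$ has a cell decomposition with $\binom{n}{k}$ cells, all of even real dimension, so there is no cancellation in the Poincar\'e polynomial). Hence $\sum\dim H^*(G_{2k}(\R^{2n}))=P_{G_k(\C^n)}(1^2)=\binom{n}{k}$, and likewise for $G_{2k}(\R^{2n+1})$ and $G_{2k+1}(\R^{2n+1})$, while $\sum\dim H^*(G_{2k+1}(\R^{2n+2}))=(1+1^{2n+1})\,P_{G_k(\C^n)}(1^2)=2\binom{n}{k}$.

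Comparing with the previous step, the two totals agree in every case, so condition (6) of Theorem~\ref{thm:formal} holds and all four families of torus actions are equivariantly formal. There is essentially no obstacle once Theorem~\ref{thm:Poinc} is granted; the only points that need care are the bookkeeping of the fixed loci (carried out above) and the observation that substituting $t=1$ converts the Casian--Kodama Poincar\'e-series identities into the desired Betti-number equalities. (I also note that part (2) of the statement appears to carry a typo, writing $G_{2k+1}(\R^{2n+1})$ where $G_{2k+1}(\R^{2n+2})$ is meant.)
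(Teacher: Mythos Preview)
Your proposal is correct and follows essentially the same approach as the paper: apply criterion (6) of Theorem~\ref{thm:formal}, count the fixed loci, and substitute $t=1$ in the Casian--Kodama Poincar\'e-series identities of Theorem~\ref{thm:Poinc}. The only cosmetic difference is that the paper obtains $\sum\dim H^*(G_k(\C^n))=\binom{n}{k}$ by invoking the equivariant formality of $G_k(\C^n)$ (isolated fixed points indexed by $\mathcal{S}$) rather than your Gaussian-binomial/Schubert-cell argument; and yes, the ``$G_{2k+1}(\R^{2n+1})$'' in part~(2) is a typo for $G_{2k+1}(\R^{2n+2})$.
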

\begin{proof}
	The verification is based on the equivalence (6) of Theorem \ref{thm:formal}. The total Betti numbers of $G_k(\R^n)$ can be calculated from the Casian-Kodama formula in Theorem \ref{thm:Poinc} by substituting $t=1$ in the Poincar\'e series.
	\begin{align*}
	\sum \mathrm{dim}\,H^*(G_{2k}(\R^{2n})) = \sum \mathrm{dim}\,H^*(G_{2k}(\R^{2n+1})) &= \sum \mathrm{dim}\,H^*(G_{2k+1}(\R^{2n+1})) = \sum \mathrm{dim}\,H^*(G_{k}(\C^{n}))\\
    \sum \mathrm{dim}\,H^*(G_{2k+1}(\R^{2n+2})) &= 2\sum \mathrm{dim}\,H^*(G_{k}(\C^{n})).
	\end{align*}
	On the other hand, by the formality of the $T^n$-action on $G_{k}(\C^{n})$, which also has isolated points parametrized by $\mathcal{S}$, we have 
	\[
	\sum \mathrm{dim}\,H^*(G_{k}(\C^{n})) = \# \mathcal{S} = \binom{n}{k}.
	\]
	Therefore, total Betti numbers of $G_k(\R^n)$ and of its fixed-point set are equal, and the torus actions on $G_k(\R^n)$ are equivariantly formal.
\end{proof}

With the verifications of GKM conditions and equivariant formality, we can give the GKM description of the torus actions on $G_k(\R^n)$ by applying the generalized GKM-type Theorems \ref{thm:EvenGKM} and \ref{thm:OddGKM} in even and odd dimensions. 

\begin{thm}[GKM description of equivariant cohomology of real Grassmannians]\label{thm:GKMrealGrass}
	Let $\mathcal{S}$ be the collection of $k$-element subsets of $\{1,2,\ldots,n\}$. 
	\begin{enumerate}
		\item For even dimensional Grassmannians $G_{2k}(\R^{2n}), G_{2k}(\R^{2n+1}), G_{2k+1}(\R^{2n+1})$ with $T^n$-actions, they have the same equivariant cohomology
		\[
		\big\{f:\mathcal{S}\rightarrow \Q[\alpha_1,\ldots,\alpha_n] \mid f_{S} \equiv f_{S'} \mod \alpha^2_j-\alpha^2_i \quad \text{for $S,S' \in \mathcal{S}$ with $S\cup\{j\}=S'\cup\{i\}$}\big\}.
		\]
		\item For odd dimensional Grassmannian $G_{2k+1}(\R^{2n+2})$ with $T^n$-action, an element of the equivariant cohomology is a set of polynomial pairs $(f_S, g_S \theta)$ to each $\circ$-vertex $S$ where $\theta$ is the unit volume form of $S^1$ such that
		\begin{enumerate}
			\item $g_{S} \equiv 0 \mod \prod_{i=1}^{n}\alpha_i$\quad for every $S$
			\item $f_{S} \equiv f_{S'} , \quad g_{S} \equiv g_{S'}\mod \alpha^2_j-\alpha^2_i$\quad for $S,S' \in \mathcal{S}$ with $S\cup\{j\}=S'\cup\{i\}$.
		\end{enumerate}
	\end{enumerate}
\end{thm}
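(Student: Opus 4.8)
The plan is simply to feed the $1$-skeleta computed above into the GKM-type Theorems~\ref{thm:EvenGKM} and~\ref{thm:OddGKM}, whose hypotheses --- equivariant formality and the GKM condition --- were both established above (formality from the Betti-number count, the GKM condition from the isotropy-weight computation). The three even-dimensional Grassmannians are handled by Theorem~\ref{thm:EvenGKM} and the odd-dimensional $G_{2k+1}(\R^{2n+2})$ by Theorem~\ref{thm:OddGKM}; in each case the only genuine work is to translate the list of solid faces into congruences and then to merge the two weights $\alpha_j-\alpha_i$ and $\alpha_j+\alpha_i$ living on each ``double edge''.

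\emph{Part (1).} For each of $G_{2k}(\R^{2n})$, $G_{2k}(\R^{2n+1})$, $G_{2k+1}(\R^{2n+1})$ the GKM graph has vertex set $\mathcal{S}$, and whenever $S\cup\{j\}=S'\cup\{i\}$ the two vertices are joined by exactly two solid edges, $S^2_{[\alpha_j-\alpha_i]}$ with weight $\alpha_j-\alpha_i$ and $S^2_{[\alpha_j+\alpha_i]}$ with weight $\alpha_j+\alpha_i$; the extra $\R P^2$-faces occurring for $G_{2k}(\R^{2n+1})$ and $G_{2k+1}(\R^{2n+1})$ are dotted, hence by the remark after Theorem~\ref{thm:EvenGKM} impose no relation, so all three share the same effective GKM graph. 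Theorem~\ref{thm:EvenGKM} then identifies $H^*_T$ with the set of $f\colon\mathcal{S}\to\Q[\alpha_1,\dots,\alpha_n]$ such that $f_S\equiv f_{S'}$ both modulo $\alpha_j-\alpha_i$ and modulo $\alpha_j+\alpha_i$. For $i\neq j$ these two linear forms are non-associate primes in the unique factorization domain $\Q[\alpha_1,\dots,\alpha_n]$, so divisibility of $f_S-f_{S'}$ by both is equivalent to divisibility by their product $\alpha_j^2-\alpha_i^2$; this gives the asserted description.

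\emph{Part (2).} Theorem~\ref{thm:OddGKM} attaches a pair $(f_S,g_S\theta)$ to the $\circ$-vertex $C_S$ for each $S\in\mathcal{S}$, and the $\Box$-vertices come in two kinds. First, for every $l\in\{1,\dots,n\}$ there is exactly one $\R P^3$-face of weight $\alpha_l$ through $C_S$ --- the $G_3(\R^2_{[\alpha_l]}\oplus\R^2_0)$-face if $l\in S$, the $\P(\R^2_{[\alpha_l]}\oplus\R^2_0)$-face if $l\notin S$ --- and this face is orientable and contains $C_S$ as its only fixed circle; the orientable single-circle case of Theorem~\ref{thm:OddGKM} then forces $g_S\equiv 0\bmod\alpha_l$, and letting $l$ run over $\{1,\dots,n\}$, whose $\alpha_l$ are pairwise non-associate primes, gives $g_S\equiv 0\bmod\prod_{l=1}^{n}\alpha_l$, i.e.\ condition~(a). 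Second, for $S'=(S\setminus\{i\})\cup\{j\}$ there are two faces $S^2\times\R P^1\cong S^2\times S^1$, of weights $\alpha_j-\alpha_i$ and $\alpha_j+\alpha_i$, each orientable and containing both $C_S$ and $C_{S'}$; Theorem~\ref{thm:OddGKM} gives $f_S\equiv f_{S'}$ together with $\pm g_S\pm g_{S'}\equiv 0$ modulo each of these two weights. Merging the $f$-congruences exactly as in Part~(1) yields $f_S\equiv f_{S'}\bmod\alpha_j^2-\alpha_i^2$; and once one knows that the two signs attached to $C_S$ and $C_{S'}$ on an $S^2\times S^1$-face are opposite, the $g$-relation reads $g_S\equiv g_{S'}$ modulo each of $\alpha_j-\alpha_i$ and $\alpha_j+\alpha_i$, so the same coprimality argument gives $g_S\equiv g_{S'}\bmod\alpha_j^2-\alpha_i^2$, i.e.\ condition~(b).

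The routine parts are the graph combinatorics and the repeated reduction ``divisible by $\alpha_j-\alpha_i$ and by $\alpha_j+\alpha_i$ $\Rightarrow$ divisible by $\alpha_j^2-\alpha_i^2$''. The main obstacle is the orientation bookkeeping in Part~(2): one must confirm that the $\R P^3$- and $S^2\times\R P^1$-faces are genuinely orientable and contain exactly the stated fixed circles, and --- the delicate point --- that the signs in the relation $\sum\pm g_C\equiv 0$ produced by an $S^2\times S^1$-face are $+1$ and $-1$ on its two polar circles, equivalently that a single orientation of $S^2\times S^1$ induces opposite orientations on the two circles relative to a common parametrization of the $S^1$-factor. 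This last fact is the model computation for $S^1\curvearrowright S^2\times S^1$ (rotating the $S^2$) underlying \cite{HeA,He17}; by the remark after Theorem~\ref{thm:OddGKM} a different choice of circle orientations changes $H^*_T$ only by an isomorphism, so one is free to normalise the orientations $\theta_S$ so that conditions~(a)--(b) take precisely the stated form.
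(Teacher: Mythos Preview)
Your proof is correct and follows exactly the approach the paper takes: apply Theorems~\ref{thm:EvenGKM} and~\ref{thm:OddGKM} to the $1$-skeleta already computed, then use the UFD property of $\Q[\alpha_1,\dots,\alpha_n]$ to merge the congruences modulo $\alpha_j-\alpha_i$ and $\alpha_j+\alpha_i$ (and the $\alpha_l$ in the odd case). The paper itself does not give a formal proof but records precisely these points in the remarks following the statement: that dotted $\R P^2$-edges contribute nothing, that each $\R P^3_{[\alpha_l]}$ contributes $g_S\equiv 0\bmod\alpha_l$, that each $S^2\times\R P^1$ contributes $f_S\equiv f_{S'}$ and $g_S\equiv g_{S'}\bmod\alpha_j\pm\alpha_i$, and that the UFD argument yields the condensed congruences; your write-up is simply a more explicit rendering of the same argument, with the orientation bookkeeping in Part~(2) spelled out where the paper just says these simple components ``resolve the sign issues''.
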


\begin{rmk}
	For convenience, we will write an element $f \in H^*_{T^n}(G_{2k}(\R^{2n}))$ as $(f_S)_{S \in \mathcal{S}}$ and an element $(f,g\theta) \in H^*_{T^n}(G_{2k+1}(\R^{2n+2}))$ as $(f_S+g_S\theta)_{S \in \mathcal{S}}$, which are understood as tuples indexed with respect to $S \in \mathcal{S}$.
\end{rmk}

\begin{rmk}
	In the $1$-skeleton of the odd dimensional Grassmannian $G_{2k+1}(\R^{2n+2})$, every $\R P^3_{[\alpha_i]}$ containing a unique fixed circle $C_S$ contributes a relation $g_{S} \equiv 0 \mod \alpha_i$; every $S^2\times \R P^1$ with weight $\alpha_j\pm \alpha_i$ and two fixed circles $C_S,C_{S'}$ contributes two relations $f_{S} \equiv f_{S'} ,  g_{S} \equiv g_{S'}\mod \alpha_j\pm\alpha_i$. These simple components in $1$-skeleton resolve the sign issues in odd dimensional GKM-type Theorem\,\ref{thm:OddGKM}.
\end{rmk}

\begin{rmk}
	Note that in the above description, we have condensed some congruence relations because $\Q[\alpha_1,\ldots,\alpha_n]$ is a unique-factorization domain.
	\begin{align*}
		\begin{cases}
		f_{S} \equiv f_{S'} \mod \alpha_j-\alpha_i\\
		f_{S} \equiv f_{S'} \mod \alpha_j+\alpha_i
		\end{cases}
		&\Longleftrightarrow \quad f_{S} \equiv f_{S'} \mod \alpha^2_j-\alpha^2_i\\
	    \begin{cases}
		g_{S} \equiv 0 \mod \alpha_1\\
		\vdots \quad \qquad \vdots \quad \qquad \vdots\\
		g_{S} \equiv 0 \mod \alpha_n
		\end{cases} 
		&\Longleftrightarrow \quad g_{S} \equiv 0 \mod \prod_{i=1}^{n}\alpha_i.
	\end{align*}
\end{rmk}

Notice the similarity among the GKM descriptions of the even and odd dimensional real Grassmannians and the complex Grassmannians, we have
\begin{thm}[Relations among equivariant cohomology of real and complex Grassmannians]\label{thm:AllGrass}
	The relations between the equivariant cohomology of even, odd dimensional real Grassmannians and complex Grassmannians are
	\begin{enumerate}
		\item There are a series of $\Q[\alpha_1,\ldots,\alpha_n]$-algebra isomorphisms:
		\[
		H^*_{T^n}(G_{2k}(\R^{2n}))\cong H^*_{T^n}(G_{2k}(\R^{2n+1}))\cong H^*_{T^n}(G_{2k+1}(\R^{2n+1})).
		\]
		\item There is an element $r^T \in H^{2n+1}_{T^n}(G_{2k+1}(\R^{2n+2}))$ such that $(r^T)^2=0$, and there is a $\Q[\alpha_1,\ldots,\alpha_n]$-algebra isomorphism
		\[
		H^*_{T^n}(G_{2k+1}(\R^{2n+2})) \cong H^*_{T^n}(G_{2k}(\R^{2n}))[r^T]/(r^T)^2.
		\]
		\item There is a $\Q[\alpha_1,\ldots,\alpha_n]$-algebra monomorphism:
		\[
		H^*_{T^n}(G_{2k}(\R^{2n})) \hookrightarrow H^*_{T^n}(G_{k}(\C^{n})).
		\]
	\end{enumerate}
\end{thm}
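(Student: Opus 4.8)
The plan is to read all three claims off the GKM descriptions already established, namely Theorem~\ref{thm:GKMrealGrass} for the real Grassmannians and Theorem~\ref{thm:GKMcplxGrass} for the complex ones. These realize every ring in question as an explicit subring of the product ring $\bigoplus_{S\in\mathcal{S}}\Q[\alpha_1,\ldots,\alpha_n]$, with pointwise operations, over the common index set $\mathcal{S}$ of $k$-element subsets of $\{1,\ldots,n\}$ and the common adjacency $S\cup\{j\}=S'\cup\{i\}$; with this in hand each assertion becomes an elementary divisibility statement. For (1), Theorem~\ref{thm:GKMrealGrass}(1) exhibits $H^*_{T^n}(G_{2k}(\R^{2n}))$, $H^*_{T^n}(G_{2k}(\R^{2n+1}))$ and $H^*_{T^n}(G_{2k+1}(\R^{2n+1}))$ as literally the \emph{same} subring $\{f\colon\mathcal{S}\to\Q[\alpha_1,\ldots,\alpha_n]\mid f_S\equiv f_{S'}\bmod\alpha_j^2-\alpha_i^2\}$, with the same $\Q[\alpha_1,\ldots,\alpha_n]$-algebra structure; hence the identity on tuples is the desired isomorphism and nothing more is needed. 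For (3), comparing that subring with Theorem~\ref{thm:GKMcplxGrass}: both sit in $\bigoplus_{S}\Q[\alpha_1,\ldots,\alpha_n]$ over the same $\mathcal{S}$ and same adjacency, the only difference being that the real side demands $f_S\equiv f_{S'}\bmod\alpha_j^2-\alpha_i^2=(\alpha_j-\alpha_i)(\alpha_j+\alpha_i)$ whereas the complex side demands it only $\bmod\ \alpha_j-\alpha_i$. Since $(\alpha_j-\alpha_i)\mid(\alpha_j^2-\alpha_i^2)$, every tuple of the real ring lies in the complex ring, so the inclusion $(f_S)_S\mapsto(f_S)_S$ is the required $\Q[\alpha_1,\ldots,\alpha_n]$-algebra monomorphism $H^*_{T^n}(G_{2k}(\R^{2n}))\hookrightarrow H^*_{T^n}(G_k(\C^n))$.

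For (2), I would start from Theorem~\ref{thm:GKMrealGrass}(2): an element of $H^*_{T^n}(G_{2k+1}(\R^{2n+2}))$ is a tuple $(f_S+g_S\theta)_{S\in\mathcal{S}}$ in which $(f_S)_S$ runs over the even GKM ring $H^*_{T^n}(G_{2k}(\R^{2n}))$ and $(g_S)_S$ is subject to $g_S\equiv 0\bmod\prod_{l=1}^n\alpha_l$ for every $S$ together with $g_S\equiv g_{S'}\bmod\alpha_j^2-\alpha_i^2$ for adjacent $S,S'$. I set $r^T:=\big(\prod_{l=1}^n\alpha_l\cdot\theta\big)_{S\in\mathcal{S}}$. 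It satisfies both congruences trivially, so it is a genuine element; it lies in $H^{2n+1}_{T^n}(G_{2k+1}(\R^{2n+2}))$ because $\deg\prod_l\alpha_l=2n$ and $\deg\theta=1$; and $(r^T)^2=0$ because $\theta^2=0$ in the product rule recorded after Theorem~\ref{thm:OddGKM}. The ``zero section'' $(f_S)_S\mapsto(f_S+0\cdot\theta)_S$ respects that product rule, hence is a ring embedding $H^*_{T^n}(G_{2k}(\R^{2n}))\hookrightarrow H^*_{T^n}(G_{2k+1}(\R^{2n+2}))$; combining it with $r^T$ and using $(r^T)^2=0$ produces a well-defined $\Q[\alpha_1,\ldots,\alpha_n]$-algebra homomorphism $\Psi\colon H^*_{T^n}(G_{2k}(\R^{2n}))[r^T]/(r^T)^2\longrightarrow H^*_{T^n}(G_{2k+1}(\R^{2n+2}))$.

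To finish (2) I would invert $\Psi$ explicitly by sending $(f_S+g_S\theta)_S$ to $(f_S)_S+\big(g_S/\prod_l\alpha_l\big)_S\cdot r^T$. The only point needing verification is that $\big(g_S/\prod_l\alpha_l\big)_S$ again satisfies the even GKM congruences $\bmod\ \alpha_j^2-\alpha_i^2$, which follows from $g_S\equiv g_{S'}\bmod\alpha_j^2-\alpha_i^2$ once one observes that $\prod_l\alpha_l$ is coprime to $\alpha_j^2-\alpha_i^2$ in the unique factorization domain $\Q[\alpha_1,\ldots,\alpha_n]$: the irreducible factors $\alpha_j-\alpha_i$ and $\alpha_j+\alpha_i$ are not associates of any $\alpha_l$. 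One checks directly that this map is a two-sided inverse of $\Psi$ (on the $\theta$-free part it is the identity, and it undoes multiplication by $r^T$), so $\Psi$ is the asserted isomorphism.

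The only step that is not pure bookkeeping with the GKM descriptions, and the one I would write out carefully, is this coprimality argument licensing division by $\prod_l\alpha_l$ without breaking the congruences; everything else is unwinding definitions. As a consistency check, the module decomposition $H^*_{T^n}(G_{2k+1}(\R^{2n+2}))\cong H^*_{T^n}(G_{2k}(\R^{2n}))\oplus H^*_{T^n}(G_{2k}(\R^{2n}))\cdot r^T$ with $\deg r^T=2n+1$ matches exactly the Poincar\'e-series identity $P_{G_{2k+1}(\R^{2n+2})}(t)=(1+t^{2n+1})\,P_{G_{2k}(\R^{2n})}(t)$ that follows from Theorem~\ref{thm:Poinc}.
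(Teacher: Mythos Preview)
Your proposal is correct and follows essentially the same approach as the paper: all three parts are read directly off the GKM descriptions of Theorems~\ref{thm:GKMrealGrass} and~\ref{thm:GKMcplxGrass}, with the same definition of $r^T=\big((\prod_l\alpha_l)\theta\big)_S$ and the same coprimality argument between $\prod_l\alpha_l$ and $\alpha_j^2-\alpha_i^2$ as the key step in part~(2). The only cosmetic difference is that you package the bijection in~(2) as an explicit homomorphism $\Psi$ with a written-out inverse, whereas the paper writes $g_S=(\prod_l\alpha_l)h_S$ first and then displays the decomposition $(f_S+g_S\theta)_S=(f_S)_S+r^T\cdot(h_S)_S$; the content is identical.
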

\begin{proof}
All the Grassmannians with $T^n$-action are modelled on the same Johnson graph $J(n,k)$ with slightly different congruence relations.
\begin{enumerate}
	\item This is the part\,(1) of Theorem\,\ref{thm:GKMrealGrass}.
	\item From Theorem\,\ref{thm:GKMrealGrass}, the GKM descriptions of even and odd dimensional real Grassmannians have the same congruence relations on the $f_S$ polynomials:
	\[
	f_{S} \equiv f_{S'}\mod \alpha^2_j-\alpha^2_i \quad \text{for $S,S' \in \mathcal{S}$ with $S\cup\{j\}=S'\cup\{i\}$}.
	\] 
	But the odd dimensional real Grassmannian has extra part of $g_S \theta$ with congruence relations:
	\begin{enumerate}
		\item $g_{S} \equiv 0 \mod \prod_{i=1}^{n}\alpha_i$\quad for every $S$
		\item $g_{S} \equiv g_{S'}\mod \alpha^2_j-\alpha^2_i$\quad for $S,S' \in \mathcal{S}$ with $S\cup\{j\}=S'\cup\{i\}$.
	\end{enumerate}	
	The first set of congruence relations means that 
	\[g_{S}=\big(\prod_{i=1}^{n}\alpha_i\big) \cdot h_{S}\] 
	for a polynomial $h_{S}\in \Q[\alpha_1,\ldots,\alpha_n]$ and for every $S$. Substitute into the second set of congruence relations, and note that $\prod_{i=1}^{n}\alpha_i$ is coprime with $\alpha^2_j-\alpha^2_i$, then we get
	\[
	h_{S} \equiv h_{S'}\mod \alpha^2_j-\alpha^2_i \quad \text{ for $S,S' \in \mathcal{S}$ with $S\cup\{j\}=S'\cup\{i\}$}
	\] 
	exactly the same as the congruence relations on the $f_S$ polynomials. Denote
	\[
	r^T=\big((\prod_{i=1}^{n}\alpha_i) \theta\big)_{S\in \mathcal{S}}
	\]
	which has $(r^T)^2=0$ because $\theta$ is the unit volume form of $S^1$, and has degree $2n+1$ because each $\alpha_i$ is of degree $2$ in cohomology. Then we can write 
	\[
	(f_S+g_S\theta)_{S\in \mathcal{S}} = (f_S)_{S\in \mathcal{S}} + r^T\cdot (h_S)_{S\in \mathcal{S}}.
	\]
	This establishes the bijection
	\[
	H^*_{T^n}(G_{2k+1}(\R^{2n+2})) \cong H^*_{T^n}(G_{2k}(\R^{2n}))[r^T]/(r^T)^2
	\]
	which can be easily verified to be a $\Q[\alpha_1,\ldots,\alpha_n]$-algebra isomorphism.
	\item From Theorem \ref{thm:GKMrealGrass}, the GKM description of even dimensional real Grassmannians has the congruence relations on the $f_S$ polynomials:
	\[
	f_{S} \equiv f_{S'}\mod \alpha^2_j-\alpha^2_i \quad \text{for $S,S' \in \mathcal{S}$ with $S\cup\{j\}=S'\cup\{i\}$}
	\] 
	which automatically satisfy the congruence relations on the $f_S$ polynomials for the complex Grassmannians in Theorem \ref{thm:GKMcplxGrass}:
	\[
	f_{S} \equiv f_{S'}\mod \alpha_j-\alpha_i \quad \text{for $S,S' \in \mathcal{S}$ with $S\cup\{j\}=S'\cup\{i\}$}.
	\]
	This establishes the injection
	\[
	H^*_{T^n}(G_{2k}(\R^{2n})) \hookrightarrow H^*_{T^n}(G_{k}(\C^{n}))
	\]
	which is also easy to verify as a $\Q[\alpha_1,\ldots,\alpha_n]$-algebra monomorphism.
\end{enumerate}
\end{proof}

\begin{rmk}
	Those $\Q[\alpha_1,\ldots,\alpha_n]$-algebra isomorphisms in Theorem\,\ref{thm:AllGrass} give ring isomorphisms among ordinary cohomology of real Grassmannians. But the ordinary version $H^*(G_{2k}(\R^{2n})) \rightarrow H^*(G_{k}(\C^{n}))$ is not injective simply due to fact that $G_{2k}(\R^{2n})$ is of dimension $4k(n-k)$, twice the real dimension of $G_{k}(\C^{n})$.
\end{rmk}

\begin{thm}[Canonical basis of even dimensional real Grassmannians]\label{thm:RealSchub}
	There is a self-indexing Morse function on $\mathcal{S}$
	\[
	\psi: \mathcal{S} \longrightarrow \R : S \longmapsto 4(\sum_{i\in S} i) -2k(k+1)
	\]
	and a canonical class $\sigma_S \in H^{\psi(S)}_{T^n}(G_{2k}(\R^{2n}),\Q)$ for each $S\in \mathcal{S}$ such that
	\begin{enumerate}
		\item $\sigma_S$ is supported upward, i.e. $\sigma_S(S')=0$ if $\psi(S')\leq \psi(S)$
		\item $\sigma_S(S)=\prod' (\alpha^2_j - \alpha^2_i)$ where the product is taken over the weights at $S$ connecting to $S'$ with $\psi(S')<\psi(S)$
	\end{enumerate}
	Moreover, $\{\sigma_S, S\in \mathcal{S}\}$ give an additive $\Q[\alpha_1,\ldots,\alpha_n]$-basis of $H^*_{T^n}(G_{2k}(\R^{2n}),\Q)$.
\end{thm}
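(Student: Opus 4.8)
The plan is to manufacture the classes $\sigma_S$ out of the complex canonical classes $\tau_S$ of Theorem~\ref{thm:CplxSchub} by the degree-doubling substitution $\alpha_i\mapsto\alpha_i^2$, and then to establish the basis statement by an upper-triangularity argument together with the Poincar\'e series of Theorem~\ref{thm:Poinc}. The starting point is that, by Theorem~\ref{thm:GKMrealGrass}(1), $H^*_{T^n}(G_{2k}(\R^{2n}))$ is exactly the GKM ring of $G_k(\C^n)$ from Theorem~\ref{thm:GKMcplxGrass} with every congruence modulus $\alpha_j-\alpha_i$ replaced by $\alpha_j^2-\alpha_i^2$; note also that $\psi=2\phi$, since $4\sum_{i\in S}i-2k(k+1)=2\bigl(2\sum_{i\in S}i-k(k+1)\bigr)$, so the two Morse functions induce the same order on $\mathcal S$ and the same notion of a down-edge at each vertex.

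Let $\rho\colon\Q[\alpha_1,\dots,\alpha_n]\to\Q[\alpha_1,\dots,\alpha_n]$ be the injective graded ring homomorphism $\alpha_i\mapsto\alpha_i^2$, which doubles cohomological degree, and put $\sigma_S(S'):=\rho\bigl(\tau_S(S')\bigr)=\tau_S(S')(\alpha_1^2,\dots,\alpha_n^2)$ for $S,S'\in\mathcal S$. Since $\rho(\alpha_j-\alpha_i)=\alpha_j^2-\alpha_i^2$, applying $\rho$ to each complex GKM relation $\tau_S(S')\equiv\tau_S(S'')\bmod\alpha_j-\alpha_i$ yields $\sigma_S(S')\equiv\sigma_S(S'')\bmod\alpha_j^2-\alpha_i^2$, so $\sigma_S\in H^*_{T^n}(G_{2k}(\R^{2n}))$; because $\rho$ doubles degree and $\tau_S\in H^{\phi(S)}_{T^n}(G_k(\C^n))$ with $\psi=2\phi$, in fact $\sigma_S\in H^{\psi(S)}_{T^n}(G_{2k}(\R^{2n}))$. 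Conditions~(1) and~(2) are then immediate: $\rho$ is injective, so $\sigma_S(S')=0$ iff $\tau_S(S')=0$, which holds whenever $\phi(S')\le\phi(S)$, i.e.\ whenever $\psi(S')\le\psi(S)$; and $\sigma_S(S)=\rho\bigl(\prod'(\alpha_j-\alpha_i)\bigr)=\prod'(\alpha_j^2-\alpha_i^2)$ over the same set of down-edges at $S$.

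For the basis property, upper-triangularity gives linear independence: ordering $\mathcal S$ by increasing $\psi$ (ties broken arbitrarily), conditions~(1) and~(2) make the matrix $\bigl(\sigma_S(S')\bigr)_{S,S'}$ upper triangular with non-zero diagonal entries $\prod'(\alpha_j^2-\alpha_i^2)$, so no non-trivial $\Q[\alpha_1,\dots,\alpha_n]$-combination of the $\sigma_S$ vanishes; hence $\sum_S\Q[\alpha_1,\dots,\alpha_n]\sigma_S$ is free on the $\sigma_S$, with Hilbert series $\bigl(\sum_S t^{\psi(S)}\bigr)/(1-t^2)^n$. On the other hand, by equivariant formality (Theorem~\ref{thm:formal}), $H^*_{T^n}(G_{2k}(\R^{2n}))\cong\Q[\alpha_1,\dots,\alpha_n]\otimes_\Q H^*(G_{2k}(\R^{2n}))$ has Hilbert series $P_{G_{2k}(\R^{2n})}(t)/(1-t^2)^n$, and by Theorem~\ref{thm:Poinc} together with $P_{G_k(\C^n)}(t)=\sum_S t^{\phi(S)}$ (the complex Schubert cell count, cf.\ Theorem~\ref{thm:CplxSchub}) this equals $\bigl(\sum_S t^{2\phi(S)}\bigr)/(1-t^2)^n=\bigl(\sum_S t^{\psi(S)}\bigr)/(1-t^2)^n$. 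The free submodule and the ambient module thus agree in every cohomological degree, hence coincide, so the $\sigma_S$ form a basis. Alternatively, spanning follows directly: the down-weights $\alpha_j^2-\alpha_i^2$ at a vertex are pairwise coprime in the unique factorization domain $\Q[\alpha_1,\dots,\alpha_n]$, so any $f$ vanishing below level $\psi(S)$ has $\prod'(\alpha_j^2-\alpha_i^2)\mid f(S)$, and one clears $f$ level by level by subtracting $\Q[\alpha_1,\dots,\alpha_n]$-multiples of the $\sigma_S$; this coprimality, with the combinatorial connection of the Johnson graph, also yields a self-contained Guillemin--Zara-type construction of the $\sigma_S$ \cite{GZ03} independent of the complex case.

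I expect no serious obstacle. The one point deserving care is that $\psi$ is not injective on $\mathcal S$ (distinct $k$-subsets can share a coordinate sum); this is harmless because $\psi$ still changes along every edge of the Johnson graph, so the order and the down-edge sets used above are well defined, and conditions~(1)--(2) already determine $\sigma_S$ on every vertex with $\psi\le\psi(S)$. The actual content is the single compatibility identity $\rho(\alpha_j-\alpha_i)=\alpha_j^2-\alpha_i^2$, which promotes the complex congruences to the stronger real ones.
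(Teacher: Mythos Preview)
Your proposal is correct and follows essentially the same route as the paper: the paper defines the same squaring homomorphism (calling it $Sq$ rather than $\rho$), sets $\sigma_S=Sq(\tau_S)$, and checks properties (1)--(2) exactly as you do. The only difference is in the basis step, where the paper simply cites the Guillemin--Zara remark \cite[Thm~2.1]{GZ03} rather than spelling out the upper-triangularity plus Hilbert-series (or level-clearing) argument you give; your version is a bit more self-contained.
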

\begin{proof}
	By Theorem \ref{thm:AllGrass}, we can identify $H^*_{T^n}(G_{2k}(\R^{2n}))$ with its embedded image in $H^*_{T^n}(G_{k}(\C^{n}))$. Recall from Theorem \ref{thm:CplxSchub} on the canonical classes of complex Grassmannian $G_{k}(\C^{n})$, we used the function $\phi=\frac{\psi}{2}$, hence both $\psi$ and $\phi$ define the same partial order on $\mathcal{S}$. Moreover, there is a basis $\tau_S$ of $H^*_{T^n}(G_{k}(\C^{n}))$, such that
	\begin{enumerate}
		\item $\tau_S$ is supported upward, i.e. $\tau_S(S')=0$ if $\phi(S')\leq \phi(S)$
		\item $\tau_S(S)=\prod' (\alpha_j - \alpha_i)$ where the product is taken over the weights at $S$ connecting to $S'$ with $\phi(S')<\phi(S)$
	\end{enumerate}	
	Let's introduce the ring homomorphism:
	\[
	Sq: \Q[\alpha_1,\ldots,\alpha_n] \rightarrow \Q[\alpha_1,\ldots,\alpha_n] : f(\alpha_1,\ldots,\alpha_n) \mapsto f(\alpha^2_1,\ldots,\alpha^2_n).
	\]
	If $f_S \equiv f_{S'} \mod \alpha_j-\alpha_i$, i.e. $f_S - f_{S'}$ is a multiple of $\alpha_j-\alpha_i$, then $Sq(f_S) - Sq(f_{S'})=Sq(f_S - f_{S'})$ is a multiple of $Sq(\alpha_j-\alpha_i)=\alpha^2_j-\alpha^2_i$, i.e. $Sq(f_S) \equiv Sq(f_{S'}) \mod \alpha^2_j-\alpha^2_i$. The homomorphism $Sq$ not only refines the congruence relations of $H^*_{T^n}(G_{k}(\C^{n}))$, but also has image in $H^*_{T^n}(G_{2k}(\R^{2n}))$, i.e. $Sq(H^*_{T^n}(G_{k}(\C^{n}))) \subseteq H^*_{T^n}(G_{2k}(\R^{2n}))$.
	Now we can define $\sigma_S = Sq(\tau_S) \in H^*_{T^n}(G_{2k}(\R^{2n}))$, and we see that this collection of classes satisfies the required properties of being supported upward and $\sigma_S(S)=\prod' (\alpha^2_j - \alpha^2_i)$ over weights at $S$ connecting to $S'$ with $\psi(S')<\psi(S)$.
	
	According to Guillemin\&Zara (\cite{GZ03} pp.\,125, Remark of Thm\,2.1), $\{\sigma_S\}$ give an additive basis of $H^*_{T^n}(G_{2k}(\R^{2n}))$.
\end{proof}

Since we have proved $H^*_{T^n}(G_{2k+1}(\R^{2n+2})) \cong H^*_{T^n}(G_{2k}(\R^{2n}))[r^T]/(r^T)^2$ in Theorem \ref{thm:AllGrass}, then
\begin{cor}[Canonical basis of odd dimensional real Grassmannians]
	 $\sigma_S$ and $r^T\sigma_S$ give an additive $\Q[\alpha_1,\ldots,\alpha_n]$-basis of $H^*_{T^n}(G_{2k+1}(\R^{2n+2}))$.
\end{cor}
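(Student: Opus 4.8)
The plan is to read the statement off the structural isomorphism already established. First I would invoke Theorem~\ref{thm:AllGrass}(2): as a $\Q[\alpha_1,\ldots,\alpha_n]$-algebra,
\[
H^*_{T^n}(G_{2k+1}(\R^{2n+2})) \;\cong\; H^*_{T^n}(G_{2k}(\R^{2n}))[r^T]\big/\big((r^T)^2\big),
\]
and for any commutative ring $A$ the quotient $A[r]/(r^2)$ is, as an $A$-module, the \emph{free} module $A\oplus Ar$ of rank $2$, with multiplication by $r$ giving an $A$-module isomorphism of the summand $A$ onto the summand $Ar$. Applying this with $A=H^*_{T^n}(G_{2k}(\R^{2n}))$ shows that $H^*_{T^n}(G_{2k+1}(\R^{2n+2}))$ splits as a $\Q[\alpha_1,\ldots,\alpha_n]$-module into $H^*_{T^n}(G_{2k}(\R^{2n}))\oplus r^T\cdot H^*_{T^n}(G_{2k}(\R^{2n}))$.

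Next I would feed in Theorem~\ref{thm:RealSchub}, which supplies a $\Q[\alpha_1,\ldots,\alpha_n]$-basis $\{\sigma_S : S\in\mathcal{S}\}$ of $H^*_{T^n}(G_{2k}(\R^{2n}))$. Transporting this basis across the two summands, $\{\sigma_S\}\cup\{r^T\sigma_S\}$ is a $\Q[\alpha_1,\ldots,\alpha_n]$-basis of $H^*_{T^n}(G_{2k+1}(\R^{2n+2}))$. As a consistency check I would track degrees: $\sigma_S$ lies in degree $\psi(S)$ and $r^T$ in degree $2n+1$ (each $\alpha_i$ having cohomological degree $2$), so $r^T\sigma_S$ lies in degree $\psi(S)+2n+1$; summing $t^{\psi(S)}+t^{\psi(S)+2n+1}$ over $S\in\mathcal{S}$ yields $(1+t^{2n+1})\,P_{G_k(\C^n)}(t^2)$, which matches the Casian--Kodama Poincar\'e series of $G_{2k+1}(\R^{2n+2})$ in Theorem~\ref{thm:Poinc}, confirming the count is right.

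The only point that needs a moment's care — and where I would focus the write-up — is to use the identification of Theorem~\ref{thm:AllGrass}(2) as an honest isomorphism of $\Q[\alpha_1,\ldots,\alpha_n]$-modules: under the bijection $(f_S+g_S\theta)_{S\in\mathcal{S}}\mapsto (f_S)_{S\in\mathcal{S}}+r^T\cdot(h_S)_{S\in\mathcal{S}}$ constructed there (with $g_S=(\prod_i\alpha_i)h_S$), the class $\sigma_S$ of $H^*_{T^n}(G_{2k}(\R^{2n}))$ corresponds to the element with $f$-component $(\sigma_S)$ and vanishing $g\theta$-component, while $r^T\sigma_S$ corresponds to the element with vanishing $f$-component and $g\theta$-component $r^T\cdot(\sigma_S)$; linear independence and spanning of the two families then follow at once from the direct-sum decomposition. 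Since everything here is formal once Theorems~\ref{thm:AllGrass} and~\ref{thm:RealSchub} are granted, I expect no genuine obstacle: the corollary is essentially the statement that a basis of $A$ together with $r^T$ times that basis is a basis of $A[r^T]/((r^T)^2)$.
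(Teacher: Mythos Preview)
Your proof is correct and follows exactly the paper's approach: the corollary is stated immediately after the isomorphism $H^*_{T^n}(G_{2k+1}(\R^{2n+2})) \cong H^*_{T^n}(G_{2k}(\R^{2n}))[r^T]/(r^T)^2$ of Theorem~\ref{thm:AllGrass}(2), and the paper simply reads off the basis from Theorem~\ref{thm:RealSchub} just as you do. Your additional Poincar\'e-series check is a nice sanity verification but not needed for the argument.
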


\begin{rmk}
	In the case of complex Grassmannian $G_{k}(\C^{n})$, a subset $S\subseteq \{1,2,\ldots,n\}$ with elements $ i_1<i_2<\cdots<i_k$ corresponds to Schubert symbol $(i_1-1,i_2-2,\ldots,i_k-k)$; there could be correspondences for real Grassmannians
	\begin{enumerate}
		\item For even dimensional Grassmannians $G_{2k}(\R^{2n}),G_{2k}(\R^{2n+1})$, let $S$ consist of $ i_1<i_2<\cdots<i_k$, then the $T^n$-fixed point $\oplus_{i\in S} \R^2_{[\alpha_i]}$, with pivot positions $(2i_1-1,2i_1, 2i_2-1,2i_2,\ldots,2i_k-1,2i_k)$ in its reduced echelon form, will correspond to Schubert symbol $(2i_1-2,2i_1-2, 2i_2-4,2i_2-4,\ldots,2i_k-2k,2i_k-2k)$. 
		\item For even dimensional Grassmannian $G_{2k+1}(\R^{2n+1})$, the $T^n$-fixed point $\R_0 \oplus (\oplus_{i\in S} \R^2_{[\alpha_i]})$, with pivot positions $(1,2i_1,2i_1+1, 2i_2,2i_2+1,\ldots,2i_k,2i_k+1)$ in its reduced echelon form, will also correspond to Schubert symbol $(2i_1-2,2i_1-2, 2i_2-4,2i_2-4,\ldots,2i_k-2k,2i_k-2k)$.
		\item For odd dimensional Grassmannian $G_{2k+1}(\R^{2n+2})$, besides the above Schubert symbols $(2i_1-2,2i_1-2, 2i_2-4,2i_2-4,\ldots,2i_k-2k,2i_k-2k)$, there is the class $r^T$, which is conjectured by Casian\&Kodama \cite{CK} to be the Schubert class with the hook Young diagram $1^{2k}\times (2(n-k)+1)$ of symbol $(1,\ldots,1,2(n-k)+1)$ where there are $2k$ copies of $1$. Following this conjecture, we can guess that a class $r^T\sigma_S$ with $S$ given by $i_1<i_2<\cdots<i_k$, corresponds to the Schubert symbol $(2i_1-1,2i_1-1, 2i_2-3,2i_2-3,\ldots,2i_k-2k+1,2i_k-2k+1,2(n-k)+1)$.
	\end{enumerate} 
\end{rmk}

Recall from Subsection \ref{subsec:BGKM} of the equivariant Littlewood-Richardson rule for complex Grassmannian $G_k(\C^n)$
\[
\tau_{S}\tau_{S'} =\sum_{S''}N_{S,S'}^{S''}\tau_{S''} 
\]
where $N_{S,S'}^{S''}\in \Q[\alpha_1,\ldots,\alpha_n]$. If we apply the ring homomorphism $Sq$ on both sides, then we get:
\begin{thm}[Equivariant Littlewood-Richardson coefficients for real Grassmannians]
	The equivariant Littlewood-Richardson coefficients for real Grassmannian $G_{2k}(\R^{2n})$ satisfy
	\[
	\sigma_{S}\sigma_{S'} =\sum_{S''}Sq(N_{S,S'}^{S''})\sigma_{S''}
	\]
	where $Sq(N_{S,S'}^{S''}) \in \Q[\alpha^2_1,\ldots,\alpha^2_n]$ is obtained from $N_{S,S'}^{S''}$ by replacing $\alpha_i$ to be $\alpha_i^2$. 
\end{thm}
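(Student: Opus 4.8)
The plan is to apply the ring homomorphism $Sq$ term-by-term to the equivariant Littlewood--Richardson identity $\tau_S\tau_{S'}=\sum_{S''}N^{S''}_{S,S'}\tau_{S''}$ already recorded for the complex Grassmannian $G_k(\C^n)$ in Subsection~\ref{subsec:BGKM}. The whole argument is formal once one notices that $Sq$ is not merely an endomorphism of the scalar ring $\Q[\alpha_1,\ldots,\alpha_n]$ but induces an honest ring homomorphism between the two equivariant cohomology rings in question.

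First I would promote $Sq$ to a ring map $Sq\colon H^*_{T^n}(G_k(\C^n))\to H^*_{T^n}(G_{2k}(\R^{2n}))$. By Theorems~\ref{thm:GKMcplxGrass} and~\ref{thm:GKMrealGrass}, both rings sit inside the product ring $\prod_{S\in\mathcal{S}}\Q[\alpha_1,\ldots,\alpha_n]$ with pointwise addition and multiplication, and applying $Sq$ in each coordinate is a ring endomorphism of this product ring. The computation already performed in the proof of Theorem~\ref{thm:RealSchub} — that $f_S\equiv f_{S'}\bmod\alpha_j-\alpha_i$ forces $Sq(f_S)\equiv Sq(f_{S'})\bmod\alpha_j^2-\alpha_i^2$ — shows precisely that this endomorphism carries the complex GKM congruence ring into the real GKM congruence ring. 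By construction $Sq(\tau_S)=\sigma_S$; and since $Sq$ is a ring homomorphism of the ambient product ring, it is compatible with scalars, i.e.\ $Sq(f\cdot x)=Sq(f)\cdot Sq(x)$ for $f\in\Q[\alpha_1,\ldots,\alpha_n]$ (sitting inside the product ring as the constant tuple) and $x$ in the complex GKM ring.

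Then I would simply apply $Sq$ to both sides of the complex identity. The left-hand side becomes $Sq(\tau_S)\,Sq(\tau_{S'})=\sigma_S\sigma_{S'}$, and the right-hand side becomes $\sum_{S''}Sq(N^{S''}_{S,S'})\,Sq(\tau_{S''})=\sum_{S''}Sq(N^{S''}_{S,S'})\,\sigma_{S''}$, using additivity, multiplicativity and scalar-compatibility of $Sq$. Since $Sq(N^{S''}_{S,S'})$ is by definition the result of substituting $\alpha_i\mapsto\alpha_i^2$, it lies in $\Q[\alpha_1^2,\ldots,\alpha_n^2]$; and because $\{\sigma_{S''}\}$ is a $\Q[\alpha_1,\ldots,\alpha_n]$-basis of $H^*_{T^n}(G_{2k}(\R^{2n}))$ by Theorem~\ref{thm:RealSchub}, these coefficients are the unique ones expressing $\sigma_S\sigma_{S'}$ in the canonical basis, so the displayed formula genuinely is the equivariant Littlewood--Richardson rule for $G_{2k}(\R^{2n})$.

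I do not expect a real obstacle: the only step requiring justification — that $Sq$ descends to a ring map between the two equivariant cohomology rings and sends $\tau_S$ to $\sigma_S$ — was already carried out inside the proof of Theorem~\ref{thm:RealSchub}, so here it is pure bookkeeping. The one mild subtlety worth a remark is that $Sq$ is \emph{not} surjective: its image is the $\Q[\alpha_1^2,\ldots,\alpha_n^2]$-span of the $\sigma_{S}$, which is strictly smaller than the full $\Q[\alpha_1,\ldots,\alpha_n]$-span $H^*_{T^n}(G_{2k}(\R^{2n}))$. This is harmless, since we only need the product formula to hold inside the larger ring, and it does because it already holds inside the image of $Sq$.
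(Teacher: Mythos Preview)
Your proposal is correct and follows exactly the paper's approach: the paper's proof is literally the single sentence ``If we apply the ring homomorphism $Sq$ on both sides, then we get'' preceding the theorem, and your write-up simply unpacks why $Sq$ is a ring map from $H^*_{T^n}(G_k(\C^n))$ to $H^*_{T^n}(G_{2k}(\R^{2n}))$ sending $\tau_S$ to $\sigma_S$, which is precisely what was established in Theorem~\ref{thm:RealSchub}. Your additional observations about uniqueness of the coefficients and non-surjectivity of $Sq$ are accurate but not needed for the statement as given.
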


\begin{rmk}
Since $Sq$ keeps constant term unchanged, the Littlewood-Richardson rules for ordinary cohomology of complex Grassmannian $G_k(\C^n)$ and real Grassmannian $G_{2k}(\R^{2n})$ are the same.
\end{rmk}

\subsection{Leray-Borel description of real Grassmannians}
Similar to Leray-Borel description of equivariant (ordinary) cohomology of complex Grassmannians using equivariant (ordinary) Chern classes, we will show there is a Leray-Borel description of equivariant (ordinary) cohomology of real Grassmannians using equivariant (ordinary) Pontryagin classes.

\subsubsection{Equivariant Pontryagin classes}
The $T^n$ actions on $\R^{2n}=\oplus_{i=1}^{n} \R^2_{[\alpha_i]}$, $\R^{2n+1}=(\oplus_{i=1}^{n} \R^2_{[\alpha_i]})\oplus \R_0$ and $\R^{2n+2}=(\oplus_{i=1}^{n} \R^2_{[\alpha_i]})\oplus \R^2_0$ induce actions on $G_{2k}(\R^{2n})$, $G_{2k}(\R^{2n+1})$, $G_{2k+1}(\R^{2n+1})$ and $G_{2k+1}(\R^{2n+2})$. These actions induce further actions on the canonical bundles $\gamma$ and complementary bundles $\bar{\gamma}$ over those Grassmannians. Then we can consider their equivariant Pontryagin classes $p^T=p^T(\gamma)$ and $\bar{p}^T=p^T(\bar{\gamma})$.

First, let's compute a warm-up example of equivariant Pontryagin classes.

\begin{lem}\label{lem:Pont}
	The total equivariant Pontryagin class of the vector space $\R^2_{[\alpha]}$ with weight $[\alpha] \in \mathfrak{t}^*_\Z/\pm 1$ over a point is $1+\alpha^2$.
\end{lem}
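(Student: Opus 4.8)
The plan is to reduce the computation to a Chern-class computation by exploiting the fact that the $T$-action on $\R^2_{[\alpha]}$ is by rotation. By definition, the equivariant Pontryagin class $p^T(\R^2_{[\alpha]})$ over a point is the ordinary total Pontryagin class of the real plane bundle $E_\alpha := (ET\times \R^2_{[\alpha]})/T$ over $BT$, i.e.\ the bundle associated to the principal $T$-bundle $ET\to BT$ via the representation $\R^2_{[\alpha]}$. The key observation is that rotation preserves the standard complex structure on $\R^2=\C$, so this structure is $T$-invariant and $E_\alpha$ is the underlying real bundle of a complex line bundle $L_\alpha\to BT$. Under the identification $c_1(\gamma_i)=\alpha_i$ fixed in Subsection\,2.2, one has $c_1(L_\alpha)=\pm\alpha$, where the sign ambiguity is precisely that of $[\alpha]\in\mathfrak{t}^*_\Z/\pm1$ and will turn out to be irrelevant.

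Next I would invoke the standard relation between Pontryagin classes and the Chern classes of the complexification: for a real bundle $V$ one has $p_j(V)=(-1)^j c_{2j}(V\otimes_\R\C)$. For $V=E_\alpha$, the underlying real bundle of the complex line bundle $L_\alpha$, there is the usual isomorphism $E_\alpha\otimes_\R\C\cong L_\alpha\oplus\overline{L_\alpha}$, hence
\[
c(E_\alpha\otimes_\R\C)=c(L_\alpha)\,c(\overline{L_\alpha})=(1+c_1(L_\alpha))(1-c_1(L_\alpha))=1-c_1(L_\alpha)^2=1-\alpha^2.
\]
Therefore $c_2(E_\alpha\otimes_\R\C)=-\alpha^2$ and all higher Chern classes vanish, so $p_1(E_\alpha)=-c_2(E_\alpha\otimes_\R\C)=\alpha^2$ and $p_j(E_\alpha)=0$ for $j\geq2$; this gives the total equivariant Pontryagin class $1+\alpha^2$, as claimed.

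There is essentially no obstacle here beyond bookkeeping of sign conventions; the one point worth stressing is that although $[\alpha]$, and hence $c_1(L_\alpha)$, is only defined up to sign, the output $\alpha^2$ is well-defined — which is exactly why Pontryagin classes are the right characteristic classes for the non-oriented real Grassmannians. As an alternative to the complexification formula, one may use that a rank-$2$ oriented real bundle $V$ satisfies $p_1(V)=e(V)^2$; since the rotation action orients $E_\alpha$ with $e(E_\alpha)=c_1(L_\alpha)=\pm\alpha$, this again yields $p_1(E_\alpha)=\alpha^2$ and the total class $1+\alpha^2$.
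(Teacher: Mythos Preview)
Your proof is correct and follows essentially the same route as the paper: both arguments compute the equivariant Chern class of the complexification $\R^2_{[\alpha]}\otimes_\R\C$ as $(1+\alpha)(1-\alpha)=1-\alpha^2$ and then apply the Milnor--Stasheff sign convention to read off $p^T=1+\alpha^2$. The only cosmetic difference is that the paper obtains the weights $\pm\alpha$ of the complexification by explicitly diagonalizing the rotation matrix over $\C$, whereas you invoke the standard isomorphism $E_\alpha\otimes_\R\C\cong L_\alpha\oplus\overline{L_\alpha}$; your alternative via $p_1=e^2$ is a nice bonus not present in the paper.
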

\begin{proof}
	Think of the elements of $\R^2_{[\alpha]}$ as $2\times 1$ column vectors. For a Lie algebra element $\xi \in \mathfrak{t}$, the action of its group element $\exp(\xi) \in T$ on $\R^2_{[\alpha]}$ is given as a $2\times 2$ real matrix 
	\[
	\begin{pmatrix}
	\cos(\alpha(\xi)) & -\sin(\alpha(\xi))\\
	\sin(\alpha(\xi)) & \cos(\alpha(\xi))
	\end{pmatrix}
	\qquad 
	\textup{or}
	\qquad
	\begin{pmatrix}
	\cos(\alpha(\xi)) & \sin(\alpha(\xi))\\
	-\sin(\alpha(\xi)) & \cos(\alpha(\xi))
	\end{pmatrix}.	
	\]
	Tensoring $\R^2_{[\alpha]}$ over $\R$-coefficients with $\C$ means that we can treat the above real matrices as complex matrices. Since both of them have the same characteristic function $\lambda^2-2\cos(\alpha(\xi))\lambda+1=(\lambda-e^{\sqrt{-1}\alpha(\xi)})(\lambda-e^{-\sqrt{-1}\alpha(\xi)})$, the two real matrices have the same diagonalization over $\C$-coefficients:
	\[
	\begin{pmatrix}
	e^{\sqrt{-1}\alpha(\xi)} & 0\\
	0 & e^{-\sqrt{-1}\alpha(\xi)}
	\end{pmatrix}
	\]
	i.e. the $T$-action on the complex vector space $\R^2_{[\alpha]}\otimes_\R \C$ has weights $\alpha$ and $-\alpha$. Therefore, $c^T(\R^2_{[\alpha]}\otimes_\R \C)=(1-\alpha)(1+\alpha)=1-\alpha^2$. Following Milnor-Stasheff's convention of signs, we get $p^T(\R^2_{[\alpha]})=1+\alpha^2$.
\end{proof}

Second, let's specify the equivariant Pontryagin classes of canonical bundles, complementary bundles and tangent bundles of real Grassmannians in GKM description at each fixed point or circle of the real Grassmannians. 

\begin{prop}\label{prop:Pont}
	For all the four real Grassmannians $G_{2k}(\R^{2n})$, $G_{2k}(\R^{2n+1})$, $G_{2k+1}(\R^{2n+1})$ and $G_{2k+1}(\R^{2n+2})$ with $T^n$-actions, the equivariant Pontryagin classes $p^T=p^T(\gamma)$ and $\bar{p}^T=p^T(\bar{\gamma})$ of the canonical bundle and complementary bundle localized at each fixed point or circle indexed as a $k$-element subset $S \in \{1,\ldots,n\}$ are
	\begin{align*}
	p^T|_S &= p^T(\gamma|_S)=\prod_{i\in S} (1+\alpha^2_i)\\
	\bar{p}^T|_S &= p^T(\bar{\gamma}|_S)=\prod_{j\not\in S} (1+\alpha^2_j)
	\end{align*}
	with the relation $p^T\bar{p}^T = \prod_{i=1}^{n}(1+\alpha^2_i)$. The equivariant Pontryagin classes of the tangent bundles are given at each fixed point or circle $S$ as
	\begin{align*}
	p^T(TG_{2k}(\R^{2n}))|_S&=\prod_{i \in S}\prod_{j \not \in S} \big[(1+(\alpha_j-\alpha_i)^2)(1+(\alpha_j+\alpha_i)^2)\big]\\
	p^T(TG_{2k}(\R^{2n+1}))|_S&=\prod_{i \in S}\prod_{j \not \in S} \big[(1+(\alpha_j-\alpha_i)^2)(1+(\alpha_j+\alpha_i)^2)\big] \prod_{i \in S} (1+\alpha_i^2)\\
	p^T(TG_{2k+1}(\R^{2n+1}))|_S&=\prod_{i \in S}\prod_{j \not \in S} \big[(1+(\alpha_j-\alpha_i)^2)(1+(\alpha_j+\alpha_i)^2)\big] \prod_{j \not \in S} (1+\alpha_j^2)\\
	p^T(TG_{2k+1}(\R^{2n+2}))|_S&=\prod_{i \in S}\prod_{j \not \in S} \big[(1+(\alpha_j-\alpha_i)^2)(1+(\alpha_j+\alpha_i)^2)\big] \prod_{i \in S} (1+\alpha_i^2) \prod_{j \not \in S} (1+\alpha_j^2).
	\end{align*}
\end{prop}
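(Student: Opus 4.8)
The plan is to reduce everything to three inputs already available: the equivariant Whitney product formula for Pontryagin classes (which holds exactly with $\Q$-coefficients), the warm-up computation $p^T(\R^2_{[\alpha]})=1+\alpha^2$ of Lemma \ref{lem:Pont}, and the $T$-equivariant decompositions of the tangent spaces at the fixed points (and of the tangent and normal bundles of the fixed circles) that were worked out in Subsection \ref{subsubsec:IsoWeights}, including the identification $(\R^2_{[\alpha_i]})^*\otimes_\R\R^2_{[\alpha_j]}\cong\R^2_{[\alpha_j-\alpha_i]}\oplus\R^2_{[\alpha_j+\alpha_i]}$. I would first recall that $p^T$ is taken via $p_j=(-1)^jc_{2j}$ of the complexification, so that Lemma \ref{lem:Pont} is precisely the building block that feeds into every step; and that a $T$-equivariant bundle with trivial $T$-action has $p^T$ equal to the pullback of its ordinary total Pontryagin class.

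For the canonical and complementary bundles, I restrict $\gamma$ to the fixed component indexed by $S$: this is the $T$-representation $\oplus_{i\in S}\R^2_{[\alpha_i]}$, possibly plus a summand with trivial $T$-action (the fixed line $\R_0$ in the case $G_{2k+1}(\R^{2n+1})$, or the tautological line bundle over the fixed circle $C_S\cong\R P^1$ in the case $G_{2k+1}(\R^{2n+2})$); dually $\bar\gamma|_S=\oplus_{j\notin S}\R^2_{[\alpha_j]}$ up to such a summand. A trivial-action summand contributes $1$, since its Pontryagin class lies in $H^{4\ast}$ of a point or of $\R P^1$, which vanishes rationally in positive degree. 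Applying equivariant Whitney and Lemma \ref{lem:Pont} to each $\R^2_{[\alpha_i]}$ factor gives $p^T|_S=\prod_{i\in S}(1+\alpha_i^2)$ and $\bar p^T|_S=\prod_{j\notin S}(1+\alpha_j^2)$. The identity $p^T\bar p^T=\prod_{i=1}^n(1+\alpha_i^2)$ then follows either globally, from the $T$-equivariant splitting $\gamma\oplus\bar\gamma\cong\underline{\R^N}$ which equals $\oplus_i\underline{\R^2_{[\alpha_i]}}$ up to a trivial summand, or locally: the product above is independent of $S$, hence by equivariant formality together with the Chang--Skjelbred/GKM injectivity $H^*_T(G)\hookrightarrow H^*_T(G^T)$ it is the image of $\prod_i(1+\alpha_i^2)\in H^*_T(pt)$.

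For the tangent bundles I restrict $TG$ to the fixed point or circle $S$ and read off the $T$-equivariant decomposition from Subsection \ref{subsubsec:IsoWeights}. Each summand $(\R^2_{[\alpha_i]})^*\otimes_\R\R^2_{[\alpha_j]}$ has $p^T=(1+(\alpha_j-\alpha_i)^2)(1+(\alpha_j+\alpha_i)^2)$ by the weight lemma plus Lemma \ref{lem:Pont}; each summand $(\R^2_{[\alpha_i]})^*$ (occurring for the $\oplus\R_0$ and $\oplus\R^2_0$ cases) contributes $1+\alpha_i^2$; and, in the odd-dimensional Grassmannian, the mixed summands $(\R^2_{[\alpha_i]})^*\otimes L_0^\perp$ and $L_0^*\otimes\R^2_{[\alpha_j]}$ restrict over $C_S\cong\R P^1$ to $\R^2_{[\alpha_i]}$- (resp.\ $\R^2_{[\alpha_j]}$-)type weight bundles tensored by a trivial-action real line bundle; their $p^T$ is still $1+\alpha_i^2$ (resp.\ $1+\alpha_j^2$) because the line-bundle contribution to the complexified Chern class dies in $H^2(\R P^1;\Q)=0$, while $L_0^*\otimes L_0^\perp$ is the tangent bundle of $C_S$ and so is trivial with $p^T=1$. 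Multiplying these out with the Whitney formula produces exactly the four displayed expressions.

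The main obstacle is this last point: over the fixed circle $\R P^1$ in $G_{2k+1}(\R^{2n+2})$ one must verify that tensoring the constant weight spaces by the tautological-type line bundles $L_0,L_0^\perp$ leaves the Pontryagin class unchanged. This is exactly where rational coefficients are indispensable, since the vanishing $H^{2j}(\R P^1;\Q)=0$ for $j\ge 1$ is what annihilates the Chern class of those line bundles; over $\Z$ the identities would only hold modulo $2$-torsion. Everything else is a mechanical bookkeeping of Whitney's formula applied to the decompositions already in hand, and no new geometric input is needed.
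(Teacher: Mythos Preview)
Your proposal is correct and follows essentially the same approach as the paper: restrict $\gamma$, $\bar\gamma$, and the tangent bundle to each fixed point or circle, read off the weight decomposition from Subsection~\ref{subsubsec:IsoWeights}, and apply Lemma~\ref{lem:Pont} together with the Whitney product formula. The paper's own proof is very terse (it simply says ``the cases \ldots\ are similar'' and ``apply Lemma~\ref{lem:Pont} to the weight decompositions''), whereas you spell out the one genuine subtlety the paper elides --- that over the fixed circle $C_S\cong\R P^1$ in $G_{2k+1}(\R^{2n+2})$ the tautological-type line bundles $L_0,L_0^\perp$ tensored into the weight summands contribute trivially because $H^{2j}(\R P^1;\Q)=0$ for $j\ge 1$; your handling of this point is accurate and is exactly what the paper is tacitly assuming.
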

\begin{proof}
	For $G_{2k}(\R^{2n})$, at each fixed point $S$, we have $\gamma|_S=\oplus_{i\in S} \R^2_{[\alpha_i]}$ and $\bar{\gamma}|_S=\oplus_{j \not\in S} \R^2_{[\alpha_j]}$, and furthermore $\gamma|_S\oplus\bar{\gamma}|_S=\oplus_{i=1}^{n} \R^2_{[\alpha_i]}$. The claimed expressions of the localized Pontryagin classes then follow from the Lemma \ref{lem:Pont}. The cases of $G_{2k}(\R^{2n+1})$, $G_{2k+1}(\R^{2n+1})$ and $G_{2k+1}(\R^{2n+2})$ are similar. For the Pontryagin classes of the tangent bundles localized at each fixed point or circle, we can apply Lemma \ref{lem:Pont} to the weight decompositions (see Subsubsection \ref{subsubsec:IsoWeights}) of tangent bundles at each fixed point or circle. 
\end{proof}

\subsubsection{Characteristic basis of real Grassmannians}
Think of $p^T$ and $\bar{p}^T$ as elements of the embedded image of $H^*_{T^n}(G_{2k}(\R^{2n}))$ in $H^*_{T^n}(G_{k}(\C^{n}))$ using GKM description on the Johnson graph $J(n,k)$. If we compare the localized expressions of $p^T$ and $\bar{p}^T$ with $c^T$ and $\bar{c}^T$ in Subsection \ref{subsec:BGKM}, we get the formula
\[
p^T=Sq(c^T) \qquad \bar{p}^T=Sq(\bar{c}^T)
\]
where the homomorphism $Sq$ is defined in Theorem \ref{thm:RealSchub}.

Recall in Subsection \ref{subsec:BGKM}, we discussed the transformations $K,\bar{K}$ between the characteristic monomials $(c^T)^I=(c_1^T)^{i_1}\cdots(c_k^T)^{i_k}$ in Leray-Borel description and the canonical classes $\tau_S$ in GKM description:
\begin{align*}
(c^T)^I &= \sum_S K^I_S \tau_S\\
\tau_S &= \sum_I \bar{K}_I^S (c^T)^I
\end{align*}
where $K^I_S,\bar{K}_I^S \in \Q[\alpha_1,\ldots,\alpha_n]$. Apply the homomorphism $Sq$ and recall $\sigma_S=Sq(\tau_S)$ from Theorem \ref{thm:RealSchub}, then
\begin{align*}
(p^T)^I &= \sum_S Sq(K^I_S) \sigma_S\\
\sigma_S &= \sum_I Sq(\bar{K}_I^S) (p^T)^I
\end{align*}
where $Sq(K^I_S),Sq(\bar{K}_I^S) \in \Q[\alpha_1,\ldots,\alpha_n]$.

Since $\{\sigma_S\}$ give a basis of $H^*_{T^n}(G_{2k}(\R^{2n}))$, the above transformations imply:

\begin{thm}[Equivariant characteristic basis of real Grassmannians]\label{thm:EquivCharReal}
	The set of monomials $(p^T_1)^{r_1}(p^T_2)^{r_2}\cdots (p^T_k)^{r_k}$ satisfying the condition $\sum_{i=1}^{k} r_i \leq n-k$ forms an additive $H^*_T(pt)$-basis for $H^*_{T^n}(G_{2k}(\R^{2n}))\cong H^*_{T^n}(G_{2k}(\R^{2n+1}))\cong H^*_{T^n}(G_{2k+1}(\R^{2n+1}))$. Together with the set of monomials $r^T\cdot(p^T_1)^{r_1}(p^T_2)^{r_2}\cdots (p^T_k)^{r_k}$, they form an additive $H^*_T(pt)$-basis for $H^*_{T^n}(G_{2k+1}(\R^{2n+2}))$.
\end{thm}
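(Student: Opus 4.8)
The plan is to transport the equivariant characteristic basis of the complex Grassmannian $G_k(\C^n)$ across the ring homomorphism $Sq$, exactly along the transformations already set up just before the statement, the only new ingredient being that these transformations are governed by an \emph{invertible} change-of-basis matrix. Concretely, from Subsection \ref{subsec:BGKM} we have the two $\Q[\alpha_1,\ldots,\alpha_n]$-bases $\{(c^T)^I:\sum_j i_j\le n-k\}$ and $\{\tau_S:S\in\mathcal S\}$ of $H^*_{T^n}(G_k(\C^n),\Q)$ related by $(c^T)^I=\sum_S K^I_S\tau_S$ and $\tau_S=\sum_I\bar K^S_I(c^T)^I$. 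Both index sets have $\binom{n}{k}$ elements (the count of exponent vectors with $\sum_j i_j\le n-k$ is the same combinatorial fact used for the complex Grassmannian), so $K=(K^I_S)$ and $\bar K=(\bar K^S_I)$ are square matrices over $\Q[\alpha_1,\ldots,\alpha_n]$; substituting one relation into the other and using that $\{(c^T)^I\}$ is a basis forces $K\bar K=\bar K K=\mathrm{Id}$, i.e. $\bar K=K^{-1}$.

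Next I would apply $Sq$ entrywise. Since $Sq$ is a ring homomorphism with $Sq(c^T_l)=p^T_l$ and $Sq(\tau_S)=\sigma_S$, it turns the two relations above into $(p^T)^I=\sum_S Sq(K^I_S)\sigma_S$ and $\sigma_S=\sum_I Sq(\bar K^S_I)(p^T)^I$, and it turns the matrix identity into $Sq(K)\,Sq(\bar K)=Sq(K\bar K)=\mathrm{Id}$. Hence $Sq(K)$ is an invertible $\binom{n}{k}\times\binom{n}{k}$ matrix over $\Q[\alpha_1,\ldots,\alpha_n]$. By Theorem \ref{thm:RealSchub}, $\{\sigma_S\}$ is a $\Q[\alpha_1,\ldots,\alpha_n]$-basis of $H^*_{T^n}(G_{2k}(\R^{2n}),\Q)$, so applying the invertible transformation $Sq(K)$ shows that $\{(p^T)^I:\sum_j r_j\le n-k\}$ is again a $\Q[\alpha_1,\ldots,\alpha_n]$-basis. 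The algebra isomorphisms of Theorem \ref{thm:AllGrass}(1) then carry this basis to $H^*_{T^n}(G_{2k}(\R^{2n+1}))$ and $H^*_{T^n}(G_{2k+1}(\R^{2n+1}))$.

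For the odd-dimensional Grassmannian I would invoke Theorem \ref{thm:AllGrass}(2): $H^*_{T^n}(G_{2k+1}(\R^{2n+2}))\cong H^*_{T^n}(G_{2k}(\R^{2n}))[r^T]/\bigl((r^T)^2\bigr)$ is a free module over $H^*_{T^n}(G_{2k}(\R^{2n}))$ with basis $\{1,r^T\}$. Combining this with the $\Q[\alpha_1,\ldots,\alpha_n]$-basis $\{(p^T)^I\}$ just obtained for $H^*_{T^n}(G_{2k}(\R^{2n}))$ yields that $\{(p^T)^I\}\cup\{r^T(p^T)^I\}$ with $\sum_j r_j\le n-k$ is a $\Q[\alpha_1,\ldots,\alpha_n]$-basis of $H^*_{T^n}(G_{2k+1}(\R^{2n+2}))$, which is the remaining claim.

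The one subtlety to watch is that $Sq$ is \emph{not} $\Q[\alpha_1,\ldots,\alpha_n]$-linear (it doubles weight degrees), so one cannot simply argue that $Sq$ is an isomorphism of free modules sending basis to basis; the argument must route through the fact that a ring homomorphism preserves the matrix identity $K\bar K=\mathrm{Id}$, which is precisely what makes $Sq(K)$ an invertible change-of-basis matrix for the genuinely $\Q[\alpha_1,\ldots,\alpha_n]$-linear structure relating the two candidate families $\{\sigma_S\}$ and $\{(p^T)^I\}$. This is the only real obstacle; everything else (the combinatorial count $\binom{n}{k}$, the mutual inverseness of $K$ and $\bar K$, the free-module structure over the even-dimensional ring) is routine or already in place.
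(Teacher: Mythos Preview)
Your proposal is correct and follows essentially the same approach as the paper: apply $Sq$ to the two transformations $K,\bar K$ between $\{(c^T)^I\}$ and $\{\tau_S\}$ to obtain transformations between $\{(p^T)^I\}$ and $\{\sigma_S\}$, then use that $\{\sigma_S\}$ is a basis. You make explicit the step the paper leaves implicit---namely that $K\bar K=\mathrm{Id}$ and hence $Sq(K)Sq(\bar K)=\mathrm{Id}$, so the change-of-basis matrix is genuinely invertible over $\Q[\alpha_1,\ldots,\alpha_n]$---and you correctly flag the subtlety that $Sq$ is only a ring homomorphism, not a module map; the paper simply writes ``the above transformations imply'' without isolating this point.
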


Now we can give the Leray-Borel description for equivariant cohomology of real Grassmannians.
\begin{thm}[Equivariant Leray-Borel description of even dimensional real Grassmannians]\label{thm:EquivBReal}
	For the even dimensional real Grassmannians $G_{2k}(\R^{2n})$, $G_{2k}(\R^{2n+1})$ and $G_{2k+1}(\R^{2n+1})$ with $T^n$-actions, their equivariant cohomology is the same:
	\[
	H^*_T(G_{2k}(\R^{2n}),\Q)\cong\frac{\Q[\alpha_1,\alpha_2,\ldots,\alpha_n][p^T_1,p^T_2,\ldots,p^T_k;\bar{p}^T_1,\bar{p}^T_2,\ldots,\bar{p}^T_{n-k}]}{p^T\bar{p}^T = \prod_{i=1}^{n}(1+\alpha^2_i)}.
	\]
\end{thm}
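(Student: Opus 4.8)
The plan is to construct an explicit $\Q[\alpha_1,\ldots,\alpha_n]$-algebra homomorphism from the ring on the right onto $H^*_{T^n}(G_{2k}(\R^{2n}),\Q)$ and then force it to be bijective by matching additive bases already in hand on both sides. Writing $P:=1+P_1+\cdots+P_k$ and $\bar P:=1+\bar P_1+\cdots+\bar P_{n-k}$ for indeterminates $P_i,\bar P_j$, set
\[
A:=\frac{\Q[\alpha_1,\ldots,\alpha_n][P_1,\ldots,P_k;\bar P_1,\ldots,\bar P_{n-k}]}{P\bar P=\prod_{i=1}^{n}(1+\alpha_i^2)},
\]
and first define a map out of the polynomial ring by $P_i\mapsto p^T_i$, $\bar P_j\mapsto\bar p^T_j$ (the equivariant Pontryagin classes of $\gamma$ and $\bar\gamma$) and $\alpha_i\mapsto\iota^*\alpha_i$. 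By Proposition \ref{prop:Pont} the identity $p^T\bar p^T=\prod_{i=1}^{n}(1+\alpha_i^2)$ holds in $H^*_{T^n}(G_{2k}(\R^{2n}))$, so this map descends to a $\Q[\alpha]$-algebra homomorphism $\Phi\colon A\to H^*_{T^n}(G_{2k}(\R^{2n}),\Q)$.

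Next I would determine the $\Q[\alpha]$-module structure of $A$. Expanding the defining relation by degree, $A$ is the quotient of the polynomial ring by the $n$ relations $\sum_{a+b=l}P_a\bar P_b=e_l(\alpha_1^2,\ldots,\alpha_n^2)$ for $l=1,\ldots,n$, where $e_l$ is the $l$-th elementary symmetric polynomial and $P_0=\bar P_0=1$, $P_a=0$ for $a>k$, $\bar P_b=0$ for $b>n-k$. These equations are obtained from $\sum_{a+b=l}P_a\bar P_b=e_l(\alpha_1,\ldots,\alpha_n)$ by applying the ring endomorphism $Sq\colon\Q[\alpha]\to\Q[\alpha]$, $\alpha_i\mapsto\alpha_i^2$, of Theorem \ref{thm:RealSchub} to the right-hand sides; equivalently, $A$ is the base change along $Sq$ of the ring
\[
A':=\frac{\Q[\alpha_1,\ldots,\alpha_n][P_1,\ldots,P_k;\bar P_1,\ldots,\bar P_{n-k}]}{P\bar P=\prod_{i=1}^{n}(1+\alpha_i)}.
\]
Renaming $P_i$ as $c^T_i$ and $\bar P_j$ as $\bar c^T_j$, the ring $A'$ is precisely the equivariant Leray--Borel presentation of $H^*_{T^n}(G_k(\C^n),\Q)$, which by the equivariant characteristic basis theorem for complex Grassmannians is a free $\Q[\alpha]$-module on the monomials $P^I=P_1^{r_1}\cdots P_k^{r_k}$ with $\sum_{i=1}^{k} r_i\le n-k$. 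Freeness, with exactly this monomial basis, is preserved by base change, so $A$ is a free $\Q[\alpha]$-module on $\{P^I:\sum_{i=1}^{k} r_i\le n-k\}$.

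I would then conclude. By construction $\Phi(P^I)=(p^T)^I$, and by Theorem \ref{thm:EquivCharReal} the classes $\{(p^T)^I:\sum_{i=1}^{k} r_i\le n-k\}$ form a $\Q[\alpha]$-basis of $H^*_{T^n}(G_{2k}(\R^{2n}),\Q)$. Hence $\Phi$ carries a $\Q[\alpha]$-basis of $A$ bijectively onto a $\Q[\alpha]$-basis of the target, so it is a $\Q[\alpha]$-module isomorphism, and being a ring map it is the claimed $\Q[\alpha]$-algebra isomorphism. That the same ring describes $G_{2k}(\R^{2n+1})$ and $G_{2k+1}(\R^{2n+1})$ follows from Theorem \ref{thm:AllGrass}(1): those isomorphisms are induced by the identity on the common Johnson graph $J(n,k)$, and under this identification $p^T$ and $\bar p^T$ correspond since their localizations $\prod_{i\in S}(1+\alpha_i^2)$ and $\prod_{j\notin S}(1+\alpha_j^2)$ in Proposition \ref{prop:Pont} are the same formulas for all three Grassmannians.

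The only genuinely non-formal step is recognizing $A$ as the base change along $Sq$ of the complex-Grassmannian presentation, which furnishes its $\Q[\alpha]$-freeness together with the monomial basis; everything else is the basis-matching above and the relation already verified in Proposition \ref{prop:Pont}. I would also note that well-definedness of $\Phi$ uses only that one relation, so that the absence of any further relation among the $p^T_i,\bar p^T_j$ is nothing but the injectivity of $\Phi$ established a posteriori.
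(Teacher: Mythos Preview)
Your proof is correct and follows essentially the same route as the paper: the paper's one-line argument ``Apply the $Sq$ to the generators and relations in the Leray--Borel description of $H^*_T(G_k(\C^n))$'' is precisely your base-change observation that $A\cong A'\otimes_{\Q[\alpha],\,Sq}\Q[\alpha]$, and both then invoke the equivariant characteristic basis (Theorem~\ref{thm:EquivCharReal}) to match bases. You have simply made explicit the well-definedness of $\Phi$ via Proposition~\ref{prop:Pont} and the preservation of the free monomial basis under base change, which the paper leaves implicit.
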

\begin{proof}
	Apply the $Sq$ to the generators and relations in the Leray-Borel description of $H^*_T(G_k(\C^n))$.
\end{proof}

\begin{thm}[Equivariant Leray-Borel description of odd dimensional real Grassmannians]\label{thm:EquivBReal2}
	For the odd dimensional real Grassmannian $G_{2k+1}(\R^{2n+2})$ with $T^n$-actions, the equivariant cohomology is:
	\[
	H^*_T(G_{2k+1}(\R^{2n+2}),\Q)\cong\frac{\Q[\alpha_1,\alpha_2,\ldots,\alpha_n][p^T_1,p^T_2,\ldots,p^T_k;\bar{p}^T_1,\bar{p}^T_2,\ldots,\bar{p}^T_{n-k};r^T]}{p^T\bar{p}^T = \prod_{i=1}^{n}(1+\alpha^2_i),\,(r^T)^2=0}.
	\]
\end{thm}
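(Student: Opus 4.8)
The plan is to reduce the statement to two facts already in hand: the structural isomorphism of Theorem~\ref{thm:AllGrass}(2) and the even-dimensional Leray--Borel description of Theorem~\ref{thm:EquivBReal}. Theorem~\ref{thm:AllGrass}(2) furnishes a $\Q[\alpha_1,\ldots,\alpha_n]$-algebra isomorphism
\[
H^*_{T^n}(G_{2k+1}(\R^{2n+2})) \cong H^*_{T^n}(G_{2k}(\R^{2n}))[r^T]/(r^T)^2,
\]
where $r^T=\big((\prod_{i=1}^{n}\alpha_i)\,\theta\big)_{S\in\mathcal{S}}$ is the distinguished class of degree $2n+1$ with $(r^T)^2=0$. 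Substituting the presentation
\[
H^*_{T^n}(G_{2k}(\R^{2n}),\Q)\cong\frac{\Q[\alpha_1,\ldots,\alpha_n][p^T_1,\ldots,p^T_k;\bar p^T_1,\ldots,\bar p^T_{n-k}]}{p^T\bar p^T=\prod_{i=1}^{n}(1+\alpha_i^2)}
\]
from Theorem~\ref{thm:EquivBReal} into the coefficient ring and adjoining the generator $r^T$ with its single relation then yields precisely the claimed presentation. So, once the inputs are lined up, the proof is a one-step substitution.

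The point requiring a short verification is that the symbols $p^T_1,\ldots,p^T_k,\bar p^T_1,\ldots,\bar p^T_{n-k}$ appearing on the right-hand side may be taken to be the genuine equivariant Pontryagin classes of the canonical and complementary bundles \emph{on} $G_{2k+1}(\R^{2n+2})$, i.e.\ that these are the images of the corresponding classes on $G_{2k}(\R^{2n})$ under the isomorphism of Theorem~\ref{thm:AllGrass}(2). This is immediate from Proposition~\ref{prop:Pont}: in the GKM model on the Johnson graph $J(n,k)$ the localized values $p^T|_S=\prod_{i\in S}(1+\alpha_i^2)$ and $\bar p^T|_S=\prod_{j\notin S}(1+\alpha_j^2)$ are the same for $G_{2k}(\R^{2n})$ and for the $f_S$-component of $G_{2k+1}(\R^{2n+2})$, so the relation $p^T\bar p^T=\prod_{i=1}^{n}(1+\alpha_i^2)$ transports verbatim and the Pontryagin classes have no $r^T$-component.

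The main (mild) obstacle is the bookkeeping of showing that adjoining $r^T$ creates \emph{no} relation other than $(r^T)^2=0$ — equivalently, that $r^T$ is not a polynomial in the $p^T_i,\bar p^T_i$ over $\Q[\alpha_1,\ldots,\alpha_n]$. This follows from Theorem~\ref{thm:AllGrass}(2), in which $H^*_{T^n}(G_{2k}(\R^{2n}))$ is exactly the subalgebra of $r^T$-free elements and $H^*_{T^n}(G_{2k+1}(\R^{2n+2}))$ is free of rank two over it with basis $1,r^T$; alternatively, Theorem~\ref{thm:EquivCharReal} shows the monomials $(p^T)^I$ together with $r^T(p^T)^I$ (with $\sum_i r_i\le n-k$) form a simultaneous $\Q[\alpha_1,\ldots,\alpha_n]$-basis, which pins down the presentation as the free extension displayed. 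As a consistency check, quotienting the polynomial ring by the two stated relations recovers the Poincar\'e series $(1+t^{2n+1})P_{G_k(\C^n)}(t^2)$ of Theorem~\ref{thm:Poinc}, and setting all $\alpha_i=0$ recovers the corresponding ordinary Leray--Borel--Takeuchi presentation.
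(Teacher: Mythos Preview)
Your proposal is correct and follows exactly the route the paper intends: the theorem is stated in the paper without an explicit proof, as an immediate consequence of combining the structural isomorphism $H^*_{T^n}(G_{2k+1}(\R^{2n+2})) \cong H^*_{T^n}(G_{2k}(\R^{2n}))[r^T]/(r^T)^2$ from Theorem~\ref{thm:AllGrass}(2) with the even-dimensional Leray--Borel presentation of Theorem~\ref{thm:EquivBReal}. Your additional checks (via Proposition~\ref{prop:Pont} that the Pontryagin classes transport correctly, and via Theorem~\ref{thm:EquivCharReal} that no spurious relations arise) are more explicit than the paper but entirely in its spirit.
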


For a $T^n$-equivariantly formal space $M$, we can recover the ordinary cohomology from the equivariant cohomology by $H^*(M,\Q)=H_T^*(M,\Q)\otimes_{\Q[\alpha_1,\ldots,\alpha_n]} \Q$ where $\Q$ has a $\Q[\alpha_1,\ldots,\alpha_n]$-algebra structure from the constant-term morphism $\Q[\alpha_1,\ldots,\alpha_n]\rightarrow \Q: f(\alpha_1,\ldots,\alpha_n)\mapsto f(0)$. Therefore, the above Theorem \ref{thm:EquivCharReal}, \ref{thm:EquivBReal}, \ref{thm:EquivBReal2} have ordinary versions by ignoring the $\alpha_i$. 

Let $r\in H^{2n+1}(G_{2k+1}(\R^{2n+2}),\Q)$ be the ordinary image of the $r^T\in H^{2n+1}_T(G_{2k+1}(\R^{2n+2}),\Q)$.

\begin{cor}[Ordinary characteristic basis of real Grassmannians]
	The set of monomials $(p_1)^{r_1}(p_2)^{r_2}\cdots (p_k)^{r_k}$ satisfying the condition $\sum_{i=1}^{k} r_i \leq n-k$ forms an additive basis for $H^*(G_{2k}(\R^{2n}))\cong H^*(G_{2k}(\R^{2n+1}))\cong H^*(G_{2k+1}(\R^{2n+1}))$. Together with the set of monomials $r\cdot(p_1)^{r_1}(p_2)^{r_2}\cdots (p_k)^{r_k}$, they form an additive basis for $H^*(G_{2k+1}(\R^{2n+2}))$.
\end{cor}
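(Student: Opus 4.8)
The plan is to deduce this corollary from the equivariant statement Theorem~\ref{thm:EquivCharReal} by specializing along the constant-term morphism $\Q[\alpha_1,\ldots,\alpha_n]\to\Q$, exactly in the spirit of the remark following Theorem~\ref{thm:formal} which recovers $H^*(M,\Q)$ as $\Q\otimes_{H^*_T(pt)}H^*_T(M)$ for an equivariantly formal action. Concretely, let $\rho:H^*_T(M,\Q)\to H^*(M,\Q)$ be the ring homomorphism induced by the fibre inclusion $M\hookrightarrow(M\times ET)/T$, for $M$ running over the four real Grassmannians. For any $T$-equivariant real vector bundle $V$ the equivariant Pontryagin class $p^T(V)$ restricts under $\rho$ to the ordinary Pontryagin class $p(V)$; applied to the canonical bundle this gives $\rho(p^T_i)=p_i$, and by its very definition $\rho(r^T)=r$. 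Since $\rho$ is multiplicative, it carries the equivariant monomial $(p^T_1)^{r_1}\cdots(p^T_k)^{r_k}$ to $(p_1)^{r_1}\cdots(p_k)^{r_k}$ and $r^T(p^T_1)^{r_1}\cdots(p^T_k)^{r_k}$ to $r(p_1)^{r_1}\cdots(p_k)^{r_k}$.

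Next I would invoke equivariant formality. By Theorem~\ref{thm:formal}(4) the map $\rho$ is surjective and factors through an isomorphism $\Q\otimes_{H^*_T(pt)}H^*_T(M,\Q)\cong H^*(M,\Q)$. Hence, writing $\{b_\lambda\}$ for the $H^*_T(pt)$-module basis of $H^*_T(M,\Q)$ furnished by Theorem~\ref{thm:EquivCharReal}, the images $\{\rho(b_\lambda)\}$ span $H^*(M,\Q)$ over $\Q$. On the other hand, the cardinality of $\{b_\lambda\}$ equals the $H^*_T(pt)$-rank of the free module $H^*_T(M,\Q)$, which by Theorem~\ref{thm:formal}(3) equals $\dim_\Q H^*(M,\Q)$. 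A spanning set of a finite-dimensional vector space whose cardinality equals the dimension is a basis, so $\{\rho(b_\lambda)\}$ is a $\Q$-basis of $H^*(M,\Q)$; by the previous paragraph this basis is precisely $\{(p_1)^{r_1}\cdots(p_k)^{r_k}:\sum r_i\le n-k\}$ in the three even-dimensional cases, and the union of this set with $\{r\cdot(p_1)^{r_1}\cdots(p_k)^{r_k}:\sum r_i\le n-k\}$ in the case $G_{2k+1}(\R^{2n+2})$. For the odd-dimensional Grassmannian one checks the degrees are as they should be: the monomials in the $p_i$ sit in even cohomological degrees $2d$ and the classes $r\cdot(p_1)^{r_1}\cdots(p_k)^{r_k}$ in odd degrees $2n+1+2d$, and $r^2=0$ follows from $(r^T)^2=0$, consistent with the Poincar\'e series $P_{G_{2k+1}(\R^{2n+2})}(t)=(1+t^{2n+1})P_{G_k(\C^n)}(t^2)$ of Theorem~\ref{thm:Poinc}.

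The only point requiring care — and the one I regard as the main obstacle — is the step asserting that a homogeneous $H^*_T(pt)$-module basis of an equivariantly formal $H^*_T(M,\Q)$ specializes under $\rho$ to a $\Q$-basis of $H^*(M,\Q)$: this is not automatic for an arbitrary spanning set, but it is forced here either by the dimension count above or, equivalently, by a graded Nakayama argument over the graded local ring $\Q[\alpha_1,\ldots,\alpha_n]$ with maximal ideal $(\alpha_1,\ldots,\alpha_n)$. Everything else is a direct transcription of Theorem~\ref{thm:EquivCharReal} through the surjection $\rho$. As an independent cross-check of the generators and relations, one may instead apply $-\otimes_{\Q[\alpha_1,\ldots,\alpha_n]}\Q$ (i.e.\ set $\alpha_i=0$) to the equivariant Leray--Borel presentations of Theorems~\ref{thm:EquivBReal} and \ref{thm:EquivBReal2}, obtaining the ordinary Leray--Borel presentations in which the listed monomials manifestly span, and then conclude by the same count.
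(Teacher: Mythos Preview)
Your proposal is correct and follows essentially the same approach as the paper: the paper simply observes that for an equivariantly formal action one recovers $H^*(M,\Q)$ as $\Q\otimes_{H^*_T(pt)}H^*_T(M,\Q)$, so the equivariant basis of Theorem~\ref{thm:EquivCharReal} specializes to an ordinary basis upon setting the $\alpha_i$ to zero. Your write-up supplies more detail (the dimension count/Nakayama argument for why a free-module basis descends to a vector-space basis) than the paper makes explicit, but the underlying argument is the same.
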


\begin{cor}[Ordinary Leray-Borel description of real Grassmannians]
	For the even dimensional real Grassmannians $G_{2k}(\R^{2n})$, $G_{2k}(\R^{2n+1})$ and $G_{2k+1}(\R^{2n+1})$, their cohomology is the same:
	\[
	\frac{\Q[p_1,p_2,\ldots,p_k;\bar{p}_1,\bar{p}_2,\ldots,\bar{p}_{n-k}]}{p\bar{p} = 1}.
	\]
	For the odd dimensional real Grassmannian $G_{2k+1}(\R^{2n+2})$, the cohomology is:
	\[
	\frac{\Q[p_1,p_2,\ldots,p_k;\bar{p}_1,\bar{p}_2,\ldots,\bar{p}_{n-k};r]}{p\bar{p} = 1,\, r^2=0}
	\]
	where $r\in H^{2n+1}(G_{2k+2}(\R^{2n+2}),\Q)$ is the ordinary image of the $r^T\in H^{2n+1}_T(G_{2k+2}(\R^{2n+2}),\Q)$.
\end{cor}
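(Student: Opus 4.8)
The plan is to deduce the ordinary description from the equivariant one via the base-change isomorphism for equivariantly formal actions. Recall (as stated just before this corollary, and in the last remark following Theorem \ref{thm:formal}) that for an equivariantly formal $T^n$-action the fibre restriction induces an isomorphism $H^*(M,\Q)\cong \Q\otimes_{\Q[\alpha_1,\ldots,\alpha_n]} H^*_T(M,\Q)$, where $\Q$ carries the $\Q[\alpha_1,\ldots,\alpha_n]$-algebra structure sending each $\alpha_i$ to $0$; moreover under this restriction the equivariant Pontryagin classes $p^T_i,\bar p^T_i$ of $\gamma,\bar\gamma$ map to the ordinary Pontryagin classes $p_i,\bar p_i$, and $r^T$ maps to $r$ by definition. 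The real Grassmannians are equivariantly formal by the Proposition on equivariant formality, so this applies to all four families.

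First I would treat the even-dimensional case. By Theorem \ref{thm:EquivBReal}, $H^*_T(G_{2k}(\R^{2n}),\Q)$ is the quotient of the polynomial ring $R[p^T_1,\ldots,p^T_k;\bar p^T_1,\ldots,\bar p^T_{n-k}]$, with $R=\Q[\alpha_1,\ldots,\alpha_n]$, by the ideal generated by the relations encoded in $p^T\bar p^T = \prod_{i=1}^n(1+\alpha^2_i)$. Since $-\otimes_R\Q$ is right exact and commutes with forming polynomial rings and passing to quotients, $\Q\otimes_R H^*_T(G_{2k}(\R^{2n}),\Q)$ is the quotient of $\Q[p_1,\ldots,p_k;\bar p_1,\ldots,\bar p_{n-k}]$ by the image of that ideal; setting every $\alpha_i=0$ turns $\prod_{i=1}^n(1+\alpha^2_i)$ into $1$, so the relation becomes $p\bar p=1$, which is exactly the claimed ring. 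The identical computation applies to $G_{2k}(\R^{2n+1})$ and $G_{2k+1}(\R^{2n+1})$, whose equivariant cohomologies agree with that of $G_{2k}(\R^{2n})$ by Theorem \ref{thm:AllGrass}. For $G_{2k+1}(\R^{2n+2})$ I would run the same argument from Theorem \ref{thm:EquivBReal2}: the defining ideal is generated by $p^T\bar p^T-\prod_{i=1}^n(1+\alpha^2_i)$ together with $(r^T)^2$, and base change to $\Q$ sends these to $p\bar p-1$ and $r^2$, yielding $\Q[p_1,\ldots,p_k;\bar p_1,\ldots,\bar p_{n-k};r]/(p\bar p=1,\,r^2=0)$.

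There is essentially no hard step: the only point requiring care is that one may reduce the relations modulo the $\alpha_i$ after passing to the quotient, which is precisely the right-exactness of $-\otimes_R\Q$, and that the relation in the equivariant theorem really cuts out the whole ideal — but that is exactly what Theorems \ref{thm:EquivBReal} and \ref{thm:EquivBReal2} assert. Alternatively, one could bypass base change entirely: the relations $p\bar p=1$ and $r^2=0$ hold in ordinary cohomology (being the $\alpha_i\mapsto 0$ images of the equivariant relations), so there is a well-defined surjection from the displayed ring onto $H^*(G_k(\R^n),\Q)$; since the ordinary characteristic basis of the preceding corollary (obtained from Theorem \ref{thm:EquivCharReal} via equivalence (5) of Theorem \ref{thm:formal}) realizes the Poincar\'e series of Theorem \ref{thm:Poinc} in each degree, a dimension count forces this surjection to be an isomorphism.
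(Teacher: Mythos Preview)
Your argument is correct and is exactly the paper's approach: the paper states just before this corollary that for equivariantly formal actions one recovers $H^*(M,\Q)$ as $H^*_T(M,\Q)\otimes_{\Q[\alpha_1,\ldots,\alpha_n]}\Q$, so Theorems~\ref{thm:EquivBReal} and~\ref{thm:EquivBReal2} ``have ordinary versions by ignoring the $\alpha_i$'', and the corollary is asserted without further proof. Your write-up simply spells out this base-change step (and even offers the alternative dimension-count via the characteristic basis), so there is nothing to add.
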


\begin{rmk}
	This explicit Leray-Borel description for the real Grassmannians is stated in Casian\&Kodama \cite{CK}. The even dimensional case is a special case of the Leray-Borel description, and the odd dimensional case is due to Takeuchi \cite{Ta62}. 
\end{rmk}

\begin{rmk}
	For $n\leq 7$, the ordinary cohomology groups of $G_k(\R^n)$ in $\Z$ coefficients were computed by Jungkind \cite{Ju79}. 
\end{rmk}

\vskip 20pt
\section{Equivariant cohomology rings of oriented Grassmannians}
\vskip 15pt
In this section, we give the GKM description and Leray-Borel description of equivariant cohomology rings of oriented Grassmannians, together with the characteristic basis of the additive structure. We use the notation $\tilde{G}_k(\R^n)$ for the Grassmannian of $k$-dimensional oriented subspaces in $\R^n$.

The Pl\"{u}cker embedding of an oriented Grassmannian can be given as follows: for $V\in \tilde{G}_k(\R^n)$, we can choose an ordered orthonormal basis $v_1,\ldots,v_k$ of $V$, then the well-defined wedge product $v_1 \wedge \cdots \wedge v_k \in \tilde{G}_1(\wedge^k \R^n)=S(\wedge^k \R^n)$ in the unit sphere of $\wedge^k \R^n$ gives the embedding $\tilde{G}_k(\R^n) \hookrightarrow S(\wedge^k \R^n)$.

Similar to the case of real Grassmannians, we can consider the $T^n$-action on $\R^{2n}=\oplus_{i=1}^{n} \R^2_{[\alpha_i]}$, $\R^{2n+1}=(\oplus_{i=1}^{n} \R^2_{[\alpha_i]})\oplus \R_0$ and $\R^{2n+2}=(\oplus_{i=1}^{n} \R^2_{[\alpha_i]})\oplus \R^2_0$ for their decompositions into weighted subspaces, where $\alpha_1,\ldots,\alpha_n$ are the standard basis of $\mathfrak{t}_\Z^*$. These actions induce $T^n$ actions on $\tilde{G}_{2k}(\R^{2n})$, $\tilde{G}_{2k}(\R^{2n+1})$, $\tilde{G}_{2k+1}(\R^{2n+1})$ and $\tilde{G}_{2k+1}(\R^{2n+2})$. More specifically, each $t \in T$ maps $v_1 \wedge \cdots \wedge v_l$ where $l=2k,2k+1$, to $t\cdot v_1 \wedge \cdots \wedge t \cdot v_l$, and it is easy to check the map is independent from the choice of a positive orthonormal basis $v_1,\ldots,v_k$. 

Also since there are natural $T^n$-diffeomorphisms $\tilde{G}_{2k}(\R^{2n+1})\cong \tilde{G}_{2n-2k+1}(\R^{2n+1})$ identifying the second and the third types of real Grassmannians, in some discussions we will only consider the three cases of $\tilde{G}_{2k}(\R^{2n})$, $\tilde{G}_{2k}(\R^{2n+1})$ and $\tilde{G}_{2k+1}(\R^{2n+2})$.

\subsection{Oriented Grassmannians as $2$-covers over real Grassmannians}
There are natural $2$-coverings of oriented Grassmannians over real Grassmannians $\pi:\tilde{G}_k(\R^n)\rightarrow G_k(\R^n): v_1 \wedge \cdots \wedge v_k \mapsto \mathrm{Span}_\R(v_1,\ldots,v_k)$ which induces a pull-back morphism $\pi^*: H^*(G_k(\R^n))\rightarrow H^*(\tilde{G}_k(\R^n))$ between their cohomology. The non-trivial deck transformation is defined by reversing orientations $\rho: \tilde{G}_k(\R^n)\rightarrow\tilde{G}_k(\R^n):v_1 \wedge \cdots \wedge v_k \mapsto -(v_1 \wedge \cdots \wedge v_k)$ which induces an isomorphism $\rho^*: H^*(\tilde{G}_k(\R^n)) \rightarrow H^*(\tilde{G}_k(\R^n))$. Both $\pi$ and $\rho$ commute with the $T$-actions that we introduced on the oriented Grassmannians and real Grassmannians. 

For covering maps between compact spaces, or equivalently for free actions of finite groups, there is a well-known fact relating their cohomology in rational coefficients:
\begin{lem}
	Let $\pi:X\rightarrow Y$ be a covering between compact topological spaces with a finite deck transformation group $G$ which also acts on the cohomology $H^*(X,\Q)$. Then $\pi^*: H^*(Y,\Q) \rightarrow H^*(X,\Q)$ is injective with image $H^*(X,\Q)^G$. This conclusion is also true for equivariant cohomology if a torus $T$ acts on $X$ and commutes with the action of $G$. 
\end{lem}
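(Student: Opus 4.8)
The plan is to exploit the transfer homomorphism attached to the finite covering $\pi$, together with the fact that $|G|$ is invertible in $\Q$. A finite covering is in particular a fibre bundle with discrete fibre, so there is a transfer (``wrong-way'') map $\pi_{!}\colon H^{*}(X,\Q)\to H^{*}(Y,\Q)$ with the two classical properties $\pi_{!}\circ\pi^{*}=|G|\cdot\mathrm{id}$ on $H^{*}(Y,\Q)$ and $\pi^{*}\circ\pi_{!}=\sum_{g\in G}g^{*}$ on $H^{*}(X,\Q)$. The first identity immediately makes $\tfrac{1}{|G|}\pi_{!}$ a left inverse of $\pi^{*}$, hence $\pi^{*}$ is injective.

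For the image I would argue as follows. Since every deck transformation $g$ satisfies $g\circ\pi=\pi$, we get $g^{*}\circ\pi^{*}=\pi^{*}$, so $\mathrm{im}(\pi^{*})\subseteq H^{*}(X,\Q)^{G}$. Conversely, if $x\in H^{*}(X,\Q)^{G}$ then $\sum_{g\in G}g^{*}x=|G|\,x$, so the second identity gives $\pi^{*}(\pi_{!}x)=|G|\,x$, i.e. $x=\pi^{*}\bigl(\tfrac{1}{|G|}\pi_{!}x\bigr)\in\mathrm{im}(\pi^{*})$. Therefore $\pi^{*}$ is an isomorphism onto $H^{*}(X,\Q)^{G}$.

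For the equivariant statement I would pass to Borel constructions. Because the $T$-action and the $G$-action on $X$ commute, $G$ acts on $X_{T}:=(X\times ET)/T$ through the first factor, and this action is free: if $g\cdot[x,e]=[x,e]$ then $(gx,e)=(tx,te)$ for some $t\in T$, forcing $t=\mathrm{id}$ (as $T$ acts freely on $ET$) and then $gx=x$, hence $g=\mathrm{id}$. Moreover $X_{T}/G=(X\times ET)/(T\times G)=(X/G)\times_{T}ET=Y_{T}$, so the induced map $\bar\pi\colon X_{T}\to Y_{T}$ is again a finite covering with deck group $G$, and under the identifications $H^{*}(X_{T},\Q)=H^{*}_{T}(X,\Q)$, $H^{*}(Y_{T},\Q)=H^{*}_{T}(Y,\Q)$ it induces $\pi^{*}_{T}$. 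Applying the already-proved non-equivariant case to $\bar\pi$ yields injectivity of $\pi^{*}_{T}$ with image $H^{*}_{T}(X,\Q)^{G}$.

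The only delicate point, and the step I would spell out most carefully, is the existence of the transfer $\pi_{!}$ and the verification of its two identities in this purely topological (non-manifold) setting; this is standard for finite covers — one may build $\pi_{!}$ from the finite-fibre-bundle transfer, or from the Becker--Gottlieb transfer, or at chain level via a \v{C}ech/simplicial model — and it is automatically compatible with Borel constructions by naturality. Alternatively, the whole lemma can be obtained without transfer from the Cartan--Leray spectral sequence $H^{p}(G;H^{q}(X,\Q))\Rightarrow H^{p+q}(X/G,\Q)$, which collapses onto the edge term $H^{q}(X,\Q)^{G}$ because $\Q[G]$ is semisimple (Maschke's theorem) and hence $H^{p}(G;-)=0$ for $p>0$ on $\Q$-modules; running the same spectral sequence for the free $G$-action on $X_{T}$ settles the equivariant version.
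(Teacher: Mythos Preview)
Your proof is correct and essentially the same as the paper's: both use the transfer/averaging map---the paper writes it as $\pi_*([c])=\tfrac{1}{|G|}\bigl[\sum_{g}gc\bigr]$, which is exactly your $\tfrac{1}{|G|}\pi_!$---and the identities $\pi_*\pi^*=\mathrm{id}$ and $\pi^*\pi_*=\tfrac{1}{|G|}\sum_g g^*$ to obtain injectivity and the image identification. For the equivariant step the paper, keeping to its compactness hypothesis, applies the ordinary case to the compact approximations $X\times_{T^n}(S^N)^n\to Y\times_{T^n}(S^N)^n$ and lets $N\to\infty$, whereas you work directly with the full Borel construction; this is a cosmetic difference, and your Cartan--Leray alternative is an extra not in the paper.
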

\begin{proof}
	For a cocycle $c$ of $X$, the averaged cocycle $\frac{1}{|G|}\sum_{g\in G} gc$ is invariant under $G$-action, hence comes from a cocycle of $Y$. Consider the averaging map $\pi_*: H^*(X,\Q) \rightarrow H^*(Y,\Q):[c]\mapsto \frac{1}{|G|}[\sum_{g\in G} gc]$, then the composition $\pi_* \pi^*$ is the identity map on $H^*(Y,\Q)$, hence $\pi^*$ is injective. Note that every cohomology class in $H^*(X,\Q)^G$ can be represented by a $G$-invariant cocycle using the averaging method. This proves the image of $\pi^*$ is exactly $H^*(X,\Q)^G$.
	
	For the $T^n$-equivariant version, though the Borel construction $X\times_{T^n} (S^\infty)^n,Y\times_{T^n} (S^\infty)^n$ is not compact, we can apply the ordinary version of current Lemma to the compact approximations $X\times_{T^n} (S^N)^n, Y\times_{T^n} (S^N)^n$ for $N\rightarrow \infty$.
\end{proof}

\begin{rmk}
	For the averaging method to work, we can relax the $\Q$ coefficients to be any coefficient ring that contains $\frac{1}{|G|}$. In $\R$ coefficients, the ordinary and equivariant de Rham theory together with the averaging method give a proof without using compact approximations. 
\end{rmk}

Applying this Lemma to the oriented Grassmannians as $T$-equivariant $2$-covers over real Grassmannians, we get
\begin{prop}\label{prop:OrientVSReal}
	The pull-back morphisms of ordinary and equivariant cohomology
	\[
	\pi^*: H^*(G_k(\R^n)) \hookrightarrow H^*(\tilde{G}_k(\R^n)) \qquad \text{and} \qquad H^*_T(G_k(\R^n)) \hookrightarrow H^*_T(\tilde{G}_k(\R^n))
	\]
	are both injective. Moreover,
	\[
	\pi^*(H^*(G_k(\R^n))) = H^*(\tilde{G}_k(\R^n))^{\Z/2} \qquad \text{and} \qquad \pi^*(H^*_T(G_k(\R^n))) = H^*_T(\tilde{G}_k(\R^n))^{\Z/2}
	\]
	identifies cohomology of real Grassmannians as the ${\Z/2}$-invariant subrings of cohomology of oriented Grassmannians, or equivalently as the $+1$-eigenspaces of $\rho^*$ on cohomology of oriented Grassmannians.
\end{prop}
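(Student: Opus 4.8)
The plan is simply to invoke the preceding lemma on rational cohomology of finite coverings, applied to the $2$-fold covering $\pi\colon\tilde{G}_k(\R^n)\to G_k(\R^n)$. First I would note that both $\tilde{G}_k(\R^n)$ and $G_k(\R^n)$ are compact smooth manifolds, so $\pi$ is a covering map between compact spaces, and that its group of deck transformations is $\Z/2$, generated by the orientation-reversing involution $\rho\colon v_1\wedge\cdots\wedge v_k\mapsto -(v_1\wedge\cdots\wedge v_k)$. This $\Z/2$-action is free: a fixed point of $\rho$ would force $-(v_1\wedge\cdots\wedge v_k)=v_1\wedge\cdots\wedge v_k$ in the unit sphere $S(\wedge^k\R^n)$, which is impossible. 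As already observed in this subsection, $\rho$ and $\pi$ both commute with the $T^n$-actions, so the hypotheses of the lemma hold both in the ordinary and in the $T^n$-equivariant setting.

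Applying the lemma then yields immediately that $\pi^*\colon H^*(G_k(\R^n))\to H^*(\tilde{G}_k(\R^n))$ and $\pi^*\colon H^*_T(G_k(\R^n))\to H^*_T(\tilde{G}_k(\R^n))$ are injective with images $H^*(\tilde{G}_k(\R^n))^{\Z/2}$ and $H^*_T(\tilde{G}_k(\R^n))^{\Z/2}$ respectively; since $\pi^*$ is a ring homomorphism, these images are the asserted $\Z/2$-invariant subrings. For the reformulation in terms of eigenspaces, I would use that $\rho\circ\rho=\mathrm{id}$ implies $\rho^*\circ\rho^*=\mathrm{id}$, so $\rho^*$ is an involution of the $\Q$-module $H^*(\tilde{G}_k(\R^n))$ (resp.\ of the $\Q[\alpha_1,\ldots,\alpha_n]$-module $H^*_T(\tilde{G}_k(\R^n))$); working over $\Q$, the module splits via the idempotents $\tfrac{1}{2}(1\pm\rho^*)$ as the direct sum of the $(+1)$- and $(-1)$-eigenspaces of $\rho^*$, and the $\Z/2$-invariant part is exactly the $(+1)$-eigenspace.

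Essentially every ingredient is handed to us by the preceding lemma, so I do not expect any substantial obstacle. The only points requiring a moment's attention are that the orientation-reversal $\Z/2$-action is free, so that $\pi$ is genuinely the quotient covering with deck group $\Z/2$, and that $\pi$ and $\rho$ are $T^n$-equivariant so that the equivariant case of the lemma (via its compact approximations) applies; both are immediate from the definitions of the coverings and the torus actions given above.
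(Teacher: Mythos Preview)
Your proposal is correct and matches the paper's approach exactly: the paper simply states that the proposition follows by ``applying this Lemma to the oriented Grassmannians as $T$-equivariant $2$-covers over real Grassmannians'' without writing any further proof. Your write-up supplies the routine verifications (freeness of the $\Z/2$-action, $T^n$-equivariance of $\pi$ and $\rho$, and the eigenspace reformulation via the idempotents $\tfrac{1}{2}(1\pm\rho^*)$) that the paper leaves implicit.
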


For odd dimensional oriented Grassmannian $\tilde{G}_{2k+1}(\R^{2n+2})$, the deck transformation $\rho:\tilde{G}_{2k+1}(\R^{2n+2}) \rightarrow \tilde{G}_{2k+1}(\R^{2n+2}): v_1 \wedge \cdots \wedge v_{2k+1} \mapsto -(v_1 \wedge \cdots \wedge v_{2k+1})=(-v_1) \wedge \cdots \wedge (-v_{2k+1})$ is induced from the antipodal map $A:\R^{2n+2}\rightarrow \R^{2n+2}: v\mapsto -v$ which is homotopic to the identity map on $\R^{2n+2}$ via
\[
\begin{pmatrix}
\begin{matrix}
\cos \theta & -\sin \theta \\
\sin \theta & \cos \theta
\end{matrix}
& &\\
& 
\ddots
&\\
& &
\begin{matrix}
\cos \theta & -\sin \theta \\
\sin \theta & \cos \theta
\end{matrix}
\end{pmatrix}
\]
which is actually $T^n$-equivariant and further induces $T^n$-equivariant homotopy between $\rho$ and $id$ on $\tilde{G}_{2k+1}(\R^{2n+2})$.

\begin{thm}[Relations between odd dimensional oriented and real Grassmannians]\label{thm:OddGrass}
	Since $\rho^*=id$ on ordinary and equivariant cohomology of odd dimensional oriented Grassmannians, we have
	\begin{align*}
	H^*(\tilde{G}_{2k+1}(\R^{2n+2})) &= H^*(\tilde{G}_{2k+1}(\R^{2n+2}))^{\Z/2} \cong H^*(G_{2k+1}(\R^{2n+2}))\\  H^*_T(\tilde{G}_{2k+1}(\R^{2n+2})) &= H^*_T(\tilde{G}_{2k+1}(\R^{2n+2}))^{\Z/2} \cong H^*_T(G_{2k+1}(\R^{2n+2})).
	\end{align*}
\end{thm}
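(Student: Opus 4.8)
The plan is to derive the theorem formally from the $T^n$-equivariant homotopy between $\rho$ and $\mathrm{id}$ exhibited just above, together with Proposition~\ref{prop:OrientVSReal}. First I would confirm that the displayed block-diagonal family $R_\theta$, $\theta \in [0,\pi]$, of simultaneous $2\times 2$ rotations by angle $\theta$ on each coordinate pair of $\R^{2n+2}$ is genuinely a $T^n$-equivariant self-homotopy: each $R_\theta$ is orthogonal, $R_0 = \mathrm{id}$ and $R_\pi = -\mathrm{Id}_{\R^{2n+2}}$ is the antipodal map $A$, and $R_\theta$ commutes with every circle subgroup $S^1_i \subset T^n$ because rotations carried out in the same $2$-plane commute (and on the remaining coordinates $R_\theta$ acts by a commuting rotation while $S^1_i$ acts trivially). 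Since each $R_\theta$ is a linear isometry it sends oriented $(2k{+}1)$-planes to oriented $(2k{+}1)$-planes, so $\theta \mapsto (V \mapsto R_\theta V)$ is a $T^n$-equivariant homotopy on $\tilde G_{2k+1}(\R^{2n+2})$ starting at the identity and ending at $\rho$, since $R_\pi$ sends an oriented basis $(v_1,\ldots,v_{2k+1})$ to $(-v_1,\ldots,-v_{2k+1})$ and $2k{+}1$ is odd, which is exactly reversal of the induced orientation.

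Next I would invoke homotopy invariance of Borel equivariant cohomology: a $T^n$-equivariant homotopy between two self-maps induces an ordinary homotopy between the induced maps on the Borel construction $(\tilde G_{2k+1}(\R^{2n+2}) \times ET^n)/T^n$, so $\rho^* = \mathrm{id}^*$ on $H^*_{T^n}(\tilde G_{2k+1}(\R^{2n+2}))$; restricting along the fibre inclusion (equivalently, passing to $\Q \otimes_{H^*_{T^n}(pt)}(-)$, or just noting that the homotopy is also a plain homotopy) gives $\rho^* = \mathrm{id}$ on ordinary cohomology as well. Consequently the $\Z/2$-action generated by $\rho^*$ is trivial on both $H^*(\tilde G_{2k+1}(\R^{2n+2}))$ and $H^*_{T^n}(\tilde G_{2k+1}(\R^{2n+2}))$, whence the invariant subrings $H^*(\tilde G_{2k+1}(\R^{2n+2}))^{\Z/2}$ and $H^*_{T^n}(\tilde G_{2k+1}(\R^{2n+2}))^{\Z/2}$ coincide with the full rings.

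Finally I would apply Proposition~\ref{prop:OrientVSReal}, which states that $\pi^*$ is injective with image precisely the $\Z/2$-invariants, both in ordinary and in equivariant cohomology. Combined with the triviality of the $\Z/2$-action just established, this forces $\pi^*$ to be surjective as well, hence a ring isomorphism $H^*(G_{2k+1}(\R^{2n+2})) \cong H^*(\tilde G_{2k+1}(\R^{2n+2}))$ and $H^*_{T^n}(G_{2k+1}(\R^{2n+2})) \cong H^*_{T^n}(\tilde G_{2k+1}(\R^{2n+2}))$, as claimed. The one point that genuinely needs care --- and where a naive non-equivariant argument would be insufficient --- is that it is the $T^n$-\emph{equivariance} of the homotopy $R_\theta$, not merely the classical fact that $A$ lies in the identity component of $O(2n{+}2)$, that is used to conclude $\rho^* = \mathrm{id}$ on the equivariant side; once that is in place the rest is purely formal given Proposition~\ref{prop:OrientVSReal}.
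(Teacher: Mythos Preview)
Your proposal is correct and follows exactly the same approach as the paper: the paragraph immediately preceding the theorem already explains that $\rho$ is induced by the antipodal map $A$, which is $T^n$-equivariantly homotopic to the identity via the displayed block-diagonal rotations, and the theorem is then an immediate consequence of Proposition~\ref{prop:OrientVSReal}. You have simply fleshed out the details (commutativity of the rotations with $T^n$, homotopy invariance of the Borel construction, the role of $2k+1$ being odd) that the paper leaves implicit.
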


\begin{cor}
	The Poincar\'e series of $\tilde{G}_{2k+1}(\R^{2n+2})$ are
	\[
	P_{\tilde{G}_{2k+1}(\R^{2n+2})}(t)=P_{G_{2k+1}(\R^{2n+2})}(t)=(1+t^{2n+1})P_{G_{k}(\C^{n})}(t^2).
	\]
\end{cor}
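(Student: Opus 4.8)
The plan is to deduce this corollary immediately from the two preceding results, since all the substantive work has already been carried out. First I would note that the Poincar\'e series of a space depends only on the dimensions of its graded rational cohomology groups, so any degree-preserving $\Q$-linear isomorphism between $H^*(X,\Q)$ and $H^*(Y,\Q)$ forces $P_X(t)=P_Y(t)$. The isomorphism $H^*(\tilde{G}_{2k+1}(\R^{2n+2}))\cong H^*(G_{2k+1}(\R^{2n+2}))$ furnished by Theorem~\ref{thm:OddGrass} is precisely such a map: it is the pull-back $\pi^*$ along the covering $\pi$ restricted to the $\Z/2$-invariant subring, which is degree-preserving, and by Theorem~\ref{thm:OddGrass} that invariant subring is all of $H^*(\tilde{G}_{2k+1}(\R^{2n+2}))$ because $\rho^*=\mathrm{id}$. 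This gives the first equality $P_{\tilde{G}_{2k+1}(\R^{2n+2})}(t)=P_{G_{2k+1}(\R^{2n+2})}(t)$.

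Second, I would quote the Casian--Kodama computation recalled in Theorem~\ref{thm:Poinc}, whose second displayed line states exactly $P_{G_{2k+1}(\R^{2n+2})}(t)=(1+t^{2n+1})P_{G_k(\C^n)}(t^2)$. Chaining the two equalities then yields the assertion.

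Since both inputs are already in hand, there is essentially no obstacle here; the only point deserving a line of care is that the isomorphism of Theorem~\ref{thm:OddGrass} respects cohomological grading, which it does as it is induced by $\pi^*$. If one wanted a fully self-contained derivation one could instead invoke Proposition~\ref{prop:OrientVSReal} together with the $T$-equivariant, hence in particular ordinary, homotopy between $\rho$ and the identity constructed just before Theorem~\ref{thm:OddGrass}: this shows $H^*(\tilde{G}_{2k+1}(\R^{2n+2}))^{\Z/2}=H^*(\tilde{G}_{2k+1}(\R^{2n+2}))$ and identifies it with $H^*(G_{2k+1}(\R^{2n+2}))$, and the Poincar\'e series equality drops out as above before applying Theorem~\ref{thm:Poinc}.
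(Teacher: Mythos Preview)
Your proposal is correct and matches the paper's intended argument: the corollary is stated without proof immediately after Theorem~\ref{thm:OddGrass}, and it follows exactly as you say by combining that isomorphism with the Casian--Kodama formula of Theorem~\ref{thm:Poinc}.
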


The relations between cohomology of oriented and real Grassmannians in even dimensions are more delicate. In next two subsections, we will try to understand the $\rho^*$-action on finer structures of the cohomology rings of oriented Grassmannians.

\subsection{GKM description of oriented Grassmannians}
The GKM description of oriented Grassmannians is very similar to that of real Grassmannians in previous section. Hence most of the details will be omitted but referred to those of real Grassmannians. 

\subsubsection{Orientations and Euler classes of canonical bundle and complementary bundle}\label{subsub:Euler}
The preferred orientation on every oriented $k$-dimensional subspace in $\R^n$ brings new invariants. 

For example, the $T^n$-fixed points of $\tilde{G}_{2k}(\R^{2n})$ as $2k$-dimensional $T^n$-subrepresentation of $\R^{2n}=\oplus_{i=1}^{n} \R^2_{[\alpha_i]}$ are of the form $\oplus_{i\in S} \R^2_{[\alpha_i]}$ where $S$ is a $k$-element subset of $\{1,2,\ldots,n\}$. Though an orientation on $\oplus_{i\in S} \R^2_{[\alpha_i]}$ can not specify the signs of the individual weights $\alpha_i,i\in S$, it does specify the sign of the product of weights as either $\prod_{i\in S}\alpha_i$ or $-\prod_{i\in S}\alpha_i$, which is exactly the equivariant Euler class of an oriented $T$-representation over a point. We will denote $V_{S_+},V_{S_-}$ for the $T$-subrepresentation $\oplus_{i\in S} \R^2_{[\alpha_i]}$ with equivariant Euler classes $e^T$ as $\prod_{i\in S}\alpha_i,-\prod_{i\in S}\alpha_i$ respectively. Similarly, for $\tilde{G}_{2k}((\oplus_{i=1}^{n} \R^2_{[\alpha_i]})\oplus \R_0)$ we can introduce the same notations with the fixed points $V_{S_\pm}=(\oplus_{i\in S}\R^2_{[\alpha_i]},\pm \prod_{i\in S}\alpha_i)$. 

For $\tilde{G}_{2k+1}((\oplus_{i=1}^{n} \R^2_{[\alpha_i]})\oplus \R_0)$, the fixed points are of the form $(\oplus_{i\in S} \R^2_{[\alpha_i]})\oplus \R_0$ which has equivariant Euler class $0$ because of the $0$-weight space $\R_0$. But an orientation on $(\oplus_{i\in S} \R^2_{[\alpha_i]})\oplus \R_0$ gives the complementary $T^n$-subrepresentation $\oplus_{j \not\in S} \R^2_{[\alpha_j]}$ an orientation hence an equivariant Euler class $\bar{e}^T$ either $\prod_{j \not\in S}\alpha_j$ or $-\prod_{j \not\in S}\alpha_j$. We will denote these fixed points as $V_{S_\pm}=((\oplus_{i\in S} \R^2_{[\alpha_i]})\oplus \R_0,\pm \prod_{j \not\in S}\alpha_j)$.

For $\tilde{G}_{2k+1}((\oplus_{i=1}^{n} \R^2_{[\alpha_i]})\oplus \R_0^2)$, a fixed component is of the form $\tilde{C}_S=\{(\oplus_{i\in S} \R^2_{[\alpha_i]})\oplus L_0 \mid L_0\in \tilde{G}_1(\R^2_0)\}\cong S^1$. Since both $(\oplus_{i\in S} \R^2_{[\alpha_i]})\oplus L_0$ and its complement $(\oplus_{j \not\in S} \R^2_{[\alpha_j]})\oplus L_0^\perp$ have a $0$-weight part, the equivariant Euler classes of both $T^n$-subrepresentations are $0$. 

\subsubsection{$1$-skeleta}
We can describe the $1$-skeleta of oriented Grassmannians:
\begin{prop}[$1$-skeleta of oriented Grassmannians]\label{prop:OrientSkeleton}
	The fixed points, isotropy weights and $1$-skeletons of oriented Grassmannians can be given as
	\begin{enumerate}
		\item For $\tilde{G}_{2k}(\R^{2n})$, there are $2\binom{n}{k}$ fixed points of the form $V_{S_\pm}=(\oplus_{i\in S} \R^2_{[\alpha_i]},\pm \prod_{i\in S}\alpha_i)$, where $S$ is a $k$-element subset of $\{1,2,\ldots,n\}$. The isotropy weights at both $V_{S_\pm}$ are $\{[\alpha_j\pm\alpha_i] \mid i\in S,j\not \in S\}$, among which $[\alpha_j-\alpha_i]$ joins $V_{S_\pm}$ via a $2$-sphere to $V_{((S\setminus\{i\})\cup \{j\})_\pm}$, and $[\alpha_j+\alpha_i]$ joins $V_{S_\pm}$ via a $2$-sphere to $V_{((S\setminus\{i\})\cup \{j\})_\mp}$.
		
		\item For $\tilde{G}_{2k}(\R^{2n+1})$, there are $2\binom{n}{k}$ fixed points of the form $V_{S_\pm}=(\oplus_{i\in S} \R^2_{[\alpha_i]},\pm \prod_{i\in S}\alpha_i)$, where $S$ is a $k$-element subset of $\{1,2,\ldots,n\}$. The isotropy weights at both $V_{S_\pm}$ are $\{[\alpha_j\pm\alpha_i] \mid i\in S,j\not \in S\}\cup \{[\alpha_i]\mid i \in S\}$, among which $[\alpha_j-\alpha_i]$ joins $V_{S_\pm}$ via a $2$-sphere to $V_{((S\setminus\{i\})\cup \{j\})_\pm}$, and $[\alpha_j+\alpha_i]$ joins $V_{S_\pm}$ via a $2$-sphere to $V_{((S\setminus\{i\})\cup \{j\})_\mp}$, and $[\alpha_i]$ joins $V_{S_+}$ via a $2$-sphere to $V_{S_-}$.
		
		\item For $\tilde{G}_{2k+1}(\R^{2n+1})$, there are $2\binom{n}{k}$ fixed points of the form $V_{S_\pm}=((\oplus_{i\in S} \R^2_{[\alpha_i]})\oplus \R_0,\pm \prod_{j \not\in S}\alpha_j)$, where $S$ is a $k$-element subset of $\{1,2,\ldots,n\}$. The isotropy weights at both $V_{S_\pm}$ are $\{[\alpha_j\pm\alpha_i] \mid i\in S,j\not \in S\} \cup \{[\alpha_j]\mid j \not \in S\}$, among which $[\alpha_j-\alpha_i]$ joins $V_{S_\pm}$ via a $2$-sphere to $V_{((S\setminus\{i\})\cup \{j\})_\pm}$ and $[\alpha_j+\alpha_i]$ joins $V_{S_\pm}$ via a $2$-sphere to $V_{((S\setminus\{i\})\cup \{j\})_\mp}$, and $[\alpha_j]$ joins $V_{S_+}$ via a $2$-sphere to $V_{S_-}$.
		
		\item For $\tilde{G}_{2k+1}(\R^{2n+2})$, there are $\binom{n}{k}$ fixed circles of the form $\tilde{C}_S=\{(\oplus_{i\in S} \R^2_{[\alpha_i]})\oplus L_0 \mid L_0\in \tilde{G}_1(\R^2_0)\}\cong S^1$, where $S$ is a $k$-element subset of $\{1,2,\ldots,n\}$. The isotropy weights at $\tilde{C}_S$ are $\{[\alpha_j\pm\alpha_i] \mid i\in S,j\not \in S\} \cup \{[\alpha_i]\mid i\in S\}\cup \{[\alpha_j]\mid j \not \in S\}$, among which both $[\alpha_j+\alpha_i]$ and $[\alpha_j-\alpha_i]$ join $\tilde{C}_S$ via a $S^2\times S^1$ to $\tilde{C}_{ (S\setminus\{i\})\cup \{j\}}$, and $[\alpha_i],[\alpha_j]$ join $\tilde{C}_S$ via a $S^3$ to no other fixed circles.		
	\end{enumerate}
\end{prop}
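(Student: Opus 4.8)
The plan is to transport the corresponding data for the real Grassmannians, obtained in the previous section, across the $T^n$-equivariant double covering $\pi:\tilde{G}_k(\R^n)\to G_k(\R^n)$. Since $\pi$ is a $T^n$-equivariant local diffeomorphism it preserves orbit dimensions, so $\tilde{G}_k(\R^n)^{T^n}=\pi^{-1}(G_k(\R^n)^{T^n})$ and $(\tilde{G}_k(\R^n))_1=\pi^{-1}((G_k(\R^n))_1)$, and it identifies the isotropy representation on each $T_p\tilde{G}_k(\R^n)$ with the one on $T_{\pi(p)}G_k(\R^n)$; hence the isotropy weights at every fixed point or circle are exactly those read off from the tangent-space decompositions in Subsubsection~\ref{subsubsec:IsoWeights} and need no new computation. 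A $T^n$-fixed point of $\tilde{G}_k(\R^n)$ is an oriented $T^n$-subrepresentation $W$ of the ambient space whose underlying subspace is one of those already enumerated for the real Grassmannians; since $T^n$ is connected it acts on $W$ by orientation-preserving maps, so both orientations of $W$ are $T^n$-fixed. When $W$ has no $\R^2_0$-summand it is an isolated fixed subspace and yields exactly the two fixed points $V_{S_\pm}$, which I would label by the sign of $e^T(W)=\pm\prod_{i\in S}\alpha_i$ for $\tilde{G}_{2k}(\R^{2n})$ and $\tilde{G}_{2k}(\R^{2n+1})$, and by the sign of $\bar{e}^T=\pm\prod_{j\notin S}\alpha_j$ of its complement for $\tilde{G}_{2k+1}(\R^{2n+1})$, where $W$ carries an $\R_0$-summand so that $e^T(W)=0$; when $W$ contains a line of $\R^2_0$ it sweeps out a fixed $\R P^1$ inside $G_k(\R^n)$, and since rotating that line by $\pi$ reverses its orientation, $\pi$ restricts over this $\R P^1$ to the connected cover $\tilde{G}_1(\R^2_0)=S^1\to\R P^1$, producing the fixed circles $\tilde{C}_S$ of $\tilde{G}_{2k+1}(\R^{2n+2})$. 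This gives the stated $2\binom{n}{k}$ fixed points, respectively $\binom{n}{k}$ fixed circles, and case~(3) can alternatively be deduced from case~(2) via the $T^n$-diffeomorphism $\tilde{G}_{2k+1}(\R^{2n+1})\cong\tilde{G}_{2(n-k)}(\R^{2n+1})$ given by orthogonal complement (orientations of $\gamma$ and of $\bar{\gamma}$ determine the same double cover, since $w_1(\gamma)=w_1(\bar{\gamma})$ over $G_{2k+1}(\R^{2n+1})$).

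For the $1$-skeleton I would push each elementary piece of $(G_k(\R^n))_1$ through $\pi$; since $(\tilde{G}_k(\R^n))_1=\pi^{-1}((G_k(\R^n))_1)$ this recovers the whole $1$-skeleton. The cover restricted to a piece is connected exactly when the orientation of the moving subspace (equivalently, of the moving line in $\R^2_0$) reverses along a generator of its fundamental group. A sphere $S^2$ is simply connected and lifts to two disjoint $2$-spheres; an $\R P^2$, an $\R P^3$, or an $S^2\times\R P^1$ lifts to its connected double cover, namely $\tilde{G}_2(\R^3)=S^2$, $\tilde{G}_3(\R^4)=S^3$, or $S^2\times S^1$. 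Consequently the dotted $\R P^2$- and $\R P^3$-edges of the real Grassmannians become solid: a weight $[\alpha_i]$ (resp.\ $[\alpha_j]$) edge becomes an $S^2$ joining $V_{S_+}$ to $V_{S_-}$ in the even-dimensional cases, the weights $[\alpha_i]$ and $[\alpha_j]$ attach an $S^3$ to $\tilde{C}_S$ with no further fixed circle in $\tilde{G}_{2k+1}(\R^{2n+2})$, and an $S^2\times\R P^1$ of weight $[\alpha_j\pm\alpha_i]$ becomes an $S^2\times S^1$ joining $\tilde{C}_S$ to $\tilde{C}_{(S\setminus\{i\})\cup\{j\}}$.

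The step I expect to be the main obstacle is the $\pm$-bookkeeping on the two $2$-spheres lying over an $[\alpha_j-\alpha_i]$- or an $[\alpha_j+\alpha_i]$-edge, namely that the first joins $V_{S_\pm}$ to $V_{((S\setminus\{i\})\cup\{j\})_\pm}$ while the second joins $V_{S_\pm}$ to $V_{((S\setminus\{i\})\cup\{j\})_\mp}$. This localizes to the model $\tilde{G}_2(\R^4)\to G_2(\R^4)$ with $\R^4=\R^2_{[\alpha_i]}\oplus\R^2_{[\alpha_j]}$, whose $1$-skeleton downstairs is $\C P^1\cup\overline{\C P^1}$, with $\C P^1$ of weight $[\alpha_j-\alpha_i]$ and $\overline{\C P^1}=\{\mathcal{F}(J(L))\}$ of weight $[\alpha_j+\alpha_i]$. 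Along $\C P^1$ the complex structure on the moving plane varies continuously and restricts to the complex orientations at the two poles $\R^2_{[\alpha_i]}$ and $\R^2_{[\alpha_j]}$, so one lift of $\C P^1$ joins the $+$-orientation of $\R^2_{[\alpha_i]}$ to the $+$-orientation of $\R^2_{[\alpha_j]}$ and the other joins the two $-$'s; along $\overline{\C P^1}$ the complex structure transported through the conjugation $J$ of the second $\C$-factor restricts to the complex orientation at the pole $\R^2_{[\alpha_i]}$ but to the \emph{opposite} of the complex orientation at the pole $\R^2_{[\alpha_j]}$, which is precisely the sign flip. Since for a general $S$ the remaining factors $\R^2_{[\alpha_{i'}]}$, $i'\in S\setminus\{i\}$, are oriented once and contribute a common sign to $V_{S_\pm}$ and to $V_{((S\setminus\{i\})\cup\{j\})_\pm}$, the sign of these fixed points is governed by the orientation of $\R^2_{[\alpha_i]}$, respectively $\R^2_{[\alpha_j]}$; running this local analysis over the ambient decompositions $\R^{2n}=\oplus_i\R^2_{[\alpha_i]}$, $\R^{2n+1}=(\oplus_i\R^2_{[\alpha_i]})\oplus\R_0$, and $\R^{2n+2}=(\oplus_i\R^2_{[\alpha_i]})\oplus\R^2_0$ then yields all four assertions.
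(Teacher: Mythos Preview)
Your argument is correct and considerably more detailed than the paper's own proof, which consists of the single sentence ``Similar to the case of real Grassmannians.'' The paper has already set up the fixed-point labeling via equivariant Euler classes in Subsubsection~\ref{subsub:Euler}, and then leaves the $1$-skeleton analysis entirely to the reader as a repetition of the unoriented computation.

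Your approach---transporting everything through the $T^n$-equivariant double cover $\pi:\tilde{G}_k(\R^n)\to G_k(\R^n)$---is a cleaner way to organize the same content. It makes explicit why no new weight computation is needed (since $\pi$ is a local diffeomorphism), reduces the $1$-skeleton analysis to identifying which pieces of $(G_k(\R^n))_1$ have connected preimage, and isolates the only genuinely new ingredient: the $\pm$-bookkeeping on the two lifted spheres over an $[\alpha_j\pm\alpha_i]$-edge. Your local model in $\tilde{G}_2(\R^4)$, tracking how the complex orientation on $\C P^1$ versus $\overline{\C P^1}$ restricts at the poles, handles this correctly and is exactly the verification the paper suppresses. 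The covering-space viewpoint also explains uniformly why the $\R P^2$, $\R P^3$, and $S^2\times\R P^1$ pieces become $S^2$, $S^3$, and $S^2\times S^1$ upstairs, whereas the paper would have the reader re-identify each piece by hand. In short, both routes arrive at the same answer; yours supplies the argument the paper only gestures at.
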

\begin{proof}
	Similar to the case of real Grassmannians.
\end{proof}

\subsubsection{GKM graphs of oriented Grassmannians}
Using the $1$-skeleta of oriented Grassmannians, we can construct their GKM graphs:

\begin{exm}
	We will give some examples of GKM graphs for $\tilde{G}_k(\R^n)$ when $k$ or $n$ is small.
	\begin{enumerate}
		\item $S^{2n}$ as $\tilde{G}_{1}(\R^{2n+1})$ or $\tilde{G}_{2n}(\R^{2n+1})$ 
		\begin{figure}[H]
			\centering
			\begin{subfigure}[b]{.45\textwidth}
				\centering
				\includegraphics{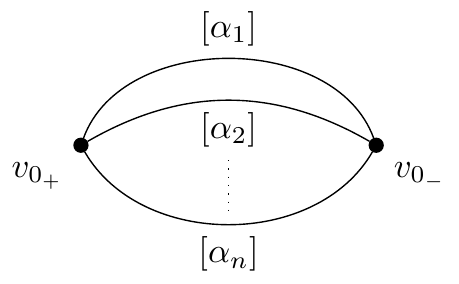}
				\caption{GKM graph for $S^{2n}$}
			\end{subfigure}
			\begin{subfigure}[b]{.45\textwidth}
				\centering
				\includegraphics{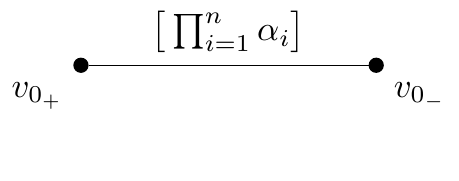}
				\caption{Condensed GKM graph for $S^{2n}$}
			\end{subfigure}
			\caption{GKM graphs for $S^{2n}$}
		\end{figure}
		\item $S^{2n+1}$ as $\tilde{G}_{1}(\R^{2n+2})$ or $\tilde{G}_{2n+1}(\R^{2n+2})$ 
		\begin{figure}[H]
			\centering
			\begin{subfigure}[b]{.45\textwidth}
				\centering
				\includegraphics{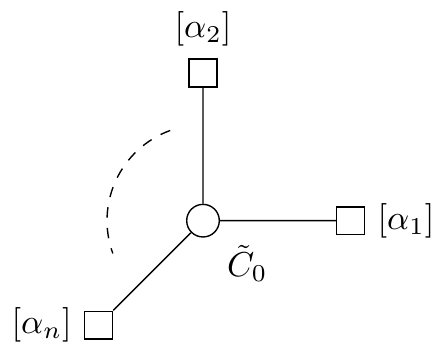}
				\caption{GKM graph for $S^{2n+1}$}
			\end{subfigure}
			\begin{subfigure}[b]{.45\textwidth}
				\centering
				\includegraphics{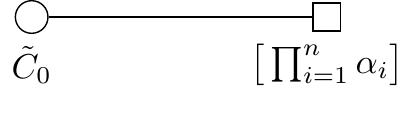}
				\caption{Condensed GKM graph for $S^{2n+1}$}
			\end{subfigure}
			\caption{GKM graphs for $S^{2n+1}$}
		\end{figure}
		\item $\tilde{G}_{2}(\R^{4}),\tilde{G}_{2}(\R^{5}),\tilde{G}_{3}(\R^{5})$ as $\tilde{G}_{2k}(\R^{2n}), \tilde{G}_{2k}(\R^{2n+1}), \tilde{G}_{2k+1}(\R^{2n+1})$ when $k=1,n=2$.
		\begin{figure}[H]
			\centering
			    \begin{subfigure}[b]{0.3\textwidth}
					\centering
					\includegraphics{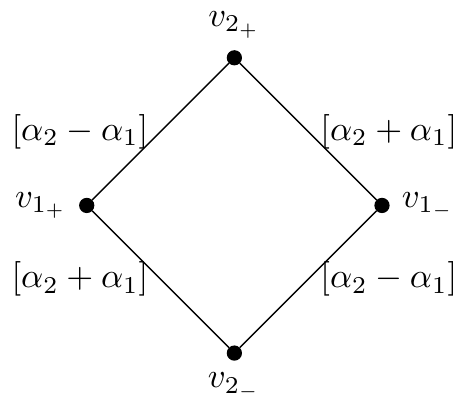}
					\caption{GKM graph for $\tilde{G}_{2}(\R^{4})$}
				\end{subfigure}
				\begin{subfigure}[b]{0.3\textwidth}
					\centering
					\includegraphics{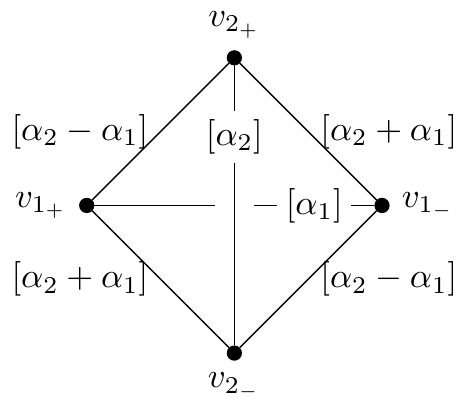}
					\caption{GKM graph for $\tilde{G}_{2}(\R^{5})$}
				\end{subfigure}
				\begin{subfigure}[b]{0.3\textwidth}
					\centering
					\includegraphics{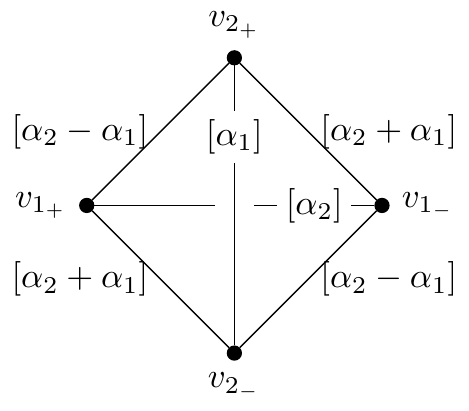}
					\caption{GKM graph for $\tilde{G}_{3}(\R^{5})$}
				\end{subfigure}
			\caption{GKM graphs for $\tilde{G}_{2}(\R^{4}),\tilde{G}_{2}(\R^{5}),\tilde{G}_{3}(\R^{5})$}
		\end{figure}		
		\item $\tilde{G}_{3}(\R^{6})$ as $\tilde{G}_{2k+1}(\R^{2n+2})$ when $k=1,n=2$.
		\begin{figure}[H]
			\centering
			\begin{subfigure}[b]{0.45\textwidth}
				\centering
				\includegraphics{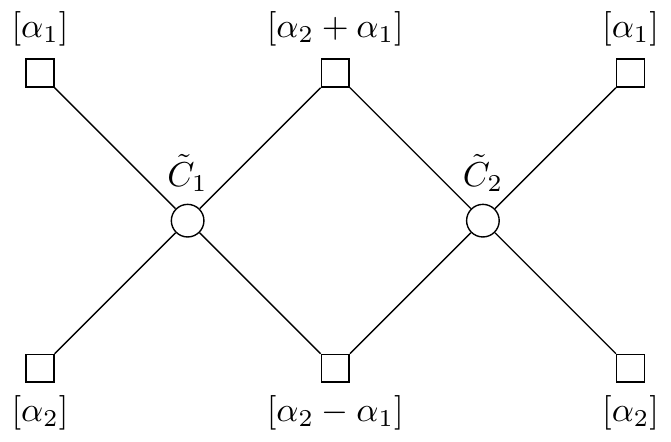}
				\caption{GKM graph for $\tilde{G}_{3}(\R^{6})$}
			\end{subfigure}
			\begin{subfigure}[b]{0.45\textwidth}
				\centering
				\includegraphics{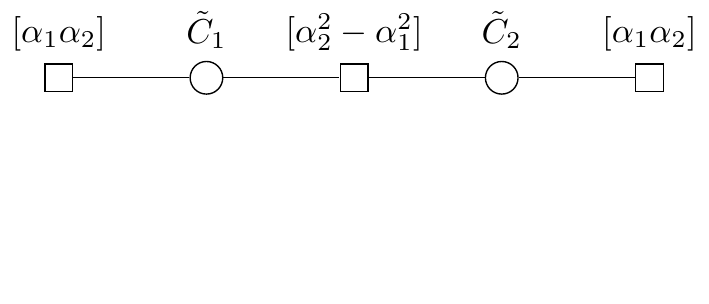}
				\caption{Condensed GKM graph for $\tilde{G}_{3}(\R^{6})$}
			\end{subfigure}
			\caption{GKM graphs for $\tilde{G}_{3}(\R^{6})$}
		\end{figure}
	\end{enumerate}
\end{exm}

\subsubsection{Formality, cohomology and canonical basis of oriented Grassmannians}
\begin{prop}[Equivariant formality of torus actions on oriented Grassmannians]
	The $T^n$-actions on all the four types of oriented Grassmannians $\tilde{G}_{2k}(\R^{2n}), \tilde{G}_{2k}(\R^{2n+1}), \tilde{G}_{2k+1}(\R^{2n+1}), \tilde{G}_{2k+1}(\R^{2n+2})$ are equivariantly formal. All the four oriented Grassmannians have the same total Betti number $2\binom{n}{k}$.
\end{prop}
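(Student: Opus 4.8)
The plan is to verify the numerical criterion~(6) of Theorem~\ref{thm:formal}: a torus action on $M$ is equivariantly formal exactly when $\sum\dim_\Q H^*(M^{T^n};\Q)=\sum\dim_\Q H^*(M;\Q)$. By Proposition~\ref{prop:OrientSkeleton} the fixed locus of $T^n$ on $\tilde{G}_k(\R^n)$ consists of $2\binom{n}{k}$ isolated points in the three even-dimensional cases $\tilde{G}_{2k}(\R^{2n})$, $\tilde{G}_{2k}(\R^{2n+1})$, $\tilde{G}_{2k+1}(\R^{2n+1})$, and of $\binom{n}{k}$ isolated circles in the odd-dimensional case $\tilde{G}_{2k+1}(\R^{2n+2})$; since $\sum\dim H^*(S^1;\Q)=2$, in every case $\sum\dim H^*\bigl((\tilde{G}_k(\R^n))^{T^n};\Q\bigr)=2\binom{n}{k}$. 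So it suffices to show that $\sum\dim H^*(\tilde{G}_k(\R^n);\Q)=2\binom{n}{k}$ for all four families; this one computation gives simultaneously the stated total Betti number and, through Theorem~\ref{thm:formal}(6), equivariant formality.

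For the odd-dimensional family this is immediate from facts already in hand: Theorem~\ref{thm:OddGrass} gives $H^*(\tilde{G}_{2k+1}(\R^{2n+2});\Q)\cong H^*(G_{2k+1}(\R^{2n+2});\Q)$, and setting $t=1$ in the Casian--Kodama formula of Theorem~\ref{thm:Poinc} yields $\sum\dim H^*(G_{2k+1}(\R^{2n+2});\Q)=2\,P_{G_k(\C^n)}(1)=2\binom{n}{k}$. For the three even-dimensional families I will use the classical (non-equivariant) Leray--Borel description, since each of them is a homogeneous space $G/H$ of an equal-rank pair: $SO(2n)/\bigl(SO(2k)\times SO(2n-2k)\bigr)$, $SO(2n+1)/\bigl(SO(2k)\times SO(2n-2k+1)\bigr)$ and $SO(2n+1)/\bigl(SO(2k+1)\times SO(2n-2k)\bigr)$ respectively, each with $H$ connected and containing a maximal torus of $G$. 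Thus $H^*(G/H;\Q)\cong\Q\otimes_{(\mathbb{S}\mathfrak{t}^*)^{W_G}}(\mathbb{S}\mathfrak{t}^*)^{W_H}$ has Poincar\'e series $P_{G/H}(t)=\prod_{d\in D_G}(1-t^{d})\big/\prod_{d\in D_H}(1-t^{d})$, where $D_{SO(2m)}$ is the multiset $\{4,8,\ldots,4(m-1),2m\}$ and $D_{SO(2m+1)}$ is $\{4,8,\ldots,4m\}$, the cohomological degrees of a minimal homogeneous generating set of the Weyl invariants (the elementary symmetric functions in $\alpha_1^2,\ldots,\alpha_m^2$, with the Pfaffian $\alpha_1\cdots\alpha_m$ replacing $e_m(\alpha^2)$ in the $SO(2m)$ case). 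In each of the three cases $D_G$ and $D_H$ have the same cardinality $n$, so numerator and denominator vanish to the same order $n$ at $t=1$; cancelling the common factor $(1-t)^n$ and evaluating the surviving ratio of products of the degrees gives $P_{G/H}(1)=2\,n!/\bigl(k!(n-k)!\bigr)=2\binom{n}{k}$ in all three cases.

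Combining the two cases yields $\sum\dim H^*(\tilde{G}_k(\R^n);\Q)=2\binom{n}{k}$, which matches the fixed-point count, so Theorem~\ref{thm:formal}(6) gives equivariant formality. I expect the main technical nuisance to be the $t\to1$ bookkeeping in the last step: one has to list the degrees of the basic invariants of $W(B_m)$ and $W(D_m)$ with the correct multiplicities and confirm that the numerator and denominator of the Leray--Borel Poincar\'e series vanish to exactly the same order, so that no stray factor of $(1-t)$ survives --- routine, but it must be checked family by family (and the degenerate cases $k=0,n$, where one Grassmannian is just $S^0$, should be noted separately). As an alternative for the even-dimensional cases, one could instead split $H^*(\tilde{G}_k(\R^n);\Q)=\pi^*H^*(G_k(\R^n);\Q)\oplus H^*(G_k(\R^n);\mathcal{L})$ along the double cover of Proposition~\ref{prop:OrientVSReal}, with $\mathcal{L}$ the rank-one $\Q$-local system attached to that cover, and evaluate the twisted summand by Poincar\'e duality on the closed manifold $G_k(\R^n)$; this reproduces the count $\binom{n}{k}$ for the twisted part at once when the ambient dimension is odd (so that $\mathcal{L}$ is the orientation sheaf of $G_k(\R^n)$ and the twisted cohomology is Poincar\'e-dual to the untwisted one), but it is less transparent for $\tilde{G}_{2k}(\R^{2n})$, which is why the Leray--Borel computation is the more uniform route.
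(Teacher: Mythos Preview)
Your argument is correct but differs from the paper's in the even-dimensional cases. The paper simply invokes \cite{GHZ06}: for an equal-rank homogeneous space $G/H$ the left $T$-action is automatically equivariantly formal, so formality comes for free, and the total Betti number is then read off either as $|W_G/W_H|$ or (equivalently, since the fixed points are isolated) as the number of fixed points $2\binom{n}{k}$ from Proposition~\ref{prop:OrientSkeleton}. You instead verify criterion~(6) of Theorem~\ref{thm:formal} directly, computing the manifold's total Betti number via the Leray--Borel Poincar\'e series $\prod_{d\in D_G}(1-t^d)/\prod_{d\in D_H}(1-t^d)$ and evaluating the limit at $t=1$ as the ratio of products of degrees. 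This is the classical identity $|W|=\prod d_i$ in disguise, so your computation $P_{G/H}(1)=2\binom{n}{k}$ is exactly the paper's $|W_G/W_H|$ spelled out explicitly; your route is more self-contained but longer, while the paper's one-line citation avoids the case-by-case bookkeeping you flag at the end. For the odd-dimensional family both arguments coincide, going through Theorem~\ref{thm:OddGrass} and the already-established formality (or Betti count) of $G_{2k+1}(\R^{2n+2})$.
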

\begin{proof}
    The even dimensional oriented Grassmannians $\tilde{G}_{2k}(\R^{2n}), \tilde{G}_{2k}(\R^{2n+1}), \tilde{G}_{2k+1}(\R^{2n+1})$ can be viewed as homogeneous spaces of the form $G/H$ with $H$ a compact connected Lie subgroup and of the same rank as the connected compact Lie group $G$. As shown in \cite{GHZ06}, the actions of maximal tori on these homogeneous spaces are equivariantly formal. Their total Betti numbers are the same as their Euler characteristic numbers $|W_G/W_H|$ where $W_G,W_H$ are the Weyl groups of $G$ and $H$. Alternatively, we can compute the total Betti number as the number of fixed points given in Theorem \ref{prop:OrientSkeleton}, namely $2\binom{n}{k}$. The odd dimensional oriented Grassmannian $\tilde{G}_{2k+1}(\R^{2n+2})$ has the same equivariant cohomology as the real Grassmannian $G_{2k+1}(\R^{2n+2})$, hence is also equivariantly formal with total Betti number $2\binom{n}{k}$.
\end{proof}

After verifying GKM conditions and equivariant formality, we can give the GKM description of the torus actions on $\tilde{G}_k(\R^n)$ by applying the even dimensional GKM Theorem as in Guillemin, Holm and Zara \cite{GHZ06} and the odd dimensional GKM-type Theorem \ref{thm:OddGKM}. 

\begin{thm}[GKM description of equivariant cohomology of oriented Grassmannians]\label{thm:GKMorientGrass}
	The following congruence relations are given for any two $k$-element subsets $S,S'\subset \{1,\ldots,n\}$ differed by one element with $S\cup\{j\}=S'\cup\{i\}$.
	\begin{enumerate}
		\item For the oriented Grassmannian $\tilde{G}_{2k}(\R^{2n})$, an element of equivariant cohomology is a set of polynomials $f_{S_\pm} \in \Q[\alpha_1,\ldots,\alpha_n]$ to each vertex $S_\pm$ such that 
		\begin{enumerate}
			\item $f_{S_+} \equiv f_{S'_+}, \quad f_{S_-} \equiv f_{S'_-} \mod \alpha_j-\alpha_i$
			\item $f_{S_+} \equiv f_{S'_-}, \quad f_{S_-} \equiv f_{S'_+} \mod \alpha_j+\alpha_i$.
		\end{enumerate}
		\item For the oriented Grassmannian $\tilde{G}_{2k}(\R^{2n+1})$, an element of equivariant cohomology is a set of polynomials $f_{S_\pm} \in \Q[\alpha_1,\ldots,\alpha_n]$ to each vertex $S_\pm$ such that 
		\begin{enumerate}
			\item $f_{S_+} \equiv f_{S'_+} , \quad f_{S_-} \equiv f_{S'_-}\mod \alpha_j-\alpha_i$
			\item $f_{S_+} \equiv f_{S'_-} , \quad f_{S_-} \equiv f_{S'_+}\mod \alpha_j+\alpha_i$
			\item $f_{S_+} \equiv f_{S_-} \mod \prod_{i'\in S}\alpha_{i'}$.
		\end{enumerate}
		\item For the oriented Grassmannian $\tilde{G}_{2k+1}(\R^{2n+1})$, an element of equivariant cohomology is a set of polynomials $f_{S_\pm} \in \Q[\alpha_1,\ldots,\alpha_n]$ to each vertex $S_\pm$ such that 
		\begin{enumerate}
			\item $f_{S_+} \equiv f_{S'_+} , \quad f_{S_-} \equiv f_{S'_-}\mod \alpha_j-\alpha_i$
			\item $f_{S_+} \equiv f_{S'_-} , \quad f_{S_-} \equiv f_{S'_+}\mod \alpha_j+\alpha_i$
			\item $f_{S_+} \equiv f_{S_-} \mod \prod_{j'\not\in S}\alpha_{j'}$.
		\end{enumerate}
		\item For the odd dimensional oriented Grassmannian $\tilde{G}_{2k+1}(\R^{2n+2})$ with $T^n$-action, an element of equivariant cohomology is a set of polynomial pairs $(f_S, g_S \theta)$ to each $\circ$-vertex $S$ where $\theta$ is the unit volume form of $S^1$ such that
		\begin{enumerate}
			\item $g_{S} \equiv 0 \mod \prod_{i=1}^{n}\alpha_i$
			\item $f_{S} \equiv f_{S'} , \quad g_{S} \equiv g_{S'}\mod \alpha^2_j-\alpha^2_i$.
		\end{enumerate}
	\end{enumerate}
\end{thm}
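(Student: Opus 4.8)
The plan is to apply the two GKM-type theorems already at our disposal, feeding them the explicit $1$-skeleta from Proposition~\ref{prop:OrientSkeleton}. Equivariant formality of all four $T^n$-actions has just been established, so it remains to note the GKM condition: the fixed sets are isolated points in the three even-dimensional cases and isolated circles in $\tilde G_{2k+1}(\R^{2n+2})$, and at each fixed point the isotropy weights listed in Proposition~\ref{prop:OrientSkeleton}, all of the form $[\alpha_j\pm\alpha_i]$, $[\alpha_i]$ or $[\alpha_j]$, are pairwise independent --- two of them being proportional would force their supports in the basis $\alpha_1,\dots,\alpha_n$ to coincide, and a short case check (using $i\in S$, $j\notin S$) then rules out proportionality. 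Hence Theorem~\ref{thm:EvenGKM} applies to cases (1)--(3) and Theorem~\ref{thm:OddGKM} to case (4).

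For (1)--(3) I would read the effective GKM graph straight off Proposition~\ref{prop:OrientSkeleton}. Unlike the real Grassmannians, the oriented ones have no $\R P^2$ components in their $1$-skeleta: every $2$-dimensional piece there is a genuine $2$-sphere, so every edge is solid and contributes a congruence. For $S,S'$ with $S\cup\{j\}=S'\cup\{i\}$, the $2$-sphere of weight $[\alpha_j-\alpha_i]$ joins $V_{S_+}$ to $V_{S'_+}$ and $V_{S_-}$ to $V_{S'_-}$, while the $2$-sphere of weight $[\alpha_j+\alpha_i]$ joins $V_{S_+}$ to $V_{S'_-}$ and $V_{S_-}$ to $V_{S'_+}$; this produces relations (a) and (b) of (1), (2) and (3). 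When the ambient space carries a trivial summand $\R_0$, i.e.\ in (2) and (3), there is in addition, for each $i\in S$ (respectively each $j\notin S$), a $2$-sphere of weight $[\alpha_i]$ (respectively $[\alpha_j]$) joining $V_{S_+}$ to $V_{S_-}$, giving $f_{S_+}\equiv f_{S_-}\pmod{\alpha_i}$ (respectively $\pmod{\alpha_j}$) for every such index. Since $\Q[\alpha_1,\dots,\alpha_n]$ is a unique factorization domain and distinct $\alpha$'s are coprime, these congruences condense to $f_{S_+}\equiv f_{S_-}\pmod{\prod_{i'\in S}\alpha_{i'}}$ and $\pmod{\prod_{j'\notin S}\alpha_{j'}}$, which is relation (c) of (2) and (3).

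For case (4) the quickest route is Theorem~\ref{thm:OddGrass}: the $T^n$-equivariant homotopy $\rho\simeq\mathrm{id}$ gives $H^*_{T^n}(\tilde G_{2k+1}(\R^{2n+2}))\cong H^*_{T^n}(G_{2k+1}(\R^{2n+2}))$, and the right-hand side is exactly part~(2) of Theorem~\ref{thm:GKMrealGrass}, which is the asserted statement. Alternatively one re-derives it directly from Theorem~\ref{thm:OddGKM} applied to the $1$-skeleton of Proposition~\ref{prop:OrientSkeleton}(4): fix an orientation on each circle $\tilde C_S\cong S^1$; each $S^3$-component contains exactly one fixed circle, so the constraint $\sum_i\pm Q_{C_i}\equiv 0$ collapses to a single term $g_S\equiv 0\pmod{\alpha_i}$ (or $\pmod{\alpha_j}$) with no sign to choose, and these condense to $g_S\equiv 0\pmod{\prod_{i=1}^n\alpha_i}$; each $S^2\times S^1$-component joining $\tilde C_S$ to $\tilde C_{S'}$ contributes $f_S\equiv f_{S'}$ together with $g_S\equiv\pm g_{S'}$ modulo $\alpha_j-\alpha_i$ and modulo $\alpha_j+\alpha_i$, and because the oriented structure orients the circles and these $3$-dimensional strata canonically the signs are consistent, so the pair condenses to $f_S\equiv f_{S'}$, $g_S\equiv g_{S'}\pmod{\alpha_j^2-\alpha_i^2}$.

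The only genuine subtlety, and hence the step I would be most careful about, is the sign/orientation bookkeeping: that the weight-$[\alpha_j+\alpha_i]$ spheres invert the Euler-class labels $\pm$ while the weight-$[\alpha_j-\alpha_i]$ spheres preserve them in the even cases, and that the chosen orientations on the fixed circles and on the $3$-dimensional strata make all the $Q$-signs agree in the odd case. Both points are supplied by Proposition~\ref{prop:OrientSkeleton} (checked by restricting to a single factor $\tilde G_2(\R^2_{[\alpha_i]}\oplus\R^2_{[\alpha_j]})\cong\tilde G_2(\R^4)$ and tracking the sign of the Euler class $\prod\alpha$ along each $2$-sphere) and by the remark following Theorem~\ref{thm:GKMrealGrass}; everything else is the routine translation of $1$-skeleton data into congruences via Theorems~\ref{thm:EvenGKM} and~\ref{thm:OddGKM}, plus condensation of congruences in the UFD $\Q[\alpha_1,\dots,\alpha_n]$.
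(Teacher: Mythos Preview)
Your proposal is correct and follows essentially the same approach as the paper, which states the theorem without an explicit proof and simply notes that it follows by applying the even-dimensional GKM Theorem (via Guillemin--Holm--Zara) and the odd-dimensional GKM-type Theorem~\ref{thm:OddGKM} to the $1$-skeleta of Proposition~\ref{prop:OrientSkeleton}, after equivariant formality has been checked. You supply considerably more detail than the paper does---the verification of pairwise independence of weights, the condensation of congruences via the UFD property, the alternative route through Theorem~\ref{thm:OddGrass} for case~(4), and the sign/orientation bookkeeping---but the underlying strategy is identical.
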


For odd dimensional oriented Grassmannians, the induced deck transformation $\rho^*: H^*_T(\tilde{G}_{2k+1}(\R^{2n+2}))\rightarrow H^*_T(\tilde{G}_{2k+1}(\R^{2n+2}))$ is the identity map hence acts trivially on the GKM description. Solving the same set of congruence equations as in Theorem \ref{thm:AllGrass} of $H^*_T({G}_{2k+1}(\R^{2n+2}))$, we will also get an element $\tilde{r}^T \in H^*_T(\tilde{G}_{2k+1}(\R^{2n+2}))$ in GKM description localized at a fixed circle $\tilde{C}_S=\{(\oplus_{i\in S} \R^2_{[\alpha_i]})\oplus L_0 \mid L_0\in \tilde{G}_1(\R^2_0)\}\cong S^1$ to be $\tilde{r}^T_S = (\prod_{i=1}^{n}\alpha_i) \theta_{S^1}$ similar to the ${r}^T \in H^*_T({G}_{2k+1}(\R^{2n+2}))$ localized at ${C}_S=\{(\oplus_{i\in S} \R^2_{[\alpha_i]})\oplus L_0 \mid L_0\in {G}_1(\R^2_0)\}\cong \R P^1$ to be ${r}^T_S = (\prod_{i=1}^{n}\alpha_i) \theta_{\R P^1}$, where $\theta_{S^1}$ and $\theta_{\R P^1}$ are the unit volume forms of $S^1$ and $\R P^1$ respectively. 

\begin{prop}[Canonical basis of $H^*_T(\tilde{G}_{2k+1}(\R^{2n+2}))$]\label{prop:CanoBaseIII}
	Let $\sigma_{S\in \mathcal{S}}$ be the canonical basis of $H^*_T({G}_{2k}(\R^{2n}))$ from Theorem \ref{thm:RealSchub} and $\tilde{r}^T_S = \prod_{i=1}^{n}\alpha_i \theta_{S^1}$ be the odd-degree generator. Then $\sigma_{S}, \tilde{r}^T \cdot \sigma_{S}$ give additive $\Q[\alpha_1,\ldots,\alpha_n]$-basis of $ H^*_T(\tilde{G}_{2k+1}(\R^{2n+2}))$.
\end{prop}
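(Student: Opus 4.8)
The plan is to reduce the statement to the already-established structure of $H^*_{T^n}(G_{2k+1}(\R^{2n+2}))$. By Theorem \ref{thm:OddGrass} the pull-back $\pi^*$ is a $\Q[\alpha_1,\ldots,\alpha_n]$-algebra isomorphism $H^*_{T^n}(G_{2k+1}(\R^{2n+2})) \cong H^*_{T^n}(\tilde{G}_{2k+1}(\R^{2n+2}))$; equivalently, item\,(4) of Theorem \ref{thm:GKMorientGrass} is \emph{verbatim} the GKM description of item\,(2) of Theorem \ref{thm:GKMrealGrass}, both written on the Johnson graph $J(n,k)$. Under this identification $\pi^*$ carries the generator $r^T=(\prod_{i=1}^n\alpha_i\,\theta_{\R P^1})_{S\in\mathcal{S}}$ to $\tilde{r}^T=(\prod_{i=1}^n\alpha_i\,\theta_{S^1})_{S\in\mathcal{S}}$ and restricts to the identity on the subring of $\theta$-free tuples, which is $\cong H^*_{T^n}(G_{2k}(\R^{2n}))$ and contains the classes $\sigma_S$. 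So it suffices to argue that $\sigma_S$ and $r^T\sigma_S$ form a $\Q[\alpha_1,\ldots,\alpha_n]$-basis of $H^*_{T^n}(G_{2k+1}(\R^{2n+2}))$; I recall the mechanism for completeness.

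First I would invoke the decomposition from the proof of Theorem \ref{thm:AllGrass}(2): writing an element of $H^*_{T^n}(\tilde{G}_{2k+1}(\R^{2n+2}))$ as $(f_S+g_S\theta)_{S\in\mathcal{S}}$, the congruence $g_S\equiv 0 \bmod \prod_{i=1}^n\alpha_i$ lets one set $g_S=(\prod_{i=1}^n\alpha_i)\,h_S$, and since $\prod_{i=1}^n\alpha_i$ is coprime to each $\alpha_j^2-\alpha_i^2$, the tuple $(h_S)_{S\in\mathcal{S}}$ satisfies exactly the congruences defining $H^*_{T^n}(G_{2k}(\R^{2n}))$, as does $(f_S)_{S\in\mathcal{S}}$. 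Because $\theta_{S^1}^2=0$, this exhibits a unique $\Q[\alpha_1,\ldots,\alpha_n]$-module decomposition
\[
H^*_{T^n}(\tilde{G}_{2k+1}(\R^{2n+2})) \;=\; H^*_{T^n}(G_{2k}(\R^{2n})) \ \oplus\ \tilde{r}^T\cdot H^*_{T^n}(G_{2k}(\R^{2n})),
\]
with $(\tilde{r}^T)^2=0$.

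Then I would feed in Theorem \ref{thm:RealSchub}: $\{\sigma_S : S\in\mathcal{S}\}$ is a free $\Q[\alpha_1,\ldots,\alpha_n]$-basis of $H^*_{T^n}(G_{2k}(\R^{2n}))$. Expanding $(f_S)_{S}=\sum_S a_S\sigma_S$ and $(h_S)_{S}=\sum_S b_S\sigma_S$ uniquely with $a_S,b_S\in\Q[\alpha_1,\ldots,\alpha_n]$ and substituting into the direct-sum decomposition shows that every element of $H^*_{T^n}(\tilde{G}_{2k+1}(\R^{2n+2}))$ is uniquely $\sum_S a_S\sigma_S+\sum_S b_S\,\tilde{r}^T\sigma_S$; that is, $\{\sigma_S\}\cup\{\tilde{r}^T\sigma_S\}$ is the claimed basis. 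There is no serious obstacle here: the only point requiring (minor) care is the uniqueness of the splitting into a $\theta$-free part and an $\tilde{r}^T$-multiple, which is exactly the coprimality argument of Theorem \ref{thm:AllGrass}(2) and is unaffected by passing from $\R P^1=G_1(\R^2_0)$ to $S^1=\tilde{G}_1(\R^2_0)$, since both contribute a one-dimensional degree-one piece spanned by a square-zero volume form.
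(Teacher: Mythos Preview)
Your direct argument in the second and third paragraphs---decomposing $(f_S+g_S\theta)_S$ via $g_S=(\prod_i\alpha_i)h_S$ and then invoking the basis $\{\sigma_S\}$ of $H^*_{T^n}(G_{2k}(\R^{2n}))$---is exactly the paper's approach: the paper simply remarks that the congruence system of item\,(4) in Theorem~\ref{thm:GKMorientGrass} is identical to that of item\,(2) in Theorem~\ref{thm:GKMrealGrass}, so the proof of Theorem~\ref{thm:AllGrass}(2) carries over verbatim with $\tilde{r}^T$ in place of $r^T$.

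One inaccuracy in your opening paragraph: you assert that $\pi^*$ carries $r^T$ to $\tilde{r}^T$, but in fact $\pi^*(r^T)=2\tilde{r}^T$, because the $2$-fold cover $\tilde{C}_S\cong S^1\to C_S\cong\R P^1$ pulls back the unit volume form as $\pi^*(\theta_{\R P^1})=2\theta_{S^1}$ (this is precisely Proposition~\ref{prop:Pullr}, which the paper states immediately \emph{after} the result you are proving). This does not damage your conclusion---over $\Q$ the factor $2$ is a unit, so $\{\sigma_S,\,2\tilde{r}^T\sigma_S\}$ being a basis is equivalent to $\{\sigma_S,\,\tilde{r}^T\sigma_S\}$ being one---but it makes the $\pi^*$-transport route misleading as written. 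Your self-contained GKM argument avoids this subtlety and matches the paper.
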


However, there is a subtlety for the pullback $\pi^*:H^*_T({G}_{2k+1}(\R^{2n+2}))\rightarrow H^*_T(\tilde{G}_{2k+1}(\R^{2n+2}))$ though this is an isomorphism. The $2$-fold covering $\pi: \tilde{G}_{2k+1}(\R^{2n+2}) \rightarrow {G}_{2k+1}(\R^{2n+2})$ restricts to a $2$-fold covering of fixed circles $\pi: (\tilde{C}_S\cong S^1) \rightarrow ({C}_S\cong \R P^1)$ which will give the localized pullback $\pi^*(\theta_{\R P^1})=2\theta_{S^1}$. Hence we get $\pi^*({r}^T)=2\tilde{r}^T$.

\begin{prop}[The explicit pullback of cohomology between odd dimensional Grassmannians]\label{prop:Pullr}
	In the canonical basis, the pullback of cohomology of odd dimensional Grassmannian is 
	\begin{align*}
	\pi^*:H^*_T({G}_{2k+1}(\R^{2n+2}))&\longrightarrow H^*_T(\tilde{G}_{2k+1}(\R^{2n+2}))\\
	\sigma_{S} &\longmapsto \sigma_{S}\\
	{r}^T \Cdot \sigma_{S} &\longmapsto 2\tilde{r}^T \Cdot \sigma_{S}.
	\end{align*}	
\end{prop}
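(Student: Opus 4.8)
The plan is to evaluate $\pi^*$ directly inside the GKM descriptions of Theorems \ref{thm:GKMrealGrass}(2) and \ref{thm:GKMorientGrass}(4). Both $G_{2k+1}(\R^{2n+2})$ and $\tilde G_{2k+1}(\R^{2n+2})$ are modelled on the same index set $\mathcal S$ of $k$-element subsets, with $\circ$-vertices the fixed circles $C_S\cong\R P^1$ (resp.\ $\tilde C_S\cong S^1$), and the covering $\pi$ carries $\tilde C_S$ onto $C_S$ for each $S$ while preserving the edge/congruence data. So $\pi^*$ is a restriction-compatible $\mathbb S\mathfrak t^*$-algebra homomorphism which, on an element written as $(f_S+g_S\theta)_{S\in\mathcal S}$, is computed circle by circle; the only thing to pin down is what $\pi^*$ does on the equivariant cohomology of a single fixed circle.

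The restriction of $\pi$ over a fixed $S$ is the map $\{(\oplus_{i\in S}\R^2_{[\alpha_i]})\oplus L_0\mid L_0\in\tilde G_1(\R^2_0)\}\to\{(\oplus_{i\in S}\R^2_{[\alpha_i]})\oplus L_0\mid L_0\in\P(\R^2_0)\}$ that forgets the orientation of $L_0$, i.e.\ exactly the standard $2$-fold covering $S^1\to\R P^1$ of degree $2$ (as a circle map). Since $T^n$ acts trivially on $\R^2_0$ it acts trivially on both circles, so $H^*_T(C_S)=\mathbb S\mathfrak t^*\otimes H^*(\R P^1)$ and $H^*_T(\tilde C_S)=\mathbb S\mathfrak t^*\otimes H^*(S^1)$, and $\pi^*|_{C_S}=\mathrm{id}\otimes(H^*(\R P^1)\to H^*(S^1))$. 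On $H^0$ this is the identity, and on $H^1$ it multiplies the generator by the covering degree $2$, so $\pi^*(1)=1$ and $\pi^*(\theta_{\R P^1})=2\theta_{S^1}$. Assembling over all $S$, $\pi^*$ sends $(f_S+g_S\theta_{\R P^1})_{S\in\mathcal S}$ to $(f_S+2g_S\theta_{S^1})_{S\in\mathcal S}$. Now $r^T=\big((\prod_{i=1}^n\alpha_i)\theta_{\R P^1}\big)_{S\in\mathcal S}$ by Theorem \ref{thm:AllGrass}(2), while $\tilde r^T=\big((\prod_{i=1}^n\alpha_i)\theta_{S^1}\big)_{S\in\mathcal S}$ by Proposition \ref{prop:CanoBaseIII}, so $\pi^*(r^T)=2\tilde r^T$. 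The classes $\sigma_S$ on both Grassmannians are, by Theorems \ref{thm:AllGrass}(2), \ref{thm:RealSchub} and Proposition \ref{prop:CanoBaseIII}, the images of the canonical classes of $G_{2k}(\R^{2n})$ under the respective inclusions of the $(r^T)^0$-, resp.\ $(\tilde r^T)^0$-, summand; hence they lie in the polynomial-valued part (zero $\theta$-component), and $\pi^*$, being the identity there vertex by vertex, gives $\pi^*(\sigma_S)=\sigma_S$. Multiplicativity of $\pi^*$ then yields $\pi^*(r^T\cdot\sigma_S)=\pi^*(r^T)\,\pi^*(\sigma_S)=2\tilde r^T\cdot\sigma_S$, which together with Proposition \ref{prop:CanoBaseIII} (that $\{\sigma_S,\tilde r^T\sigma_S\}$ is a basis of the target) proves the formula on the whole canonical basis.

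The main obstacle is purely the bookkeeping of normalizations: one must make sure that the generators $\theta_{\R P^1}$ and $\theta_{S^1}$ appearing here are the very ones used to define $r^T$ and $\tilde r^T$ in the two GKM pictures, and that the restriction of $\pi$ to each fixed circle genuinely is the degree-$2$ circle covering (so that the coefficient is exactly $2$, not $\tfrac12$ or $1$). A secondary point is to verify that the identification of the polynomial-valued parts of the two GKM rings is the one through which $\sigma_S$ is defined on both sides, so that $\pi^*(\sigma_S)=\sigma_S$ holds literally and not just up to isomorphism; this is immediate from the fact that $\pi^*$ restricts to the identity on the $\mathbb S\mathfrak t^*$-valued entries at every vertex.
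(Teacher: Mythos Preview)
Your proof is correct and follows essentially the same approach as the paper: the key point in both is that the $2$-fold covering $\pi$ restricts to the degree-$2$ circle covering $\tilde C_S\cong S^1\to C_S\cong\R P^1$ on each fixed circle, whence $\pi^*(\theta_{\R P^1})=2\theta_{S^1}$ and $\pi^*(r^T)=2\tilde r^T$. You supply more detail than the paper does (in particular the explicit verification that $\pi^*(\sigma_S)=\sigma_S$ via the polynomial-valued part and the careful bookkeeping of normalizations), but the argument is the same.
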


For the even dimensional oriented Grassmannians, the deck transformation $\rho: \tilde{G}_k(\R^n)\rightarrow \tilde{G}_k(\R^n)$ switches any fixed point ${S_+}$ with its twin fixed point ${S_-}$ by reversing orientations. Then the induced deck transformation $\rho^*:H^*_T(\tilde{G}_k(\R^n))\rightarrow H^*_T(\tilde{G}_k(\R^n))$ in GKM description will switch any polynomial $f_{S_+}$ with $f_{S_-}$. Notice the symmetry in the GKM descriptions, we see that the switch of polynomials preserves the congruence relations.

Since $(\rho^*)^2=id$, both cohomology $H^*(\tilde{G}_k(\R^n)),H^*_T(\tilde{G}_k(\R^n))$ decompose
 into $\pm 1$-eigenspaces of $\rho^*$. 

\begin{prop}\label{prop:RhoEigen}
	For the even dimensional oriented Grassmannians $\tilde{G}_{2k}(\R^{2n}),\tilde{G}_{2k}(\R^{2n+1}),\tilde{G}_{2k+1}(\R^{2n+1})$, the elements of $+1$-eigenspace of $\rho^*$ on the equivariant cohomology can be identified as those sets of polynomials $\{f_{S_\pm}, S \in \mathcal{S}\}$ where $\mathcal{S}$ is the collection of $k$-element subsets of $\{1,\ldots,n\}$ such that
	\begin{align*}
	f_{S_+}=f_{S_-}
	\end{align*}
	and the elements of $-1$-eigenspace of $\rho^*$ are those with
	\[
	f_{S_+}=-f_{S_-}.
	\]
\end{prop}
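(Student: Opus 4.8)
The plan is to read everything off the GKM description of $H^*_T(\tilde G_k(\R^n))$ in Theorem \ref{thm:GKMorientGrass}, together with the observation recorded in the paragraph preceding the statement: the deck transformation $\rho$ permutes the $T^n$-fixed points by sending $S_+$ to $S_-$, so the induced involution $\rho^*$ on the GKM ring is the index swap that interchanges, for each $k$-element subset $S$, the polynomial $f_{S_+}$ attached to the vertex $S_+$ with the polynomial $f_{S_-}$ attached to $S_-$. First I would check that this index swap genuinely carries the GKM ring to itself, i.e. that it preserves all the congruence relations in Theorem \ref{thm:GKMorientGrass}. This is immediate from the symmetry of those relations under $+\leftrightarrow -$: in cases (1)--(3), swapping turns relation (a) into itself, relation (b) into itself, and, when present, relation (c) into itself. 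Hence $\rho^*$ is a well-defined involution of the free $\Q[\alpha_1,\ldots,\alpha_n]$-module $H^*_T(\tilde G_k(\R^n))$, and in $\Q$-coefficients this module is the direct sum of the $+1$- and $-1$-eigenspaces, cut out by the complementary idempotents $\tfrac12(\mathrm{id}+\rho^*)$ and $\tfrac12(\mathrm{id}-\rho^*)$.

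Next I would compute the eigenspaces vertex by vertex. For $f=(f_{S_\pm})$ in the GKM ring, the identity $\rho^* f = f$ says exactly that $f_{S_-}=f_{S_+}$ for every $S$, and $\rho^* f = -f$ says exactly that $f_{S_-}=-f_{S_+}$ for every $S$; these are the two asserted descriptions. Conversely, any tuple of polynomials satisfying the congruences of Theorem \ref{thm:GKMorientGrass} together with $f_{S_+}=f_{S_-}$ (respectively $f_{S_+}=-f_{S_-}$) is visibly fixed (respectively negated) by the index swap, hence lies in the $+1$- (respectively $-1$-) eigenspace. So the $+1$-eigenspace is precisely $\{(f_{S_\pm}) : f_{S_+}=f_{S_-}\}\cap H^*_T(\tilde G_k(\R^n))$ and the $-1$-eigenspace is precisely $\{(f_{S_\pm}) : f_{S_+}=-f_{S_-}\}\cap H^*_T(\tilde G_k(\R^n))$.

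Finally I would note that these two candidate sets are nonzero $\Q[\alpha_1,\ldots,\alpha_n]$-submodules of the GKM ring — they are the images of the idempotents $\tfrac12(\mathrm{id}\pm\rho^*)$, which land in $H^*_T(\tilde G_k(\R^n))$ since $\rho^*$ preserves it — so imposing $f_{S_+}=\pm f_{S_-}$ is automatically consistent with all the relations. As a sanity check one can substitute $f_{S_+}=f_{S_-}=:h_S$ into relations (a)--(b) and recover exactly the real-Grassmannian congruence $h_S\equiv h_{S'}\bmod\alpha_j^2-\alpha_i^2$, consistent with $\pi^*$ identifying the $+1$-eigenspace with $H^*_T(G_{2k}(\R^{2n}))$ as in Proposition \ref{prop:OrientVSReal}; substituting $f_{S_+}=-f_{S_-}=:h_S$ yields $h_S\equiv h_{S'}\bmod\alpha_j-\alpha_i$ and $h_S\equiv -h_{S'}\bmod\alpha_j+\alpha_i$, plus, when relation (c) is present, the extra condition $h_S\equiv 0$ modulo the relevant product of $\alpha$'s (using that $2$ is a unit over $\Q$).

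I expect essentially no obstacle: the argument is pure bookkeeping on the GKM graph, and the only point that needs a moment's care is confirming that the index swap on the GKM description is indeed the map induced by $\rho$ — but that is exactly what is asserted just before the statement, so I would simply invoke it.
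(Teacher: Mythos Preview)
Your proposal is correct and follows exactly the reasoning the paper uses: the paper does not give a separate proof of this proposition, treating it as immediate from the preceding paragraph, which records that $\rho^*$ acts on the GKM description by swapping $f_{S_+}$ with $f_{S_-}$, that this swap preserves the congruence relations by their manifest symmetry, and that $(\rho^*)^2=\mathrm{id}$ forces the eigenspace decomposition. Your write-up simply makes these observations explicit and adds the sanity checks, so there is nothing to compare.
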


\begin{rmk}
	As we have proved before, the $+1$-eigenspaces of $\rho^*$ on the equivariant cohomology of oriented Grassmannians are exactly the equivariant cohomology of real Grassmannians.
\end{rmk}

Recall that we defined equivariant Euler classes at each fixed point $S$ for $\tilde{G}_{2k}(\R^{2n}),\tilde{G}_{2k}(\R^{2n+1})$ to be $e^T_{S_{\pm}}=\pm \prod_{i\in S} \alpha_i$ and for $\tilde{G}_{2k}(\R^{2n}),\tilde{G}_{2k+1}(\R^{2n+1})$ to be $\bar{e}^T_{S_{\pm}}=\pm \prod_{j\not\in S} \alpha_j$. It is easy to check that $\{e^T_{S_\pm}, S \in \mathcal{S}\}$ and $\{\bar{e}^T_{S_\pm}, S \in \mathcal{S}\}$ are elements of the GKM description of the corresponding equivariant cohomology. Since $\rho$ changes the signs of orientations, $\rho^*$ changes the signs of the equivariant Euler classes. Therefore, $\{e^T_{S_\pm}, S \in \mathcal{S}\}$ and $\{\bar{e}^T_{S_\pm}, S \in \mathcal{S}\}$ are in the $-1$-eigenspaces of $\rho^*$. Topologically, the localized classes $e^T,\bar{e}^T$ in GKM description are exactly the equivariant Euler classes of the canonical oriented bundles and complementary oriented bundles over the oriented Grassmannians.

\begin{prop}[Equivariant Euler class and top equivariant Pontryagin class]\label{prop:EulerPont}
	Similar to the relations between ordinary Euler class and top ordinary Pontryagin class, 
	\begin{enumerate}
		\item For $\tilde{G}_{2k}(\R^{2n})$ and $\tilde{G}_{2k}(\R^{2n+1})$, we have $(e^T)^2 = p^T_k $
		\item For $\tilde{G}_{2k}(\R^{2n})$ and $\tilde{G}_{2k+1}(\R^{2n+1})$, we have $(\bar{e}^T)^2 = \bar{p}^T_{n-k}$
		\item For $\tilde{G}_{2k}(\R^{2n})$, we have $e^T\bar{e}^T= \prod_{i=1}^{n}\alpha_i$
	\end{enumerate}
\end{prop}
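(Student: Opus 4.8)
The plan is to check every identity fixed point by fixed point and then invoke the injectivity of the localization map. All three Grassmannians named in the statement are even dimensional and equivariantly formal, so by the Chang--Skjelbred Lemma~\ref{Chang} the restriction $H^*_{T^n}(\tilde G_k(\R^n))\hookrightarrow H^*_{T^n}((\tilde G_k(\R^n))^{T^n})$ is injective; equivalently, in the GKM description of Theorem~\ref{thm:GKMorientGrass} two classes coincide as soon as their values $f_{S_\pm}$ at all fixed points agree. Both sides of each claimed equation are honest equivariant classes: the $p^T_i,\bar p^T_i$ by Proposition~\ref{prop:Pont}, and the $e^T,\bar e^T$ by the discussion of Subsubsection~\ref{subsub:Euler}. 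So the whole proof reduces to computing restrictions.

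\textbf{Items (1) and (2).} At the fixed point $V_{S_\pm}$ the canonical bundle $\gamma$ restricts to the $T^n$-representation $\bigoplus_{i\in S}\R^2_{[\alpha_i]}$, so Lemma~\ref{lem:Pont} (equivalently Proposition~\ref{prop:Pont}) gives $p^T(\gamma)|_{S_\pm}=\prod_{i\in S}(1+\alpha_i^2)$, whose top-degree part is $p^T_k|_{S_\pm}=\prod_{i\in S}\alpha_i^2$. On the other hand $e^T|_{S_\pm}=\pm\prod_{i\in S}\alpha_i$ by Subsubsection~\ref{subsub:Euler}, hence $(e^T)^2|_{S_\pm}=\prod_{i\in S}\alpha_i^2$. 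Since the two agree at every fixed point, $(e^T)^2=p^T_k$. The statement $(\bar e^T)^2=\bar p^T_{n-k}$ is proved the same way, with $\gamma$ replaced by $\bar\gamma$, $S$ by $\{1,\dots,n\}\setminus S$ (of size $n-k$), and using $\bar e^T|_{S_\pm}=\pm\prod_{j\notin S}\alpha_j$.

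\textbf{Item (3).} At $V_{S_\pm}$ of $\tilde G_{2k}(\R^{2n})$ we have $e^T|_{S_\pm}=\pm\prod_{i\in S}\alpha_i$ and $\bar e^T|_{S_\pm}=\pm\prod_{j\notin S}\alpha_j$; the only point needing care is that these two signs are the same (both $+$ over $V_{S_+}$, both $-$ over $V_{S_-}$). This holds because the orientation of $\bar\gamma$ is, by convention, the one making $(\text{orientation of }\gamma)\oplus(\text{orientation of }\bar\gamma)$ equal to the fixed orientation of $\R^{2n}=\bigoplus_{i=1}^n\R^2_{[\alpha_i]}$, and reordering the two-dimensional weight blocks $\R^2_{[\alpha_i]}$ is always an even permutation of coordinates (a transposition of blocks being a product of two coordinate transpositions), so no sign is introduced. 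Granting this, $(e^T\bar e^T)|_{S_\pm}=\prod_{i\in S}\alpha_i\cdot\prod_{j\notin S}\alpha_j=\prod_{i=1}^n\alpha_i$, which is the restriction of the class $\prod_{i=1}^n\alpha_i$ pulled back from $H^*_{T^n}(pt)$; injectivity of localization then gives $e^T\bar e^T=\prod_{i=1}^n\alpha_i$. Equivalently, one may argue directly from the equivariant Whitney product formula applied to $\gamma\oplus\bar\gamma=\underline{\R^{2n}}$ together with $e^T(\underline{\R^2_{[\alpha_i]}})=\alpha_i$.

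I expect the only genuine subtlety to be the sign bookkeeping in item (3): one must confirm that the convention fixing $V_{S_\pm}$ in Subsubsection~\ref{subsub:Euler} is the one compatible with $\gamma\oplus\bar\gamma=\underline{\R^{2n}}$, which is precisely the even-permutation remark above. Items (1) and (2) carry no such issue, since squaring kills the sign and the underlying combinatorial fact is simply that the top elementary symmetric polynomial in $\{\alpha_i^2:i\in S\}$ equals $(\prod_{i\in S}\alpha_i)^2$.
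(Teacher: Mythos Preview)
Your proof is correct and follows essentially the same approach as the paper: verify each identity by restricting to the fixed points $V_{S_\pm}$ and then invoke injectivity of the localization map. The paper's argument is slightly terser---it imports the localized values of $p^T_k,\bar p^T_{n-k}$ from the real Grassmannian via $\pi^*$ rather than recomputing them, and it handles item~(3) by tacitly relying on its notational convention that $e^T_{S_\pm}$ and $\bar e^T_{S_\pm}$ carry the \emph{same} sign $\pm$---whereas you take the extra care to justify that convention via the even-permutation remark. That added sign discussion is a welcome clarification, not a different method.
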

\begin{proof}
	Let's prove this for $\tilde{G}_{2k}(\R^{2n})$ which covers the remaining cases of $\tilde{G}_{2k}(\R^{2n+1}), \tilde{G}_{2k+1}(\R^{2n+1})$. In Proposition \ref{prop:Pont}, we have given the localized top equivariant Pontryagin classes of real Grassmannians as 
	\[
	p^T_k|_S=\prod_{i \in S} \alpha_i^2  \qquad \qquad \bar{p}^T_{n-k}|_S=\prod_{j \not \in S} \alpha_j^2.
	\]
	Via the pullback $\pi^*: H^*_T({G}_{2k}(\R^{2n})) \rightarrow H^*_T(\tilde{G}_{2k}(\R^{2n}))$, the equivariant Pontryagin classes of ${G}_{2k}(\R^{2n})$ are identified as those of $\tilde{G}_{2k}(\R^{2n})$, and are in the $+1$-eigenspaces of $\rho^*$. Therefore
	\[
	p^T_k|_{S_\pm}=\prod_{i \in S} \alpha_i^2  \qquad \qquad \bar{p}^T_{n-k}|_{S_\pm}=\prod_{j \not \in S} \alpha_j^2.
	\]
	Comparing them with
	\[
	e^T_{S_{\pm}}=\pm \prod_{i\in S} \alpha_i    \qquad \qquad \bar{e}^T_{S_{\pm}}=\pm \prod_{j\not\in S} \alpha_j
	\] 
	we get the stated relations.
\end{proof}

The induced deck transformation $\rho^*$ is a ring homomorphism, therefore the multiplication of an element in the $-1$-eigenspace with an element in the $+1$-eigenspace results in the $-1$-eigenspace. 

\begin{prop}\label{prop:EulerMult}
	Multiplication with the equivariant Euler classes $e^T,\bar{e}^T$ maps $+1$-eigenspaces of $\rho^*$ to $-1$-eigenspaces.
	\begin{enumerate}
		\item For $\tilde{G}_{2k}(\R^{2n+1})$, the multiplication with $e^T$ is an isomorphism between $+1$-eigenspace of $\rho^*$ to its $-1$-eigenspace.
		\item For $\tilde{G}_{2k+1}(\R^{2n+1})$, the multiplication with $\bar{e}^T$ is an isomorphism between $+1$-eigenspace of $\rho^*$ to its $-1$-eigenspace.
	\end{enumerate}
\end{prop}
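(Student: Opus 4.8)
The plan is to run the whole argument inside the GKM description of Theorem~\ref{thm:GKMorientGrass}, in which multiplication by $e^T$ is entry-wise multiplication by the localized classes $e^T_{S_\pm}=\pm\prod_{i\in S}\alpha_i$, and similarly $\bar e^T$ acts by $\bar e^T_{S_\pm}=\pm\prod_{j\notin S}\alpha_j$. I would first reduce the two cases to one: the $T^n$-diffeomorphism $\tilde{G}_{2k+1}(\R^{2n+1})\cong\tilde{G}_{2n-2k+1}(\R^{2n+1})$ interchanges the canonical and complementary oriented bundles, hence interchanges $\bar e^T$ with $e^T$, and carries the $\rho^*$-eigenspaces to one another, so part~(2) follows from part~(1) and it suffices to treat $\tilde{G}_{2k}(\R^{2n+1})$. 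By Proposition~\ref{prop:RhoEigen} and Theorem~\ref{thm:GKMorientGrass}(2), the $+1$-eigenspace of $\rho^*$ consists of the tuples $(g_S)_{S\in\mathcal{S}}$ with $g_S:=f_{S_+}=f_{S_-}$ and $g_S\equiv g_{S'}\bmod \alpha_j^2-\alpha_i^2$ whenever $S\cup\{j\}=S'\cup\{i\}$, while the $-1$-eigenspace consists of the tuples with $f_{S_+}=-f_{S_-}$. Since $\rho^*$ is a ring homomorphism and $\rho^*(e^T)=-e^T$, multiplication by $e^T$ already lands in the $-1$-eigenspace; the real content of the proposition is that it is a bijection onto it.

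Injectivity is immediate: each $e^T_{S_\pm}=\pm\prod_{i\in S}\alpha_i$ is a nonzero element of the domain $\Q[\alpha_1,\dots,\alpha_n]$, so $e^T$ is a nonzerodivisor on $H^*_T(\tilde{G}_{2k}(\R^{2n+1}))$ and in particular on its $+1$-eigenspace. For surjectivity I would take $h=(f_{S_\pm})$ in the $-1$-eigenspace, so $f_{S_-}=-f_{S_+}$, and feed this into relation~(c) of Theorem~\ref{thm:GKMorientGrass}(2): it gives $2f_{S_+}\equiv 0\bmod\prod_{i\in S}\alpha_i$, hence over $\Q$ there is a unique polynomial $g_S$ with $f_{S_+}=(\prod_{i\in S}\alpha_i)\,g_S$ for every $S$. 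It then remains to show that $(g_S)_{S\in\mathcal{S}}$ lies in the $+1$-eigenspace, for then $h=e^T\cdot(g_S)$ by construction.

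To check that, fix $S,S'$ with $S\cup\{j\}=S'\cup\{i\}$, put $R=S\cap S'$ and $P=\prod_{r\in R}\alpha_r$, so that $f_{S_+}=\alpha_i P\,g_S$ and $f_{S'_+}=\alpha_j P\,g_{S'}$. Relations~(a) and~(b) for the $-1$-eigenspace read $f_{S_+}\equiv f_{S'_+}\bmod\alpha_j-\alpha_i$ and $f_{S_+}\equiv -f_{S'_+}\bmod\alpha_j+\alpha_i$. Substituting $\alpha_j\equiv\alpha_i$ in the first relation and $\alpha_j\equiv-\alpha_i$ in the second, both become $\alpha_i P\,(g_S-g_{S'})\equiv 0$, modulo $\alpha_j-\alpha_i$ and modulo $\alpha_j+\alpha_i$ respectively. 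Since $i,j\notin R$, the factor $\alpha_i P$ is coprime in the UFD $\Q[\alpha_1,\dots,\alpha_n]$ to each of the distinct irreducibles $\alpha_j-\alpha_i$ and $\alpha_j+\alpha_i$, so it may be cancelled, yielding $g_S\equiv g_{S'}\bmod\alpha_j-\alpha_i$ and $g_S\equiv g_{S'}\bmod\alpha_j+\alpha_i$, hence $g_S\equiv g_{S'}\bmod\alpha_j^2-\alpha_i^2$. This is exactly the defining congruence of the $+1$-eigenspace.

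The only genuinely substantive step is this last verification: that the fibre-wise quotients $g_S$ are honest polynomials — which is precisely what relation~(c) buys us — and that they reassemble into a class satisfying the real-Grassmannian congruences; everything else (the inclusion of the image in the $-1$-eigenspace, and injectivity) is formal. As indicated, part~(2) then follows from part~(1) via the perpendicular-complement diffeomorphism; alternatively one repeats the argument verbatim with $\prod_{i\in S}\alpha_i$ replaced by $\prod_{j\notin S}\alpha_j$ and with relation~(c) of Theorem~\ref{thm:GKMorientGrass}(3) in place of~(c) of Theorem~\ref{thm:GKMorientGrass}(2).
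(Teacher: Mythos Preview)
Your argument is correct and follows essentially the same route as the paper: use relation~(c) of Theorem~\ref{thm:GKMorientGrass}(2) together with $f_{S_+}=-f_{S_-}$ to see that each $f_{S_\pm}$ is divisible by $e^T_{S_\pm}$, then check that the quotients assemble into an element of the $+1$-eigenspace. In fact you supply more detail than the paper does---the paper simply asserts that $f/e^T\in V_{+1}$ without verifying relations~(a) and~(b) for the quotients, and does not explicitly address injectivity---so your write-up is an improvement; one small slip: the perpendicular-complement diffeomorphism should read $\tilde{G}_{2k+1}(\R^{2n+1})\cong\tilde{G}_{2n-2k}(\R^{2n+1})$, not $\tilde{G}_{2n-2k+1}(\R^{2n+1})$.
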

\begin{proof}
	For $\tilde{G}_{2k}(\R^{2n+1})$, denote $V_{+1}$ and $V_{-1}$ be the $+1$ and $-1$-eigenspaces of $\rho^*$ on $H^*_T(\tilde{G}_{2k}(\R^{2n+1}))$. The fact that $e^T$ is in the $-1$-eigenspace gives the multiplication $\times e^T: V_{+1}\rightarrow V_{-1}$. On the other hand, every element $\{f_{S_\pm}, S \in \mathcal{S}\}$ of $V_{-1}$ has the form $f_{S_+}=-f_{S_-}$ by Prop \ref{prop:RhoEigen}. Plug this into the congruence relation between $S_+$ and $S_-$ in Theorem \ref{thm:GKMorientGrass}, we get 
	\[
	f_{S_+} \equiv f_{S_-}=-f_{S_+} \mod \prod_{i\in S}\alpha_{i}
	\]
	or equivalently, both $f_{S_+}$ and $f_{S_-}$ are multiples of $e^T_{S_{\pm}}=\pm \prod_{i\in S} \alpha_i$. Therefore, the localized quotients $f_{S_+}/e^T_{S_+}, f_{S_-}/e^T_{S_-}\in \Q[\alpha_1,\ldots,\alpha_n]$ are polynomials, and this defines a unique element $f/e^T \in V_{+1}$.
	
	The case of $\tilde{G}_{2k+1}(\R^{2n+1})$ is similar.
\end{proof}

\begin{rmk}
	For $\tilde{G}_{2k}(\R^{2n})$, neither the multiplication by $e^T$ nor by $\bar{e}^T$ are isomorphisms between the $+1$ and $-1$-eigenspaces of $\rho^*$. We will try to understand the equivariant cohomology of $\tilde{G}_{2k}(\R^{2n})$ in next subsection. 
\end{rmk}

The above isomorphism between eigenspaces of $\rho^*$, together with the canonical basis $\sigma_{S}$ of $H^*_T({G}_{2k}(\R^{2n+1}))$ and $H^*_T({G}_{2k+1}(\R^{2n+1}))$, give
\begin{prop}[Canonical basis of $H^*_T(\tilde{G}_{2k}(\R^{2n+1})),H^*_T(\tilde{G}_{2k+1}(\R^{2n+1}))$]\label{prop:CanoBaseII}
	Let $\sigma_{S\in \mathcal{S}}$ be the canonical basis of $H^*_T({G}_{2k}(\R^{2n+1}))$ and $H^*_T({G}_{2k+1}(\R^{2n+1}))$ from Theorem \ref{thm:RealSchub}. Then $\sigma_{S}, e^T \cdot \sigma_{S}$ and $\sigma_{S}, \bar{e}^T \cdot \sigma_{S}$ give additive $\Q[\alpha_1,\ldots,\alpha_n]$-basis of $ H^*_T(\tilde{G}_{2k}(\R^{2n+1}))$ and $H^*_T(\tilde{G}_{2k+1}(\R^{2n+1}))$ respectively.
\end{prop}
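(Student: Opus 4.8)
The plan is to deduce the statement from the eigenspace decomposition of the deck involution $\rho^*$ combined with the Euler-class isomorphism of Proposition \ref{prop:EulerMult}; essentially no new computation is needed beyond bookkeeping. Since $\rho$ commutes with the $T^n$-action, $\rho^*$ is an involution of $\Q[\alpha_1,\ldots,\alpha_n]$-algebras on $H^*_T(\tilde G_{2k}(\R^{2n+1}))$, so over $\Q$ there is a $\Q[\alpha_1,\ldots,\alpha_n]$-module splitting $H^*_T(\tilde G_{2k}(\R^{2n+1})) = V_{+1}\oplus V_{-1}$ into the $\pm 1$-eigenspaces. By Proposition \ref{prop:OrientVSReal} the pullback $\pi^*$ identifies $V_{+1}$ with $H^*_T(G_{2k}(\R^{2n+1}))$; by Theorem \ref{thm:AllGrass}(1) the latter is isomorphic as a $\Q[\alpha_1,\ldots,\alpha_n]$-algebra to $H^*_T(G_{2k}(\R^{2n}))$, so Theorem \ref{thm:RealSchub} supplies the $\Q[\alpha_1,\ldots,\alpha_n]$-basis $\{\sigma_S\}_{S\in\mathcal S}$, and pulled back $\{\pi^*\sigma_S\}_{S\in\mathcal S}$ is a $\Q[\alpha_1,\ldots,\alpha_n]$-basis of $V_{+1}$.

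Next, as noted in Subsubsection \ref{subsub:Euler} and the discussion preceding Proposition \ref{prop:EulerPont}, the equivariant Euler class $e^T$ of the canonical oriented bundle lies in $V_{-1}$, so multiplication by $e^T$ sends $V_{+1}$ into $V_{-1}$; Proposition \ref{prop:EulerMult}(1) says this map is a $\Q[\alpha_1,\ldots,\alpha_n]$-module isomorphism. Therefore $\{e^T\cdot\sigma_S\}_{S\in\mathcal S}$ is a basis of $V_{-1}$, and the union $\{\sigma_S,\, e^T\cdot\sigma_S\}_{S\in\mathcal S}$ is a $\Q[\alpha_1,\ldots,\alpha_n]$-basis of $H^*_T(\tilde G_{2k}(\R^{2n+1}))$. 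The case of $\tilde G_{2k+1}(\R^{2n+1})$ is verbatim the same after replacing $e^T$ by the equivariant Euler class $\bar e^T$ of the complementary oriented bundle and invoking Proposition \ref{prop:EulerMult}(2) in place of part (1).

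I do not expect a genuine obstacle: all three ingredients — the eigenspace splitting, the identification $V_{+1}\cong H^*_T(G_k(\R^n))$, and the isomorphism multiplication-by-$e^T\colon V_{+1}\to V_{-1}$ — are already available, so the only thing to check is $\Q[\alpha_1,\ldots,\alpha_n]$-linearity of the maps involved, which holds because $\pi$, $\rho$ and the bundle projections are $T^n$-equivariant. If a purely numerical confirmation were desired, one could instead count ranks degree by degree using $P_{\tilde G_k(\R^n)}(t)=2P_{G_k(\C^n)}(t^2)$ together with $\deg\sigma_S=\psi(S)$ and $\deg e^T = 2k$ (respectively $\deg\bar e^T = 2(n-k)$), but the eigenspace argument renders this redundant.
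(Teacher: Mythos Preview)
Your argument is correct and is exactly the one the paper intends: the sentence preceding the proposition already says that the result follows by combining the $\rho^*$-eigenspace decomposition, the identification $V_{+1}\cong H^*_T(G_{2k}(\R^{2n+1}))$ from Proposition~\ref{prop:OrientVSReal}, and the isomorphism $e^T\times\colon V_{+1}\to V_{-1}$ of Proposition~\ref{prop:EulerMult}. One minor slip in your closing parenthetical: the Poincar\'e series is $P_{\tilde G_{2k}(\R^{2n+1})}(t)=(1+t^{2k})P_{G_k(\C^n)}(t^2)$, not $2P_{G_k(\C^n)}(t^2)$ (the latter is only the evaluation at $t=1$), but since you do not use this in the main argument it does not affect the proof.
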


\begin{cor}
	The Poincar\'e series of $\tilde{G}_{2k}(\R^{2n+1})$ and $\tilde{G}_{2k+1}(\R^{2n+1})$ are
	\begin{align*}
	P_{\tilde{G}_{2k}(\R^{2n+1})}(t)&=(1+t^{2k})P_{{G}_{2k}(\R^{2n+1})}(t)=(1+t^{2k})P_{G_{k}(\C^{n})}(t^2)\\
	P_{\tilde{G}_{2k+1}(\R^{2n+1})}(t)&=(1+t^{2n-2k})P_{{G}_{2k+1}(\R^{2n+1})}(t)=(1+t^{2n-2k})P_{G_{k}(\C^{n})}(t^2).
	\end{align*}
\end{cor}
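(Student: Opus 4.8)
The plan is to read the Poincaré series directly off the explicit additive bases supplied by Proposition \ref{prop:CanoBaseII}, keeping careful track of cohomological degrees, and then to pass from equivariant to ordinary cohomology using equivariant formality and finally invoke Theorem \ref{thm:Poinc}.

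First consider $\tilde{G}_{2k}(\R^{2n+1})$. By Proposition \ref{prop:CanoBaseII} the classes $\{\sigma_S,\ e^T\sigma_S \mid S\in\mathcal{S}\}$ form a homogeneous $\Q[\alpha_1,\ldots,\alpha_n]$-basis of $H^*_{T^n}(\tilde{G}_{2k}(\R^{2n+1}))$, where $\sigma_S$ is the canonical class of degree $\psi(S)$ (Theorem \ref{thm:RealSchub}, pulled back from $G_{2k}(\R^{2n+1})$) and $e^T$ is the equivariant Euler class of the canonical oriented bundle $\gamma$. Since $\gamma$ has real rank $2k$ over $\tilde{G}_{2k}(\R^{2n+1})$, we have $e^T\in H^{2k}_{T^n}$, so $\deg(e^T\sigma_S)=\psi(S)+2k$; this can also be checked against the GKM description of Subsubsection \ref{subsub:Euler}, where $e^T$ is represented at $S_\pm$ by $\pm\prod_{i\in S}\alpha_i$, a polynomial of degree $2k$. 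By equivariant formality (the action is equivariantly formal, so $H^*_{T^n}(\tilde{G}_{2k}(\R^{2n+1}))\to H^*(\tilde{G}_{2k}(\R^{2n+1}))$ realizes the specialization $\Q\otimes_{H^*_{T^n}(pt)}-$, as in the last remark of Section 2 and equivalence (5) of Theorem \ref{thm:formal}), the images of a homogeneous $\Q[\alpha_1,\ldots,\alpha_n]$-basis form a graded $\Q$-basis of the ordinary cohomology. Hence
\[
P_{\tilde{G}_{2k}(\R^{2n+1})}(t)=\sum_{S\in\mathcal{S}} t^{\psi(S)}+\sum_{S\in\mathcal{S}} t^{\psi(S)+2k}=(1+t^{2k})\sum_{S\in\mathcal{S}} t^{\psi(S)}.
\]
The same reasoning applied to $\{\sigma_S\}$ alone (a homogeneous $\Q[\alpha_1,\ldots,\alpha_n]$-basis of $H^*_{T^n}(G_{2k}(\R^{2n+1}))$, which is graded-isomorphic to $H^*_{T^n}(G_{2k}(\R^{2n}))$ by Theorem \ref{thm:AllGrass}) gives $\sum_{S\in\mathcal{S}} t^{\psi(S)}=P_{G_{2k}(\R^{2n+1})}(t)$, and Theorem \ref{thm:Poinc} identifies this with $P_{G_k(\C^n)}(t^2)$. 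Substituting yields the first displayed formula.

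For $\tilde{G}_{2k+1}(\R^{2n+1})$ the argument is identical after replacing $e^T$ by the equivariant Euler class $\bar{e}^T$ of the complementary oriented bundle $\bar\gamma$: Proposition \ref{prop:CanoBaseII} gives the basis $\{\sigma_S,\ \bar{e}^T\sigma_S\}$, and $\bar\gamma$ has real rank $(2n+1)-(2k+1)=2(n-k)$, so $\bar{e}^T\in H^{2n-2k}_{T^n}$ and $\deg(\bar{e}^T\sigma_S)=\psi(S)+2n-2k$. The same bookkeeping, together with $P_{G_{2k+1}(\R^{2n+1})}(t)=P_{G_k(\C^n)}(t^2)$ from Theorem \ref{thm:Poinc}, produces the second formula.

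The only step requiring any care is pinning down the degrees of $e^T$ and $\bar{e}^T$, i.e.\ the ranks of the canonical and complementary oriented bundles over the two oriented Grassmannians; I do not expect a genuine obstacle, since these ranks are $2k$ and $2(n-k)$ respectively and are confirmed by the localized expressions of $e^T,\bar{e}^T$ in Subsubsection \ref{subsub:Euler}. Once these are fixed, the corollary is immediate from Proposition \ref{prop:CanoBaseII}, equivariant formality, and Theorem \ref{thm:Poinc}.
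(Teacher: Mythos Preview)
Your proposal is correct and follows essentially the same approach as the paper: the corollary is presented there without proof, as an immediate consequence of Proposition~\ref{prop:CanoBaseII}, and your argument spells out exactly the degree count (using $\deg e^T=2k$, $\deg\bar{e}^T=2(n-k)$) together with equivariant formality and Theorem~\ref{thm:Poinc} that makes this deduction work.
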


\begin{cor}[Relations between some oriented and real Grassmannians]\label{thm:Type2Grass}
	The equivariant cohomology of $\tilde{G}_{2k}(\R^{2n+1})$ and $\tilde{G}_{2k+1}(\R^{2n+1})$ are $\Q[\alpha_1,\ldots,\alpha_n]$-algebra extensions by $e^T,\bar{e}^T$ of the equivariant cohomology of ${G}_{2k}(\R^{2n+1})$ and ${G}_{2k+1}(\R^{2n+1})$, i.e.
	\begin{align*}
	H^*_T(\tilde{G}_{2k}(\R^{2n+1}))&\cong \frac{H^*_T({G}_{2k}(\R^{2n+1}))[e^T]}{(e^T)^2 = p^T_k} \\
	H^*_T(\tilde{G}_{2k+1}(\R^{2n+1}))&\cong \frac{H^*_T({G}_{2k+1}(\R^{2n+1}))[\bar{e}^T]}{(\bar{e}^T)^2 = \bar{p}^T_{n-k}}.
	\end{align*}
\end{cor}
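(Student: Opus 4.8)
The plan is to deduce both isomorphisms from the $\pm 1$-eigenspace decomposition of the deck transformation $\rho^*$, invoking the three structural propositions already established for these two Grassmannians. I carry out the argument for $\tilde G_{2k}(\R^{2n+1})$; the case of $\tilde G_{2k+1}(\R^{2n+1})$ is verbatim the same with $e^T$ replaced by $\bar e^T$ and $p^T_k$ by $\bar p^T_{n-k}$, using part (2) of each of Propositions \ref{prop:EulerMult} and \ref{prop:EulerPont} in place of part (1).

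Since we work over $\Q$ and $(\rho^*)^2=\mathrm{id}$, I first write $H^*_T(\tilde G_{2k}(\R^{2n+1}))=V_{+1}\oplus V_{-1}$ as a direct sum of $\Q[\alpha_1,\ldots,\alpha_n]$-modules, where $V_{\pm 1}$ are the $\pm 1$-eigenspaces of $\rho^*$. By Proposition \ref{prop:OrientVSReal}, $\pi^*$ identifies $H^*_T(G_{2k}(\R^{2n+1}))$ with the subring $V_{+1}$, which I henceforth suppress from the notation. By Proposition \ref{prop:EulerMult}(1), multiplication by $e^T$ is a $\Q[\alpha_1,\ldots,\alpha_n]$-module isomorphism $V_{+1}\xrightarrow{\ \sim\ }V_{-1}$; hence
\[
H^*_T(\tilde G_{2k}(\R^{2n+1}))=V_{+1}\oplus e^T\!\cdot V_{+1},
\]
a free $V_{+1}$-module of rank $2$ with basis $\{1,e^T\}$ — this is a restatement of the canonical basis in Proposition \ref{prop:CanoBaseII}. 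The only products not already determined by this module structure are products of two elements of $V_{-1}$, and since $V_{-1}=e^T V_{+1}$ these are governed by the single value $(e^T)^2$, which by Proposition \ref{prop:EulerPont}(1) equals $p^T_k\in V_{+1}$.

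Consequently the assignment $e^T\mapsto e^T$ extends to a well-defined $\Q[\alpha_1,\ldots,\alpha_n]$-algebra homomorphism
\[
\Phi\colon\ \frac{H^*_T(G_{2k}(\R^{2n+1}))[e^T]}{\big((e^T)^2-p^T_k\big)}\ \longrightarrow\ H^*_T(\tilde G_{2k}(\R^{2n+1})),
\]
surjective because $1$ and $e^T$ generate the target as a module over $V_{+1}=H^*_T(G_{2k}(\R^{2n+1}))$. The source is itself a free $H^*_T(G_{2k}(\R^{2n+1}))$-module with basis $\{1,e^T\}$, and $\Phi$ carries this basis to the $V_{+1}$-basis $\{1,e^T\}$ found above, so $\Phi$ is bijective; being a ring map, it is an algebra isomorphism. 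The substantive content — that $V_{+1}$ is the equivariant cohomology of the real Grassmannian, that $\times e^T$ is an eigenspace isomorphism, and that $(e^T)^2=p^T_k$ — is entirely carried by the cited propositions, so the remaining work is bookkeeping; the one point to state carefully is that the eigenspace splitting is precisely a $\Z/2$-grading ($V_{+1}$ a subring, $V_{-1}$ a $V_{+1}$-submodule with $V_{-1}\cdot V_{-1}\subseteq V_{+1}$), which is exactly the structure encoded by the quotient presentation, so no product of two $(-1)$-eigenvectors can escape the picture. This last compatibility is the only place any genuine (if routine) verification is needed, and is therefore the step I would expect to write out in full.
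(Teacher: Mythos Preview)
Your proof is correct and follows essentially the same approach as the paper, which simply cites Propositions \ref{prop:EulerPont} and \ref{prop:CanoBaseII} together with dimension counting. You have written out explicitly what the paper leaves implicit: the eigenspace decomposition as a $\Z/2$-grading, the construction of the algebra map $\Phi$, and the verification that it carries a free basis to a free basis; your invocation of Proposition \ref{prop:EulerMult} rather than \ref{prop:CanoBaseII} is a harmless variation, since the latter is an immediate consequence of the former.
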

\begin{proof}
	Using Prop \ref{prop:EulerPont}, \ref{prop:CanoBaseII} and dimension counting.
\end{proof}

\subsection{Leray-Borel description of oriented Grassmannians}
In this subsection, we will confirm the ring generators of equivariant cohomology of oriented Grassmannians to be characteristic classes, then determine the complete relations among them, and also give additive basis.

\subsubsection{Leray-Borel description of $\tilde{G}_{2k}(\R^{2n+1}),\tilde{G}_{2k+1}(\R^{2n+1}),\tilde{G}_{2k+1}(\R^{2n+2})$}
From Theorem \ref{thm:OddGrass} and Theorem \ref{thm:Type2Grass}, we have seen that equivariant cohomology rings of $\tilde{G}_{2k}(\R^{2n+1})$, $\tilde{G}_{2k+1}(\R^{2n+1})$ and $\tilde{G}_{2k+1}(\R^{2n+2})$ are ring extensions of the equivariant cohomology of their real counterparts. Hence the equivariant Leray-Borel descriptions and equivariant characteristic basis of those oriented Grassmannians can be extended from the related real Grassmannians.

\begin{thm}[Equivariant Leray-Borel description of $\tilde{G}_{2k}(\R^{2n+1}),\tilde{G}_{2k+1}(\R^{2n+1}),\tilde{G}_{2k+1}(\R^{2n+2})$]
	The equivariant cohomology rings of $\tilde{G}_{2k}(\R^{2n+1}),\tilde{G}_{2k+1}(\R^{2n+1}),\tilde{G}_{2k+1}(\R^{2n+2})$ are generated by equivariant Pontryagin and Euler classes, and an odd-degree class $\tilde{r}^T$:
	\begin{align*}
	H^*_T(\tilde{G}_{2k}(\R^{2n+1})) &\cong \frac{\Q[\alpha_1,\alpha_2,\ldots,\alpha_n][p^T_1,p^T_2,\ldots,p^T_k;\bar{p}^T_1,\bar{p}^T_2,\ldots,\bar{p}^T_{n-k};e^T]}{p^T\bar{p}^T = \prod_{i=1}^{n}(1+\alpha^2_i),\, (e^T)^2=p^T_k}\\
	H^*_T(\tilde{G}_{2k+1}(\R^{2n+1})) &\cong \frac{\Q[\alpha_1,\alpha_2,\ldots,\alpha_n][p^T_1,p^T_2,\ldots,p^T_k;\bar{p}^T_1,\bar{p}^T_2,\ldots,\bar{p}^T_{n-k};\bar{e}^T]}{p^T\bar{p}^T = \prod_{i=1}^{n}(1+\alpha^2_i),\, (\bar{e}^T)^2=\bar{p}^T_{n-k}}\\
	H^*_T(\tilde{G}_{2k+1}(\R^{2n+2}))&\cong\frac{\Q[\alpha_1,\alpha_2,\ldots,\alpha_n][p^T_1,p^T_2,\ldots,p^T_k;\bar{p}^T_1,\bar{p}^T_2,\ldots,\bar{p}^T_{n-k};\tilde{r}^T]}{p^T\bar{p}^T = \prod_{i=1}^{n}(1+\alpha^2_i),\,(\tilde{r}^T)^2=0}.
	\end{align*}
\end{thm}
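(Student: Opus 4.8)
The plan is to obtain all three isomorphisms by assembling results already established, viewing each of $\tilde{G}_{2k}(\R^{2n+1})$, $\tilde{G}_{2k+1}(\R^{2n+1})$, $\tilde{G}_{2k+1}(\R^{2n+2})$ as a small $\Q[\alpha_1,\ldots,\alpha_n]$-algebra extension of its real counterpart, whose equivariant Leray--Borel description is already in hand. No new computation is needed; the content is the reduction to the real-Grassmannian cases, together with the substitution of one presentation into another.

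For $\tilde{G}_{2k}(\R^{2n+1})$ I would start from Corollary \ref{thm:Type2Grass}, which gives the $\Q[\alpha_1,\ldots,\alpha_n]$-algebra isomorphism $H^*_T(\tilde{G}_{2k}(\R^{2n+1}))\cong H^*_T({G}_{2k}(\R^{2n+1}))[e^T]/((e^T)^2-p^T_k)$, and then substitute the equivariant Leray--Borel presentation of $H^*_T({G}_{2k}(\R^{2n+1}))=H^*_T({G}_{2k}(\R^{2n}))$ from Theorem \ref{thm:EquivBReal}. Writing $A=\Q[\alpha_1,\ldots,\alpha_n][p^T_1,\ldots,p^T_k;\bar{p}^T_1,\ldots,\bar{p}^T_{n-k}]$ and $J=(p^T\bar{p}^T-\prod_{i=1}^{n}(1+\alpha^2_i))$, the elementary identity $(A/J)[x]/(x^2-\bar a)\cong A[x]/(J,\,x^2-a)$, applied with $a=p^T_k$, produces exactly the claimed presentation. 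Two bookkeeping remarks: the canonical bundle of $\tilde{G}_{2k}(\R^{2n+1})$ is $2k$-dimensional, so its top Pontryagin class $p^T_k$ genuinely occurs among the generators of the base presentation, and the relation $(e^T)^2=p^T_k$ holds for the honest equivariant classes by Proposition \ref{prop:EulerPont}. The case of $\tilde{G}_{2k+1}(\R^{2n+1})$ is handled identically, with the canonical bundle replaced by the $2(n-k)$-dimensional complementary bundle: one uses the second isomorphism of Corollary \ref{thm:Type2Grass} with $\bar{e}^T$ and $\bar{p}^T_{n-k}$ in place of $e^T$ and $p^T_k$, again invoking Proposition \ref{prop:EulerPont}.

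For the odd-dimensional Grassmannian $\tilde{G}_{2k+1}(\R^{2n+2})$ I would invoke Theorem \ref{thm:OddGrass}, which asserts that the pullback $\pi^*\colon H^*_T({G}_{2k+1}(\R^{2n+2}))\to H^*_T(\tilde{G}_{2k+1}(\R^{2n+2}))$ is an algebra isomorphism (since $\rho^*=\mathrm{id}$), and transport along it the equivariant Leray--Borel presentation of $H^*_T({G}_{2k+1}(\R^{2n+2}))$ from Theorem \ref{thm:EquivBReal2}. By naturality of characteristic classes $\pi^*$ carries the equivariant Pontryagin classes of ${G}_{2k+1}(\R^{2n+2})$ to those of $\tilde{G}_{2k+1}(\R^{2n+2})$, while by Proposition \ref{prop:Pullr} the odd generator satisfies $\pi^*(r^T)=2\tilde{r}^T$, where $\tilde{r}^T=(\prod_{i=1}^{n}\alpha_i)\,\theta_{S^1}$ is the odd class of Proposition \ref{prop:CanoBaseIII}. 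Since $2$ is invertible in $\Q$, we may take $\tilde{r}^T=\tfrac12\pi^*(r^T)$ as the odd generator, and the relation $(r^T)^2=0$ transports to $(\tilde{r}^T)^2=0$; this gives the third presentation.

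The only points that require care, rather than any deeper input, are that the ring-extension isomorphism of Corollary \ref{thm:Type2Grass} and the covering isomorphism of Theorem \ref{thm:OddGrass} are genuine $\Q[\alpha_1,\ldots,\alpha_n]$-\emph{algebra} (not merely module) isomorphisms, so that the substitutions of presentations are legitimate --- this is precisely how those results are proved, via Propositions \ref{prop:EulerPont}, \ref{prop:CanoBaseII}, \ref{prop:CanoBaseIII} and a rank count --- and that the factor $2$ in the normalization $\pi^*(r^T)=2\tilde{r}^T$ forces the argument into the rational coefficients used throughout. I expect this normalization issue, and keeping straight which bundle (canonical versus complementary) carries the Euler class in each of the first two cases, to be the main places where one must be attentive; everything else is a direct assembly.
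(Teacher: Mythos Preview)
Your proposal is correct and follows essentially the same approach as the paper: the paper likewise deduces this theorem directly from Theorem \ref{thm:OddGrass} and Corollary \ref{thm:Type2Grass}, substituting in the equivariant Leray--Borel presentations of the real counterparts from Theorems \ref{thm:EquivBReal} and \ref{thm:EquivBReal2}. Your write-up is in fact more explicit than the paper's, which simply states the reduction in one sentence and records the resulting presentations.
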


\begin{thm}[Equivariant characteristic basis of $\tilde{G}_{2k}(\R^{2n+1}),\tilde{G}_{2k+1}(\R^{2n+1}),\tilde{G}_{2k+1}(\R^{2n+2})$]\label{thm:EquivCharOrient}
	The sets of monomials $\{(p^T_1)^{r_1}(p^T_2)^{r_2}\cdots (p^T_k)^{r_k},\,e^T\cdot(p^T_1)^{r_1}(p^T_2)^{r_2}\cdots (p^T_k)^{r_k}\}$, $\{(p^T_1)^{r_1}(p^T_2)^{r_2}\cdots (p^T_k)^{r_k},\,\bar{e}^T\cdot(p^T_1)^{r_1}(p^T_2)^{r_2}\cdots (p^T_k)^{r_k}\}$ and $\{(p^T_1)^{r_1}(p^T_2)^{r_2}\cdots (p^T_k)^{r_k},\,\tilde{r}^T\cdot(p^T_1)^{r_1}(p^T_2)^{r_2}\cdots (p^T_k)^{r_k}\}$ satisfying the condition $\sum_{i=1}^{k} r_i \leq n-k$ form additive $H^*_T(pt)$-basis for $H^*_T(\tilde{G}_{2k}(\R^{2n+1})),H^*_T(\tilde{G}_{2k+1}(\R^{2n+1})),H^*_T(\tilde{G}_{2k+1}(\R^{2n+2}))$ respectively. 
\end{thm}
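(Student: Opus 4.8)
The plan is to deduce each of the three cases from the corresponding real Grassmannian by means of the ring-extension descriptions already established, together with the elementary fact that if $B$ is a free module over a ring $A$ with basis $\mathcal{B}_0$, and $A$ is a free module over a subring $R$ with basis $\mathcal{A}_0$, then $B$ is a free $R$-module with basis $\{ab \mid a\in\mathcal{A}_0,\ b\in\mathcal{B}_0\}$. At the outset I would record the bookkeeping point that the equivariant Pontryagin classes $p^T_i,\bar p^T_i$ appearing in this statement are the $\pi$-pullbacks of the equivariant Pontryagin classes on the underlying real Grassmannian, because the canonical and complementary bundles on $\tilde G_k(\R^n)$ are $\pi$-pullbacks of those on $G_k(\R^n)$ (the choice of orientation does not change the underlying real bundle); consequently the generators named here are exactly the ones used in Theorems \ref{thm:EquivCharReal} and \ref{thm:Type2Grass}.

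For $\tilde G_{2k}(\R^{2n+1})$: Corollary \ref{thm:Type2Grass} provides a $\Q[\alpha_1,\ldots,\alpha_n]$-algebra isomorphism $H^*_T(\tilde G_{2k}(\R^{2n+1}))\cong H^*_T(G_{2k}(\R^{2n+1}))[e^T]/((e^T)^2=p^T_k)$; since $p^T_k$ lies in $H^*_T(G_{2k}(\R^{2n+1}))$ this quotient is free of rank $2$ over $H^*_T(G_{2k}(\R^{2n+1}))$ with module basis $\{1,e^T\}$. Combining this with the $H^*_T(pt)$-basis $\{(p^T_1)^{r_1}\cdots(p^T_k)^{r_k}\mid \sum r_i\le n-k\}$ of $H^*_T(G_{2k}(\R^{2n+1}))$ furnished by Theorem \ref{thm:EquivCharReal} yields precisely the claimed basis. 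The case of $\tilde G_{2k+1}(\R^{2n+1})$ is word-for-word the same after replacing $e^T,p^T_k$ by $\bar e^T,\bar p^T_{n-k}$ and invoking the second isomorphism in Corollary \ref{thm:Type2Grass}.

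For $\tilde G_{2k+1}(\R^{2n+2})$: by Theorem \ref{thm:OddGrass} the pullback $\pi^*\colon H^*_T(G_{2k+1}(\R^{2n+2}))\to H^*_T(\tilde G_{2k+1}(\R^{2n+2}))$ is a $\Q[\alpha_1,\ldots,\alpha_n]$-algebra isomorphism, so it carries any $H^*_T(pt)$-basis to an $H^*_T(pt)$-basis. I would apply it to the basis $\{(p^T)^I,\ r^T(p^T)^I\mid \sum r_i\le n-k\}$ of $H^*_T(G_{2k+1}(\R^{2n+2}))$ from Theorem \ref{thm:EquivCharReal}. Using $\pi^*((p^T)^I)=(p^T)^I$ together with $\pi^*(r^T)=2\tilde r^T$ from Proposition \ref{prop:Pullr}, the image is $\{(p^T)^I,\ 2\tilde r^T(p^T)^I\}$; as $2$ is a unit of $\Q$, this set $H^*_T(pt)$-spans the same submodule as $\{(p^T)^I,\ \tilde r^T(p^T)^I\}$, which is therefore a basis.

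The only step requiring genuine attention, the \emph{main obstacle} such as it is, is this preliminary identification of the characteristic-class generators on the oriented Grassmannians with the pullbacks of the corresponding classes on the real Grassmannians, so that the module extension of Corollary \ref{thm:Type2Grass}, the freeness of $H^*_T(G_k(\R^n))$ over $H^*_T(pt)$ of Theorem \ref{thm:EquivCharReal}, and the generators in the present statement are all phrased in one and the same family of classes. Once that is settled, the assertion follows immediately from base change of free modules and from $2\in\Q^\times$; in particular no new computation with the GKM descriptions is needed.
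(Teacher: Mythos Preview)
Your proposal is correct and follows the same route the paper takes: use the ring-extension descriptions of Theorem~\ref{thm:OddGrass} and Corollary~\ref{thm:Type2Grass} to write each oriented case as a free rank-$2$ module over the corresponding real Grassmannian's equivariant cohomology, and then import the characteristic basis from Theorem~\ref{thm:EquivCharReal}. The paper leaves this essentially implicit (one sentence before the statement), whereas you spell out the base-change argument, the identification of the Pontryagin classes under $\pi^*$, and the unit factor $2$ in $\pi^*(r^T)=2\tilde r^T$; these added details are all accurate and in the spirit of the paper's own derivation.
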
 

The above two theorems of equivariant ring generators and equivariant additive basis both have their ordinary versions by replacing $\alpha_i$ with $0$.

\begin{cor}[Ordinary Leray-Borel description of $\tilde{G}_{2k}(\R^{2n+1}),\tilde{G}_{2k+1}(\R^{2n+1}),\tilde{G}_{2k+1}(\R^{2n+2})$]
	The ordinary cohomology rings of $\tilde{G}_{2k}(\R^{2n+1}),\tilde{G}_{2k+1}(\R^{2n+1}),\tilde{G}_{2k+1}(\R^{2n+2})$ are generated by Pontryagin and Euler classes, and an odd-degree class $\tilde{r}$:
	\begin{align*}
	H^*(\tilde{G}_{2k}(\R^{2n+1})) &\cong \frac{\Q[p_1,p_2,\ldots,p_k;\bar{p}_1,\bar{p}_2,\ldots,\bar{p}_{n-k};e]}{p\bar{p} = 1,\, e^2=p_k}\\
	H^*(\tilde{G}_{2k+1}(\R^{2n+1})) &\cong \frac{\Q[p_1,p_2,\ldots,p_k;\bar{p}_1,\bar{p}_2,\ldots,\bar{p}_{n-k};\bar{e}]}{p\bar{p} = 1,\, \bar{e}^2=\bar{p}_{n-k}}\\
	H^*(\tilde{G}_{2k+1}(\R^{2n+2}))&\cong\frac{\Q[p_1,p_2,\ldots,p_k;\bar{p}_1,\bar{p}_2,\ldots,\bar{p}_{n-k};\tilde{r}]}{p\bar{p} = 1,\, \tilde{r}^2=0}.
	\end{align*}
\end{cor}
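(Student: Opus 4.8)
The plan is to deduce each ordinary presentation directly from the corresponding equivariant presentation in the preceding theorem by passing to the fibre, i.e. by base-changing along the constant-term (augmentation) morphism $\Q[\alpha_1,\ldots,\alpha_n]\to\Q$, $\alpha_i\mapsto 0$. Since the three oriented Grassmannians $\tilde{G}_{2k}(\R^{2n+1})$, $\tilde{G}_{2k+1}(\R^{2n+1})$, $\tilde{G}_{2k+1}(\R^{2n+2})$ were shown above to carry equivariantly formal $T^n$-actions, the remarks after Theorem~\ref{thm:formal} give a ring isomorphism $H^*(M,\Q)\cong \Q\otimes_{\Q[\alpha_1,\ldots,\alpha_n]}H^*_T(M,\Q)$, realized by restriction to the fibre $M\hookrightarrow M\times_{T^n}ET$. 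So it suffices to compute the right-hand side from the equivariant Leray--Borel presentation.

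First I would record that the equivariant Leray--Borel isomorphisms of the previous theorem are isomorphisms of $\Q[\alpha_1,\ldots,\alpha_n]$-algebras, not merely of modules, so that the base change $\otimes_{\Q[\alpha_1,\ldots,\alpha_n]}\Q$ may be applied term by term to the generators and relations. Writing $H^*_T(\tilde{G}_{2k}(\R^{2n+1}))\cong \Q[\alpha_1,\ldots,\alpha_n][p^T_1,\ldots,p^T_k;\bar p^T_1,\ldots,\bar p^T_{n-k};e^T]/I$, with $I$ generated by the graded components of $p^T\bar p^T-\prod_{i=1}^n(1+\alpha_i^2)$ together with $(e^T)^2-p^T_k$, right-exactness of $\otimes_{\Q[\alpha_1,\ldots,\alpha_n]}\Q$ replaces the ambient polynomial ring by $\Q[p_1,\ldots,p_k;\bar p_1,\ldots,\bar p_{n-k};e]$ and replaces $I$ by the ideal generated by the reductions of those relations at $\alpha_i=0$, namely $p\bar p-1$ and $e^2-p_k$. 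The same computation gives $p\bar p-1$ and $\bar e^2-\bar p_{n-k}$ for $\tilde{G}_{2k+1}(\R^{2n+1})$, and $p\bar p-1$ and $\tilde r^2=0$ for $\tilde{G}_{2k+1}(\R^{2n+2})$; combined with the fibre isomorphism this yields the three claimed presentations, with $e,\bar e,\tilde r$ the ordinary images of $e^T,\bar e^T,\tilde r^T$.

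As a consistency check I would note that the equivariant characteristic bases of Theorem~\ref{thm:EquivCharOrient} restrict, via equivalence (5) of Theorem~\ref{thm:formal}, to the monomial bases $\{(p_1)^{r_1}\cdots(p_k)^{r_k}\}\cup\{\theta\cdot(p_1)^{r_1}\cdots(p_k)^{r_k}\}$ ($\theta=e,\bar e,\tilde r$, and $\sum r_i\le n-k$), so the presented ordinary rings have the expected Poincar\'e series and no relations are created or destroyed in the reduction. The one point requiring care, which I would flag as the main (though genuinely routine) obstacle, is precisely the algebra-versus-module distinction together with the right-exactness step: one must know the equivariant Leray--Borel statement as an honest $\Q[\alpha_1,\ldots,\alpha_n]$-algebra presentation before base-changing, rather than only as an abstract additive isomorphism. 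Given the preceding results this is immediate.
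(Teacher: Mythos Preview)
Your proposal is correct and follows exactly the paper's approach: the paper simply remarks that the equivariant Leray--Borel descriptions and characteristic bases ``have their ordinary versions by replacing $\alpha_i$ with $0$,'' which is precisely your base-change along the augmentation $\Q[\alpha_1,\ldots,\alpha_n]\to\Q$. You have supplied more justification (equivariant formality, right-exactness, the algebra-versus-module point) than the paper itself does, but the underlying argument is identical.
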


\begin{cor}[Ordinary characteristic basis of $\tilde{G}_{2k}(\R^{2n+1}),\tilde{G}_{2k+1}(\R^{2n+1}),\tilde{G}_{2k+1}(\R^{2n+2})$]
	The sets of monomials $\{p_1^{r_1}p_2^{r_2}\cdots p_k^{r_k},\,e\cdot p_1^{r_1}p_2^{r_2}\cdots p_k^{r_k}\}$, $\{p_1^{r_1}p_2^{r_2}\cdots p_k^{r_k},\,\bar{e}\cdot p_1^{r_1}p_2^{r_2}\cdots p_k^{r_k}\}$ and $\{p_1^{r_1}p_2^{r_2}\cdots p_k^{r_k},\,\tilde{r}\cdot p_1^{r_1}p_2^{r_2}\cdots p_k^{r_k}\}$ satisfying the condition $\sum_{i=1}^{k} r_i \leq n-k$ form additive basis for $H^*(\tilde{G}_{2k}(\R^{2n+1})),H^*(\tilde{G}_{2k+1}(\R^{2n+1})),H^*(\tilde{G}_{2k+1}(\R^{2n+2}))$ respectively. 
\end{cor}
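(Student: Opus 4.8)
The plan is to obtain this statement from its equivariant counterpart, Theorem~\ref{thm:EquivCharOrient}, simply by specializing the equivariant parameters $\alpha_1,\dots,\alpha_n$ to zero. Each of the three oriented Grassmannians $\tilde{G}_{2k}(\R^{2n+1})$, $\tilde{G}_{2k+1}(\R^{2n+1})$, $\tilde{G}_{2k+1}(\R^{2n+2})$ carries an equivariantly formal $T^n$-action (established in the preceding subsection), so by the Remark following Theorem~\ref{thm:formal} one recovers the ordinary cohomology as $H^*(M,\Q)=\Q\otimes_{\Q[\alpha_1,\dots,\alpha_n]}H^*_T(M,\Q)$, where $\Q$ is a $\Q[\alpha_1,\dots,\alpha_n]$-algebra via the constant-term morphism $f\mapsto f(0)$, and this tensor product is realized concretely by the fibre-restriction homomorphism $H^*_T(M)\to H^*(M)$. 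Tensoring a free module basis along the augmentation $\Q[\alpha_1,\dots,\alpha_n]\to\Q$ produces a $\Q$-basis of the quotient; hence if $\{b_i\}$ is a free $\Q[\alpha_1,\dots,\alpha_n]$-basis of $H^*_T(M)$, the images $\{\bar b_i\}$ of the $b_i$ under fibre restriction form a $\Q$-basis of $H^*(M)$.

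Next I would check that fibre restriction carries the equivariant generators to the ordinary ones. The fibre inclusion $M\hookrightarrow ET\times_T M$ pulls the Borel-construction bundles back to the honest canonical and complementary bundles on $M$, so by naturality of Pontryagin and Euler classes the equivariant Pontryagin classes $p^T_l$ restrict to $p_l$ and the equivariant Euler classes $e^T,\bar e^T$ restrict to $e,\bar e$; the class $\tilde r^T$ restricts to $\tilde r$ by the very definition of $\tilde r$ (the ordinary image of $\tilde r^T$), exactly as in the odd-dimensional real case. Since fibre restriction is a ring homomorphism, a monomial $(p^T_1)^{r_1}\cdots(p^T_k)^{r_k}$ restricts to $p_1^{r_1}\cdots p_k^{r_k}$, while $e^T\cdot(p^T_1)^{r_1}\cdots(p^T_k)^{r_k}$ restricts to $e\cdot p_1^{r_1}\cdots p_k^{r_k}$, and similarly for the $\bar e^T$- and $\tilde r^T$-families.

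Combining the two points finishes the argument: Theorem~\ref{thm:EquivCharOrient} asserts that for each of the three spaces the displayed family of equivariant monomials with $\sum_i r_i\le n-k$ (together with their $e^T$-, $\bar e^T$-, or $\tilde r^T$-multiples) is a free $\Q[\alpha_1,\dots,\alpha_n]$-basis of $H^*_T(M)$; by the first point its fibre restriction is a $\Q$-basis of $H^*(M)$, and by the second point that restriction is precisely the stated ordinary family. I do not anticipate a genuine obstacle, as the reasoning is purely formal once equivariant formality and the equivariant basis are granted; the only point deserving a word of care is degree-compatibility — one must note that the $b_i$ are homogeneous and fibre restriction is degree-preserving, so that no cancellations occur upon setting $\alpha_i=0$ and the resulting classes distribute across cohomological degrees exactly as dictated by the Poincaré series recorded in the corollaries above.
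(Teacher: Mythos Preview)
Your proposal is correct and matches the paper's approach exactly: the paper derives this corollary (and its companion on ring generators) from the equivariant version by ``replacing $\alpha_i$ with $0$,'' which is precisely the equivariant-formality argument you spell out via $H^*(M)\cong \Q\otimes_{H^*_T(pt)}H^*_T(M)$ and fibre restriction. Your write-up simply supplies the details behind that one-line reduction.
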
 

\subsubsection{Leray-Borel description of $\tilde{G}_{2k}(\R^{2n})$}
Now let's turn to the remaining type of oriented Grassmannian $\tilde{G}_{2k}(\R^{2n})$. As we remarked in previous subsection, neither the multiplication by $e^T$ nor by $\bar{e}^T$ are isomorphisms between eigenspaces of $\rho^*$. However, we will show the multiplications by $e^T$ and $\bar{e}^T$, restricted on certain carefully chosen subspaces, do give isomorphism between $+1$ and $-1$-eigenspaces of $\rho^*$. 

Notice the equivariant diffeomorphism ${G}_{2k}(\R^{2n}) \cong {G}_{2n-2k}(\R^{2n})$ by mapping an oriented $2k$-dimensional subspace to its perpendicular oriented $(2n-2k)$-dimensional subspace. Then the complementary characteristic monomials $(\bar{p}^T_1)^{r_1}(\bar{p}^T_2)^{r_2}\cdots (\bar{p}^T_{n-k})^{r_{n-k}}$ and $\bar{p}_1^{r_1}\bar{p}_2^{r_2}\cdots \bar{p}_{n-k}^{r_{n-k}}$, satisfying the condition $\sum_{i=1}^{n-k} r_i \leq k$, give additive basis for the equivariant and respectively ordinary cohomology of ${G}_{2k}(\R^{2n})$. Also recall from Prop \ref{prop:EulerPont} on the relations among top Pontryagin classes and Euler classes of the oriented Grassmannian $\tilde{G}_{2k}(\R^{2n})$ that $(e^T)^2=p^T_k,(\bar{e}^T)^2=\bar{p}^T_{n-k},e^T\bar{e}^T=\prod_{i=1}^{n}\alpha_i$ and $e^2=p_k,\bar{e}^2=\bar{p}_{n-k},e\bar{e}=0$.

\begin{prop}[Eigenspaces of $H^*(\tilde{G}_{2k}(\R^{2n}))$]\label{prop:OrientEigen}
	Let $\rho$ be the non-trivial deck transformation of the covering $\pi: \tilde{G}_{2k}(\R^{2n})\rightarrow {G}_{2k}(\R^{2n})$ and identify $H^*({G}_{2k}(\R^{2n}))$ as the $+1$-eigenspace of $\rho^*$ on $H^*(\tilde{G}_{2k}(\R^{2n}))$. 
	\begin{enumerate}
		\item The multiplications by $e^T$ and $\bar{e}^T$ are isomorphisms restricted on the following subspaces
		\begin{align*}
		e \times: \mathrm{Span}(p_1^{r_1}p_2^{r_2}\cdots p_k^{r_k}\mid \sum_{i=1}^{k} r_i \leq n-k-1) & \overset{\cong}{\longrightarrow}
		e\cdot \mathrm{Span}(p_1^{r_1}p_2^{r_2}\cdots p_k^{r_k}\mid \sum_{i=1}^{k} r_i \leq n-k-1)\\
        \bar{e} \times: \mathrm{Span}(\bar{p}_1^{r_1}\bar{p}_2^{r_2}\cdots \bar{p}_{n-k}^{r_{n-k}}\mid \sum_{i=1}^{n-k} r_i \leq k-1) & \overset{\cong}{\longrightarrow}
        \bar{e}\cdot \mathrm{Span}(\bar{p}_1^{r_1}\bar{p}_2^{r_2}\cdots \bar{p}_{n-k}^{r_{n-k}}\mid \sum_{i=1}^{n-k} r_i \leq k-1).
		\end{align*}
		\item $e\cdot \mathrm{Span}(p_1^{r_1}p_2^{r_2}\cdots p_k^{r_k}\mid \sum_{i=1}^{k} r_i \leq n-k-1) \oplus \bar{e}\cdot \mathrm{Span}(\bar{p}_1^{r_1}\bar{p}_2^{r_2}\cdots \bar{p}_{n-k}^{r_{n-k}}\mid \sum_{i=1}^{n-k} r_i \leq k-1)$ is the $(-1)$-eigenspace of $\rho^*$
		\item $e\cdot H^*({G}_{2k}(\R^{2n})) \cap \bar{e} \cdot H^*({G}_{2k}(\R^{2n}))=0$ and $e\cdot H^*({G}_{2k}(\R^{2n})) \oplus \bar{e} \cdot H^*({G}_{2k}(\R^{2n}))$ is the $(-1)$-eigenspace of $\rho^*$
		\item The kernels of $e \times$ and $\bar{e} \times$ on $H^*({G}_{2k}(\R^{2n}))$ are $\bar{p}_{n-k}\cdot \mathrm{Span}(\bar{p}_1^{r_1}\bar{p}_2^{r_2}\cdots \bar{p}_{n-k}^{r_{n-k}}\mid \sum_{i=1}^{n-k} r_i \leq k-1)$ and $p_k \cdot \mathrm{Span}(p_1^{r_1}p_2^{r_2}\cdots p_k^{r_k}\mid \sum_{i=1}^{k} r_i \leq n-k-1)$ respectively
		\item The following spaces are identical
		\begin{align*}
		e\cdot \mathrm{Span}(p_1^{r_1}p_2^{r_2}\cdots p_k^{r_k}\mid \sum_{i=1}^{k} r_i \leq n-k-1) = 
		e\cdot \mathrm{Span}(\bar{p}_1^{r_1}\bar{p}_2^{r_2}\cdots \bar{p}_{n-k-1}^{r_{n-k-1}}\mid \sum_{i=1}^{n-k-1} r_i \leq k) = e\cdot H^*({G}_{2k}(\R^{2n}))\\
		\bar{e}\cdot \mathrm{Span}(\bar{p}_1^{r_1}\bar{p}_2^{r_2}\cdots \bar{p}_{n-k}^{r_{n-k}}\mid \sum_{i=1}^{n-k} r_i \leq k-1) =
		\bar{e}\cdot \mathrm{Span}(p_1^{r_1}p_2^{r_2}\cdots p_{k-1}^{r_{k-1}}\mid \sum_{i=1}^{k-1} r_i \leq n-k) =
		\bar{e} \cdot H^*({G}_{2k}(\R^{2n})). 
		\end{align*}
	\end{enumerate}
\end{prop}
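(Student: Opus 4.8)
The plan is to run everything through the $\rho^*$-eigenspace splitting $H^*(\tilde G_{2k}(\R^{2n}))=V_{+1}\oplus V_{-1}$, where by Proposition \ref{prop:OrientVSReal} the subring $V_{+1}$ is $\pi^*H$ with $H:=H^*(G_{2k}(\R^{2n}))$ and $V_{-1}$ is a $V_{+1}$-module. Since all four oriented Grassmannians have total Betti number $2\binom{n}{k}$ and $\dim H=\binom{n}{k}$, we have $\dim V_{+1}=\dim V_{-1}=\binom{n}{k}$. I abbreviate $A:=\mathrm{Span}(p_1^{r_1}\cdots p_k^{r_k}\mid\sum r_i\le n-k-1)$ and $\bar A:=\mathrm{Span}(\bar p_1^{s_1}\cdots\bar p_{n-k}^{s_{n-k}}\mid\sum s_j\le k-1)$; recalling the two characteristic bases of $H$ (the $p_i$-monomials with $\sum r_i\le n-k$ and the $\bar p_j$-monomials with $\sum s_j\le k$) one counts $\dim A=\binom{n-1}{k}$, $\dim\bar A=\binom{n-1}{k-1}$, so that $\dim A+\dim\bar A=\dim V_{-1}$. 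The algebraic inputs are: $e,\bar e\in V_{-1}$ (as $\rho$ reverses orientations, hence Euler classes), the relations $e^2=p_k$, $\bar e^2=\bar p_{n-k}$, $e\bar e=0$ from Proposition \ref{prop:EulerPont}, and their consequence $p_k\bar p_{n-k}=(e\bar e)^2=0$ in $H$.

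First I would record two elementary facts in $H$. (a) Multiplication by $p_k$ carries the basis monomials of $A$ bijectively to the distinct basis monomials of $H$ whose $p_k$-exponent is positive, so $p_k\times$ is injective on $A$ with image the coordinate subspace spanned by those monomials; likewise $\bar p_{n-k}\times$ is injective on $\bar A$. (b) Because $p_k\bar p_{n-k}=0$ we have $\bar p_{n-k}H\subseteq\mathrm{Ann}_H(p_k)$; combining $\dim(p_kH)\ge\dim A$ and $\dim(\bar p_{n-k}H)\ge\dim\bar A$ from (a) with $\dim\mathrm{Ann}_H(p_k)=\binom{n}{k}-\dim(p_kH)$ squeezes all inequalities to equalities, giving $\dim(p_kH)=\binom{n-1}{k}$, $\dim(\bar p_{n-k}H)=\binom{n-1}{k-1}$, $\mathrm{Ann}_H(p_k)=\bar p_{n-k}H$ and symmetrically $\mathrm{Ann}_H(\bar p_{n-k})=p_kH$; moreover $\mathrm{Ann}_H(p_k)$ is exactly the span of the $\bar p$-basis monomials with positive $\bar p_{n-k}$-exponent and $\mathrm{Ann}_H(\bar p_{n-k})$ that of the $p$-basis monomials with positive $p_k$-exponent. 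Now $e^2=p_k$ and (a) give that $e\times$ is injective on $A$ and, since $p_k\times$ is injective on any coordinate subspace of $H$ meeting $\mathrm{Ann}_H(p_k)$ trivially, also on $A':=\mathrm{Span}(\bar p_1^{r_1}\cdots\bar p_{n-k-1}^{r_{n-k-1}}\mid\sum r_i\le k)$, the span of $\bar p$-basis monomials not involving $\bar p_{n-k}$; and $e\cdot\bar p_{n-k}=e\bar e^2=0$ gives $\bar p_{n-k}H\subseteq\ker(e\times)$. A dimension count then forces $e\cdot H=e\cdot A=e\cdot A'$ of dimension $\binom{n-1}{k}$ and $\ker(e\times)=\bar p_{n-k}H=\bar p_{n-k}\bar A$, and the symmetric statements for $\bar e$ follow identically. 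These give (1), (4) and (5).

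For (2) and (3): if $e\,x=\bar e\,y$ in $H^*(\tilde G_{2k}(\R^{2n}))$, then multiplying by $e$ gives $p_kx=0$ in $H$, so $x\in\mathrm{Ann}_H(p_k)=\bar p_{n-k}H$, say $x=\bar p_{n-k}w=\bar e^2w$, whence $e\,x=(e\bar e)\bar e w=0$; thus $e\cdot H\cap\bar e\cdot H=0$, and since both summands lie in $V_{-1}$ with dimensions adding to $\binom{n}{k}=\dim V_{-1}$ we conclude $V_{-1}=e\cdot H\oplus\bar e\cdot H=e\cdot A\oplus\bar e\cdot\bar A$. Equivalently, applying $e\times$ and $\bar e\times$ to a vanishing linear combination of the $\binom{n}{k}$ vectors $\{e\cdot p_1^{r_1}\cdots p_k^{r_k}:\sum r_i\le n-k-1\}\cup\{\bar e\cdot\bar p_1^{s_1}\cdots\bar p_{n-k}^{s_{n-k}}:\sum s_j\le k-1\}$, pushing the resulting identities back into $H$ via the injectivity of $\pi^*$, and finishing with the two characteristic bases, shows this set is linearly independent, hence a basis of $V_{-1}$. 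The step I expect to be the real obstacle is (b): one must play the two characteristic bases of $H$, in the $p_i$ and in the $\bar p_j$, against each other and use $p_k\bar p_{n-k}=0$ to identify $\mathrm{Ann}_H(p_k)$ and $\mathrm{Ann}_H(\bar p_{n-k})$ with the coordinate subspaces cut out by a prescribed top Pontryagin exponent being positive; everything after that is formal manipulation of the eigenspace decomposition.
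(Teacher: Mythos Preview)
Your argument is correct and rests on the same ingredients as the paper's proof: the factorization $e^2=p_k$, $\bar e^2=\bar p_{n-k}$, the relation $e\bar e=0$, the two characteristic bases of $H^*(G_{2k}(\R^{2n}))$, and the dimension count $\binom{n-1}{k}+\binom{n-1}{k-1}=\binom{n}{k}$. The only real difference is organizational: you first extract the relation $p_k\bar p_{n-k}=(e\bar e)^2=0$ and pin down the annihilators $\mathrm{Ann}_H(p_k)=\bar p_{n-k}H$, $\mathrm{Ann}_H(\bar p_{n-k})=p_kH$ entirely inside $H$ (your fact~(b)), and then read off (1), (4), (5) at once before turning to the eigenspace statement~(2)--(3); the paper instead proves the items in the listed order, deducing (3) from (2) and (4)--(5) from (3) by successive dimension squeezes, never isolating $p_k\bar p_{n-k}=0$ as a separate lemma. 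Your route makes the structure of $H$ do more of the work up front, which is arguably cleaner; the paper's route stays closer to the $(-1)$-eigenspace throughout. Your worry about step~(b) is unfounded: the squeeze you wrote down, using $\dim(p_kH)\ge\dim A$, $\dim(\bar p_{n-k}H)\ge\dim\bar A$, $\bar p_{n-k}H\subseteq\mathrm{Ann}_H(p_k)$ and rank--nullity, is already a complete proof.
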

\begin{proof}
	Note that the total Betti numbers of $H^*({G}_{2k}(\R^{2n}))$ and $H^*(\tilde{G}_{2k}(\R^{2n}))$ are $\binom{n}{k}$ and $2\binom{n}{k}$ respectively, hence the dimension of the $-1$-eigenspace of $\rho^*$ is $\binom{n}{k}$.
	\begin{enumerate}
		\item The composition of the surjective linear maps
		\begin{align*}
		&e \times: \mathrm{Span}(p_1^{r_1}p_2^{r_2}\cdots p_k^{r_k}\mid \sum_{i=1}^{k} r_i \leq n-k-1)  \longrightarrow
		e\cdot \mathrm{Span}(p_1^{r_1}p_2^{r_2}\cdots p_k^{r_k}\mid \sum_{i=1}^{k} r_i \leq n-k-1)\\
		&e \times: e\cdot \mathrm{Span}(p_1^{r_1}p_2^{r_2}\cdots p_k^{r_k}\mid \sum_{i=1}^{k} r_i \leq n-k-1)  \longrightarrow e^2\cdot \mathrm{Span}(p_1^{r_1}p_2^{r_2}\cdots p_k^{r_k}\mid \sum_{i=1}^{k} r_i \leq n-k-1)
		\end{align*}
		is 
		\[
		p_k\times: \mathrm{Span}(p_1^{r_1}p_2^{r_2}\cdots p_k^{r_k}\mid \sum_{i=1}^{k} r_i \leq n-k-1)  \longrightarrow
		p_k\cdot \mathrm{Span}(p_1^{r_1}p_2^{r_2}\cdots p_k^{r_k}\mid \sum_{i=1}^{k} r_i \leq n-k-1)
		\]
		where we have used the relation $e^2=p_k$. The composition maps a sub-basis of $H^*({G}_{2k}(\R^{2n}))$ onto another sub-basis without common vectors, hence is a bijection. Therefore, each individual surjection is a bijection. Similarly, we get the bijection for the restricted $\bar{e} \times$.
		\item
		We have seen from the above that
		\[
		e \times: e\cdot \mathrm{Span}(p_1^{r_1}p_2^{r_2}\cdots p_k^{r_k}\mid \sum_{i=1}^{k} r_i \leq n-k-1) \longrightarrow p_k\cdot \mathrm{Span}(p_1^{r_1}p_2^{r_2}\cdots p_k^{r_k}\mid \sum_{i=1}^{k} r_i \leq n-k-1)
		\]   
		is a bijection. However, $e \times$ takes $\bar{e} \cdot H^*({G}_{2k}(\R^{2n}))$ to zero, because $e\bar{e}=0$. Hence 
		\[
		e\cdot \mathrm{Span}(p_1^{r_1}p_2^{r_2}\cdots p_k^{r_k}\mid \sum_{i=1}^{k} r_i \leq n-k-1) \cap \bar{e} \cdot H^*({G}_{2k}(\R^{2n})) = 0.
		\]
		Similarly,
		\[
		\bar{e}\cdot \mathrm{Span}(\bar{p}_1^{r_1}\bar{p}_2^{r_2}\cdots \bar{p}_{n-k}^{r_{n-k}}\mid \sum_{i=1}^{n-k} r_i \leq k-1) \cap e\cdot H^*({G}_{2k}(\R^{2n})) = 0.
		\]
		Combine these two, we get
		\[
		e\cdot \mathrm{Span}(p_1^{r_1}p_2^{r_2}\cdots p_k^{r_k}\mid \sum_{i=1}^{k} r_i \leq n-k-1) \cap \bar{e}\cdot \mathrm{Span}(\bar{p}_1^{r_1}\bar{p}_2^{r_2}\cdots \bar{p}_{n-k}^{r_{n-k}}\mid \sum_{i=1}^{n-k} r_i \leq k-1) = 0.
		\]
		However, as a subspace in $-1$-eigenspace of $\rho^*$, the sum $e\cdot\mathrm{Span}(p_1^{r_1}p_2^{r_2}\cdots p_k^{r_k}\mid \sum_{i=1}^{k} r_i \leq n-k-1) \oplus \bar{e}\cdot \mathrm{Span}(\bar{p}_1^{r_1}\bar{p}_2^{r_2}\cdots \bar{p}_{n-k}^{r_{n-k}}\mid \sum_{i=1}^{n-k} r_i \leq k-1)$ has dimension $\binom{n-1}{k}+\binom{n-1}{n-k}=\binom{n}{k}$ the same as dimension of the entire $-1$-eigenspace of $\rho^*$, hence is exactly the $-1$-eigenspace of $\rho^*$. 
		\item
		The above series of zero intersections force $e\cdot\mathrm{Span}(p_1^{r_1}p_2^{r_2}\cdots p_k^{r_k}\mid \sum_{i=1}^{k} r_i \leq n-k-1)=e\cdot H^*({G}_{2k}(\R^{2n}))$ and $\bar{e}\cdot \mathrm{Span}(\bar{p}_1^{r_1}\bar{p}_2^{r_2}\cdots \bar{p}_{n-k}^{r_{n-k}}\mid \sum_{i=1}^{n-k} r_i \leq k-1)=\bar{e}\cdot H^*({G}_{2k}(\R^{2n}))$. Hence we get $e\cdot H^*({G}_{2k}(\R^{2n})) \cap \bar{e} \cdot H^*({G}_{2k}(\R^{2n}))=0$ and $e\cdot H^*({G}_{2k}(\R^{2n})) \oplus \bar{e} \cdot H^*({G}_{2k}(\R^{2n}))$ is the $(-1)$-eigenspace of $\rho^*$.
		\item
		We have proved $e\cdot H^*({G}_{2k}(\R^{2n}))=e\cdot\mathrm{Span}(p_1^{r_1}p_2^{r_2}\cdots p_k^{r_k}\mid \sum_{i=1}^{k} r_i \leq n-k-1)$ and they are of dimension $\binom{n-1}{k}$. Since $H^*({G}_{2k}(\R^{2n}))$ is of dimension $\binom{n}{k}$, the kernel of $e\times$ on $H^*({G}_{2k}(\R^{2n}))$ is then of dimension $\binom{n}{k}-\binom{n-1}{k}=\binom{n-1}{n-k}$. Because $e\cdot \bar{p}_{n-k}=e\cdot \bar{e}^2=0$, the subspace $\bar{p}_{n-k}\cdot \mathrm{Span}(\bar{p}_1^{r_1}\bar{p}_2^{r_2}\cdots \bar{p}_{n-k}^{r_{n-k}}\mid \sum_{i=1}^{n-k} r_i \leq k-1)$ of dimension $\binom{n-1}{n-k}$ is clearly in the kernel of $e\times$ on $H^*({G}_{2k}(\R^{2n}))$, hence is exactly the kernel. Similarly, we obtain the kernel of $\bar{e}\times$ on $H^*({G}_{2k}(\R^{2n}))$.
		\item
		The $e\times$-kernel subspace $\bar{p}_{n-k}\cdot \mathrm{Span}(\bar{p}_1^{r_1}\bar{p}_2^{r_2}\cdots \bar{p}_{n-k}^{r_{n-k}}\mid \sum_{i=1}^{n-k} r_i \leq k-1)$ of $H^*({G}_{2k}(\R^{2n}))$ has complementary subspace $\mathrm{Span}(\bar{p}_1^{r_1}\bar{p}_2^{r_2}\cdots \bar{p}_{n-k-1}^{r_{n-k-1}}\mid \sum_{i=1}^{n-k-1} r_i \leq k)$. Hence the restriction 
		\[
		e\times: \mathrm{Span}(\bar{p}_1^{r_1}\bar{p}_2^{r_2}\cdots \bar{p}_{n-k-1}^{r_{n-k-1}}\mid \sum_{i=1}^{n-k-1} r_i \leq k) \longrightarrow e\cdot H^*({G}_{2k}(\R^{2n}))
		\]
		is bijection, therefore $e\cdot \mathrm{Span}(\bar{p}_1^{r_1}\bar{p}_2^{r_2}\cdots \bar{p}_{n-k-1}^{r_{n-k-1}}\mid \sum_{i=1}^{n-k-1} r_i \leq k) = e\cdot H^*({G}_{2k}(\R^{2n}))$. The identification $e\cdot\mathrm{Span}(p_1^{r_1}p_2^{r_2}\cdots p_k^{r_k}\mid \sum_{i=1}^{k} r_i \leq n-k-1)=e\cdot H^*({G}_{2k}(\R^{2n}))$ is proved in (3). Similarly, we get the identifications for $\bar{e}\cdot H^*({G}_{2k}(\R^{2n}))$.
	\end{enumerate}
\end{proof}

The detailed discussion of $e\times$ and $\bar{e}\times$ between the eigenspaces of $\rho^*$ gives:
\begin{cor}[Ordinary characteristic basis of $\tilde{G}_{2k}(\R^{2n})$]
	The ordinary cohomology of $\tilde{G}_{2k}(\R^{2n})$ is generated by Pontryagin classes and Euler classes with an additive basis $\{p_1^{r_1}p_2^{r_2}\cdots p_k^{r_k}\mid \sum_{i=1}^{k} r_i \leq n-k\}$ for the $+1$-eigenspace of $\rho^*$ and $\{e\cdot\bar{p}_1^{r_1}\bar{p}_2^{r_2}\cdots \bar{p}_{n-k-1}^{r_{n-k-1}}\mid \sum_{i=1}^{n-k-1} r_i \leq k\}\cup \{ \bar{e}\cdot p_1^{r_1}p_2^{r_2}\cdots p_{k-1}^{r_{k-1}}\mid \sum_{i=1}^{k-1} r_i \leq n-k\}$ for the $-1$-eigenspace.
\end{cor}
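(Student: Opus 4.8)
The plan is to analyze the $+1$- and $(-1)$-eigenspaces of $\rho^*$ on $H^*(\tilde{G}_{2k}(\R^{2n}))$ separately, combining Proposition~\ref{prop:OrientVSReal} and the ordinary characteristic basis of real Grassmannians for the $+1$-part with Proposition~\ref{prop:OrientEigen} for the $(-1)$-part.

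For the $+1$-eigenspace, I would note that by Proposition~\ref{prop:OrientVSReal} the pullback $\pi^*$ identifies $H^*(G_{2k}(\R^{2n}))$ with $H^*(\tilde{G}_{2k}(\R^{2n}))^{\Z/2}$, and that, since the canonical bundle over $\tilde{G}_{2k}(\R^{2n})$ is the pullback as a real bundle of the canonical bundle over $G_{2k}(\R^{2n})$, this identification carries ordinary Pontryagin classes to ordinary Pontryagin classes. Hence the ordinary characteristic basis of the real Grassmannian transports to the claimed basis $\{p_1^{r_1}\cdots p_k^{r_k}\mid\sum_{i=1}^k r_i\leq n-k\}$ of the $+1$-eigenspace.

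For the $(-1)$-eigenspace, part~(3) of Proposition~\ref{prop:OrientEigen} gives the internal direct sum decomposition $e\cdot H^*(G_{2k}(\R^{2n}))\oplus\bar{e}\cdot H^*(G_{2k}(\R^{2n}))$, and part~(5) identifies $e\cdot H^*(G_{2k}(\R^{2n}))$ with $e\cdot\mathrm{Span}(\bar{p}_1^{r_1}\cdots\bar{p}_{n-k-1}^{r_{n-k-1}}\mid\sum r_i\leq k)$ and $\bar{e}\cdot H^*(G_{2k}(\R^{2n}))$ with $\bar{e}\cdot\mathrm{Span}(p_1^{r_1}\cdots p_{k-1}^{r_{k-1}}\mid\sum r_i\leq n-k)$. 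What remains is to see that these spanning sets are linearly independent: the proof of part~(5) shows that $e\times$ restricted to $\mathrm{Span}(\bar{p}_1^{r_1}\cdots\bar{p}_{n-k-1}^{r_{n-k-1}}\mid\sum r_i\leq k)$ is a bijection onto $e\cdot H^*(G_{2k}(\R^{2n}))$ — it fits into the factorization of $p_k\times$ through two copies of $e\times$, which carries one sub-basis of $H^*(G_{2k}(\R^{2n}))$ injectively into another — so the products $e\cdot\bar{p}_1^{r_1}\cdots\bar{p}_{n-k-1}^{r_{n-k-1}}$ are independent, and likewise for the $\bar{e}$-products. The Betti count $\binom{n-1}{k}+\binom{n-1}{k-1}=\binom{n}{k}$ agrees with the dimension $\binom{n}{k}$ of the $(-1)$-eigenspace noted at the beginning of the proof of Proposition~\ref{prop:OrientEigen}, so the union is a basis and not merely a spanning set. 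Finally, every monomial in both bases is visibly a polynomial in $p_1,\ldots,p_k,\bar{p}_1,\ldots,\bar{p}_{n-k},e,\bar{e}$, which gives the generation statement.

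The main obstacle is in effect already resolved inside Proposition~\ref{prop:OrientEigen}; the work remaining in this corollary is bookkeeping — reconciling the two descriptions of $e\cdot H^*(G_{2k}(\R^{2n}))$ from part~(5), upgrading the relevant surjections to isomorphisms, and the dimension count — together with the passage from the real Grassmannian's characteristic basis to the $+1$-eigenspace via $\pi^*$.
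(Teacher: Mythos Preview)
Your argument is correct and matches the paper, which states the corollary without separate proof as an immediate consequence of Proposition~\ref{prop:OrientEigen} together with the ordinary characteristic basis of $H^*(G_{2k}(\R^{2n}))$. One minor correction to your parenthetical: the injectivity of $e\times$ on $\mathrm{Span}(\bar{p}_1^{r_1}\cdots\bar{p}_{n-k-1}^{r_{n-k-1}}\mid\sum r_i\le k)$ in part~(5) is obtained via the kernel computation of part~(4) (this span is complementary to the kernel $\bar p_{n-k}\cdot\mathrm{Span}(\cdots)$), not via the $p_k\times$-factorization trick of part~(1), which applies to the $p_i$-monomial span rather than the $\bar p_i$-monomial span.
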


\begin{rmk}
	Using the various identifications of $e\cdot H^*({G}_{2k}(\R^{2n}))$ and $\bar{e}\cdot H^*({G}_{2k}(\R^{2n}))$ in Theorem \ref{prop:OrientEigen}, we can also give the additive basis of $\tilde{G}_{2k}(\R^{2n})$ in other forms.
\end{rmk}

\begin{cor}
	The Poincar\'e series of $\tilde{G}_{2k}(\R^{2n})$ are
	\begin{align*}
	P_{\tilde{G}_{2k}(\R^{2n})}(t)
	&=P_{{G}_{2k}(\R^{2n})}(t)+t^{2k}P_{{G}_{2k}(\R^{2n-2})}(t)+t^{2n-2k}P_{{G}_{2k-2}(\R^{2n-2})}(t)\\
	&=P_{G_{k}(\C^{n})}(t^2)+t^{2k}P_{G_{k}(\C^{n-1})}(t^2)+t^{2n-2k}P_{G_{k-1}(\C^{n-1})}(t^2).
	\end{align*} 
\end{cor}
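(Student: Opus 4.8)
The plan is to read off the Poincar\'e series from the $\rho^*$-eigenspace decomposition established in Proposition~\ref{prop:OrientEigen}. Since $\rho^*$ is a graded ring automorphism with $(\rho^*)^2=\mathrm{id}$, the space $H^*(\tilde{G}_{2k}(\R^{2n}))$ splits as a graded vector space into its $(+1)$- and $(-1)$-eigenspaces, so $P_{\tilde{G}_{2k}(\R^{2n})}(t)$ is the sum of the two eigenspace Poincar\'e series. By Proposition~\ref{prop:OrientVSReal} the $(+1)$-eigenspace is $H^*(G_{2k}(\R^{2n}))$, contributing $P_{G_{2k}(\R^{2n})}(t)$. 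By Proposition~\ref{prop:OrientEigen}(3) the $(-1)$-eigenspace is the internal direct sum $e\cdot H^*(G_{2k}(\R^{2n}))\oplus \bar e\cdot H^*(G_{2k}(\R^{2n}))$, so it remains only to compute the Poincar\'e series of these two summands.

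For the $e$-summand I would use that, by Proposition~\ref{prop:OrientEigen}(5), $e\cdot H^*(G_{2k}(\R^{2n})) = e\cdot \mathrm{Span}(p_1^{r_1}\cdots p_k^{r_k}\mid \sum_{i=1}^k r_i\le n-k-1)$, while by part~(1) the map $e\times$ is injective on that span. As $e$ is homogeneous of degree $2k$, the Poincar\'e series of $e\cdot H^*(G_{2k}(\R^{2n}))$ is $t^{2k}$ times that of $\mathrm{Span}(p_1^{r_1}\cdots p_k^{r_k}\mid \sum r_i\le n-k-1)$; but the monomials $p_1^{r_1}\cdots p_k^{r_k}$ with $\sum r_i\le (n-1)-k$ form an additive basis of $H^*(G_{2k}(\R^{2n-2}))$ by the ordinary characteristic basis of real Grassmannians, so this span has Poincar\'e series $P_{G_{2k}(\R^{2n-2})}(t)$, and the $e$-summand contributes $t^{2k}P_{G_{2k}(\R^{2n-2})}(t)$. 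Symmetrically, using the equivariant diffeomorphism $G_{2k}(\R^{2n})\cong G_{2n-2k}(\R^{2n})$ interchanging $\gamma$ and $\bar\gamma$ together with Proposition~\ref{prop:OrientEigen}(1)(5), the $\bar e$-summand equals $\bar e\cdot \mathrm{Span}(\bar p_1^{r_1}\cdots \bar p_{n-k}^{r_{n-k}}\mid \sum_{i=1}^{n-k} r_i\le k-1)$ with $\bar e\times$ injective there and $\bar e$ of degree $2(n-k)$; identifying the $\bar p$-span (via that diffeomorphism) with $H^*(G_{2n-2k}(\R^{2n-2}))=H^*(G_{2k-2}(\R^{2n-2}))$, it contributes $t^{2n-2k}P_{G_{2k-2}(\R^{2n-2})}(t)$. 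Adding the three pieces gives the first displayed formula, and substituting Theorem~\ref{thm:Poinc} (Casian--Kodama), namely $P_{G_{2k}(\R^{2n})}(t)=P_{G_k(\C^n)}(t^2)$, $P_{G_{2k}(\R^{2n-2})}(t)=P_{G_k(\C^{n-1})}(t^2)$ and $P_{G_{2k-2}(\R^{2n-2})}(t)=P_{G_{k-1}(\C^{n-1})}(t^2)$, yields the second.

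Since the structural work --- the eigenspace decomposition, the injectivity of multiplication by $e$ and $\bar e$ on the relevant truncated polynomial spans, and the identification of their images with $e\cdot H^*(G_{2k}(\R^{2n}))$ and $\bar e\cdot H^*(G_{2k}(\R^{2n}))$ --- has already been carried out in Proposition~\ref{prop:OrientEigen}, there is no substantive obstacle here. The only care needed is bookkeeping: matching the two degree shifts ($2k$ for $e$ and $2n-2k$ for $\bar e$) against the index shifts in Theorem~\ref{thm:Poinc}, and using the complementary-subspace duality $G_{2k}(\R^{2n})\cong G_{2n-2k}(\R^{2n})$ (together with $G_{n-k}(\C^{n-1})\cong G_{k-1}(\C^{n-1})$ on the complex side) correctly.
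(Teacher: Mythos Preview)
Your proposal is correct and follows essentially the same route as the paper: both split $H^*(\tilde{G}_{2k}(\R^{2n}))$ into the $\pm 1$-eigenspaces of $\rho^*$, identify the $(-1)$-eigenspace via Proposition~\ref{prop:OrientEigen} as $e\cdot H^*(G_{2k}(\R^{2n}))\oplus\bar e\cdot H^*(G_{2k}(\R^{2n}))$, and then recognize the two summands (after the degree shifts by $2k$ and $2n-2k$) as cohomologies of the smaller Grassmannians $G_{2k}(\R^{2n-2})$ and $G_{2k-2}(\R^{2n-2})$. The only cosmetic difference is which of the equivalent bases from Proposition~\ref{prop:OrientEigen}(5) is used: the paper matches the $e$-summand to the $\bar p$-monomial basis and the $\bar e$-summand to the $p$-monomial basis, while you do the reverse and then invoke the duality $G_{2n-2k}(\R^{2n-2})\cong G_{2k-2}(\R^{2n-2})$; either way the Poincar\'e series are the same.
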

\begin{proof}
	Notice that the $-1$-eigenbasis $\{e\cdot\bar{p}_1^{r_1}\bar{p}_2^{r_2}\cdots \bar{p}_{n-k-1}^{r_{n-k-1}}\mid \sum_{i=1}^{n-k-1} r_i \leq k\}\cup \{ \bar{e}\cdot p_1^{r_1}p_2^{r_2}\cdots p_{k-1}^{r_{k-1}}\mid \sum_{i=1}^{k-1} r_i \leq n-k\}$ has factors $\{\bar{p}_1^{r_1}\bar{p}_2^{r_2}\cdots \bar{p}_{n-k-1}^{r_{n-k-1}}\mid \sum_{i=1}^{n-k-1} r_i \leq k\}$ and $ \{p_1^{r_1}p_2^{r_2}\cdots p_{k-1}^{r_{k-1}}\mid \sum_{i=1}^{k-1} r_i \leq n-k\}$ which also appear as the additive basis of $H^*({G}_{2k}(\R^{2n-2}))$ and $H^*({G}_{2k-2}(\R^{2n-2}))$ respectively.
\end{proof}

\begin{rmk}
	The Poincar\'e series of even dimensional oriented Grassmannians $\tilde{G}_{2k}(\R^{2n}), \tilde{G}_{2k}(\R^{2n+1}),\tilde{G}_{2k+1}(\R^{2n+1})$ were already computed by H. Cartan \cite{Car50}.
\end{rmk}

\begin{thm}[Ordinary Leray-Borel description of $\tilde{G}_{2k}(\R^{2n})$]
	The ordinary cohomology of $\tilde{G}_{2k}(\R^{2n})$ is a ring extension of the ordinary cohomology of ${G}_{2k}(\R^{2n})$:
	\[
	H^*(\tilde{G}_{2k}(\R^{2n}))\cong \frac{H^*({G}_{2k}(\R^{2n}))[e,\bar{e}]}{e^2=p_k,\,\bar{e}^2=\bar{p}_{n-k},\,e\bar{e}=0} \cong \frac{\Q[p_1,p_2,\ldots,p_k;\bar{p}_1,\bar{p}_2,\ldots,\bar{p}_{n-k};e,\bar{e}]}{p\bar{p} = 1,\, e^2=p_k,\,\bar{e}^2=\bar{p}_{n-k},\,e\bar{e}=0}.
	\]
\end{thm}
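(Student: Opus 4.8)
The plan is to build the obvious surjection from the asserted presentation onto $H^*(\tilde G_{2k}(\R^{2n}))$ and prove it is an isomorphism by a dimension count. Let $B$ denote the graded $\Q$-algebra
\[
B=\frac{\Q[p_1,\ldots,p_k;\bar p_1,\ldots,\bar p_{n-k};e,\bar e]}{\bigl(p\bar p=1,\ e^2=p_k,\ \bar e^2=\bar p_{n-k},\ e\bar e=0\bigr)},
\]
and define $\phi\colon B\to H^*(\tilde G_{2k}(\R^{2n}))$ by sending each generator to the ordinary characteristic class of the same name: the Pontryagin classes of the canonical and complementary oriented bundles and their Euler classes $e,\bar e$.

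First I would check that $\phi$ is well defined. The relation $p\bar p=1$ holds in $H^*(\tilde G_{2k}(\R^{2n}))$ because the $2$-fold cover $\pi$ pulls the canonical and complementary bundles of $G_{2k}(\R^{2n})$ back to the canonical and complementary oriented bundles of $\tilde G_{2k}(\R^{2n})$, so by naturality of Pontryagin classes $\pi^*$ carries the ordinary Leray--Borel relation $p\bar p=1$ of $G_{2k}(\R^{2n})$ into $H^*(\tilde G_{2k}(\R^{2n}))$; the three relations $e^2=p_k$, $\bar e^2=\bar p_{n-k}$, $e\bar e=0$ are the ordinary specialisations (substitute $\alpha_i\mapsto 0$) of Proposition \ref{prop:EulerPont}. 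Surjectivity of $\phi$ is then immediate from the ordinary characteristic basis of $\tilde G_{2k}(\R^{2n})$ proved just above: each of its basis elements is a monomial in the $p_i$, or $e$ times a monomial in the $\bar p_j$, or $\bar e$ times a monomial in the $p_i$, all of which lie in $\mathrm{im}\,\phi$.

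It remains to bound $\dim_\Q B$. Put $R=\Q[p_\bullet;\bar p_\bullet]/(p\bar p=1)\cong H^*(G_{2k}(\R^{2n}))$, so $\dim_\Q R=\binom nk$ by Theorem \ref{thm:Poinc} at $t=1$. Since $e^2,\bar e^2\in R$ and $e\bar e=0$, every element of $B$ is $r_0+r_1e+r_2\bar e$ with $r_i\in R$, i.e. $B=R+Re+R\bar e$. Moreover $e\cdot\bar p_{n-k}=e\bar e^2=0$ and $\bar e\cdot p_k=\bar e e^2=0$ in $B$, so $Re$ is a quotient of $R/(\bar p_{n-k})$ and $R\bar e$ a quotient of $R/(p_k)$. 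Killing $\bar p_{n-k}$ in $R$ turns $p\bar p=1$ into $(1+p_1+\cdots+p_k)(1+\bar p_1+\cdots+\bar p_{n-k-1})=1$, which is the Leray--Borel presentation of $H^*(G_{2k}(\R^{2(n-1)}))$, of dimension $\binom{n-1}{k}$; likewise $R/(p_k)$ is the Leray--Borel ring of $H^*(G_{2(k-1)}(\R^{2(n-1)}))$, of dimension $\binom{n-1}{k-1}$ (alternatively one reads both dimensions directly off Proposition \ref{prop:OrientEigen}(3)--(4)). Hence
\[
\dim_\Q B\ \le\ \binom nk+\binom{n-1}{k}+\binom{n-1}{k-1}\ =\ 2\binom nk\ =\ \dim_\Q H^*(\tilde G_{2k}(\R^{2n})),
\]
and together with surjectivity this forces $\phi$ to be an isomorphism. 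Finally, substituting the Leray--Borel presentation of $R\cong H^*(G_{2k}(\R^{2n}))$ into $B$ yields the first displayed isomorphism $H^*(\tilde G_{2k}(\R^{2n}))\cong H^*(G_{2k}(\R^{2n}))[e,\bar e]/(e^2=p_k,\ \bar e^2=\bar p_{n-k},\ e\bar e=0)$.

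The step I expect to carry the real weight is the dimension estimate for $B$: one must correctly identify the quotients $R/(p_k)$ and $R/(\bar p_{n-k})$ — the point being that forcing a top Pontryagin class to vanish collapses a real Grassmannian's Leray--Borel ring to that of a smaller one — and match the resulting count $\binom{n-1}{k}+\binom{n-1}{k-1}=\binom nk$ against the total Betti number $2\binom nk$ of $\tilde G_{2k}(\R^{2n})$. Everything else is formal, since well-definedness and surjectivity are handed to us by Proposition \ref{prop:EulerPont} and the ordinary characteristic-basis corollary respectively.
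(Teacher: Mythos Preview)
Your proof is correct and follows essentially the same approach as the paper: define the natural map from the presented ring to $H^*(\tilde G_{2k}(\R^{2n}))$, note surjectivity from the ordinary characteristic basis, and conclude by a dimension count. The paper's version of the dimension step is terser---it simply asserts that the presented ring admits the same additive basis $\{p^I\}\cup\{e\cdot\bar p^J\}\cup\{\bar e\cdot p^K\}$ and is therefore of dimension $2\binom nk$---whereas you obtain the same bound via the identifications $R/(\bar p_{n-k})\cong H^*(G_{2k}(\R^{2n-2}))$ and $R/(p_k)\cong H^*(G_{2k-2}(\R^{2n-2}))$, which is a perfectly valid (and arguably more transparent) way to do the same bookkeeping.
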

\begin{proof}
	Consider the ring homomorphism
	\[
	\frac{H^*({G}_{2k}(\R^{2n}))[e,\bar{e}]}{e^2=p_k,\,\bar{e}^2=\bar{p}_{n-k},\,e\bar{e}=0} \longrightarrow H^*(\tilde{G}_{2k}(\R^{2n}))
	\]
	which sends Pontryagin classes of ${G}_{2k}(\R^{2n})$ to the corresponding Pontryagin classes of $\tilde{G}_{2k}(\R^{2n})$ and sends the abstract symbols $e,\bar{e}$ to the actual Euler classes of the oriented canonical bundle and complementary bundle over $\tilde{G}_{2k}(\R^{2n})$. Since we have proved that $H^*(\tilde{G}_{2k}(\R^{2n}))$ is generated by Pontryagin classes and Euler classes, the above morphism is surjective. It is easy check that $H^*({G}_{2k}(\R^{2n}))[e,\bar{e}]/\{e^2=p_k,\,\bar{e}^2=\bar{p}_{n-k},\,e\bar{e}=0\}$ also has the same additive basis $\{p_1^{r_1}p_2^{r_2}\cdots p_k^{r_k}\mid \sum_{i=1}^{k} r_i \leq n-k\}\cup\{e\cdot\bar{p}_1^{r_1}\bar{p}_2^{r_2}\cdots \bar{p}_{n-k-1}^{r_{n-k-1}}\mid \sum_{i=1}^{n-k-1} r_i \leq k\}\cup \{ \bar{e}\cdot p_1^{r_1}p_2^{r_2}\cdots p_{k-1}^{r_{k-1}}\mid \sum_{i=1}^{k-1} r_i \leq n-k\}$ as $H^*(\tilde{G}_{2k}(\R^{2n}))$. Hence, we get a ring isomorphism.
\end{proof}

Since $T^n$ acts on $\tilde{G}_{2k}(\R^{2n})$ equivariantly formal, i.e. $H_T^*(\tilde{G}_{2k}(\R^{2n}))\cong\Q[\alpha_1,\dots,\alpha_n]\otimes_\Q H^*(\tilde{G}_{2k}(\R^{2n}))$ as $\Q[\alpha_1,\dots,\alpha_n]$-modules, we can lift the ordinary basis, characteristic classes and relations to be equivariant, then obtain the equivariant versions of characteristic basis and Leray-Borel description:

\begin{cor}[Equivariant Leray-Borel description and characteristic basis of $\tilde{G}_{2k}(\R^{2n})$]
	The equivariant cohomology of $\tilde{G}_{2k}(\R^{2n})$ is a ring extension of the equivariant cohomology of ${G}_{2k}(\R^{2n})$:
	\begin{align*}
	H_T^*(\tilde{G}_{2k}(\R^{2n}))
	&\cong \frac{H_T^*({G}_{2k}(\R^{2n}))[e^T,\bar{e}^T]}{(e^T)^2=p^T_k,\,(\bar{e}^T)^2=\bar{p}^T_{n-k},\,e^T\bar{e}^T=\prod_{i=1}^{n}\alpha_i} \\
	&\cong \frac{\Q[\alpha_1,\alpha_2,\ldots,\alpha_n][p^T_1,p^T_2,\ldots,p^T_k;\bar{p}^T_1,\bar{p}^T_2,\ldots,\bar{p}^T_{n-k};e^T,\bar{e}^T]}{p^T\bar{p}^T = \prod_{i=1}^{n}(1+\alpha^2_i),\,(e^T)^2=p^T_k,\,(\bar{e}^T)^2=\bar{p}^T_{n-k},\,e^T\bar{e}^T=\prod_{i=1}^{n}\alpha_i}
	\end{align*}
	with additive $\Q[\alpha_1,\alpha_2,\ldots,\alpha_n]$-basis $\{(p^T_1)^{r_1}(p^T_2)^{r_2}\cdots (p^T_k)^{r_k}\mid \sum_{i=1}^{k} r_i \leq n-k\}$ for the $+1$-eigenspace of $\rho^*$ and $\{e^T\cdot(\bar{p}^T)_1^{r_1}(\bar{p}^T)_2^{r_2}\cdots (\bar{p}^T)_{n-k-1}^{r_{n-k-1}}\mid \sum_{i=1}^{n-k-1} r_i \leq k\}\cup \{ \bar{e}^T\cdot (p_1^T)^{r_1}(p_2^T)^{r_2}\cdots (p_{k-1}^T)^{r_{k-1}}\mid \sum_{i=1}^{k-1} r_i \leq n-k\}$ for the $-1$-eigenspace.
\end{cor}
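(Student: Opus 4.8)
The plan is to leverage equivariant formality to reduce the statement to the ordinary Leray--Borel description of $\tilde G_{2k}(\R^{2n})$ proved just above. Write $R^T$ for the abstract graded $\Q[\alpha_1,\ldots,\alpha_n]$-algebra
\[
R^T=\frac{\Q[\alpha_1,\alpha_2,\ldots,\alpha_n][P_1,\ldots,P_k;\bar P_1,\ldots,\bar P_{n-k};E,\bar E]}{\big(P\bar P=\prod_{i=1}^{n}(1+\alpha_i^2),\ E^2=P_k,\ \bar E^2=\bar P_{n-k},\ E\bar E=\prod_{i=1}^{n}\alpha_i\big)},
\]
and let $R=R^T\otimes_{\Q[\alpha_1,\ldots,\alpha_n]}\Q$ be its reduction at $\alpha_1=\cdots=\alpha_n=0$. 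Since setting all $\alpha_i=0$ turns the defining relations into $p\bar p=1$, $e^2=p_k$, $\bar e^2=\bar p_{n-k}$, $e\bar e=0$, the ring $R$ is precisely the one appearing in the ordinary Leray--Borel description of $\tilde G_{2k}(\R^{2n})$ established above.

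First I would construct the comparison map. The canonical and complementary bundles over $\tilde G_{2k}(\R^{2n})$ are $T^n$-equivariant, so they carry equivariant Pontryagin classes $p_i^T,\bar p_i^T$ and equivariant Euler classes $e^T,\bar e^T$, reducing to the ordinary $p_i,\bar p_i,e,\bar e$ under $H_T^*\to H^*$. The relation $p^T\bar p^T=\prod_{i=1}^{n}(1+\alpha_i^2)$ holds by Proposition \ref{prop:Pont} (pulled back along $\pi^*$), and $(e^T)^2=p_k^T$, $(\bar e^T)^2=\bar p_{n-k}^T$, $e^T\bar e^T=\prod_{i=1}^{n}\alpha_i$ hold by Proposition \ref{prop:EulerPont}. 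Hence there is a graded $\Q[\alpha_1,\ldots,\alpha_n]$-algebra homomorphism $\phi\colon R^T\to H_T^*(\tilde G_{2k}(\R^{2n}))$ sending $P_i,\bar P_i,E,\bar E$ to $p_i^T,\bar p_i^T,e^T,\bar e^T$, and $\phi\otimes_{\Q[\alpha_1,\ldots,\alpha_n]}\Q$ is the corresponding ordinary map $R\to H^*(\tilde G_{2k}(\R^{2n}))$, which is an isomorphism by the ordinary theorem.

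Next I would show $\phi$ is an isomorphism by matching a spanning set with a basis. By equivariant formality (equivalence (5) of Theorem \ref{thm:formal}), lifting the ordinary characteristic basis $\{p^I\}\cup\{e\,\bar p^J\}\cup\{\bar e\,p^K\}$ (with the degree bounds stated in the corollary) to the equivariant monomials $\{(p^T)^I\}\cup\{e^T(\bar p^T)^J\}\cup\{\bar e^T(p^T)^K\}$ produces a $\Q[\alpha_1,\ldots,\alpha_n]$-basis of $H_T^*(\tilde G_{2k}(\R^{2n}))$; in particular $\phi$ is surjective. On the other side, the same monomials in $R^T$ span $R^T$ over $\Q[\alpha_1,\ldots,\alpha_n]$: they span $R^T/(\alpha_1,\ldots,\alpha_n)=R$ by the ordinary theorem, each graded piece of $R^T$ is a finite-dimensional $\Q$-vector space, and so a graded Nakayama argument upgrades spanning modulo $(\alpha_1,\ldots,\alpha_n)$ to spanning over $\Q[\alpha_1,\ldots,\alpha_n]$. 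Thus $\phi$ carries a $\Q[\alpha_1,\ldots,\alpha_n]$-spanning set of $R^T$ onto a $\Q[\alpha_1,\ldots,\alpha_n]$-basis of $H_T^*$, which forces $\phi$ to be injective as well, hence an isomorphism; this simultaneously establishes the asserted equivariant characteristic basis, and the ring-extension form $H_T^*(\tilde G_{2k}(\R^{2n}))\cong H_T^*(G_{2k}(\R^{2n}))[e^T,\bar e^T]/(\ldots)$ follows by feeding in the presentation of $H_T^*(G_{2k}(\R^{2n}))$ from Theorem \ref{thm:EquivBReal}.

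The only step needing real care is the last one: verifying that the four lifted relations generate all equivariant relations, equivalently that the listed monomials span $R^T$ over $\Q[\alpha_1,\ldots,\alpha_n]$. The graded Nakayama argument does this cleanly once one has checked $R^T\otimes_{\Q[\alpha_1,\ldots,\alpha_n]}\Q=R$; the subtle point to get right is merely that reducing $E\bar E=\prod_{i=1}^{n}\alpha_i$ modulo $(\alpha_1,\ldots,\alpha_n)$ yields $e\bar e=0$ (not a unit), so that the ordinary presentation is recovered exactly and the graded ranks match in every cohomological degree.
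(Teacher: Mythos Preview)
Your proposal is correct and follows essentially the same approach as the paper: the paper's justification for this corollary is the single sentence preceding it, which invokes equivariant formality to ``lift the ordinary basis, characteristic classes and relations'' from the ordinary Leray--Borel description just proved. Your argument fleshes this out carefully---in particular, the graded Nakayama step makes explicit why the four lifted relations suffice (a point the paper leaves implicit)---but the strategy is the same. One minor omission: you do not explicitly verify that the lifted basis splits into the $\pm 1$-eigenspaces of $\rho^*$ as stated, though this is immediate from the facts (already established in the paper) that $\rho^*$ fixes the equivariant Pontryagin classes and negates $e^T,\bar e^T$.
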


\begin{rmk}
	The ordinary cohomology groups of $\tilde{G}_k(\R^n)$ in $\Z$ coefficients for $n\leq 8$ were computed by Jungkind \cite{Ju79}. The ordinary cohomology rings of $\tilde{G}_k(\R^n)$ in $\R$ coefficients for $k=2$ were computed by Shi\&Zhou \cite{SZ14}.
\end{rmk}

\subsubsection{Characteristic numbers of orientable Grassmannians}
All the oriented Grassmannians are canonically oriented. Among the real Grassmannians, only $G_{2k}(\R^{2n})$ and $G_{2k+1}(\R^{2n+2})$ have nonzero top Betti numbers and hence are orientable. We can integrate equivariant cohomology classes on these Grassmannians using the Atiyah-Bott-Berline-Vergne(ABBV) localization formula \ref{ABBV}. According to the additive $\Q[\alpha_1,\ldots,\alpha_n]$-module structures of equivariant cohomology of theses Grassmannians, we shall need to understand the integration of equivariant characteristic classes in various cases for any multi-index $I=(i_1,\ldots,i_k)$:  $(p^T)^I,\,e^T\cdot(p^T)^I,\,\bar{e}^T\cdot(p^T)^I,\,r^T\cdot(p^T)^I,\,\tilde{r}^T\cdot(p^T)^I$.

The equivariant Pontryagin classes of canonical bundles, complementary bundles and tangent bundles are given in Prop \ref{prop:Pont}. The equivariant Euler classes of canonical bundles and complementary bundles are given in Subsubsection \ref{subsub:Euler}. The $r^T,\tilde{r}^T$ are given in Theorem \ref{thm:AllGrass} and Prop \ref{prop:CanoBaseIII}. In order to apply the ABBV formula, we need a localized expression for the equivariant Euler class of normal bundle at each fixed point or fixed circle.

\begin{prop}
	Let $S$ be a $k$-element subset of $\{1,\ldots,n\}$, the equivariant Euler class of normal bundle at a fixed point or fixed circle associated to $S,S_\pm$ is
	\begin{enumerate}
		\item For $G_{2k}(\R^{2n}),\tilde{G}_{2k}(\R^{2n})$, 
		\[
		e^N_{S_\pm} = e^N_S = \prod_{i\in S}\prod_{j \not \in S}(\alpha^2_j-\alpha^2_i).
		\]
		\item For $\tilde{G}_{2k}(\R^{2n+1})$,
		\[
		e^N_{S_\pm} = \pm \prod_{l\in S}\alpha_l \prod_{i\in S}\prod_{j \not \in S}(\alpha^2_j-\alpha^2_i).
		\]
		\item For $\tilde{G}_{2k+1}(\R^{2n+1})$,
		\[
		e^N_{S_\pm} = \pm \prod_{l \not\in S}\alpha_l \prod_{i\in S}\prod_{j \not \in S}(\alpha^2_j-\alpha^2_i).
		\]
		\item For ${G}_{2k+1}(\R^{2n+1}),\tilde{G}_{2k+1}(\R^{2n+1})$,
		\[
		e^N_S = \prod_{l=1}^n\alpha_l \prod_{i\in S}\prod_{j \not \in S}(\alpha^2_j-\alpha^2_i).
		\]
	\end{enumerate}
\end{prop}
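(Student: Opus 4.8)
The plan is to reduce every formula to the multiplicativity of the equivariant Euler class over the isotropy weight decomposition of the normal bundle. At a fixed point the normal bundle is the whole tangent space; at a fixed circle $C_S$ it is the tangent space with its one-dimensional tangent direction along $C_S$ removed. In either case it is a direct sum of non-trivial isotropy weight spaces, each a real $2$-plane (bundle) on which $T$ acts by rotation, and is thereby canonically oriented (Subsection \ref{subsec:T-action}). Hence $e^T(NC)$ is the product of the equivariant Euler classes of these summands, and all four formulas follow once we evaluate the Euler class of each summand type. The needed decompositions are already at hand: for real Grassmannians in Subsubsection \ref{subsubsec:IsoWeights}, for oriented Grassmannians in Proposition \ref{prop:OrientSkeleton}, and they agree under the double cover $\pi$.

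First I would treat a summand $(\R^2_{[\alpha_i]})^*\otimes_\R\R^2_{[\alpha_j]}$. By the lemma of Subsubsection \ref{subsubsec:IsoWeights} it splits as $\R M_1\oplus\R M_2$ of weight $[\alpha_j-\alpha_i]$ and $\R M_3\oplus\R M_4$ of weight $[\alpha_j+\alpha_i]$, so with the rotation orientation its equivariant Euler class is $(\alpha_j-\alpha_i)(\alpha_j+\alpha_i)=\alpha_j^2-\alpha_i^2$; this is unaffected by the sign ambiguities $\alpha_i\mapsto-\alpha_i$, $\alpha_j\mapsto-\alpha_j$, which merely interchange the two factors. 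A single weight space $\R^2_{[\alpha_l]}$ (or its dual) over a fixed point has equivariant Euler class $\pm\alpha_l$, the sign being that of the chosen orientation; by the definitions of Subsubsection \ref{subsub:Euler}, reversing the orientation of a twin fixed point $V_{S_\pm}$ reverses the sign on all such factors at once, so they contribute precisely the sign carried by $e^T_{S_\pm}=\pm\prod_{i\in S}\alpha_i$ (respectively $\bar{e}^T_{S_\pm}=\pm\prod_{j\notin S}\alpha_j$).

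Then I would multiply out case by case. For $G_{2k}(\R^{2n})$ and $\tilde{G}_{2k}(\R^{2n})$ the tangent space at $S$ is $\bigoplus_{i\in S}\bigoplus_{j\notin S}(\R^2_{[\alpha_i]})^*\otimes_\R\R^2_{[\alpha_j]}$, so $e^N_S=\prod_{i\in S}\prod_{j\notin S}(\alpha_j^2-\alpha_i^2)$, a sign-free product, whence $e^N_{S_+}=e^N_{S_-}$. For $\tilde{G}_{2k}(\R^{2n+1})$ the tangent space has in addition one summand $(\R^2_{[\alpha_i]})^*$ for each $i\in S$, whose Euler classes multiply to the orientation sign of $V_{S_\pm}$, namely $\pm\prod_{l\in S}\alpha_l$; likewise $\tilde{G}_{2k+1}(\R^{2n+1})$ has the extra summands $\R^2_{[\alpha_j]}$, $j\notin S$, contributing $\pm\prod_{l\notin S}\alpha_l$. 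For the last case, where the fixed locus is a union of circles $C_S$ (the odd-dimensional Grassmannian), the decomposition of Subsubsection \ref{subsubsec:IsoWeights} exhibits the normal bundle of $C_S$ as $\bigl(\bigoplus_{i\in S}\bigoplus_{j\notin S}(\R^2_{[\alpha_i]})^*\otimes_\R\R^2_{[\alpha_j]}\bigr)\oplus\bigl(\bigoplus_{i\in S}(\R^2_{[\alpha_i]})^*\otimes_\R L_0^\perp\bigr)\oplus\bigl(\bigoplus_{j\notin S}L_0^*\otimes_\R\R^2_{[\alpha_j]}\bigr)$. The first batch contributes $\prod_{i\in S}\prod_{j\notin S}(\alpha_j^2-\alpha_i^2)$ as before; each rank-$2$ bundle in the other two batches is a tensor of a trivial rank-$2$ bundle with a line bundle over $C_S$, hence orientable, and is canonically oriented by the $T$-rotation of isotropy weight $[\alpha_i]$ (resp. $[\alpha_j]$), with equivariant Euler class $\pm\alpha_i$ (resp. $\pm\alpha_j$) after restriction to a fibre. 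Since each index $l\in\{1,\ldots,n\}$ occurs exactly once among these bundles, their Euler classes multiply to $\prod_{l=1}^{n}\alpha_l$, giving $e^N_S=\prod_{l=1}^{n}\alpha_l\prod_{i\in S}\prod_{j\notin S}(\alpha_j^2-\alpha_i^2)$ once orientations are normalized.

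The only step that is not routine is the choice of signs. The \emph{size} of every formula is already forced: by Proposition \ref{prop:Pont} the top equivariant Pontryagin class of the tangent bundle at $S$ is the square of the asserted $e^N_S$, so $e^N_S$ is determined up to sign. What must be checked is that the rotation orientation on $(\R^2_{[\alpha_i]})^*\otimes_\R\R^2_{[\alpha_j]}$ yields $+(\alpha_j^2-\alpha_i^2)$ and not its negative, that the two orientations of a twin fixed point $V_{S_\pm}$ match the signs of the leftover weight-$\alpha_l$ summands, and, in the fixed-circle case, that the bundles built from $L_0^\perp$ are orientable and that their equivariant Euler classes evaluate on a fibre to $+\alpha_l$ for a consistent choice of ambient orientation. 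I expect this orientation bookkeeping to be the main obstacle; the remainder is the elementary multiplication above.
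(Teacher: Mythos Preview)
Your computation of the magnitudes is exactly what the paper does: take the weight decomposition of the tangent (or normal) space from Subsubsection~\ref{subsubsec:IsoWeights} and Proposition~\ref{prop:OrientSkeleton}, and multiply the Euler classes of the rank-two summands. The paper's proof literally says this gives the asserted expressions ``up to signs.'' So on that part you and the paper coincide.

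Where you diverge is in how to pin down the signs. You propose to track orientations summand by summand: check that the rotation orientation on $(\R^2_{[\alpha_i]})^*\otimes_\R\R^2_{[\alpha_j]}$ really produces $+(\alpha_j^2-\alpha_i^2)$, match the leftover $\R^2_{[\alpha_l]}$ factors against the orientation convention defining $V_{S_\pm}$, and verify orientability and fibrewise values for the $L_0^\perp$-bundles in the circle case. This is honest and would work, but it is exactly the bookkeeping you flag as the main obstacle, and it is easy to get wrong. The paper instead argues by symmetry: the equivariant Euler class of the normal bundle must transform correctly under the Weyl group of $G/H$ (which permutes the fixed points by orientation-preserving diffeomorphisms) and under the deck transformation $\rho^*$ (which swaps $S_+\leftrightarrow S_-$). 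Requiring the family $\{e^N_{S_\pm}\}$ to be compatible with these symmetries forces the relative signs among the fixed points, so once the overall sign is fixed by the ambient orientation convention, everything is determined. This replaces your case-by-case orientation chase with a single invariance check, and it explains at a glance why $e^N_{S_+}=e^N_{S_-}$ in case~(1) but $e^N_{S_+}=-e^N_{S_-}$ in cases~(2) and~(3): the extra factors $\prod_{l\in S}\alpha_l$ and $\prod_{l\notin S}\alpha_l$ are precisely $e^T_{S_\pm}$ and $\bar e^T_{S_\pm}$, which $\rho^*$ negates.

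Your approach buys concreteness---you actually see where each factor comes from---at the cost of a delicate sign verification. The paper's approach buys a cleaner and more robust argument, at the cost of being less explicit. Either is acceptable; you might consider adding the Weyl-group/$\rho^*$-invariance observation as a sanity check on your orientation computations.
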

\begin{proof}
	The tangent spaces with weight decomposition at each fixed point or fixed circle of real Grassmannians (hence also oriented Grassmannians) are given in Subsubsection \ref{subsubsec:IsoWeights}, therefore we get the equivariant Euler classes of normal bundles up to signs as the expressions claimed in current Proposition. To resolve the sign ambiguity, we just need to note that the claimed expressions are invariant under the Weyl groups of the oriented Grassmannians as homogeneous spaces $G/H$, and also invariant under the deck transformation $\rho^*$. 
\end{proof}

Next, we will compute and relate equivariant characteristic numbers of different Grassmannians.

\begin{thm}[Equivariant characteristic numbers of real\&oriented Grassmannians]
	Let $I=(i_1,\ldots,i_k)$ be a multi-index and $\mathcal{S}$ be the collection of all $k$-element subsets of $\{1,\ldots,n\}$, then 
	\begin{align*}
	&\int_{\tilde{G}_{2k}(\R^{2n})}(p^T)^I=\int_{\tilde{G}_{2k}(\R^{2n+1})}e^T \cdot (p^T)^I=\int_{\tilde{G}_{2k+1}(\R^{2n+1})}\bar{e}^T \cdot (p^T)^I\\
	=&2\int_{{G}_{2k}(\R^{2n})}(p^T)^I=2\int_{{G}_{2k+1}(\R^{2n+2})}r^T\cdot (p^T)^I=2\int_{\tilde{G}_{2k+1}(\R^{2n+2})}\tilde{r}^T\cdot (p^T)^I\\
	=& 2 \sum_{S \in \mathcal{S}}\frac{\big((p_1^T)^{i_1}\cdots(p_k^T)^{i_k}\big)|_S}{\prod_{i\in S}\prod_{j \not \in S}(\alpha^2_j-\alpha^2_i)}.
	\end{align*}
\end{thm}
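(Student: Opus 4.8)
The plan is to evaluate each of the six integrals by the ABBV localization formula (Theorem~\ref{ABBV}) and check that, after the indicated factor, every one of them reduces to the common fixed-point sum on the right. By the remarks preceding the statement, all six manifolds are orientable: the oriented Grassmannians $\tilde{G}_{2k}(\R^{2n})$, $\tilde{G}_{2k}(\R^{2n+1})$, $\tilde{G}_{2k+1}(\R^{2n+1})$, $\tilde{G}_{2k+1}(\R^{2n+2})$ carry canonical orientations, while $G_{2k}(\R^{2n})$ and $G_{2k+1}(\R^{2n+2})$ have nonzero top Betti numbers. The localized data needed to feed into ABBV have all been collected above: $p^T_l|_S = e_l(\alpha_i^2 : i\in S)$ and in particular $(p^T)^I|_S = \big((p^T_1)^{i_1}\cdots(p^T_k)^{i_k}\big)|_S$ (Proposition~\ref{prop:Pont}); $e^T|_{S_\pm} = \pm\prod_{i\in S}\alpha_i$ and $\bar{e}^T|_{S_\pm} = \pm\prod_{j\not\in S}\alpha_j$ (Subsubsection~\ref{subsub:Euler}); $r^T|_{C_S} = \big(\prod_{l=1}^n\alpha_l\big)\theta_{\R P^1}$ and $\tilde{r}^T|_{\tilde{C}_S} = \big(\prod_{l=1}^n\alpha_l\big)\theta_{S^1}$ (Theorem~\ref{thm:AllGrass} and Proposition~\ref{prop:CanoBaseIII}); and the equivariant Euler classes of the normal bundles from the preceding Proposition, whose forms are read off the weight decompositions of Subsubsection~\ref{subsubsec:IsoWeights}.

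First I would treat the three even-dimensional oriented Grassmannians, where ABBV is a sum over the $2\binom{n}{k}$ isolated fixed points $V_{S_\pm}$. Since $p^T$ is pulled back along $\pi$, it is $\rho^*$-invariant, so $(p^T)^I$ restricts to the same polynomial $\big((p^T_1)^{i_1}\cdots(p^T_k)^{i_k}\big)|_S$ at both $V_{S_+}$ and $V_{S_-}$. For $\tilde{G}_{2k}(\R^{2n})$ the normal Euler class is $\prod_{i\in S}\prod_{j\not\in S}(\alpha^2_j-\alpha^2_i)$, so each twin pair contributes twice the $S$-term. For $\tilde{G}_{2k}(\R^{2n+1})$ (respectively $\tilde{G}_{2k+1}(\R^{2n+1})$) the extra factor $\pm\prod_{i\in S}\alpha_i$ (respectively $\pm\prod_{j\not\in S}\alpha_j$) in $e^N_{S_\pm}$ cancels the localized $e^T|_{S_\pm}$ (respectively $\bar{e}^T|_{S_\pm}$), with the two $\pm$ signs matched at $S_+$ and $S_-$, again leaving twice the $S$-term. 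Summing over $S\in\mathcal{S}$ gives $2\sum_{S}(p^T)^I|_S\big/\prod_{i\in S}\prod_{j\not\in S}(\alpha^2_j-\alpha^2_i)$ in all three cases.

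Next I would treat $G_{2k}(\R^{2n})$, where ABBV over the $\binom{n}{k}$ fixed points $V_S$ with normal Euler class $\prod_{i\in S}\prod_{j\not\in S}(\alpha^2_j-\alpha^2_i)$ produces $\sum_S (p^T)^I|_S\big/\prod(\alpha^2_j-\alpha^2_i)$, which accounts for the factor $2$. Finally the two odd Grassmannians $G_{2k+1}(\R^{2n+2})$ and $\tilde{G}_{2k+1}(\R^{2n+2})$: ABBV sums over the $\binom{n}{k}$ fixed circles $C_S\cong\R P^1$, respectively $\tilde{C}_S\cong S^1$, and since $\theta^2=0$ and $(p^T)^I$ has no $\theta$-component, the integrand restricts to $\big(\prod_{l=1}^n\alpha_l\big)(p^T)^I|_S\,\theta$; the factor $\prod_{l=1}^n\alpha_l$ cancels the corresponding factor in the normal Euler class $\prod_{l=1}^n\alpha_l\prod_{i\in S}\prod_{j\not\in S}(\alpha^2_j-\alpha^2_i)$, and $\int_{C_S}\theta_{\R P^1} = \int_{\tilde{C}_S}\theta_{S^1} = 1$ by the unit-volume normalization, so each circle again contributes $(p^T)^I|_S\big/\prod(\alpha^2_j-\alpha^2_i)$, and the factor $2$ appears. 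Collecting the six evaluations yields all the asserted equalities, and the last line is the definition of the common value.

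I expect the only real difficulty to be the sign and orientation bookkeeping, not the algebra. One must verify that the canonical orientation of each even oriented Grassmannian induces the \emph{same} sign in $e^T|_{S_\pm}$ and in $e^N_{S_\pm}$ at the twin fixed points, so that the ratio is sign-independent and the two $S$-terms add rather than cancel; that a global orientation of $G_{2k}(\R^{2n})$ and of $G_{2k+1}(\R^{2n+2})$ can be chosen consistently with the sign conventions of the preceding Proposition (this is where the sign-invariance of $\prod(\alpha^2_j-\alpha^2_i)$, and the cancellation of $\prod_l\alpha_l$ in the odd case, make the outcome independent of the choice); and that the unit volume forms $\theta_{\R P^1}$, $\theta_{S^1}$ used to define $r^T$ and $\tilde{r}^T$ are compatible with the orientations entering ABBV, so the inner integrals over the fixed circles are genuinely $+1$. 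Once these orientation conventions are pinned down, the remaining manipulations are the elementary cancellations indicated above.
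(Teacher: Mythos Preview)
Your proposal is correct and follows essentially the same approach as the paper: apply ABBV to each of the six Grassmannians, observe that in every case the extra factor in the integrand ($1$, $e^T$, $\bar{e}^T$, $r^T$, or $\tilde{r}^T$) cancels against the corresponding extra factor in $e^N$, leaving $(p^T)^I|_S/\prod_{i\in S}\prod_{j\not\in S}(\alpha_j^2-\alpha_i^2)$ at each fixed locus, and then account for the factor $2$ by the twin fixed points $S_\pm$ in the first three cases versus the single index $S$ in the last three. Your explicit attention to the sign/orientation bookkeeping is in fact more careful than the paper's own write-up, which disposes of that issue in the preceding Proposition via Weyl-group and $\rho^*$ invariance.
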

\begin{proof}
	When applying the ABBV localization formula \ref{ABBV}, besides the localized Pontryagin classes, we just need to observe that for ${G}_{2k}(\R^{2n})$, $\tilde{G}_{2k}(\R^{2n})$, $\tilde{G}_{2k}(\R^{2n+1})$ and $\tilde{G}_{2k+1}(\R^{2n+1})$, they respectively have
	\begin{align*}
	\frac{e^T_{S}}{e^N_{S}}&=\frac{1}{\prod_{i\in S}\prod_{j \not \in S}(\alpha^2_j-\alpha^2_i)} &
	\frac{e^T_{S_\pm}}{e^N_{S_\pm}}&=\frac{1}{\prod_{i\in S}\prod_{j \not \in S}(\alpha^2_j-\alpha^2_i)}\\
	\frac{e^T_{S_\pm}}{e^N_{S_\pm}}&=\frac{\pm \prod_{l\in S}\alpha_l}{\pm \prod_{l\in S}\alpha_l \prod_{i\in S}\prod_{j \not \in S}(\alpha^2_j-\alpha^2_i)} &
	\frac{\bar{e}^T_{S_\pm}}{e^N_{S_\pm}}&=\frac{\pm \prod_{l\not \in S}\alpha_l}{\pm \prod_{l\not\in S}\alpha_l \prod_{i\in S}\prod_{j \not \in S}(\alpha^2_j-\alpha^2_i)} 
	\end{align*}
	for ${G}_{2k+1}(\R^{2n+2}),\tilde{G}_{2k+1}(\R^{2n+2})$, they respectively have
	\begin{align*}
	\frac{\int r^T_S}{e^N_S} &=\frac{\prod_{l=1}^n\alpha_l \int_{S^1}\theta_{S^1}}{\prod_{l=1}^n\alpha_l \prod_{i\in S}\prod_{j \not \in S}(\alpha^2_j-\alpha^2_i)} &
	\frac{\int \tilde{r}^T_S}{e^N_S} &=\frac{\prod_{l=1}^n\alpha_l \int_{\R P^1}\theta_{\R P^1}}{\prod_{l=1}^n\alpha_l \prod_{i\in S}\prod_{j \not \in S}(\alpha^2_j-\alpha^2_i)}.
	\end{align*}
	All these fractions are equal to $\frac{1}{\prod_{i\in S}\prod_{j \not \in S}(\alpha^2_j-\alpha^2_i)}$. The difference by factor of $2$ comes from the fact that $\tilde{G}_{2k}(\R^{2n})$, $\tilde{G}_{2k}(\R^{2n+1})$ and $\tilde{G}_{2k+1}(\R^{2n+1})$ have fixed points indexed by $S_\pm$, while ${G}_{2k}(\R^{2n})$, ${G}_{2k+1}(\R^{2n+2})$ and $\tilde{G}_{2k+1}(\R^{2n+2})$ have fixed points or circles indexed by $S$.
\end{proof}

\begin{rmk}
	When the cohomological degree of a characteristic polynomial matches with the dimension of a Grassmannian, or equivalently $\sum_{j=1}^{k}j\cdot i_j=k(n-k)$, then the equivariant characteristic number will be a constant, i.e. an ordinary characteristic number. Moreover, we then get a formula of the ordinary characteristic numbers by substituting any $\alpha_i=a_i \in \R$ such that $a_i\not =0, a_i \not = \pm a_j$ into the localized expression of ABBV formula. For instance, we can choose $\alpha_i=i,\forall i$, or $\alpha_i=\sqrt{i},\forall i$. Moreover, we have the relations between ordinary Pontryagin characteristic numbers:
	\begin{align*}
	&\int_{\tilde{G}_{2k}(\R^{2n})}p^I=\int_{\tilde{G}_{2k}(\R^{2n+1})}e \cdot p^I=\int_{\tilde{G}_{2k+1}(\R^{2n+1})}\bar{e} \cdot p^I\\
	=&2\int_{{G}_{2k}(\R^{2n})}p^I=2\int_{{G}_{2k+1}(\R^{2n+2})}r\cdot p^I=2\int_{\tilde{G}_{2k+1}(\R^{2n+2})}\tilde{r}\cdot p^I.
	\end{align*}
\end{rmk}

\begin{rmk}
	Recall that localized equivariant Pontryagin class can be obtained from localized equivariant Chern class by replacing $\alpha_i$ with $\alpha^2_i$. Let $Sq:\Q[\alpha_1,\ldots,\alpha_n]$ be the ring homomorphism by sending $\alpha_i$ to $\alpha^2_i$ for every $i$. Then we have
	\[
	\int_{{G}_{2k}(\R^{2n})}(p^T)^I = Sq\big(\int_{{G}_{k}(\C^{n})}(c^T)^I\big).
	\]
	Since the ring homomorphism $Sq$ keeps rational numbers unchanged, we have the relation of ordinary Pontryagin numbers and Chern numbers
	\[
	\int_{{G}_{2k}(\R^{2n})}p^I = \int_{{G}_{k}(\C^{n})}c^I = \sum_{S\in \mathcal{S}} \frac{e^{i_1}_1(S)\cdots e^{i_k}_k(S)}{\prod_{i\in S}\prod_{j\not\in S} (j - i)}
	\]
	where the second identity is given in Cor \ref{thm:OrdChernNum}. 
\end{rmk}

\begin{rmk}
	Consider the $2$-covers of orientable Grassmannians $\pi: \tilde{G}_{2k}(\R^{2n}) \rightarrow {G}_{2k}(\R^{2n})$ and $\pi: \tilde{G}_{2k+1}(\R^{2n+2}) \rightarrow {G}_{2k+1}(\R^{2n+2})$. By the naturality of equivariant Pontryagin classes and the above relations among equivariant characteristic numbers, we see
	\[
	\int_{\tilde{G}_{2k}(\R^{2n})}\pi^*\big((p^T)^I\big)=\int_{\tilde{G}_{2k}(\R^{2n})}(p^T)^I = 2\int_{{G}_{2k}(\R^{2n})}(p^T)^I.
	\]
	From Prop \ref{prop:Pullr}, we have $ \pi^*{r}^T=2\tilde{r}^T$ , then
	\[
	\int_{\tilde{G}_{2k+1}(\R^{2n+2})}\pi^*\big({r}^T\cdot (p^T)^I\big) = 2\int_{\tilde{G}_{2k+1}(\R^{2n+2})}\tilde{r}^T\cdot (p^T)^I = 2\int_{{G}_{2k+1}(\R^{2n+2})}r^T\cdot (p^T)^I. 
	\]
\end{rmk}

\vskip 20pt
\bibliographystyle{amsalpha}

\end{document}